\newtheorem{thm}{\bf Theorem}[section]
\newtheorem{eg}[thm]{\bf Example}
\newtheorem{prop}[thm]{\bf Proposition}
\newtheorem{cor}[thm]{\bf Corollary}
\newtheorem{mydef}[thm]{\bf Definition}
\newtheorem{lem}[thm]{\bf Lemma}
\newtheorem{mainthm}{Theorem}
\theoremstyle{remark}
\newtheorem{rem}[thm]{\bf Remark}
\newcommand{\ZZ}{\mathbb{Z}}
\newcommand{\U}{\"u}
\newcommand{\flag}{\Fl(n;r_1,r_2)}
\newcommand{\HH}{\mathrm{H}}
\newcommand{\rl}{\mathrm{rl}}
\newcommand{\colp}{1^p}
\renewcommand{\bigwedge}{\Lambda}
\newcommand{\Sn}{S(n;\mathbf{r})}
\newcommand{\rr}{\mathbf{r}}
\newcommand{\Sch}{\epsilon} % Schubert polynomial
\newcommand{\boxsize}{1.2em}
\DeclareMathOperator{\gr}{gr}
\DeclareMathOperator{\Slide}{\mathrm{Inf}}
\DeclareMathOperator{\Fl}{\mathrm{Fl}}
\DeclareMathOperator{\row}{\mathrm{row}}
\DeclareMathOperator{\RS}{\mathrm{RS}}
\DeclareMathOperator{\Arr}{\mathrm{Arr}}
\DeclareMathOperator{\Ver}{\mathrm{Vert}}
\DeclareMathOperator{\qRW}{\mathcal{W}}
\DeclareMathOperator{\RW}{\mathrm{RW}}
\DeclareMathOperator{\Rect}{\mathrm{Rect}}
\DeclareMathOperator{\sh}{\mathrm{sh}}
\DeclareMathOperator{\Diff}{\mathrm{Diff}}
\renewcommand{\emptyset}{\varnothing}
\newcolumntype{C}[1]{>{\centering\let\newline\\\arraybackslash\hspace{0pt}}m{#1}}
\title[]{Littlewood--Richardson rules from quivers for two-step flag varieties}
\author[L.~Chen]{L.~Chen}
\address{Linda Chen \newline \indent Department of Mathematics and Statistics, Swarthmore College, Swarthmore, PA 19081}
\email{lchen@swarthmore.edu}
\author[E.~Kalashnikov]{E.~Kalashnikov}
\address{Elana Kalashnikov \newline \indent  Department of Pure Mathematics,University of Waterloo,
Waterloo, Canada N2L 3G1}
\email{e2kalash@uwaterloo.ca}
\thanks{LC was partially supported by NSF Grant DMS-2101861. EK is supported by an NSERC Discovery Grant.}
\begin{document}
\begin{abstract} Let $\bigwedge_1$ and $\bigwedge_2$ be two symmetric function algebras in independent sets of variables.  We define vector space bases of $\bigwedge_1 \otimes_\ZZ \bigwedge_2$ coming from certain quivers, with vertex sets indexed by pairs of partitions. 
We use these vector space bases to give a positive tableau formula for Littlewood--Richardson coefficients for the product of Schubert polynomials with certain Schur polynomials in two-step flag varieties, in the spirit of the Remmel-Whitney rule for the product of two Schur polynomials in Grassmannians.  This in particular covers the cases considered by the Pieri rule. The appendix is joint work with Ellis Buckminster. 
\end{abstract}

\maketitle

\section{Introduction}\label{sec:intro}

In  this paper, we introduce a novel  approach to studying the cohomology ring of two-step flag varieties. The basis of our approach is to introduce special vector space bases of $\bigwedge_1 \otimes_\ZZ \bigwedge_2$, where $\bigwedge_1$ and $\bigwedge_2$ are symmetric function algebras in two independent sets of variables.  The special bases are given by quivers, and the iterative structure of the bases  allows us to compute combinatorially many structure constants. Using the natural surjective morphisms from $\bigwedge_1 \otimes_\ZZ \bigwedge_2 $ to the cohomology ring of any two-step flag variety, we obtain Littlewood--Richardson rules for a broad family of products, expressed using tableaux.  

The cohomology ring of a two-step flag variety $\flag$ has a basis consisting of Schubert classes which can be indexed  by pairs of partitions  $(\alpha_1,\alpha_2)\in P(n;r_1,r_2)$, where $\alpha_1$ is contained in a $r_1 \times (n-r_1)$ rectangle and $\alpha_2$ is contained in a $r_2 \times (r_1-r_2)$ rectangle. The Schubert classes can also be indexed by permutations in $S_n$ with possible descents at $r_1$ and $r_2$; see \S \ref{s:bijection} for a bijection between such permutations and pairs of partitions. The Schubert structure constants, or Littlewood-Richardson numbers, are the integers defined by
\[  \sigma_{\alpha_1,\alpha_2} \cdot \sigma_{\beta_1,\beta_2}  = 
\sum_{(\gamma_1,\gamma_2)\in P(n;r_1,r_2)} c_{(\alpha_1,\alpha_2),(\beta_1,\beta_2)}^{(\gamma_1,\gamma_2)}\, \sigma_{\gamma_1,\gamma_2}.
\]
We give a positive formula for this product in terms of tableaux in the case when a pair of partitions includes the empty set:  pairs of the $(\alpha_1,\emptyset)$ correspond to a Schur polynomial associated to Grassmannian permutations with descent at $r_1$ and pairs $(\emptyset,\beta_2)$ correspond to  a Schur polynomial associated to certain Grassmannian permutations with descent at $r_2$.   We show that the Schubert structure constants in these cases are given by counting certain subsets of  \emph{Remmel--Whitney sets} of tableaux. 

Positive formulas for the product of a Schur polynomial with a Schubert polynomial were previously given in \cite{kogan} and \cite{knutsonyong} in terms of rc-graphs (pipe dreams), and for general Schubert structure constants of two-step flag varieties in terms of mondrian tableaux in \cite{coskun} and in terms of puzzles in  \cite{puzzle}. It would be interesting to understand the connection between the tableau that appear in our formula and the pipe dreams, growth diagrams, and bumpless diagrams that appear in  related work, for example in  \cite{kogan, knutsonyong, purbhoosottile, lenart, huang}.

 We now give an overview to give readers a better sense of the results of the paper. 

\subsection{{Remmel--Whitney rule for Littlewood--Richardson coefficients}} Since the main result of the paper can be seen to be an extension of the Remmel--Whitney rule for Grassmannians to two-step flag varieties, we introduce some of the objects in \cite{remmel} and their extensions. For a skew shape $\alpha/\beta$,  its  \emph{reverse lexicographic filling} $\rl(\alpha/\beta)$ is defined to be the tableau obtained  by entering  
numbers $1,\dots, |\alpha|-|\beta|$   from right to left along rows and from  top  to  bottom  into  the Young diagram of $\alpha/\beta$. Given another skew shape $\gamma/\delta$, the \emph{Remmel--Whitney set} $\RW_{\gamma/\delta}(\alpha/\beta)$ is defined to be the set of all standard skew tableau $T$ of shape $\gamma/\delta$ whose filling satisfies certain rules governed by  $\rl(\alpha/\beta)$. (See Definition \ref{def:rw} and \S \ref{sec:rw} for definitions and examples.)

The set  $\RW_{\gamma/\delta}(\alpha/\beta)$ generalizes the set studied by Remmel and Whitney for straight shapes. In particular,the main result in \cite{remmel} was
\[c^\alpha_{\beta,\gamma}=|\RW_{\gamma}(\alpha/\beta)|,\]
where $\gamma=\gamma/\emptyset$ is a straight shape.
We  prove the following generalization (see Proposition \ref{prop:LRRW})
\[\sum_{\lambda} c^\alpha_{\beta,\lambda} c^{\gamma}_{\delta,\lambda}=|\RW_{\gamma/\delta}(\alpha/\beta)|.\]
Our rule for two-step flag varieties identifies the structure constants of the Schubert basis with the sizes of certain subsets of Remmel--Whitney sets.

\subsection{Bases from quivers} We describe vector space bases determined by quivers with vertex set given by all pairs of partitions.  We consider quivers that are \emph{graded} and \emph{homogeneous}; that is, an arrow from a vertex labeled by a pair $(\alpha_1,\alpha_2)$ to a pair $(\beta_1,\beta_2)$ satisfies  
\begin{itemize} 
\item $|\alpha_1|+|\alpha_2|=|\beta_1|+|\beta_2|$, and
\item  $|\alpha_1|<|\beta_1|$.
\end{itemize} To such a quiver $Q$, we define a basis $\{b^Q_{\alpha_1,\alpha_2}\}$ inductively by
 \[b^Q_{\alpha_1,\alpha_2}:=s_{\alpha_1,\alpha_2}-\sum_{a \in \Arr^Q_{\alpha_1,\alpha_2}} b_{t(a)}.\]
 Here $s_{\alpha_1,\alpha_2}$ denotes the product of two Schur polynomials, $s_{\alpha_1}(\underline{x})s_{\alpha_2}(\underline{y})$. The two sets of variables  $\underline{x}$ and $\underline{y}$ are the variables in the symmetric function algebras $\bigwedge_1$ and $\bigwedge_2$. 
 
 The simplest such basis is the trivial quiver with no arrows, where we just obtain the natural basis $s_{\alpha_1,\alpha_2}$. We define the \emph{Remmel--Whitney quiver} $\qRW$ to be the quiver defined by  arrows
 \[a_T: (\alpha_1,\alpha_2) \to (\beta_1,\beta_2)\] for each $T \in \RW_{\alpha_2/\beta_2}(\beta_1/\alpha_1)$; we label the basis $w_{\alpha_1,\alpha_2}$. 
 
 For each positive integer $r$, we describe a sub-quiver of the Remmel--Whitney quiver, $\qRW^r$, where the set of arrows $a_T: (\alpha_1,\alpha_2) \to (\beta_1,\beta_2)$ for $T \in \RW_{\alpha_2/\beta_2}(\beta_1/\alpha_1)$ satisfying the \emph{row rule}. For the precise definition, see Definition \ref{def:rowrule} in \S \ref{subsec:subquivers}. The row rule is determined by assigning to each label in the reverse lexicographic labeling  $\rl(\beta_1/\alpha_1)$ a minimum row; this assignment depends on $r$. Then an arrow satisfies the row rule if each label appears in $T$ in the minimum assigned row or lower.  We label the basis $\tau^r_{\alpha_1,\alpha_2}$.

 For example, the full Remmel--Whitney quiver in degree 2 is
 \[\begin{tikzpicture}[scale=0.7]
\node (A) at (0,0) {$(\emptyset,\yng(2))$};
\node (B) at (2,0) {$(\emptyset,\yng(1,1))$};
\node (C) at (1,-2) {$(\yng(1),\yng(1))$};
\node (D) at (0,-4) {$(\yng(2),\emptyset)$};
\node (E) at (2,-4) {$(\yng(1,1),\emptyset)$};
\path [->,red] (A) edge (C);
\path [->] (A) edge[bend right=10] (D);
\path [->] (B) edge (C);
\path [->] (B) edge[bend left=10] (E);
\path [->] (C) edge (D);
\path [->,red] (C) edge (E);
\end{tikzpicture},\]
and 
\[w_{\yng(1),\yng(1)}=s_{\yng(1),\yng(1)}-s_{\yng(1,1),\emptyset}-s_{\emptyset,\yng(2)}.\] The degree 2 quiver of $\qRW^2$ is the red sub-quiver of the above, and
 \[\tau^2_{\yng(1),\yng(1)}=s_{\yng(1),\yng(1)}-s_{\yng(1,1),\emptyset}.\]

 \subsection{Product rules}
The definition of each basis element given by a quiver is inductive; surprisingly, this turns out to be very helpful in computing explicit (non-inductive) product rules. We first do this for the Remmel--Whitney basis; this turns out not only to be a positive basis, but have very familiar structure constants:
\begin{mainthm}[Theorem \ref{thm:RWpositivity}] The structure constants of the Remmel--Whitney basis are the same as the structure constants given by the trivial quiver, that is, they are Littlewood--Richardson coefficients.
\end{mainthm}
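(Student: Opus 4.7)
The plan is to show that the linear map $\phi \colon \bigwedge_1 \otimes_\ZZ \bigwedge_2 \to \bigwedge_1 \otimes_\ZZ \bigwedge_2$ defined by $\phi(s_{\alpha_1,\alpha_2}) = w_{\alpha_1,\alpha_2}$ is an algebra automorphism. Since algebra automorphisms preserve structure constants and the trivial-quiver basis has structure constants $c^{\gamma_1}_{\alpha_1,\beta_1} c^{\gamma_2}_{\alpha_2,\beta_2}$, this will immediately yield the theorem.

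The first step is to unwind the recursive definition of $w$ into a closed change-of-basis formula. Rearranging the recursion gives $s_{\alpha_1,\alpha_2} = w_{\alpha_1,\alpha_2} + \sum_{a \in \Arr^{\qRW}_{\alpha_1,\alpha_2}} w_{t(a)}$. The number of arrows $(\alpha_1,\alpha_2) \to (\mu_1,\mu_2)$ is $|\RW_{\alpha_2/\mu_2}(\mu_1/\alpha_1)|$, which by Proposition \ref{prop:LRRW} equals $\sum_{\lambda} c^{\mu_1}_{\alpha_1,\lambda} c^{\alpha_2}_{\mu_2,\lambda}$. The restriction $|\alpha_1|<|\mu_1|$ on arrows corresponds to $\lambda \neq \emptyset$, while the $\lambda=\emptyset$ summand of the unrestricted sum contributes $\delta_{\mu_1,\alpha_1}\delta_{\mu_2,\alpha_2}$, i.e.\ exactly the standalone $w_{\alpha_1,\alpha_2}$ term. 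Combining,
\[
s_{\alpha_1,\alpha_2} \;=\; \sum_{\mu_1,\mu_2,\lambda} c^{\mu_1}_{\alpha_1,\lambda} \, c^{\alpha_2}_{\mu_2,\lambda} \, w_{\mu_1,\mu_2}.
\]

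The second step is to produce the same coefficients from a natural algebra map. Define $T \colon \bigwedge_1 \otimes_\ZZ \bigwedge_2 \to \bigwedge_1 \otimes_\ZZ \bigwedge_2$ on decomposable tensors by $T\bigl(f(\underline{x})\, g(\underline{y})\bigr) := f(\underline{x}) \cdot g(\underline{x},\underline{y})$, where $g(\underline{x},\underline{y})$ means $g$ evaluated at the concatenated alphabet and then re-expanded in $\bigwedge_1 \otimes_\ZZ \bigwedge_2$. The map $g \mapsto g(\underline{x},\underline{y})$ is the standard algebra embedding $\bigwedge \hookrightarrow \bigwedge_1 \otimes_\ZZ \bigwedge_2$ coming from the Hopf coproduct, so $T$ is an algebra homomorphism. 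The classical expansion $s_{\alpha_2}(\underline{x},\underline{y}) = \sum_{\mu_2,\lambda} c^{\alpha_2}_{\mu_2,\lambda} s_\lambda(\underline{x}) s_{\mu_2}(\underline{y})$, followed by multiplying through by $s_{\alpha_1}(\underline{x})$, yields
\[
T(s_{\alpha_1,\alpha_2}) \;=\; \sum_{\mu_1,\mu_2,\lambda} c^{\mu_1}_{\alpha_1,\lambda} \, c^{\alpha_2}_{\mu_2,\lambda} \, s_{\mu_1,\mu_2}.
\]

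Comparing the two displayed identities shows $\phi \circ T = \mathrm{id}$ on the $s$-basis, hence everywhere. Since $T$ is graded and upper triangular with unit diagonal with respect to the $|\alpha_1|$-filtration, $T$ is invertible, so $\phi = T^{-1}$ is an algebra automorphism. The theorem then follows:
\[
w_{\alpha_1,\alpha_2} \cdot w_{\beta_1,\beta_2} \;=\; \phi(s_{\alpha_1,\alpha_2} \cdot s_{\beta_1,\beta_2}) \;=\; \sum_{\gamma_1,\gamma_2} c^{\gamma_1}_{\alpha_1,\beta_1} c^{\gamma_2}_{\alpha_2,\beta_2}\, w_{\gamma_1,\gamma_2}.
\]
The main conceptual step is the second one: recognizing that the recursion-defined $w$-basis is governed by the natural concatenation-of-alphabets algebra map on $\bigwedge_1 \otimes_\ZZ \bigwedge_2$. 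Once Proposition \ref{prop:LRRW} puts the change of basis into the displayed bilinear-in-LR-coefficients form, everything else is formal.
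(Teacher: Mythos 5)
Your proof is correct, and it takes a genuinely different and more conceptual route than the paper's. You identify the change-of-basis from the $s$-basis to the $w$-basis as the inverse of the algebra endomorphism
$T = (\mathrm{mult}_{\bigwedge_1} \otimes \mathrm{id}) \circ (\mathrm{id}_{\bigwedge_1} \otimes \Delta)$,
where $\Delta$ is the Hopf coproduct (``concatenation of alphabets''); once Proposition~\ref{prop:LRRW} puts the unwound recursion in the form $s_{\alpha_1,\alpha_2} = \sum c^{\mu_1}_{\alpha_1,\lambda} c^{\alpha_2}_{\lambda,\mu_2}\, w_{\mu_1,\mu_2}$, the identity $\phi \circ T = \mathrm{id}$ is a direct comparison of coefficients, and $T$ being an algebra automorphism (commutativity of $\bigwedge_1$ makes multiplication an algebra map, $\Delta$ is an algebra map, and the unitriangularity gives invertibility) finishes the argument. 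The paper instead proves the result combinatorially by a two-stage induction: a lemma establishing the special case $w_{\mu,\emptyset}\, w_{\lambda_1,\lambda_2} = \sum_\gamma c^\gamma_{\mu,\lambda_1}\, w_{\gamma,\lambda_2}$ (via Proposition~\ref{prop:LRRW} and LR associativity), followed by a reduction to $\lambda_1 = \mu_1 = \emptyset$ and an induction on $|\lambda_2|+|\mu_2|$ that relies on Corollary~\ref{cor:KL}, whose proof in turn needs the diffusion machinery. Your proof bypasses diffusion entirely and is shorter and cleaner; what it does not buy is the explicit tableau-level bijections between the two sides, which the paper develops along the way precisely because they are reused later to analyze the $\tau^r$ bases. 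One small remark: the appeal to unitriangularity to show $T$ is invertible is fine, but it is not strictly needed---since $\phi$ is visibly bijective (it sends one basis to another) and $\phi \circ T = \mathrm{id}$, one has $T = \phi^{-1}$ automatically, hence $T$ is bijective and $\phi = T^{-1}$ is an algebra automorphism.
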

The quivers $\qRW^r$ now sit between two quivers with easily computable structure constants. The (infinite) change of basis of matrix from the $\tau^r_{\alpha,\beta}$ basis to the $s_{\alpha,\beta}$ is given by the definition of the $\qRW$ quiver. Note that all entries are non-negative -- they are counts of tableaux determined by the row rule given in  Definition \ref{def:rowrule}. The entries of the change of basis matrix from the $w_{\alpha,\beta}$ basis to the $\tau^r_{\alpha,\beta}$ basis are also non-negative, and given by counts of tableaux that fully break the row rule (Theorem \ref{thm:wexp}).  

Using these results, and the fact that the structure constants of the bases $s_{\alpha,\beta}$ and $w_{\alpha,\beta}$ are both easy to work with, we describe product rules for the $\tau^r_{\alpha,\beta}$ basis in the following situations:
\begin{enumerate}
\item for any partition $\alpha,$ multiplication by $\tau^r_{\alpha,\emptyset}=s_{\alpha}(\underline{x})$ (Theorem \ref{thm:s1thm}), and
\item for any partition $\beta$ of width at most $r-1$, multiplication by $\tau^r_{\emptyset,\beta}=s_{\beta}(\underline{y})$ (Theorem \ref{thm:betamult}).
\end{enumerate}
Both of these theorems describe the coefficients as the numbers of elements of certain sets; they are therefore non-negative. 
The product rule for multiplication by $\tau^r_{\alpha,\emptyset}$ is indexed by summands of the product $s_\alpha(\underline{x})s_{\beta_1}(\underline{x})$ in the basis of Schur polynomials; these correspond to partitions $\lambda$ such that $c_{\alpha,\beta_1}^\lambda\neq 0$, which by \cite{remmel} are the $\lambda$ such that the set $\RW_{\lambda/\beta_1}(\alpha)$ is non-empty. Additional terms arise from  arrows that satisfy conditions described by the row rule (see Definition \ref{def:rowrule}) and the process of infusion \cite{thomas} (see Definition \ref{defn:slide}).

Multiplication by $\tau^r_{\emptyset,\beta}$ is analogous, with summands indexed by partitions $\gamma$ such that $c_{\alpha_2,\beta}^\gamma\neq 0$, and involving a process called diffusion (see Definition \ref{def:diffusion}) instead of infusion.
\begin{mainthm}[Theorems \ref{thm:s1thm} and \ref{thm:betamult}] Fix $r>0$. Then in the $\qRW^r$ basis,
\[\tau^r_{\alpha,\emptyset} \tau^r_{\beta_1,\beta_2}=\sum_{\lambda} \sum_{T \in \RW_{\lambda/\beta_1}(\alpha)} (\tau^r_{\lambda,\beta_2}+\sum_{\substack{a_{U} \in \qRW^{r}_1, s(a_U)=(\gamma,\beta_2) \\ a_{\Slide_T(U)}\text{ breaks the $r$-row rule} }} \tau^r_{t(a_U)}).\]

Let $\beta$ be a partition that is strictly less than $r$ wide. Then in the $\qRW^r$ basis, 
\[\tau^r_{\alpha_1,\alpha_2} \tau^r_{\emptyset,\beta}=\sum_{\gamma}\sum_{T \in \RW_\gamma(\beta*\alpha_2)}( \tau^r_{\alpha_1,\gamma}
+\sum_{\substack{a_U \in \qRW^r_1, s(a_U)=(\alpha_1,\gamma) \\ a_{\Diff_{T}(a_U)} \text{ breaks the $r$-row rule}}} \tau^r_{t(a_U)}).\]
\end{mainthm}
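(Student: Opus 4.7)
The plan is to reduce both identities to the classical Littlewood--Richardson rule for Schur polynomials, in its Remmel--Whitney form, and to track the corrections introduced when passing between the $s$ basis and the $\tau^r$ basis via the quiver relations defining $\qRW^r$. The key preliminary observation is that, in each formula, one factor already agrees with a product of ordinary Schur polynomials: for $(\alpha,\emptyset)$, the homogeneity condition on outgoing arrows in $\qRW$ would force the second partition to shrink below $\emptyset$, so there are no outgoing arrows and $\tau^r_{\alpha,\emptyset}=s_{\alpha,\emptyset}=s_\alpha(\underline{x})$; for $(\emptyset,\beta)$ with $\beta$ strictly less than $r$ wide, a direct check on reverse lexicographic fillings shows that every outgoing arrow in $\qRW$ fails the row rule of $\qRW^r$, so $\tau^r_{\emptyset,\beta}=s_{\emptyset,\beta}=s_\beta(\underline{y})$.

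For the first identity I would substitute the defining relation $\tau^r_{\beta_1,\beta_2}=s_{\beta_1,\beta_2}-\sum_{a\in\Arr^{\qRW^r}_{(\beta_1,\beta_2)}}\tau^r_{t(a)}$ and multiply by $s_{\alpha,\emptyset}$. The Littlewood--Richardson rule, combined with Remmel--Whitney, expands $s_{\alpha,\emptyset}\cdot s_{\beta_1,\beta_2}$ as $\sum_\lambda\sum_{T\in\RW_{\lambda/\beta_1}(\alpha)}s_{\lambda,\beta_2}$, after which each $s_{\lambda,\beta_2}$ is rewritten in the $\tau^r$ basis via the inverted relation $s_{\lambda,\beta_2}=\tau^r_{\lambda,\beta_2}+\sum_{a_V\in\Arr^{\qRW^r}_{(\lambda,\beta_2)}}\tau^r_{t(a_V)}$. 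The remaining piece $-s_{\alpha,\emptyset}\cdot\sum_a\tau^r_{t(a)}$ is handled by induction on $|\beta_2|$, since every target $t(a)$ has a strictly smaller second partition by homogeneity; the base case $\beta_2=\emptyset$ reduces to a pure Schur product in $\underline{x}$ and matches the formula directly.

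The combinatorial heart of the argument, and what I expect to be the main obstacle, is the matching of coefficients. Each coefficient of a $\tau^r_{\nu_1,\nu_2}$ arises as a signed sum of pairs $(T,V)$ consisting of a Remmel--Whitney tableau and an arrow label, with the negative contributions coming from the inductive correction; to obtain the claimed positive tableau count one must exhibit a sign-reversing involution (or explicit bijection) on these pairs, mediated by the infusion operation $\Slide_T$, whose surviving pairs correspond exactly to the arrows $a_U$ for which $a_{\Slide_T(U)}$ breaks the row rule. Pushing through the precise interaction of infusion, the reverse lexicographic filling, and the row rule is where most of the technical work will be.

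The second identity follows from the same template with the two symmetric function factors swapped: substituting $\tau^r_{\alpha_1,\alpha_2}=s_{\alpha_1,\alpha_2}-\sum_a\tau^r_{t(a)}$, multiplying by $s_{\emptyset,\beta}=s_\beta(\underline{y})$, and applying Littlewood--Richardson in the variables $\underline{y}$ gives $\sum_\gamma\sum_{T\in\RW_\gamma(\beta*\alpha_2)}s_{\alpha_1,\gamma}$; the induction now runs on $|\alpha_2|$, and the bijective matching is governed by diffusion $\Diff_T$ in place of infusion. The width hypothesis on $\beta$ is precisely what makes $\tau^r_{\emptyset,\beta}=s_{\emptyset,\beta}$ and allows the diffusion argument to mirror the infusion argument from the first case.
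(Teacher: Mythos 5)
Your preliminary reductions are correct: $\tau^r_{\alpha,\emptyset}=s_\alpha(\underline{x})$ holds trivially (no outgoing arrows), and $\tau^r_{\emptyset,\beta}=s_\beta(\underline{y})$ when $\beta_1<r$ because label $1$ always sits in row $1$ of any $T$ of straight shape $\beta_1$, while $R_r(1)=r+k-c\geq 2$, so every outgoing arrow at $(\emptyset,\beta)$ fails the row rule. For the second identity your plan is essentially the paper's: induction on $|\alpha_2|$, computing $s_{\alpha_1,\alpha_2}s_{\emptyset,\beta}$ two ways and matching via diffusion. For the first identity you propose the same inductive template, but the paper does \emph{not} prove Theorem~\ref{thm:s1thm} by induction; instead it first passes to the Remmel--Whitney basis via Theorem~\ref{thm:wexp} (positive expansion $\tau^r_{\alpha_1,\alpha_2}=w_{\alpha_1,\alpha_2}+\sum_{a\in\Arr^{>r}}w_{t(a)}$), applies Theorem~\ref{thm:RWpositivity} (the $w$-basis has Littlewood--Richardson structure constants), and then uses the infusion bijection of Lemma~\ref{lem:bij} to convert back. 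That detour through $w$ is what makes the bookkeeping clean, because every product becomes a genuine LR product. Your inductive route is plausible and structurally parallel to the paper's proof of Theorem~\ref{thm:betamult}, but you would be reproving an analogue of Theorem~\ref{thm:wexp} inside the induction, which is more work.

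The more substantive omission is that you describe the ``combinatorial heart'' only as a bijection mediated by infusion or diffusion. In the paper the essential structural ingredients are the \emph{splitting} Propositions~\ref{prop:splitting1} and \ref{prop:splitting2} (and their $\overline{\Arr}$ versions \ref{prop:tsplitting1}, \ref{prop:tsplitting2}): an arrow that is neither row-rule-satisfying nor reverse-row-rule-satisfying factors uniquely as a row-rule arrow followed by a reverse-row-rule arrow, and conversely such a path composes. Without this three-way decomposition of the arrow set, the matching you need --- between the ``not reverse'' arrows $a_V$ at $(\lambda,\beta_2)$ after sliding by $T$ and the inductive contributions coming from $\sum_{a_1}\tau_{\alpha,\emptyset}\tau_{t(a_1)}$ --- does not close up. Infusion alone tells you where boxes go under jeu-de-taquin; it does not by itself tell you that mixed arrows split compatibly with the row rule, and that splitting is what makes the cancellation a genuine bijection rather than a signed sum. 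Identifying and proving those splitting lemmas is the piece your proposal would have to supply before the sketch becomes a proof.
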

The sums are over all partitions $\lambda$ and $\gamma$, though  non-zero contributions occur only when $c_{\alpha,\beta_1}^\lambda\neq 0$ and
$c_{\alpha_2,\beta}^\gamma\neq 0$, respectively.
These results are illustrated in Examples \ref{eg:s1mult} and \ref{eg:thm2eg}.

The final stage of the paper is to translate these theorems into Littlewood--Richardson rules for two-step flag varieties.  There is a natural homomorphism 
\[\Sch_{\Fl}:\bigwedge_1 \otimes_\ZZ \bigwedge_2  \to \HH^*\flag\]
which evaluates symmetric polynomials in the Chern roots of the two tautological quotient bundles on $\flag$. 
We show that
\begin{mainthm}[Theorem \ref{thm:thmcomp}] \label{thm:intro} Let $r=r_1-r_2+1$.
The homomorphism $\Sch_{\Fl}:\bigwedge_1 \otimes_\ZZ \bigwedge_2  \to \HH^*\flag$  is surjective and induces an isomorphism 
\[\text{$\bigwedge_1 \otimes_\ZZ \bigwedge_2$}/\ker(\Sch_{\Fl}) \cong  \HH^*\flag\]
that takes $\tau^r_{\alpha,\beta}$ to the Schubert class $\sigma_{\alpha,\beta},$ when $(\alpha,\beta) \in P(n;r_1,r_2)$, and $0$ otherwise. 
\end{mainthm}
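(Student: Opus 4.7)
The plan is to establish, in order, surjectivity of $\Sch_{\Fl}$, the induced ring isomorphism, and the combinatorial identification $\tau^r_{\alpha,\beta} \mapsto \sigma_{\alpha,\beta}$ (or zero when $(\alpha,\beta)\notin P(n;r_1,r_2)$). Surjectivity follows from Borel's presentation: $\HH^*\flag$ is generated as a $\ZZ$-algebra by the Chern classes of the two tautological quotient bundles, and these are exactly the images of the elementary symmetric polynomials $e_k(\underline{x})$ and $e_k(\underline{y})$ under $\Sch_{\Fl}$. The ring isomorphism $(\bigwedge_1 \otimes_\ZZ \bigwedge_2)/\ker(\Sch_{\Fl}) \cong \HH^*\flag$ then follows from the First Isomorphism Theorem.

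For the identification, I would proceed by induction on $|\alpha|+|\beta|$. The base cases $\tau^r_{\alpha,\emptyset} = s_\alpha(\underline{x})$ (for all $\alpha$) and $\tau^r_{\emptyset,\beta} = s_\beta(\underline{y})$ (for $\beta$ of width less than $r$) are handled directly via the classical Giambelli formula: these Schur polynomials in Chern roots give the pulled-back Grassmannian Schubert classes $\sigma_{\alpha,\emptyset}$ or $\sigma_{\emptyset,\beta}$ when the partition lies in the appropriate rectangle, and vanish in $\HH^*\flag$ otherwise. For the inductive step at a general pair $(\alpha,\beta)$ of positive degree, I would remove a box from whichever component is nonempty to obtain $(\alpha',\beta)$ or $(\alpha,\beta')$, and apply Theorem \ref{thm:s1thm} with first factor $\tau^r_{(1),\emptyset}$ (respectively Theorem \ref{thm:betamult} with second factor $\tau^r_{\emptyset,(1)}$). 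This expresses $\tau^r_{\alpha,\beta}$ as a term with coefficient $1$ in a product expansion whose other summands either have smaller total degree or are controlled by the inductive hypothesis. Applying $\Sch_{\Fl}$ and comparing with the classical Chevalley--Monk formula for the corresponding Schubert product in $\HH^*\flag$ then forces $\Sch_{\Fl}(\tau^r_{\alpha,\beta}) = \sigma_{\alpha,\beta}$ for in-range pairs and zero for out-of-range pairs.

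The main obstacle is controlling the ``spurious'' terms $\tau^r_{\gamma,\delta}$ with $(\gamma,\delta)\notin P(n;r_1,r_2)$ that appear in the product expansions of Theorems \ref{thm:s1thm} and \ref{thm:betamult}: for the inductive comparison with Chevalley--Monk to be exact, these spurious terms must all vanish under $\Sch_{\Fl}$. This is precisely where the value $r = r_1-r_2+1$ enters. The row rule defining $\qRW^r$ with this value is calibrated so that whenever $(\gamma,\delta)$ violates one of the four rectangle-containment bounds of $P(n;r_1,r_2)$, the recursive subtractions in $\tau^r_{\gamma,\delta} = s_{\gamma,\delta} - \sum \tau^r_{t(a)}$ keep the expression inside the ideal generated by the rank-constraint relations of the Borel presentation. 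A careful verification of this combinatorial alignment between the row rule and the defining relations of $\HH^*\flag$ is the technical heart of the proof; once it is in hand, the induction closes and the theorem follows.
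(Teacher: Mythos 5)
Your treatment of surjectivity and the passage to an isomorphism is fine, but the inductive argument for the identification $\Sch_{\Fl}(\tau^r_{\alpha,\beta})=\sigma_{\alpha,\beta}$ contains a gap that cannot be repaired along the route you describe. The proposed induction is on $|\alpha|+|\beta|$, but the expansion of $\tau^r_{(1),\emptyset}\cdot\tau^r_{\alpha',\beta}$ (or $\tau^r_{\alpha,\beta'}\cdot\tau^r_{\emptyset,(1)}$) provided by Theorems \ref{thm:s1thm} and \ref{thm:betamult} is homogeneous: every summand $\tau^r_{\gamma_1,\gamma_2}$ has $|\gamma_1|+|\gamma_2|=|\alpha|+|\beta|$. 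There are no terms of strictly smaller total degree, and the expansion also contains terms $\tau^r_{\lambda,\beta}$ with $\lambda\neq\alpha$, $|\lambda|=|\alpha|$, for which the induction hypothesis gives no information. A single comparison with Chevalley--Monk therefore produces one equation in several unknowns, and the induction does not close. There is a second, independent obstruction: Chevalley--Monk governs multiplication only by the two degree-one Schubert classes, and $\HH^*\flag$ is not generated in degree one, so even a corrected induction could not lean on Chevalley--Monk alone.

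The paper circumvents both problems by a different induction and a stronger geometric input. It inducts on $|\beta|$ only, using the defining recursion $s_{\alpha,\emptyset}s_{\emptyset,\beta}=\tau^r_{\alpha,\beta}+\sum_{a\in\Arr^{\leq r}_{\alpha,\beta}}\tau^r_{t(a)}$; since every row-rule arrow strictly decreases the second component, every term other than $\tau^r_{\alpha,\beta}$ is recursively known. Matching this with the Schubert-side identity then requires comparing the $\tau^r$-Pieri rule with the geometric Pieri rule for $\sigma_{1^p,\emptyset}$ for \emph{all} column lengths $p$, not just $p=1$; this is the content of Proposition \ref{thm:appendix}, proved via 012-strings following \cite{puzzle}, and via Jacobi--Trudi it determines multiplication by every $\sigma_{\alpha,\emptyset}$. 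Finally, the vanishing for out-of-range pairs — which you rightly flag as where $r=r_1-r_2+1$ enters — is not handled by appealing to the Borel ideal as you suggest; instead it is a separate induction on $|\beta|$ using Lemma \ref{lem:explem} (a determinantal Chern-class identity showing $s^2_\nu=\sigma_{\alpha_1,\alpha_2}$) together with the observation that when $\beta$ is too wide, the row rule forces the surviving arrows to keep $\gamma_2$ too wide as well. Naming this verification the ``technical heart'' and asserting that the row rule is ``calibrated'' is, as a proof, a placeholder for exactly the work the paper's Appendix and Lemma \ref{lem:explem} carry out.
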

This allows us to easily rephrase the multiplication rules above into statements about Schubert calculus: all one does is replace $\tau^r_{\alpha,\beta}$ with $\sigma_{\alpha,\beta}$, and use the convention that if $(\alpha,\beta) \not \in P(n;r_1,r_2)$, $\sigma_{\alpha,\beta}=0.$
\begin{mainthm} \label{thm:flagthm}(Corollary \ref{cor:s1thm} and Corollary \ref{cor:betamult}) Consider the two-step flag variety $\flag$. Then for $(\alpha,\emptyset)$ and $(\beta_1,\beta_2)$ in $P(n;r_1,r_2)$ 
\[\sigma_{\alpha,\emptyset} \sigma_{\beta_1,\beta_2}=\sum_{\lambda} \sum_{T \in \RW_{\lambda/\beta_1}(\alpha)} (\sigma_{\lambda,\beta_2}+\sum_{\substack{a_{U} \in \qRW^{r}_1, s(a_U)=(\gamma,\beta_2) \\ a_{\Slide_T(U)}\text{ breaks the $r$-row rule} }} \sigma_{t(a_U)}) \]
and for  $(\emptyset,\beta)$ and  $(\alpha_1,\alpha_2)$  in $P(n;r_1,r_2)$ 
\[\sigma_{\emptyset,\beta} \sigma_{\alpha_1,\alpha_2} =\sum_{\gamma}\sum_{T \in \RW_\gamma(\beta*\alpha_2)}( \sigma_{\alpha_1,\gamma}
+\sum_{\substack{a_U \in \qRW^r_1, s(a_U)=(\alpha_1,\gamma) \\ a_{\Diff_{T}(a_U)} \text{ breaks the $r$-row rule}}} \sigma_{t(a_U)}) \]
in $\HH^*\flag$.
\end{mainthm}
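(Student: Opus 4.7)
The plan is to deduce both identities as images of Theorems~\ref{thm:s1thm} and~\ref{thm:betamult} under the surjective ring homomorphism
\[\Sch_{\Fl}\colon \bigwedge_1 \otimes_\ZZ \bigwedge_2 \twoheadrightarrow \HH^*\flag\]
using the identification supplied by Theorem~\ref{thm:intro}. Those two theorems already provide the expansions of the products $\tau^r_{\alpha,\emptyset}\,\tau^r_{\beta_1,\beta_2}$ and $\tau^r_{\emptyset,\beta}\,\tau^r_{\alpha_1,\alpha_2}$ in $\bigwedge_1 \otimes_\ZZ \bigwedge_2$ in the $\tau^r$-basis, and Theorem~\ref{thm:intro} says that after quotienting by $\ker(\Sch_{\Fl})$ the basis element $\tau^r_{\mu,\nu}$ descends to $\sigma_{\mu,\nu}$ when $(\mu,\nu)\in P(n;r_1,r_2)$ and to $0$ otherwise. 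So the only thing to do is push the symmetric-function identities through the isomorphism.

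First I would set $r = r_1 - r_2 + 1$, matching the parameter in Theorem~\ref{thm:intro}, and apply $\Sch_{\Fl}$ to both sides of Theorem~\ref{thm:s1thm}. Since $(\alpha,\emptyset),(\beta_1,\beta_2)\in P(n;r_1,r_2)$ by hypothesis, the left-hand side becomes $\sigma_{\alpha,\emptyset}\,\sigma_{\beta_1,\beta_2}$. On the right, each summand $\tau^r_{\lambda,\beta_2}$ or $\tau^r_{t(a_U)}$ is sent either to the corresponding Schubert class when the pair of partitions lies in $P(n;r_1,r_2)$ or to $0$ otherwise, which is encoded exactly by the convention $\sigma_{\mu,\nu}=0$ for $(\mu,\nu)\notin P(n;r_1,r_2)$ stated in the theorem. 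The first identity follows. For the second identity I would repeat the argument verbatim using Theorem~\ref{thm:betamult}; its hypothesis requires $\beta$ to have width strictly less than $r$, which is automatic from $(\emptyset,\beta)\in P(n;r_1,r_2)$, since that containment forces $\beta_1 \le r_1 - r_2 = r - 1$.

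Hence the corollary is essentially a mechanical translation once Theorem~\ref{thm:intro} and the two $\tau^r$-basis product rules are in hand; the main obstacle has already been absorbed upstream in proving those three results. The only items worth explicitly verifying at this stage are that the width hypothesis of Theorem~\ref{thm:betamult} follows automatically from the indexing condition $(\emptyset,\beta)\in P(n;r_1,r_2)$, and that the vanishing convention on indices outside $P(n;r_1,r_2)$ is consistent with the behavior of $\Sch_{\Fl}$ on the $\tau^r$-basis, both of which are immediate from the statements above.
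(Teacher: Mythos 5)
Your proposal matches the paper's argument: the corollaries are stated immediately after Theorem~\ref{thm:thmcomp} precisely because they follow by pushing Theorems~\ref{thm:s1thm} and~\ref{thm:betamult} through the homomorphism $\Sch_{\Fl}$ and invoking the dictionary $\tau^r_{\mu,\nu}\mapsto\sigma_{\mu,\nu}$ (or $0$ for $(\mu,\nu)\notin P(n;r_1,r_2)$), together with the observation that $(\emptyset,\beta)\in P(n;r_1,r_2)$ forces the width of $\beta$ to be at most $r_1-r_2=r-1$, so the hypothesis of Theorem~\ref{thm:betamult} is automatic. This is exactly what the paper does, so no further comment is needed.
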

It would be interesting to extend these techniques to other type A flag varieties, and thus obtain more general Littlewood--Richardson rules.

\subsection{Plan of the paper} In \S\ref{sec:background}, we establish background and notation on some basic material (jeu-de-taquin, RSK correspondence, the Remmel--Whitney rule etc), with some basic lemmas required for the paper that we could not find readily in the literature. In \S  \ref{sec:quivers}, we describe the quivers that we use to produce the bases of interest. In \S \ref{sec:multrules}, we prove the theorems giving the structure constants of the $\tau^r_{\alpha,\beta}$ basis described above. In \S \ref{sec:flag}, we show that this gives a Remmel--Whitney style rule for two-step flag varieties for a large family of products of Schubert classes. In \S \ref{sec:appendix}, we compare the special case of our rule with the Pieri rule described by \cite{puzzle}. 

\subsection*{Acknowledgements}
The authors would like to thank Jennifer Morse and Oliver Pechenik for helpful conversations. 

\section{Background}\label{sec:background}
In this section, we recall the basic background needed for the paper. 
\subsection{Jeu-de-Taquin and Rectification}
For more details on the constructions here, there are many resources (for example \cite{Fulton1997}). We give only a brief description as needed. 

A partition $\alpha=(\alpha^1\geq \dots \geq \alpha^k>0)$ can be represented by a Young diagram consisting of $\alpha_i$ boxes in the $i$th row, counting from top to bottom.  The sum $|\alpha|:=\sum \alpha_i$  is the total number of boxes and $k$ is the \emph{length} of the partition.  For $\beta \subseteq \alpha$, the \emph{skew shape} $\alpha/\beta$ corresponds to the set difference of the Young diagrams of shape $\alpha$ and shape $\beta$. A Young tableau $T$ on $\alpha/\beta$ is a filling of the boxes of the Young diagram of  $\alpha/\beta$ by positive integers; we write  $sh(T)=\alpha/\beta$ and say that the shape of $T$ is $\alpha/\beta$.  A standard Young tableau on $\alpha/\beta$ is a filling by integers $1,\dots,|\alpha|-|\beta|$ such that rows and columns are strictly increasing. Semi-standard Young tableaux are those with strictly increasing columns and weakly increasing rows. We often consider Young tableaux and standard Young tableau on straight shapes $\alpha=\alpha/\emptyset$.

An inner corner of a skew tableau $T$ is a box that has no box of $T$ above or to the left of it. An outer corner is a box that has no box of $T$ below or to the right of it.  Given an inner corner of a standard skew Young tableaux, one can complete a \emph{jeu-de-taquin slide} in the following way. Consider the two boxes below and to the right of the corner. Move the box with the smaller label into the corner; one now has a gap within the tableau. Do the same thing: move the box with the smaller label out of the (at most two) boxes to the right and below the gap into the gap. Repeat until the gap is on the outside of the tableau. The trail traced by the gap as it moves through the tableau is called the \emph{sliding path}.  

\begin{lem}[{\cite[Fact 2]{buch}}]\label{lem:buch} Let $T$ be a standard skew tableau of shape $\alpha/\beta$ and let $\alpha/\alpha'$ be a skew shape with no two boxes in the same column. Perform jeu-de-taquin from right to left along the boxes of $\alpha/\alpha'$. Then two distinct sliding paths cannot cross each other. In particular, subsequent slides lie weakly below or strictly left of previous sliding paths. 
\end{lem}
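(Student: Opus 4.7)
The plan is to reduce the statement to the case of two consecutive slides and then argue by tracking the position of the second sliding gap step-by-step. Since the boxes of $\alpha/\alpha'$ lie in distinct columns and are traversed from right to left, for any two consecutive slides the second starts at an inner corner $c_2$ whose column is strictly less than the column of the first inner corner $c_1$. Granting the two-slide statement, the full lemma follows by induction on the number of slides: the previously performed paths together occupy a ``northeast'' region, and the newest path stays weakly below or strictly left of the most recently performed one, hence of all of them by transitivity of the partial order together with the inductive hypothesis.

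The core is thus a two-path claim: letting $P_1$ be the first sliding path in the original tableau $T$ and $P_2$ the sliding path of the subsequent slide in the intermediate tableau $T_1$, I would prove by induction on the steps of $P_2$ that the current gap position $(i,j)$ of $P_2$ satisfies one of the following: either $j$ is strictly smaller than every column touched by $P_1$, or $i$ is strictly greater than the row in which $P_1$ passes through column $j$. The base case is exactly the hypothesis on $c_2$ versus $c_1$. For the inductive step, the gap moves into whichever of $(i,j+1)$ and $(i+1,j)$ carries the smaller label (or the only one present), and one must rule out that this move places the new gap onto, or above, $P_1$.

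The only nontrivial case is when the gap of $P_2$ sits immediately below $P_1$ at column $j$, so a rightward move would violate the invariant. Here the cells $(i,j+1)$ and $(i+1,j)$ form a $2 \times 2$ window straddling the portion of $P_1$ at that column, and one must show that the label at $(i+1,j)$ is strictly smaller than that at $(i,j+1)$, forcing the downward move. This is where the effect of the first slide enters: the labels now sitting along $P_1$ were pulled there according to inequalities inherited from the original row- and column-monotonicity of the standard tableau $T$, and a direct comparison within the $2 \times 2$ window supplies the required strict inequality.

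The main obstacle is the bookkeeping in this final case analysis, because the current content of cells near the second gap depends on precisely how (and from which neighbor) $P_1$ entered the window. I would dispatch this by enumerating the two local behaviors of $P_1$ inside the $2 \times 2$ window --- a horizontal step versus a vertical step through the window --- and, in each sub-case, chasing the comparison back to two inequalities among original entries of $T$, both of which follow from the fact that $T$ was standard together with the defining inequality of the preceding jeu-de-taquin move along $P_1$.
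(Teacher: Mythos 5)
The paper does not give its own proof here—Lemma~\ref{lem:buch} is cited from Buch's paper—so your proposal must stand on its own, and it has a genuine gap. The invariant you propose is too strong: you require the gap $(i,j)$ of $P_2$ to have row \emph{strictly} greater than the row of $P_1$ in column $j$, i.e., to lie strictly below $P_1$. But the conclusion of the lemma is that $P_2$ lies \emph{weakly} below $P_1$, and indeed the two sliding paths can share cells. A small counterexample: take $T$ of shape $(3,2)/(2)$ with $T(1,3)=1$, $T(2,1)=2$, $T(2,2)=3$, and slide along the horizontal strip $\{(1,1),(1,2)\}$, starting from $(1,2)$. The first slide gives $P_1=\{(1,2),(1,3)\}$ and leaves the entry $1$ in $(1,2)$. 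The second slide starts at $(1,1)$, where the comparison is between $1$ at $(1,2)$ and $2$ at $(2,1)$; the gap moves \emph{right} onto $(1,2)\in P_1$, and only afterward drops to $(2,2)$. So $P_2=\{(1,1),(1,2),(2,2)\}$ meets $P_1$ at $(1,2)$, falsifying your strict invariant at the first nontrivial step. Your sentence ``one must rule out that this move places the new gap onto, or above, $P_1$'' is therefore not achievable—``onto'' cannot be ruled out—and the claimed strict inequality in the $2\times 2$ window is false in general.

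Consequently the case analysis you flag as the crux (``gap immediately below $P_1$'') is also misplaced: when the gap sits strictly below $P_1$ and $P_1$ made a rightward step overhead, a rightward move of the gap is actually harmless for the (correct, weak) invariant, since it stays weakly below. The genuinely delicate case you do not address is when the gap of $P_2$ is \emph{on} a horizontal segment of $P_1$: it may ride along $P_1$ for several rightward steps (this really happens), and what must be shown is that whenever $P_1$'s next step out of the current column is downward, the $P_2$ gap is forced downward first, so it can never emerge strictly above $P_1$. That comparison uses the entry that $P_1$ pulled up from below (namely $T(i+1,j)$ sitting in $(i,j)$ after $P_1$) against the entry in $(i+1,j-1)$, and it is the row-monotonicity of the original $T$ in row $i+1$ that closes the argument—not the $2\times 2$ comparison you describe. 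So the overall strategy (induction along the second sliding path, with a positional invariant) is reasonable, but the invariant needs to be weakened to allow touching and the inductive step needs a separate argument for the on-path case before the proof goes through.
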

By symmetry, we can let $\alpha/\alpha'$ be a vertical rather than horizontal strip:
\begin{lem}\label{lem:buch2} Let $T$ be a standard skew tableau of shape $\alpha/\beta$ and let $\alpha/\alpha'$ be a skew shape with no two boxes in the same row. Perform jeu-de-taquin from bottom to top along the boxes of $\alpha/\alpha'$. Then two distinct sliding paths cannot cross each other. In particular, subsequent slides lie strictly below or weakly left of previous sliding paths. 
\end{lem}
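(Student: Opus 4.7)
The plan is to deduce Lemma \ref{lem:buch2} from Lemma \ref{lem:buch} by a transposition argument. The key observation is that jeu-de-taquin on a standard skew tableau commutes with transposition $T \mapsto T^t$: the slide rule calls for moving the smaller of the two neighbors (below or to the right of the current gap) into the gap, and this rule is symmetric in the two candidate directions. Since all labels in a standard tableau are distinct, no tie-breaking is ever required, so the transpose of each slide step is itself a valid slide step in the transposed tableau. Consequently, the sliding path of a slide on $T$ is the transpose of the sliding path of the corresponding slide on $T^t$.

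The setup of Lemma \ref{lem:buch2} transposes directly to the setup of Lemma \ref{lem:buch}: if $\alpha/\alpha'$ has no two boxes in the same row, then $\alpha^t/(\alpha')^t$ has no two boxes in the same column, and processing the boxes of $\alpha/\alpha'$ from bottom to top in $T$ corresponds under transposition to processing the boxes of $\alpha^t/(\alpha')^t$ from right to left in $T^t$ (since the bottommost box of a column becomes the rightmost box of the corresponding row). I will therefore apply Lemma \ref{lem:buch} to $T^t$ and $\alpha^t/(\alpha')^t$, obtaining that the sliding paths in $T^t$ are pairwise non-crossing and that each subsequent path stands in the stated geometric relation to the previous ones. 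Transposing back, non-crossing is manifestly preserved, and the geometric relation on paths transports under the map $(i,j) \mapsto (j,i)$ to the analogous relation in the vertical-strip setup of Lemma \ref{lem:buch2}.

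The only real care needed is in tracking the direction conventions across transposition, i.e., ensuring that row and column roles are swapped consistently in the ``weakly/strictly'' relations. This is routine bookkeeping rather than a genuine obstacle; once the commutativity of jeu-de-taquin with transposition on standard tableaux is in hand, the result follows immediately from Lemma \ref{lem:buch} applied to $T^t$.
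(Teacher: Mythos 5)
Your approach is the paper's: the paper proves this lemma with the single remark ``by symmetry,'' and your justification --- that jeu-de-taquin on a standard tableau commutes with transposition because the entries are distinct and so no ties arise --- is exactly the right way to make that symmetry precise. However, the ``routine bookkeeping'' you wave away is where the actual content is, and carrying it out reveals a discrepancy with the lemma as printed. Under $(i,j)\mapsto(j,i)$, ``weakly below'' becomes ``weakly to the right'' and ``strictly left'' becomes ``strictly above,'' so the transposed conclusion of Lemma~\ref{lem:buch} reads: subsequent slides lie weakly to the right or strictly above previous sliding paths. The conclusion of Lemma~\ref{lem:buch2} as stated says ``strictly below or weakly left,'' which is the opposite direction and is in fact false as written. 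In a bottom-to-top pass over a vertical strip, each later slide starts strictly above and weakly to the right of the previous one, and since gap paths move southeast and cannot cross, later paths stay weakly to the northeast of earlier ones, not the southwest; a two-box vertical strip already produces a counterexample to the printed phrasing. Note also that the paper's own application of Lemma~\ref{lem:buch2} in the proof of Lemma~\ref{lem:abelowb} --- ``if at any point a sliding path crosses above $a$ without including $a$, no further paths touch $a$'' --- relies on later paths staying above, i.e., on the northeast version. So your method is correct and matches the paper, but you should carry the directions through explicitly rather than asserting the transported relation coincides with what is printed; doing so would have led you to flag and correct the misstated conclusion.
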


The \emph{rectification} of a skew tableau is the straight tableau obtained by repeatedly playing jeu-de-taquin until there are no more inner corners. It is a foundational theorem that the rectification of a skew tableau does not depend on the choice of order of inner corners. 
\begin{eg} 
 Consider the skew tableau
  \ytableausetup{mathmode, boxsize=\boxsize}
 \[\begin{ytableau}
$ $ & $ $ & 1  & 4 \\
$ $ & 2 & 3 \\
5\\
\end{ytableau}.\]
Playing jeu-de-taquin at the top corner gives:
 \[\begin{ytableau}
$ $ & 1 & 3  & 4 \\
$ $ & 2  \\
5\\
\end{ytableau}.\]
The sliding path (in yellow) of lower corner is:
 \[
 \begin{ytableau}
$ $ &  $ $ &  $ $  & $ $ \\
*(green) $ $ &*(yellow) $ $ &*(yellow) $ $ \\
$ $\\
\end{ytableau}.\]
The rectification is
\[\begin{ytableau}
 1 & 3  & 4 \\
 2  \\
5\\
\end{ytableau}.\]
 
 \end{eg}

\subsection{Column Insertion and the Robinson–Schensted correspondence}
We will need the (dual) Robinson–Schensted correspondence between permutations and pairs of standard tableau of the same shape. 
\begin{mydef}[Column Insertion] Let $T$ be a tableau, and $i$ an integer that does not appear in $T$. Then the \emph{column insertion of $i$ in $T$} is obtained by the following steps:
\begin{enumerate}
\item Let $k$ be the smallest integer in the first column of $T$ such that $k>i$. Replace the label $k$ with $i$.
\item Repeat the step above with $k$ and the second column of $T$. 
\item Continue until you have reached the last column.
\end{enumerate}
Given a word $w$ (that is, a sequence of positive integers), the column insertion of $w$ is the tableau obtained by column-inserting consecutively the elements of $w$.
\end{mydef}
Because of our conventions, we  use column insertion to define the RS correspondence. 
Given a permutation $w$, the RS correspondence defines a pair of tableau 
\[\RS(w)=(T_1,T_2)\]
obtained by column inserting $w$, and keeping track of where new boxes are added. The first tableau $T_1$ is the tableau obtained by column insertion of $w$, and the second tableau $T_2$ is the recording tableau. The correspondence $\RS$ is a bijection; given a pair of standard tableau of the same shape 
\[(T_1,T_2),\]
one computes $\RS^{-1}$ by reverse column inserting labels of $T_1$, following the order given by the labels of $T_2$, from largest to smallest. If one removes the labels (in order) $a_n,a_{n-1},..,a_1$, then the word is $(a_1,\dots,a_n)$. 

\begin{prop}\label{prop:transposition}\cite[A.1.2.11]{stanley} Let $w$ be a permutation. Then 
\[\RS^{-1}(\RS(w)_2,\RS(w)_1)=w^{-1}.\]
\end{prop}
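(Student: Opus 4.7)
The plan is to derive the identity from the manifestly symmetric description of the RS correspondence via Fomin's growth diagrams. First I would associate to $w \in S_n$ its permutation matrix, placing an $\times$ at each cell $(i, w(i))$ of an $n \times n$ grid. Using the standard local growth rules, one attaches a partition $\lambda_{i,j}$ to every lattice corner so that the chain of partitions along the top edge recovers the recording tableau $T_2 = \RS(w)_2$, and the chain along the right edge recovers the insertion tableau $T_1 = \RS(w)_1$. The inverse map $\RS^{-1}$ is recovered by reading off the positions of the $\times$'s from such a diagram, and before proceeding I would verify that this growth-diagram setup agrees with our column-insertion convention for $\RS$.

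Once the growth-diagram description is in place, the identity is essentially automatic. The key observation is that the local growth rules are invariant under reflection of the grid across its main diagonal. Reflecting the permutation matrix of $w$ produces the permutation matrix of $w^{-1}$, while simultaneously exchanging the top edge with the right edge of the growth diagram. Consequently, the insertion and recording tableaux switch roles: $\RS(w^{-1}) = (T_2, T_1)$. Applying $\RS^{-1}$ to both sides then gives the desired identity $\RS^{-1}(T_2, T_1) = w^{-1}$.

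The step I expect to be the main obstacle is the calibration that the growth rules correspond precisely to column insertion, since most textbook accounts set up growth diagrams with row insertion in mind, and one must also be careful that the recording convention (with $\RS^{-1}$ removing labels from largest to smallest) is consistent with reading the chain of shapes along the appropriate edge. A backup plan that circumvents this calibration would be a direct induction on $|w|$: I would track what happens when $w$ is built up one letter at a time, showing that the column-bumping path and recorded cell produced by appending $w_n$ correspond bijectively, on the side of $w^{-1}$, to reverse-column-inserting the label $n$ from the recording tableau of the previous step. Unwinding this correspondence inductively matches exactly the procedure by which $\RS^{-1}(T_2, T_1)$ is computed, and yields $w^{-1}$ as required.
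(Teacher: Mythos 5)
The paper does not supply its own proof of this statement: it simply cites Stanley \cite[A.1.2.11]{stanley}, where the symmetry theorem for the Robinson--Schensted correspondence is established. Your growth-diagram argument is a correct and standard route to that result (going back to Fomin). The key soundness checks are all in order: transposing the permutation matrix across the main diagonal does yield the matrix of $w^{-1}$, the local growth rules are symmetric under the reflection, and the reflection exchanges the two output boundary chains, which forces $\RS(w^{-1}) = (T_2, T_1)$ and hence $\RS^{-1}(T_2, T_1) = w^{-1}$. You also correctly flag the one place needing real care: the textbook local rules are calibrated to \emph{row} insertion (an $\times$ in a square with equal surrounding partitions adds a box to the first row), whereas the paper uses \emph{column} insertion. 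One either recalibrates to the transposed local rules (which are still symmetric under reflection, so the argument goes through unchanged) or relates the two insertion procedures directly. Your backup plan -- induction on word length, matching the column-bumping path created by appending $w_n$ with the reverse column insertion of the label $n$ on the $w^{-1}$ side -- is also a valid and somewhat more elementary approach, essentially the Viennot/Schützenberger-style argument adapted to column insertion. Either route would serve as a self-contained replacement for the citation.
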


Now $T$ be a standard skew tableau. The reading word of $T$, $w_T$, is defined by taking the labels of T, from top to bottom and right to left. We set
 \[\theta(T)=\RS(w_T),\]
 i.e. $\theta$ is the composition of $\RS$ with the operation of taking the reading word of a skew tableaux. For a fixed skew shape $\alpha/\beta$, we often use the bijection of $\theta$ restricted to $\{T: \sh(T)=\alpha/\beta\}$ onto its image. 
 \begin{eg} 
 As before, consider the skew tableau
 \[\begin{ytableau}
\none & \none & 1  & 4 \\
\none & 2 & 3 \\
5\\
\end{ytableau}.\]
The reading word is $41325$. Applying RS gives the pair
\[ \begin{ytableau}
1 &3& 4 \\
2\\ 
5\\
\end{ytableau}, \hspace{5mm}
\begin{ytableau}
1 &2& 4 \\
3\\ 
5\\
\end{ytableau}.
\]

 \end{eg}

\begin{lem}\label{lem:abelowb} Let $T$ be a standard skew tableau of shape $\alpha/\beta$, and $\beta/\gamma$ a skew shape with no two boxes appearing in the same row. Let $\tilde{T}$ be the (possibly skew) shape obtained by doing jeu-de-taquin on $T$ on the boxes of $\beta/\gamma$, starting with the bottom box. Suppose $a>b$ are two labels in $T$, such that $a$ appears strictly to the left of $b$, and $b$ appears in row $j$ of $T$. Then $a$ appears in row strictly lower than $j$ in $\tilde{T}$. 
\end{lem}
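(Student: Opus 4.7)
The plan is to first observe that the hypotheses already force $a$ to occupy a row strictly below $j$ in $T$, and then to show by induction on the number of slides that $a$ remains below row $j$ throughout.

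For the first part, suppose for contradiction that $a$ sits in row $i_a \le j$. The case $i_a = j$ is impossible because row $j$ is strictly increasing and $a > b$ with $c_a < c_b$. If $i_a < j$, examine the cell $(i_a, c_b)$. Since $c_b \le \alpha_{i_b} \le \alpha_{i_a}$, this cell lies in $\alpha$. If it is labeled, then its value exceeds $a$ (same row, right of $a$) and is less than $b$ (same column, above $b$), forcing $a < b$. Otherwise $(i_a, c_b) \in \beta$; since $\beta$ is a partition this forces $(i_a, c_a) \in \beta$, contradicting that $a$ sits at $(i_a, c_a)$.

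For the inductive step, I track $a$'s position through the slides in the given bottom-to-top order on $\beta/\gamma$. Let $(p_i, q_i)$ denote the $i$-th inner corner; since $\beta/\gamma$ is a vertical strip, $q_i = \beta_{p_i}$ and distinct slides use distinct rows. The gap of the $i$-th slide travels only down and to the right from $(p_i, q_i)$, so it lives in rows $\ge p_i$, and any label moving up during this slide ends in a row $\ge p_i$. If $p_i > j$, the invariant is preserved immediately. Suppose instead $p_i \le j$ and, for contradiction, that during this slide $a$ moves from $(j{+}1, s)$ into $(j, s)$. Then $q_i \le s \le c_a < c_b$ (the upper bound because $a$'s column only weakly decreases). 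The key step is to show that the other neighbor of the gap, $(j, s+1)$, is labeled by some $y$ with $y < a$; then $y$ rather than $a$ would enter the gap, contradicting the assumption. When $b$ still sits in row $j$, standardness of the intermediate tableau places $y$ strictly to the left of $b$'s current column (which is still at most $c_b$ since $b$ never moves rightward), giving $y < b < a$ by row-strict increase.

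The main obstacle is handling the remaining situations: when $(j, s+1)$ is unlabeled in the current tableau (because it lies in $\gamma$, is the not-yet-processed row-$j$ box of $\beta/\gamma$, or was removed as an outer corner by an earlier slide), and when $b$ has already migrated up so the direct comparison above does not apply. Here I expect to use that $\beta/\gamma$ is a vertical strip (so row $j$ contributes at most one inner corner overall) together with Lemma~\ref{lem:buch2} on the non-crossing of earlier (lower) sliding paths to restrict where those paths could have truncated row $j$ or brought in labels from below. A careful accounting of $b$'s trajectory — noting that whenever $b$ moves up it does so by passing through the gap of some prior slide with $p_{i'} \le j$ — then supplies, by induction on slide index, a label at $(j, s+1)$ satisfying $y < a$, which closes the induction.
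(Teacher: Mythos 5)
The first half of your proof is sound: the argument that $a$ must start strictly below row $j$ in $T$ (via examining the cell $(i_a,c_b)$) is correct, even if the paper leaves this as an implicit consequence of standardness. But this is the easy half, and your inductive step has a genuine, self-acknowledged gap.

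You isolate the one move that could violate the invariant (gap at $(j,s)$, $a$ at $(j+1,s)$) and handle it only when (i) $(j,s+1)$ is labeled and (ii) $b$ still sits in row $j$. You then write ``The main obstacle is handling the remaining situations \dots Here I expect to use \dots A careful accounting of $b$'s trajectory \dots then supplies \dots.'' That is a sketch of a plan, not a proof, and the remaining situations are exactly where the lemma is nontrivial. Two concrete issues: (1) you do not rule out that $(j,s+1)$ was removed as an exit box by an earlier slide — this is the ``second case'' in the paper's own argument, which requires a separate, genuine step; and (2) even in your ``easy case,'' you only rule out $b$ moving up, not $b$ moving far enough left that its current column is $\le s$, in which case the inequality $y<b$ fails and the comparison collapses. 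Without these, the induction does not close.

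The paper avoids this bookkeeping with a more global argument: it first observes (using the non-crossing of bottom-to-top sliding paths from Lemma~\ref{lem:buch2}) that once a sliding path passes above $a$ without touching $a$ — in particular the path that moves $b$ — no later path touches $a$ again, so $a$'s position is frozen. It then reduces to checking two local hole configurations and uses the reverse-lexicographic position of the label to the right of the hole to rule out $a$ rising to row $j$. This frees it from having to track $b$'s exact cell or analyze when $(j,s+1)$ has been evacuated. To complete your proof along your own lines, you would need to (a) show that if $(j,s+1)$ is not labeled then $a$ cannot in fact slide into $(j,s)$ (which requires the non-crossing lemma applied to the earlier slide that would have hollowed out $(j,s+1)$), and (b) handle the case of $b$ migrating up or left by a genuine induction, not a gesture. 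As written, the argument is incomplete.
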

\begin{proof}

Note that the sliding paths of boxes from $\beta/\gamma$, taken from bottom to top, cannot cross each other, by Lemma \ref{lem:buch2}. Note that if at any point a sliding path crosses above $a$ without including $a$, no further paths touch $a$ and so it does not move any more. This includes any path that moves $b$.

 Each jeu-de-taquin move has no effect on $a$ until we get to the first box of $\beta/\gamma$ in the same row of $a$ or higher. At this point, $a$ moves left or not at all. 
As we continue the jeu-de-taquin moves, the only way $a$ can move up is if there is hole (in blue) in one of the following arrangements:
\[\begin{tikzpicture}[scale=0.7]
\tiny
\draw[fill,blue] (0,0)--(1,0)--(1,-1)--(0,-1)--(0,0);
\draw[fill,green] (0,-1)--(1,-1)--(1,-2)--(0,-2)--(0,-1);
\draw[fill,green] (1,0)--(2,0)--(2,-1)--(1,-1)--(1,0);
%\draw[fill,green] (1,4)--(2,4)--(2,2)--(1,2)--(1,4);
\node (A) at (0.5,-1.5) {$a$};
\node (B) at (1.5,-0.5) {$c$};
%\node (C) at (1.5,3.5) {$b$};
%\node (D) at (1.5,2.5) {$b_1$};
%\node (E) at (1.5,1.5) {$\vdots$};
\end{tikzpicture}
\hspace{10mm}
\begin{tikzpicture}[scale=0.7]
\tiny
\draw[fill,blue] (0,0)--(1,0)--(1,-1)--(0,-1)--(0,0);
\draw[fill,green] (0,-1)--(1,-1)--(1,-2)--(0,-2)--(0,-1);
%\draw[fill,green] (1,0)--(2,0)--(2,-1)--(1,-1)--(1,0);
\node (A) at (0.5,-1.5) {$a$};
%\node (B) at (1.5,-0.5) {$c$};
\end{tikzpicture}.\]
In the first case, $a$ only moves up if $a<c$. Since $b$ is strictly to the right of $a$, $c$ must be weakly to the left of $b$. And since $c>b$, $c$ must appear in a row lower than $b$. So after this move, $a$ remains in a row strictly below $b$.

In the second case, by definition of skew shapes, $b$ must be strictly above the row in which tfhe gap appears, otherwise we are in the first case. We conclude that $a$ never moves above the row of $b$. 
\end{proof}
We will need a special case of this lemma. 
\begin{cor} \label{cor:abelowb} Let $T$ be a standard skew tableau of shape $\alpha/\beta$, and $\beta/\gamma$ a skew shape with no two boxes appearing in the same row. Let $\tilde{T}$ be the (possibly skew) shape obtained by doing jeu-de-taquin on $T$ on the boxes of $\beta/\gamma$, starting with the bottom box. Suppose $a$ and $a+1$ are two labels in $T$, such that $a$ appears strictly to the left and below $a+1$, and $a+1$ appears in row $j$ of $T$. Then $a$ appears in row $j$ or lower in $\tilde{T}$. 
\end{cor}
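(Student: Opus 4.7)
The plan is to mimic the case analysis in the proof of Lemma \ref{lem:abelowb}, with the observation that our situation is slightly weaker: now $a$ is the \emph{smaller} of the two comparable labels (instead of the larger), and it lies to the left of $a+1$. Correspondingly, the conclusion is only that $a$ ends in row $\geq j$ rather than strictly $> j$. Intuitively, since $a$ starts in row $\geq j+1$ and moves up by at most one row in any single slide, what we need to prevent is $a$ ever being pushed from row $j$ to row $j-1$.

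First, I would recall from the proof of Lemma \ref{lem:abelowb} that $a$ moves up in a given slide only when the gap sits directly above $a$, and moreover only when either (i) there is no box immediately to the right of the gap, or (ii) the box $c$ to the right of the gap satisfies $c > a$. In case (ii), the column-strict condition forces $c \ge a+1$. Meanwhile, throughout the slides, the paths traced by the successive inner corners of $\beta/\gamma$ cannot cross, by Lemma \ref{lem:buch2}, and each such path moves weakly left and strictly down (or leaves the tableau).

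Next, I would track the position of $a+1$ in parallel. Initially $a+1$ is in row $j$, strictly to the right of $a$; under the slides it can only move up or to the left. I would argue that $a+1$ remains in row $\leq j$ and weakly to the left of its original column throughout the process. Once $a$ reaches row $j$ (if it ever does), the position of $a+1$ combined with the standard-tableau property blocks the two enabling configurations above: in case (i) the right edge of the current row $j-1$ above $a$ cannot be the tableau boundary without contradicting the surviving position of $a+1$, and in case (ii) the label $c \geq a+1$ to the right of the gap would have to coexist with $a+1$ in a way ruled out by the non-crossing of paths.

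The main obstacle I expect is the careful bookkeeping: one must simultaneously track $a$ and $a+1$ (and potentially the column of the active gap) through a sequence of slides and rule out, at each step, the precise geometric configuration that would push $a$ above row $j$. The underlying reasoning is essentially the same style of local analysis as in Lemma \ref{lem:abelowb}, so the corollary should follow without introducing new ideas, but writing it out rigorously requires enumerating a handful of subcases analogous to the two-picture analysis in that proof.
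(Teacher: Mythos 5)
Your proposal takes a genuinely different route from the paper, and the difference matters: the paper's proof of the corollary is not a re-run of the local case analysis but a short \emph{reduction} to Lemma~\ref{lem:abelowb}. Concretely, let $b$ be the entry directly above $a+1$ in $T$, if one exists. Since $T$ is standard, $b < a+1$; since $a$ is strictly to the left of $a+1$ (hence of $b$) while $b$ sits in the column of $a+1$, we have $b \ne a$ and therefore $b < a$. Now $a > b$ with $a$ strictly to the left of $b$ and $b$ in row $j-1$, so Lemma~\ref{lem:abelowb} applies verbatim to the pair $(a,b)$ and gives that $a$ ends strictly below row $j-1$, i.e.\ in row $\geq j$. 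If no such $b$ exists, the cell above $a+1$ is either a box of $\gamma$ (so $a$, which never moves right, can never enter row $j-1$ at all) or a box of $\beta/\gamma$ (in which case the adjacent hole of the vertical strip to its left sits in a strictly lower row, and $a$ cannot climb past that row). The key idea you are missing is precisely this: look at the box \emph{above} $a+1$ and feed it to the lemma, rather than tracking $a+1$ itself.

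As written, your sketch has concrete gaps. First, the claim that each sliding path ``moves weakly left and strictly down'' is backwards: for inner-corner jeu-de-taquin the gap trail moves weakly \emph{right} and weakly down (boxes move up or left). Second, the ``blocking'' argument is not actually carried out, and both cases are harder than you suggest: in case~(i) you cannot conclude that the absence of a box to the right of the gap contradicts the ``surviving position of $a+1$,'' because $a+1$ may have moved up or left by then, and the outer boundary of the tableau shrinks by one cell with each completed slide, so ``right edge of row $j-1$'' is not a static object. In case~(ii), ``ruled out by the non-crossing of paths'' is asserted but not justified; one would have to pin down where $a+1$ is at the moment $a$ sits in row $j$, which your sketch never does. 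None of this is fatal to the underlying intuition, but turning it into a proof would require substantially more bookkeeping than the paper's one-line reduction, and the reduction is the idea to adopt.
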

\begin{proof} Since $T$ is standard, the label $b$ above $a+1$, if it exists, satisfies $a>b$, so we can apply the lemma. Otherwise, $a+1$ either has a hole above it, or a box of $\gamma$. In the second case, it is clear that $a$ can never move above this row. So assume $a+1$ has a hole (i.e. a box of $\beta/\gamma$) above it. Then the next hole to the left of this one, is in a strictly lower row, and $a$ can never move higher than this row. 
\end{proof}

\subsection{The Remmel--Whitney Rule} \label{sec:rw}
The Remmel--Whitney rule gives a method for computing Littlewood--Richardson coefficients in terms of a tableau count. We review it here, give a generalization, and prove a proposition about its behavior under jeu-de-taquin.
\begin{mydef} \label{def:rw} For a skew shape $\alpha/\beta$,  define the  \emph{reverse lexicographic filling} of the skew shape $\alpha/\beta$, denoted by  $\rl(\alpha/\beta)$, to be the tableau obtained  by entering  
numbers $1,\dots, |\alpha|-|\beta|$   from right to left along rows and from  top  to  bottom  into  the Young diagram of $\alpha/\beta$. Let $\gamma/\delta$ be another skew shape; the \emph{Remmel--Whitney set} $\RW_{\gamma/\delta}(\alpha/\beta)$ is defined to be the set of all standard skew tableau $T$ of shape $\gamma/\delta$ satisfying the following rules:
 \begin{itemize}
 \item If $i+1$ and $i$ appear in the same row of  $\rl(\alpha/\beta)$, then in the tableau $T$, $i+1$ is strictly to the right and weakly above $i$ in $T$. 
 \item If $i$ is directly above $j$ in the same column of   $\rl(\alpha/\beta)$, then in the tableau $T$, $j$ is strictly below and weakly to the left of $i$. 
  \end{itemize} 
 \end{mydef}

\begin{eg}
For $\alpha=(3,2)$ and $\beta=(2)$,  the reverse lexicographic filling of $\alpha/\beta$ is
 
 \[\rl\left(\yng(3,2)/\yng(2)\right)% =\ytableausetup{mathmode, boxsize=1.5em}
=\begin{ytableau}
\none & \none & 1 \\
3 & 2\\
\end{ytableau}.\]
For $\gamma=(3,2,1)$ and $\delta=(2,1)$ the elements of $\RW_{\gamma/\delta}(\alpha/\beta)$
%$\RW_{\yng(3,2,1)/\yng(2,1)}(\yng(3,2)/\yng(2))$ 
are 
\[ \begin{ytableau}
\none & \none & 1 \\
\none & 3\\
2
\end{ytableau} \hspace{5mm}
 \begin{ytableau}
\none & \none & 3 \\
\none & 1\\
2
\end{ytableau} \hspace{5mm}
 \begin{ytableau}
\none & \none & 3\\
\none & 2\\
1
\end{ytableau}.
\]
 \end{eg}

The set $\RW_\gamma(\alpha,\beta)$ for a straight shape $\gamma=\gamma/\emptyset$  was introduced in \cite{remmel} to obtain 
 the skew Schur function  and a formula for the Littlewood-Richardson coefficients 
 \[s_{\alpha/\beta}=\sum_{T \in \RW(\alpha/\beta)} s_{\sh(T)} \, \,\text{ and} \, \, |\RW_{\gamma}(\alpha/\beta)| = c^\alpha_{\beta,\gamma} \]
 where $\sh(T)$ is the shape of the tableau $T$. Defining the set $\RW(\alpha/\beta)$ as the union of all $\RW_\gamma(\alpha/\beta)$ over straight shapes $\gamma$, we have
\[\RW_{\gamma}(\alpha/\beta):=\{T\in \RW(\alpha/\beta) : \sh(T) = \gamma \},\]
An explicit bijection $\theta$  given in \cite{remmel}, due to \cite{white}, showed that the sets 
\[\{T :\sh(T)=\alpha/\beta,\Rect(T)=T_\gamma\}\]
and $ \RW_\gamma(\alpha/\beta)$ are both of size $c^\gamma_{\alpha \beta}$. Here,  $\theta(T)=(T_1,T_2)$ is defined with $T_2 \in \RW_{\gamma}(\beta/\alpha)$, and $T_1=T_\gamma.$ Since $\theta$ is bijective on these sets, if we fix a skew shape $\alpha/\beta$, and tableau $T_\gamma$, we can define
\[\theta^{-1}:  \{T :\sh(T)=\alpha/\beta,\Rect(T)=T_\gamma\} \to \RW_\gamma(\alpha/\beta);\]
where $\theta^{-1}(T)$ is obtained by taking the word $w=\RS^{-1}(T_\gamma,T)$ and inserting it into the skew shape $\alpha/\beta.$

We now prove a series of straightforward results about tableaux that we did not find readily in the literature that we need at different points in the proofs of the main results. 

\begin{prop}\label{prop:jdt} Let $T$ and $T'$ be two jeu-de-taquin equivalent standard skew tableaux. Then $T \in \RW_{\sh(T)}(\alpha/
\beta)$ if and only if $T' \in \RW_{\sh(T')}(\alpha/\beta).$
\end{prop}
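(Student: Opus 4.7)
My plan is to reduce to the case where $T'$ is obtained from $T$ by a single jeu-de-taquin slide and then check that each of the two rules in Definition \ref{def:rw} is preserved by such a slide. Fix an inner corner $c_0$ of $T$ and denote the resulting slide path by $c_0, c_1, \ldots, c_\ell$; the labels $a_1, \ldots, a_\ell$ that migrate along this path (each $a_i$ sits at $c_i$ in $T$ and at $c_{i-1}$ in $T'$) satisfy $a_1 < a_2 < \cdots < a_\ell$, because at each step the gap moves to the smaller of its two neighbours and $T$ is standard. This monotonicity is what gives us control over how pairs of labels can be relocated by the slide.

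For the first rule (labels $i, i+1$ in the same row of $\rl(\alpha/\beta)$), I would first observe that in any standard skew tableau the assertion ``$i+1$ is strictly right and weakly above $i$'' is equivalent to ``$i$ is not a descent of $T$'': once $i+1$ lies in a row weakly above $i$, standardness forces it strictly to the right. So this rule is really a condition on the descent set of $T$. A direct case analysis on whether $i$, $i+1$, both, or neither of them lie on the slide path shows that the descent set is preserved under one slide: in each case, any configuration that would change the descent status at $i$ forces an integer strictly between $i$ and $i+1$ to appear somewhere in $T$, which is impossible.

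For the second rule (labels $i$ directly above $j$ in the same column of $\rl(\alpha/\beta)$), I would run a parallel case analysis for the pair $(i,j)$. The subtle case is when both $i = a_p$ and $j = a_q$ lie on the slide path. Monotonicity of the path, together with the hypothesis that $j$ is weakly left of $i$ in $T$, forces $c_p$ and $c_q$ to lie in the same column and the sub-path between them to descend strictly downward. The only way the slide could then break the column rule would be for the step into $c_p$ to be a right-step, and that possibility is ruled out by combining the chained form of the first rule applied to the intermediate labels $i+1,\ldots,j-1$ (which sit in the leftward portion of the row of $i$ in $\rl(\alpha/\beta)$ and the rightward portion of the row of $j$) with the row- and column-strict inequalities in $T$: the resulting chain produces an integer strictly between two consecutive labels, a contradiction. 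The cases where only one of $i$, $j$ lies on the path, or neither, are handled by the same strategy, using the chained first rule to force the positions of $i+1,\ldots,j-1$ and thereby preclude any configuration that would break the column rule after the slide.

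The main obstacle is the bookkeeping in the second-rule case analysis: the chained first-rule inequalities span two different rows of $\rl(\alpha/\beta)$, and they must be combined with the geometry of the slide path and the strict monotonicity of rows and columns in $T$ to exhaust every configuration in which the slide could violate the column rule.
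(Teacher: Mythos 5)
Your top-level strategy matches the paper's: reduce to a single jeu-de-taquin slide and check that each of the two Remmel--Whitney rules survives one slide. For the first rule your reformulation in terms of descents, together with the jeu-de-taquin-invariance of the descent set, is correct and a cleaner way to package what the paper does by a direct local case check (the invariance fact is itself established by essentially that same local check, so this is a repackaging rather than a genuinely shorter path).

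For the second rule, however, your argument has a real gap. You claim that the chained first rule applied to the intermediate labels $i+1,\dots,j-1$, combined with row- and column-strictness in $T$, rules out a right-step into $c_p$. Write $k$ for the label sitting directly below-left of $i$ (the label whose comparison with $i$ decides whether $i$ slides left), so that $i<k<j$. The chain argument gives $k>s$ (where $s$ is the leftmost entry of $i$'s row in $\rl(\alpha/\beta)$) and, when $j$ lies more than one row below $i$ in $T$, the chain $s+1,\dots,j$ also forces $k$ into a row at least as low as $j$'s, which is a contradiction. But in the boundary case where $j$ lies exactly one row below $i$ in $T$, the chain and standardness only pin down $k=j-1$ and produce no impossibility at all; no integer ``strictly between two consecutive labels'' materializes. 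The paper closes precisely this case by invoking the \emph{second} rule for the auxiliary pair $(t,k)$, where $t$ is the entry directly above $k$ in $\rl(\alpha/\beta)$: the second-rule constraint on $t$ (strictly above, weakly right of $k$) combined with the chained first-rule constraint (strictly left, weakly below $i$) corners $t$ into the gap cell, which is impossible. Your sketch never invokes the second rule for any auxiliary pair, so as written it does not complete this case; you would need to add that step.
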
 
\begin{proof} It suffices to show that if a skew tableau $T$ satisfies the Remmel--Whitney rules for a skew tableau $\alpha/\beta$, then after doing either an inner or outer corner slide, the new tableau also satisfies the rules. First, consider inner corner slides. While doing such a slide, boxes move up or left. 

Suppose $i$ and $i+1$ are two labels appearing in the same row in the reverse lexicographic filling of $\alpha/\beta.$  By assumption, $i+1$ appears to the right and weakly above $i$ in $T$. The only way a slide could break the $\RW$ rule is in the following configuration:
\[\ytableausetup{boxsize=\boxsize}
\begin{ytableau}
 \none & j \\
i & \scriptstyle  i+1 \\
\end{ytableau},\]
where $i$ is slid into the gap above.  Note that, as $T$ is standard, $j<i+1$, and $j \neq i$, so $j<i$. Therefore this slide cannot occur.

Now consider the case where $i$ appears directly above $j$ in the reverse lexicographic filling $\rl(\alpha/\beta).$  By assumption, $j$ appears weakly to the left and strictly below $i$ in $T$. The only way a slide could break the $\RW$ rule is in the following configuration:
\[\ytableausetup{boxsize=\boxsize}
\begin{ytableau}
 \none & i  \\
 k & \vdots \\
$ $ & j \\
\end{ytableau},\]
where $i$ is slid into the gap to the left. For this to occur, we must have $i<k<j.$ In $\rl(\alpha\beta)$ the labels look like:
\[\ytableausetup{boxsize=\boxsize}
\begin{ytableau}
 s & \cdots & i & \none & \none   \\
  \none & \none & j & \cdots &  \scriptstyle s+1 \\
\end{ytableau}.\]
Here $s$ is the label on the far left of row containing $i$. Since $i$ is strictly to the left of $i+1,\dots,s$, and $k$ is strictly left of $i$, we conclude that $j> k > s.$
Let $t$ be the label directly above $k$ in the reverse lexicographic filling $\rl(\alpha/\beta)$.
Then as $t$ is above and to the right of $k$, but also to the left and below of $i$, the only possibility is for it to be in the empty cell, which is impossible. 

The argument for slides involving outer corners is exactly analogous, so we omit it. 
\end{proof}

\begin{prop}\label{prop:LRRW} For skew shapes $\alpha_1/\alpha_2$ and $\beta_1/\beta_2$, let $\RW_{\beta_1/\alpha_1}(\alpha_2/\beta_2)$ be the Remmel--Whitney set given by Definition \ref{def:rw}. Then
\[ | \RW_{\beta_1/\alpha_1}(\alpha_2/\beta_2) | = \sum_{\gamma} c^{\beta_1}_{\gamma \alpha_1} c^{\alpha_2}_{\gamma \beta_2}.\]
\end{prop}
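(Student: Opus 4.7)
The plan is to stratify $\RW_{\beta_1/\alpha_1}(\alpha_2/\beta_2)$ according to the rectification of its elements. Proposition \ref{prop:jdt} tells us that jeu-de-taquin preserves membership in the Remmel--Whitney set governed by $\alpha_2/\beta_2$, so the rectification map
\[ R : \RW_{\beta_1/\alpha_1}(\alpha_2/\beta_2) \longrightarrow \bigsqcup_{\gamma} \RW_{\gamma}(\alpha_2/\beta_2), \qquad U \longmapsto \Rect(U), \]
is well-defined, with the target ranging over straight shapes $\gamma$. The classical Remmel--Whitney rule together with the symmetry $c^{\alpha_2}_{\beta_2,\gamma} = c^{\alpha_2}_{\gamma,\beta_2}$ identifies the $\gamma$-component of the target as a set of cardinality $c^{\alpha_2}_{\gamma,\beta_2}$.

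Next I would compute the fibers of $R$. Fix $\gamma$ and $V \in \RW_\gamma(\alpha_2/\beta_2)$. By Proposition \ref{prop:jdt} applied in the reverse direction, any standard skew tableau $U$ of shape $\beta_1/\alpha_1$ that rectifies to $V$ automatically belongs to $\RW_{\beta_1/\alpha_1}(\alpha_2/\beta_2)$, so the fiber is
\[ R^{-1}(V) = \{\, U : \sh(U) = \beta_1/\alpha_1,\ \Rect(U) = V \,\}. \]
By a standard enumeration in jeu-de-taquin theory, the size of this set depends only on $\gamma$ (not on the particular $V$ of shape $\gamma$), and equals the Littlewood--Richardson coefficient $c^{\beta_1}_{\gamma,\alpha_1}$. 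Summing fiber sizes over the target gives
\[ |\RW_{\beta_1/\alpha_1}(\alpha_2/\beta_2)| = \sum_{\gamma} |\RW_{\gamma}(\alpha_2/\beta_2)| \cdot c^{\beta_1}_{\gamma,\alpha_1} = \sum_{\gamma} c^{\alpha_2}_{\gamma,\beta_2} \cdot c^{\beta_1}_{\gamma,\alpha_1}, \]
as required.

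The main obstacle is really only the first step: ensuring that Proposition \ref{prop:jdt} can be used simultaneously as a forward statement (so that $\Rect(U)$ lies in a straight-shape Remmel--Whitney set whenever $U$ lies in the skew one) and as a backward statement (so that every skew preimage of such a $V$ lies back in $\RW_{\beta_1/\alpha_1}(\alpha_2/\beta_2)$). Since the proposition is phrased as a jeu-de-taquin invariance in both directions, both applications are immediate, and the remainder of the argument is routine tableau bookkeeping.
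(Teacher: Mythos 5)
Your proof is correct and follows essentially the same route as the paper's: both stratify $\RW_{\beta_1/\alpha_1}(\alpha_2/\beta_2)$ by rectification, use Proposition~\ref{prop:jdt} in both directions to identify the fiber over each $V\in\RW_\gamma(\alpha_2/\beta_2)$ with the full set of skew tableaux of shape $\beta_1/\alpha_1$ rectifying to $V$, and then invoke the classical counts $|\RW_\gamma(\alpha_2/\beta_2)|=c^{\alpha_2}_{\beta_2\gamma}$ and $|\{U:\sh(U)=\beta_1/\alpha_1,\Rect(U)=V\}|=c^{\beta_1}_{\alpha_1\gamma}$.
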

\begin{proof}
By Proposition \ref{prop:jdt}, every $T \in \RW_{\beta_1/\alpha_1}(\alpha_2/\beta_2)$ rectifies to some tableau in $\RW (\alpha_2/\beta_2)$. We can therefore partition this set into
\[\sqcup_{\gamma} \sqcup_{U \in \RW_\gamma(\alpha_2/\beta_2)} \{T \in \RW_{\beta_1/\alpha_1}(\alpha_2/\beta_2): \Rect(T) = U\}.\]
Since $\#\RW_\gamma(\alpha_2/\beta_2) = c^{\alpha_2}_{\beta_2 \gamma}$, it suffices to show that the set
\[\{T \in \RW_{\beta_1/\alpha_1}(\alpha_2/\beta_2): \Rect(T) = U\}\]
has size $c^{\beta_1}_{\alpha_1 \gamma}$. This set is clearly contained in the set 
\[\{T: \sh(T) =\beta_1/\alpha_1, \Rect(T) = U\}.\]
If this inclusion is actually an equality, we are done, as it is well-known that this second set has size $c^{\beta_1}_{\alpha_1 \gamma}$. But again by Proposition \ref{prop:jdt}, if $T$ is a skew tableau with $\sh(T) =\beta_1/\alpha_1, \Rect(T) = U$, then $T$ satisfies the $\RW$ rules, as $U$ does by assumption. 
\end{proof}

\begin{lem}
There is a natural bijection between $\RW_{\lambda/\mu}(\alpha/\beta)$ and the set $\RW(\alpha/\beta) \times  \RW(\lambda/\mu)$, given by applying $\theta$.  
\end{lem}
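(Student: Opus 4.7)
The plan is to define the map $\theta$ on $\RW_{\lambda/\mu}(\alpha/\beta)$ by $T \mapsto \RS(w_T) = (P,Q)$, and verify that the image consists of pairs $(P,Q)$ with $P \in \RW_\gamma(\alpha/\beta)$ and $Q \in \RW_\gamma(\lambda/\mu)$ sharing a common straight shape $\gamma$; this is the natural way to parse the codomain $\RW(\alpha/\beta) \times \RW(\lambda/\mu)$ compatibly with the cardinality formula of Proposition~\ref{prop:LRRW}, since $\theta$ always produces same-shape pairs under RSK.

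For the forward direction, I would take $T \in \RW_{\lambda/\mu}(\alpha/\beta)$ and set $\theta(T) = \RS(w_T) = (P,Q)$, where $P$ is the insertion tableau and $Q$ the recording tableau, both of some straight shape $\gamma$. A standard RSK fact gives $P = \Rect(T)$. Proposition~\ref{prop:jdt} then guarantees that $P$ still satisfies the Remmel--Whitney rules coming from $\rl(\alpha/\beta)$, so $P \in \RW_\gamma(\alpha/\beta)$. Meanwhile, the classical White--Remmel bijection recalled just before Proposition~\ref{prop:jdt} (applied to $T$ as a skew tableau of shape $\lambda/\mu$ rectifying to a shape-$\gamma$ tableau) identifies $Q$ as an element of $\RW_\gamma(\lambda/\mu)$.

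To construct the inverse, I would start from a compatible pair $(P,Q)$ with $\sh(P) = \sh(Q) = \gamma$, $P \in \RW_\gamma(\alpha/\beta)$, $Q \in \RW_\gamma(\lambda/\mu)$, form the word $w = \RS^{-1}(P,Q)$, and fill the skew diagram $\lambda/\mu$ with $w$ in the reading-word order to produce a standard skew tableau $T$ of shape $\lambda/\mu$ with $\Rect(T) = P$. Since $P$ satisfies the Remmel--Whitney conditions and $T$ is jeu-de-taquin equivalent to $P$, applying Proposition~\ref{prop:jdt} in the reverse direction places $T$ in $\RW_{\lambda/\mu}(\alpha/\beta)$. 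That these two constructions are mutually inverse follows directly from the bijectivity of RSK.

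I do not expect a substantive obstacle: the content is essentially packaged into Proposition~\ref{prop:jdt} (the Remmel--Whitney property is a jeu-de-taquin invariant) together with the White--Remmel refinement of RSK already in hand. The only point requiring care is the interpretation of the codomain as the set of pairs with matching straight shape, after which the lemma is essentially a $\theta$-theoretic reformulation of the decomposition of $|\RW_{\lambda/\mu}(\alpha/\beta)|$ in Proposition~\ref{prop:LRRW} as $\sum_\gamma c^\alpha_{\gamma\beta}\, c^\lambda_{\gamma\mu}$.
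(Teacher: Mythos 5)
Your proof is correct and agrees with the paper's on the forward direction, but the two diverge on surjectivity: the paper deduces it immediately from injectivity of $\theta$ together with the cardinality count of Proposition~\ref{prop:LRRW}, whereas you construct an explicit inverse $(P,Q)\mapsto\theta^{-1}(P,Q)$ and verify it lands in $\RW_{\lambda/\mu}(\alpha/\beta)$ via Proposition~\ref{prop:jdt}. The paper's route is shorter; yours is more constructive and does not lean on the cardinality identity, but it quietly uses the inverse direction of the White--Remmel bijection, namely that for any fixed standard tableau $P$ of shape $\gamma$ and any $Q\in\RW_\gamma(\lambda/\mu)$, inserting $\RS^{-1}(P,Q)$ into $\lambda/\mu$ in reading-word order produces a valid standard skew tableau rectifying to $P$. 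That is exactly the content of $\theta^{-1}$ as set up in \S\ref{sec:rw}, applied with $T_\gamma := P$, so it is available, but the step deserves to be flagged rather than absorbed into the phrase ``to produce a standard skew tableau.'' You are also right that the codomain $\RW(\alpha/\beta)\times\RW(\lambda/\mu)$ must be read as the fibered product over a common straight shape $\gamma$ (the image of $\theta$ always has $\sh(P)=\sh(Q)$, and the naive Cartesian product has the wrong cardinality $\bigl(\sum_\gamma c^\alpha_{\beta\gamma}\bigr)\bigl(\sum_{\gamma'} c^\lambda_{\mu\gamma'}\bigr)$ rather than $\sum_\gamma c^\alpha_{\beta\gamma}c^\lambda_{\mu\gamma}$); the paper leaves this implicit.
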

\begin{proof} Let $T \in \RW_{\lambda/\mu}(\alpha/\beta)$. Then $\theta(T)_1$ is the rectification of $T$, and hence jeu-de-taquin equivalent to $T$. So by Proposition \ref{prop:jdt}, $\theta(T)_1 \in \RW(\alpha/\beta)$. As already discussed, $\theta(T)_2 \in \RW(\lambda/\mu).$ So $\theta(T) \in \RW(\alpha,\beta) \times  \RW(\lambda/\mu)$ as desired. Since $\theta$ is injective, and these are finite sets with same cardinality by Proposition \ref{prop:LRRW}, it must be surjective as well. 
\end{proof}

\begin{cor}\label{cor:RWbij}
There is a natural bijection $\Psi: \RW_{\lambda/\mu}(\alpha/\beta) \to \RW_{\alpha/\beta}(\lambda/\mu)$, defined as follows. For
 $T \in \RW_{\lambda/\mu}(\alpha/\beta)$, 
 \[\Psi(T)=\theta^{-1}(\theta(T)_2,\theta(T)_1).\] 
\end{cor}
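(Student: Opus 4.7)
The plan is to deduce this corollary directly from the preceding lemma by swapping coordinates in the tensor factorization. First, I would note that the preceding lemma provides a bijection
\[\theta: \RW_{\lambda/\mu}(\alpha/\beta) \;\longrightarrow\; \bigsqcup_\nu \RW_\nu(\alpha/\beta) \times \RW_\nu(\lambda/\mu),\]
where $T$ is sent to $(\theta(T)_1,\theta(T)_2)$, a pair of standard tableaux of the common straight shape $\nu$; this uses both that $\theta(T)_1=\Rect(T)$ is jeu-de-taquin equivalent to $T$ (so Proposition \ref{prop:jdt} places it in $\RW_\nu(\alpha/\beta)$) and that the recording tableau $\theta(T)_2$ lies in $\RW_\nu(\lambda/\mu)$ by the White/Remmel--Whitney correspondence.

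Next, I would apply the same lemma with the roles of $\alpha/\beta$ and $\lambda/\mu$ exchanged, which gives a second bijection
\[\theta': \RW_{\alpha/\beta}(\lambda/\mu) \;\longrightarrow\; \bigsqcup_\nu \RW_\nu(\lambda/\mu) \times \RW_\nu(\alpha/\beta).\]
The coordinate swap $\sigma\colon (P,Q)\mapsto(Q,P)$ is plainly a bijection between the two indexed unions of pairs, since the common shape $\nu$ is preserved and the membership conditions $\RW_\nu(\alpha/\beta)$ and $\RW_\nu(\lambda/\mu)$ simply trade places. Composing the three bijections gives
\[\Psi \;:=\; (\theta')^{-1}\circ \sigma\circ \theta,\]
which is a bijection $\RW_{\lambda/\mu}(\alpha/\beta) \to \RW_{\alpha/\beta}(\lambda/\mu)$. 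Unwinding definitions, $\Psi(T)=(\theta')^{-1}\bigl(\theta(T)_2,\theta(T)_1\bigr)$, which matches the formula in the statement once one identifies the overloaded notation $\theta^{-1}$ with $(\theta')^{-1}$ (the inverse map whose image has skew shape $\alpha/\beta$ rather than $\lambda/\mu$).

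The only point that requires attention is keeping straight which skew shape is the ``output'' shape of the inverse map: since $\Psi(T)$ must have shape $\alpha/\beta$, it is the $\theta'$ version of the lemma's bijection (applied with $\lambda/\mu$ in the role previously played by $\alpha/\beta$) whose inverse we invoke. Concretely, one reads off the word $w=\RS^{-1}(\theta(T)_2,\theta(T)_1)$ and inserts it into the skew shape $\alpha/\beta$; the resulting tableau rectifies to $\theta(T)_2\in\RW_\nu(\lambda/\mu)$, and Proposition \ref{prop:jdt} then guarantees membership in $\RW_{\alpha/\beta}(\lambda/\mu)$. Because the entire argument is a formal composition of two instances of the preceding lemma with an obvious involution, there is no genuine combinatorial obstacle, only this bookkeeping of shapes.
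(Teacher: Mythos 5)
Your argument is correct and is essentially the same as the paper's: apply the preceding lemma in both directions, swap coordinates, and compose. The paper's proof is terser — it shows the formula lands in the right set and is injective, then invokes the equality of cardinalities (Proposition~\ref{prop:LRRW}) to conclude bijectivity — but this is the same route once one observes that the paper's $\theta^{-1}$ plays the role of your $(\theta')^{-1}$.

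One small thing worth noting: you write the lemma's target as the fibered product $\bigsqcup_\nu \RW_\nu(\alpha/\beta)\times\RW_\nu(\lambda/\mu)$ over common straight shapes $\nu$, rather than the full Cartesian product $\RW(\alpha/\beta)\times\RW(\lambda/\mu)$ appearing in the lemma statement. That is actually the precise version; $\theta$ always produces a pair of tableaux of the same shape, and the cardinality count $|\RW_{\lambda/\mu}(\alpha/\beta)|=\sum_\nu c^\alpha_{\beta\nu}c^\lambda_{\mu\nu}$ matches the fibered product, not the full product. Your formulation makes the coordinate-swap step, and the subsequent application of $\theta^{-1}$, visibly well defined, since $\theta^{-1}$ only accepts pairs of tableaux of the same shape. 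So your bookkeeping is not superfluous — it is exactly the point that makes the composition meaningful.
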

\begin{proof}
As above, we just need to show that the map given in the statement lands in the correct set, and is injective. Bijectivity will then follow from the cardinality of the sets. Let  $T \in \RW_{\lambda/\mu}(\alpha/\beta)$. Then $\theta(T) \in \RW(\alpha,\beta) \times  \RW(\lambda/\mu)$, as observed in the previous lemma. So $(\theta(T)_2,\theta(T)_1) \in  \RW(\lambda/\mu) \times \RW(\alpha,\beta)$, and applying $\theta^{-1}$ and the lemma above concludes the proof.
\end{proof}
\begin{rem} Using Proposition \ref{prop:transposition}, we can compute the bijection $\Psi$ by taken the inverse word of $w_T$ and using it to fill in the skew shape $\lambda/\mu.$
\end{rem}
\begin{mydef} Let $\alpha$ and $\beta$ be two partitions (or skew partitions). We define $\alpha*\beta$ to be the skew partition given by arranging $\alpha$ and $\beta$ as in the diagram below:
\[\begin{tikzpicture}[scale=0.4]
\draw (0,0)--(3,0)--(3,-1)--(2,-1)--(2,-3)--(1,-3)--(0,-3)--(0,0);
\draw (3,4)--(6,4)--(6,3)--(5,3)--(5,1)--(4,1)--(4,0)--(3,0)--(3,4);
\node (A) at (1,-1) {$\alpha$};
\node (B) at (4,2) {$\beta$};
\end{tikzpicture}\]
\end{mydef}

\begin{mydef} Let $T$ be a standard skew tableau of size $n$, and $1 \leq k \leq n$ an integer. Then the \emph{split at $k$} of $T$ is the pair of standard skew tableaux $T^{\leq k}$, $T^{>k}$ defined as follows: 
\begin{itemize}
\item The tableau $T^{\leq k}$ is the sub-tableau of $T$ restricted to the labels $\{1,\dots,k\}$. 
\item The tableau $T^{>k}$ is the sub-tableau $T$ restricted to the labels $\{k+1,\dots,n\}$, with every label shifted down by $k$. 
\end{itemize}
\end{mydef}

\begin{cor}\label{cor:starsplit} Given (possibly skew) partitions $\alpha$ and $\beta$, and a skew partition $\lambda/\mu$, there is a bijection
\[\RW_{\alpha*\beta}(\lambda/\mu) \to \cup_{\mu \subset \gamma \subset \lambda} \RW_{\alpha}(\lambda/\gamma) \times \RW_{\beta}(\gamma/\mu).\]
\end{cor}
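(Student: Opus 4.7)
The plan is to exhibit the bijection by splitting each tableau at $|\beta|$, leveraging the structural observation that in $\alpha*\beta$ the two subshapes occupy disjoint sets of rows and disjoint sets of columns. Concretely, I would first read off from the defining picture of $\alpha*\beta$ that $\rl(\alpha*\beta)$ assigns labels $1,\dots,|\beta|$ entirely to the $\beta$-piece (top-right) and labels $|\beta|+1,\dots,|\beta|+|\alpha|$ entirely to the $\alpha$-piece (bottom-left), with the critical feature that the bottom-leftmost cell of $\beta$ (carrying label $|\beta|$) and the top-rightmost cell of $\alpha$ (carrying label $|\beta|+1$) share neither a row nor a column. Consequently, no Remmel--Whitney constraint of Definition \ref{def:rw} bridges the two pieces, and the conditions defining $\RW_{\alpha*\beta}$ decouple into the conditions for $\rl(\beta)$ on the labels $\leq |\beta|$ together with (a shift of) the conditions for $\rl(\alpha)$ on the labels $>|\beta|$.

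With decoupling in hand, the forward map is just the split at $|\beta|$: for $T \in \RW_{\alpha*\beta}(\lambda/\mu)$, send $T \mapsto (T^{>|\beta|}, T^{\leq |\beta|})$. Standardness of $T$ forces the boxes with labels $\leq |\beta|$ to, together with $\mu$, assemble into a Young diagram $\gamma$ with $\mu \subseteq \gamma \subseteq \lambda$, so $\sh(T^{\leq |\beta|}) = \gamma/\mu$ and $\sh(T^{>|\beta|}) = \lambda/\gamma$, and the decoupling then places the pieces in $\RW_\beta(\gamma/\mu)$ and $\RW_\alpha(\lambda/\gamma)$ respectively. For the inverse, given a pair in the right-hand side, I would shift every label of the $\RW_\alpha$-piece up by $|\beta|$ and superimpose it with the $\RW_\beta$-piece inside $\lambda/\mu$. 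The result is standard because $\gamma$ is a Young diagram, so no box of $\lambda/\gamma$ lies weakly northwest of any box of $\gamma/\mu$; and it lies in $\RW_{\alpha*\beta}(\lambda/\mu)$ again by decoupling. The two maps are mutually inverse by construction.

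The main obstacle I foresee is the geometric verification underlying decoupling: namely, that the boundary cells labeled $|\beta|$ and $|\beta|+1$ of $\rl(\alpha*\beta)$ occupy distinct rows and distinct columns. This must be read carefully off the definition of $\alpha*\beta$ and confirmed to hold whether $\alpha$ and $\beta$ are straight or skew. Once that point is secure, the remaining verifications (standardness of the reassembled tableau, checking the $\RW$ rules piece-by-piece, and mutual inverseness) reduce to routine bookkeeping.
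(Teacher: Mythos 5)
Your decoupling argument is exactly the right idea and matches the paper's approach, but as written there is a real gap: you have silently interchanged $\RW_{\alpha*\beta}(\lambda/\mu)$ with $\RW_{\lambda/\mu}(\alpha*\beta)$. Per Definition \ref{def:rw}, an element $T \in \RW_{\alpha*\beta}(\lambda/\mu)$ is a tableau \emph{of shape} $\alpha*\beta$, with the $\RW$ constraints read off from $\rl(\lambda/\mu)$. Your analysis of $\rl(\alpha*\beta)$ (labels $1,\dots,|\beta|$ in the $\beta$-piece, no constraint bridging the pieces) is correct, but it governs the other set, $\RW_{\lambda/\mu}(\alpha*\beta)$, whose elements have shape $\lambda/\mu$. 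Likewise, your claim that standardness of $T$ forces the boxes with labels $\le |\beta|$ to assemble with $\mu$ into a partition $\gamma$ with $\mu \subseteq \gamma \subseteq \lambda$, so that $\sh(T^{\le|\beta|}) = \gamma/\mu$, only makes sense when $T$ has shape $\lambda/\mu$; for a tableau of shape $\alpha*\beta$ the split produces a sub-skew-shape of $\alpha*\beta$, not $\gamma/\mu$. And having (implicitly) gotten $T^{\le|\beta|} \in \RW_{\gamma/\mu}(\beta)$, you then place it in $\RW_\beta(\gamma/\mu)$, which again swaps the two roles.

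The repair is Corollary \ref{cor:RWbij}: one needs the bijection $\Psi \colon \RW_{\alpha*\beta}(\lambda/\mu) \to \RW_{\lambda/\mu}(\alpha*\beta)$ up front, then the split at $|\beta|$ (which is exactly your decoupling argument, and does land in $\RW_{\lambda/\gamma}(\alpha) \times \RW_{\gamma/\mu}(\beta)$), and then two more applications of $\Psi$ to convert each factor to $\RW_\alpha(\lambda/\gamma)$ and $\RW_\beta(\gamma/\mu)$. This is precisely the paper's proof. So the combinatorial core of your plan is sound and identical to the paper's, but the proposal omits the $\Psi$-conjugation entirely, which is not cosmetic: without it the split map is not even well-defined on the stated domain. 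A secondary remark: the ``main obstacle'' you flag (whether the cells labeled $|\beta|$ and $|\beta|+1$ in $\rl(\alpha*\beta)$ share a row or column) is settled immediately by the definition of $\alpha*\beta$, which places $\alpha$ and $\beta$ with disjoint rows and disjoint columns by construction, so no additional case analysis is needed there.
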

\begin{proof}
Note that we can apply the bijection $\RW_{\alpha*\beta}(\lambda/\mu) \to \RW_{\lambda/\mu}(\alpha*\beta).$ Since the labels in the reverse lexicographic filling of $\beta*\alpha$ from the $\beta$ part are exactly those $j \leq |\beta|$, and because the RW rules don't involve any labels from both $\alpha$ and $\beta,$ we see that 
\[\RW_{\lambda/\mu}(\alpha*\beta) \rightarrow \cup_{\mu \subset \gamma \subset \lambda} \RW_{\lambda/\gamma}(\alpha) \times \RW_{\gamma/\mu}(\beta),\]
\[T \mapsto (T^{>|\beta|}, T^{\leq |\beta|}),\]
is well-defined and a bijection. We then apply Corollary \ref{cor:RWbij} to complete the proof. 

\end{proof}

\begin{lem}\label{lem:split} Let $T$ be a standard skew tableau. Then $\Rect(T)^{\leq k}=\Rect(T^{\leq k})$ and 
\[\Rect(\Rect(T)^{> k})=\Rect(T^{> k}).\]
\end{lem}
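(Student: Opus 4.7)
The plan is to reduce both equalities to the classical identification of jeu-de-taquin equivalence with Knuth (plactic) equivalence on reading words, combined with the observation that Knuth equivalence is preserved under restricting to an interval of the alphabet.

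First I would make a simple observation about reading words: for any standard skew tableau $S$, the reading word $w_{S^{\leq k}}$ is exactly the subword of $w_S$ obtained by deleting all entries greater than $k$, and $w_{S^{>k}}$ is obtained from $w_S$ by keeping only entries greater than $k$ and then shifting each entry down by $k$. This is because the reading order (top-to-bottom, right-to-left) on the positions of labels $\{1,\ldots,k\}$ in $S$ agrees with the reading order on $S^{\leq k}$, since the underlying positions are the same (and analogously for $S^{>k}$ after the shift).

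Next I would verify (or invoke) that Knuth equivalence is stable under restriction to any interval of the alphabet: a short case check on the elementary Knuth relations $acb \equiv cab$ (for $a \leq b < c$) and $bac \equiv bca$ (for $a < b \leq c$) shows that, after deleting those of $a,b,c$ that fall outside the chosen interval, the two resulting restricted words agree on both sides of the relation. Since translating labels by a constant also preserves Knuth equivalence, combining the above with the fundamental theorem $w_T \equiv w_{\Rect(T)}$ gives
\[ w_{T^{\leq k}} \equiv w_{\Rect(T)^{\leq k}} \quad\text{and}\quad w_{T^{>k}} \equiv w_{\Rect(T)^{>k}}, \]
whence $\Rect(T^{\leq k}) = \Rect(\Rect(T)^{\leq k})$ and $\Rect(T^{>k}) = \Rect(\Rect(T)^{>k})$.

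To finish the first equality, I would note that because $\Rect(T)$ is a standard tableau of straight shape, the positions of its labels $\{1,\ldots,k\}$ form a sub-partition (they are adjoined to the diagram one outer corner at a time as the labels are placed in order), so $\Rect(T)^{\leq k}$ is already of straight shape and is therefore fixed by $\Rect$. The second equality is then just the displayed relation, with no further simplification possible because $\Rect(T)^{>k}$ is in general genuinely skew. The main subtle point is justifying the restriction-to-intervals property of Knuth equivalence carefully; everything else follows from standard properties of rectification.
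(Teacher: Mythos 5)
Your proof is correct, and it hinges on the same key fact as the paper's one-line proof, namely that $T^{\leq k}$ is jeu-de-taquin equivalent to $\Rect(T)^{\leq k}$ (and likewise for the $>k$ restriction). The paper simply asserts this as an observation, whereas you supply a justification via the plactic/Knuth characterization of jeu-de-taquin equivalence together with the (correct, easily checked) stability of Knuth equivalence under deleting entries outside an interval of the alphabet; this is a standard and perfectly valid way to fill in that step, so the two arguments are essentially the same with yours giving more detail.
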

\begin{proof} This follows from the observation that $T^{\leq k}$ is jeu-de-taquin equivalent to $\Rect(T)^{\leq k}$, and that the same holds for $\Rect(T)^{> k}$ and $\Rect(T^{> k})$.
\end{proof}

In the next two subsections, we introduce two operations, one called infusion due to \cite{thomas}, and one called diffusion.  
 \subsection{Infusion}
\begin{mydef}[Infusion] \label{defn:slide} Let $\alpha \subseteq \beta \subseteq \gamma$ be three partitions, and $T_1$ and $T_2$ standard skew tableaux of shape $\beta/\alpha$ and $\gamma/\beta$ respectively. To obtain the \emph{infusion} of $T_2$ past $T_1$, we do jeu de taquin on $T_2$ following the order of the labels of $T_1$ in reverse. This produces a tableau $\Slide_{T_1}(T_2)$, jeu-de-taquin equivalent to $T_2$. When sliding into the corner labeled $i$, we have removed an external corner from $T_2$, which we label $i$: this produces a tableau $ \Slide^{T_2}(T_1)$, the infusion of $T_1$ past $T_2$. 
\end{mydef}
As shown in \cite{thomas}, infusion is an involution. Note that this is shown for when $T_1$ is a straight tableau, however, the proof is the same if $T_1$ is skew as in our case. 
\begin{thm}[\cite{thomas}]\label{thm:slideprop} Let $\alpha \subset \beta \subset \gamma$ be three partitions, and $T_1$ and $T_2$ standard skew tableaux of shape $\beta/\alpha$ and $\gamma/\beta$ respectively. Then sliding the pair $(\Slide_{T_1}(T_2), \Slide^{T_2}(T_1))$ past each other returns the pair $(T_1,T_2)$. 
\end{thm}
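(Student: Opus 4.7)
I would prove this by induction on $|T_1|$, reducing the involution property to the local reversibility of a single jeu-de-taquin slide. The base case $|T_1| = 0$ is immediate: no slides are performed in either direction, so forward infusion yields $(T_2, \emptyset)$ and reverse infusion on this pair (which has no labels in its first component to order slides) trivially returns $(\emptyset, T_2) = (T_1, T_2)$.

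For the inductive step, let $k = |T_1| \geq 1$ and let $c$ be the cell of $T_1$ labeled $k$. Since $T_1$ is standard, $c$ is an outer corner of $\beta/\alpha$ and hence an inner corner available for a slide on $T_2$. The first step of the forward infusion performs a single jeu-de-taquin slide of $T_2$ from $c$, producing a tableau $T_2^{(1)}$ on $\gamma^{(1)}/\beta^{(1)}$ with sliding path terminating at some outer corner $c'$ of $\gamma$. Setting $T_1^\circ = T_1 \setminus \{c\}$, the remaining $k-1$ slides of the forward infusion coincide with the forward infusion applied to the smaller pair $(T_1^\circ, T_2^{(1)})$, yielding
\[
\Slide_{T_1}(T_2) = \Slide_{T_1^\circ}(T_2^{(1)}),
\]
while $\Slide^{T_2}(T_1)$ equals $\Slide^{T_2^{(1)}}(T_1^\circ)$ with one additional cell at $c'$ labeled $k$.

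By the induction hypothesis, reverse infusion on the smaller pair $(\Slide_{T_1^\circ}(T_2^{(1)}), \Slide^{T_2^{(1)}}(T_1^\circ))$ recovers $(T_1^\circ, T_2^{(1)})$. The remaining task is to analyze how the extra cell labeled $k$ at $c'$ behaves during the $|T_2|$ reverse slides performed in the order given by $\Slide_{T_1}(T_2)$'s labels in reverse. The key assertion is that this cell is pushed back one step at a time along the reverse of the original forward sliding path, ultimately landing at $c$ with label $k$; combined with the inductive hypothesis this yields $T_1^\circ \cup \{(c,k)\} = T_1$ as the first output component, while the second component recovers $T_2$ from $T_2^{(1)}$ by exactly one undone slide.

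The main obstacle is the careful local analysis needed to verify the propagation claim: that the presence of the extra cell at $c'$ modifies the reverse slides so that, taken together, they precisely invert the original forward slide from $c$ to $c'$. This reduces to the local reversibility of a single jeu-de-taquin slide. Thomas \cite{thomas} carries out this verification in detail in the case where $T_1$ is a straight-shape tableau; since the local behavior of each slide is insensitive to the shape of $T_1$, the argument transfers verbatim to the skew case considered here, as noted immediately before the theorem.
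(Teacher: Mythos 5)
The paper gives no proof of this theorem: it cites \cite{thomas} for the involution property and simply remarks that although Thomas treats the case of straight-shape $T_1$, the argument is unchanged for skew $T_1$. Your proposal supplies an inductive scaffold (peeling off the first forward slide, corresponding to the maximal label $k$ of $T_1$) and then defers the substantive local-reversibility verification to Thomas, so in the end you are leaning on the same external reference that the paper does.

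Two points worth flagging about the scaffold itself. First, the reduction to the pair $(T_1^{\circ},T_2^{(1)})$ and the identities $\Slide_{T_1}(T_2)=\Slide_{T_1^{\circ}}(T_2^{(1)})$, $\Slide^{T_2}(T_1)=\Slide^{T_2^{(1)}}(T_1^{\circ})\cup\{(c',k)\}$ are correct, and the restriction-commutes-with-jeu-de-taquin principle (cf.\ Lemma \ref{lem:split}) does give that the $<k$ part of $\Slide_{U_1}(U_2)$ agrees with $\Slide_{U_1}(U_2^{\circ})=T_1^{\circ}$ by the inductive hypothesis. Second, however, your description of the remaining work is imprecise: the reverse infusion performs $|T_2|$ separate inner-corner slides on $U_2$ in the order prescribed by $U_1$'s labels, and the cell labeled $k$ moves incrementally across several of these slides rather than tracing ``the reverse of the original forward sliding path'' in any literal sense; moreover, the claim that the second component recovers $T_2$ ``by exactly one undone slide'' is asserted without justification, and it is genuinely the harder half, since the slide paths in $U_2$ and in $U_2^{\circ}$ can diverge once they encounter the $k$-cell. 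These are precisely the local analyses that Thomas carries out. Since the paper likewise just cites Thomas for this content, your proposal is consistent with the paper's treatment, but you should not present the propagation claim as a routine corollary of single-slide reversibility: the bookkeeping across the interleaved slides is where the real work lies.
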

\begin{cor}\label{cor:slideprop} The tableau $\Slide^{T_2}(T_1)$ is jeu-de-taquin equivalent to $T_1$.
\end{cor}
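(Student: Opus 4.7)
The plan is to deduce this essentially immediately from Theorem \ref{thm:slideprop}. Set $U_1 := \Slide_{T_1}(T_2)$ and $U_2 := \Slide^{T_2}(T_1)$, so that $(U_1, U_2)$ is a pair of standard skew tableaux of shapes $\beta'/\alpha$ and $\gamma/\beta'$ for the appropriate intermediate partition $\beta'$ (namely the shape obtained after performing the jeu-de-taquin slides on $T_2$). By Theorem \ref{thm:slideprop} applied to this pair, sliding $U_2$ past $U_1$ recovers $(T_1, T_2)$; in particular, $\Slide_{U_1}(U_2) = T_1$.

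Now I would invoke the definition of infusion directly: $\Slide_{U_1}(U_2)$ is produced from $U_2$ by performing a sequence of jeu-de-taquin slides into the inner corners of $U_2$ in the order dictated (in reverse) by the labels of $U_1$. Since jeu-de-taquin slides preserve the jeu-de-taquin equivalence class, $U_2$ and $\Slide_{U_1}(U_2)$ are jeu-de-taquin equivalent. Combining with the previous paragraph gives that $\Slide^{T_2}(T_1) = U_2$ is jeu-de-taquin equivalent to $T_1$, as desired.

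The argument has no real obstacle, since it is a one-line unpacking of the involution statement together with the definition of $\Slide$; the only minor thing to double-check is that Theorem \ref{thm:slideprop} as cited applies when $T_1$ is skew (not just straight), which the authors explicitly note in the paragraph preceding that theorem.
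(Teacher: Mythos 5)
Your proof is correct and follows exactly the same route as the paper: both apply Theorem \ref{thm:slideprop} to the pair $(\Slide_{T_1}(T_2), \Slide^{T_2}(T_1))$, use $\Slide_{U_1}(U_2)=T_1$, and observe that the $\Slide$ operation produces a jeu-de-taquin equivalent tableau to conclude. You also correctly flag the minor point (that the theorem applies to skew $T_1$), which the paper addresses in the paragraph before the theorem statement.
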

\begin{proof} We have already observed that 
\[\tilde{T}=\Slide_{\Slide_{T_1}(T_2)}(\Slide^{T_2}(T_1))\]
is jdt equivalent to  $\Slide^{T_2}(T_1)$, and by Theorem \ref{thm:slideprop}, $\tilde{T}=T_1$, so we obtain the claim. 
\end{proof}

\begin{eg} Consider the pair $T_1$ and $T_2$, where we have recorded the entries of $T_2$ in green. 
\[\begin{ytableau}
\none & \none & 2 &*(green) 1 & *(green) 2 \\
\none & 1 & 3 &*(green) 3\\
4 &5 &*(green) 4\\
*(green) 5 &*(green) 6\\
\end{ytableau}
.\]
We first do jeu-de-taquin at the $5$ in $T_1$, then the $4$ and so forth. As we slide past each label in $T_1$, we change its color to yellow:
\[
\begin{ytableau}
\none & \none & 2 &*(green) 1 & *(green) 2 \\
\none & 1 & 3 &*(green) 3\\
4  &*(green) 4&*(yellow)5\\
*(green) 5 &*(green) 6\\
\end{ytableau}
\hspace{5mm} 
\begin{ytableau}
\none & \none & 2 &*(green) 1 & *(green) 2 \\
\none & 1 & 3 &*(green) 3\\
*(green) 4& *(green) 6 &*(yellow) 5\\
*(green) 5 &*(yellow) 4\\
\end{ytableau}
\hspace{5mm} 
\begin{ytableau}
\none & \none & 2 &*(green) 1 & *(green) 2 \\
\none & 1 & *(green) 3 &*(yellow) 3\\
*(green) 4& *(green) 6 &*(yellow) 5\\
*(green) 5 &*(yellow) 4\\
\end{ytableau}
\]
\[\begin{ytableau}
\none & \none & *(green)1 &*(green) 2 & *(yellow) 2 \\
\none & 1 & *(green) 3 &*(yellow) 3\\
*(green) 4& *(green) 6 &*(yellow) 5\\
*(green) 5 &*(yellow) 4\\
\end{ytableau}
\hspace{5mm} 
\begin{ytableau}
\none & \none & *(green)1 &*(green) 2 & *(yellow) 2 \\
\none  & *(green) 3 &*(yellow) 1&*(yellow) 3\\
*(green) 4& *(green) 6 &*(yellow) 5\\
*(green) 5 &*(yellow) 4\\
\end{ytableau}
\]
\end{eg}
\begin{lem}\label{lem:bij} Fix partitions $\alpha,\beta$ and skew shapes $\gamma_1/\delta_1, \gamma_2/\delta_2$.
 Infusion induces a bijection between the set
\[ \bigcup_{\lambda}  \RW_{\beta/\lambda}(\gamma_1/\delta_1) \times  \RW_{\lambda/\alpha}(\gamma_2/\delta_2)\]
and the set
\[ \bigcup_{\mu}  \RW_{\beta/\mu}(\gamma_2/\delta_2) \times  \RW_{\mu/\alpha}(\gamma_1/\delta_1).\]
\end{lem}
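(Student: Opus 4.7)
The plan is to construct the bijection explicitly via infusion and to verify both well-definedness and invertibility using the results already established. Given $(T_1,T_2)\in \RW_{\beta/\lambda}(\gamma_1/\delta_1)\times \RW_{\lambda/\alpha}(\gamma_2/\delta_2)$, observe that $T_2$ sits on the inner skew shape $\lambda/\alpha$ while $T_1$ sits on the outer skew shape $\beta/\lambda$, so the pair $(T_2,T_1)$ has exactly the inside/outside configuration required by Definition \ref{defn:slide}. Running infusion produces tableaux $\Slide_{T_2}(T_1)$ (of shape $\mu/\alpha$ for some intermediate $\alpha\subseteq \mu\subseteq \beta$) and $\Slide^{T_1}(T_2)$ (of shape $\beta/\mu$). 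I define
\[\Phi(T_1,T_2) := \bigl(\Slide^{T_1}(T_2),\, \Slide_{T_2}(T_1)\bigr),\]
which is a candidate element of $\RW_{\beta/\mu}(\gamma_2/\delta_2)\times \RW_{\mu/\alpha}(\gamma_1/\delta_1)$.

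Next I would check that $\Phi$ actually lands in that set. By construction $\Slide_{T_2}(T_1)$ is obtained from $T_1$ by a sequence of jeu-de-taquin slides, hence is jeu-de-taquin equivalent to $T_1$. The symmetric version of Corollary \ref{cor:slideprop} (obtained by swapping the roles of $T_1$ and $T_2$ in the statement) shows that $\Slide^{T_1}(T_2)$ is jeu-de-taquin equivalent to $T_2$. Applying Proposition \ref{prop:jdt} to each factor, the membership $T_1\in \RW_{\beta/\lambda}(\gamma_1/\delta_1)$ transfers to $\Slide_{T_2}(T_1)\in \RW_{\mu/\alpha}(\gamma_1/\delta_1)$, and similarly $T_2\in \RW_{\lambda/\alpha}(\gamma_2/\delta_2)$ transfers to $\Slide^{T_1}(T_2)\in \RW_{\beta/\mu}(\gamma_2/\delta_2)$. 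So $\Phi$ is well-defined.

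Finally, for bijectivity, Theorem \ref{thm:slideprop} asserts that infusion is an involution: applied to $(\Slide_{T_2}(T_1), \Slide^{T_1}(T_2))$, it returns $(T_2,T_1)$. Hence the analogous map $\Phi'$ defined on the RHS (with the roles of $\gamma_1/\delta_1$ and $\gamma_2/\delta_2$ swapped) is a two-sided inverse of $\Phi$, completing the bijection. The only real obstacle here is purely organizational, namely keeping the inner/outer roles and the two shape decorations straight; once the notational dictionary between the lemma's $(T_1,T_2)$ and the inner/outer pair of Definition \ref{defn:slide} is fixed, the argument is a direct assembly of Proposition \ref{prop:jdt}, Corollary \ref{cor:slideprop}, and Theorem \ref{thm:slideprop}.
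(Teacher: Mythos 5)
Your proof is correct and takes essentially the same route as the paper: apply infusion to the pair, use Proposition \ref{prop:jdt} (together with Corollary \ref{cor:slideprop} for the recording tableau) to check that the image lands in the target set, and invoke the involutive property from Theorem \ref{thm:slideprop} for bijectivity. The paper's own proof is nearly identical; the only difference is that the paper writes $\Slide^{T_2}(T_1)$ and $\Slide_{T_1}(T_2)$, which swaps the super- and subscripts relative to the inner/outer convention of Definition \ref{defn:slide}, whereas your notation $\Slide_{T_2}(T_1)$ and $\Slide^{T_1}(T_2)$ tracks those roles consistently.
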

\begin{proof}  If we take a pair 
\[(T_1,T_2) \in \cup_{\lambda} \RW_{\beta/\lambda}(\gamma_1/\delta_1) \times  \RW_{\lambda/\alpha}(\gamma_2/\delta_2)\]
then by Corollary \ref{cor:slideprop}, $\Slide^{T_2}(T_1) \in  \RW_{\mu/\alpha}(\gamma_1/\delta_1)$ and $\Slide_{T_1}(T_2) \in \RW_{\beta/\mu}(\gamma_2/\delta_2)$. 
Similarly, infusion takes a pair in the second set to a pair in the first set, and the composition of the two processes is the identity, by Theorem \ref{cor:slideprop}. Therefore $\Slide$ gives the required bijection.  
\end{proof}
\begin{rem} Taking $\delta_1=\delta_2=\emptyset$ above, Lemma \ref{lem:bij} gives
\[\sum_\lambda c^\beta_{\lambda \gamma_1} c^\lambda_{\alpha \gamma_2}=\sum_{\mu} c^\beta_{\mu \gamma_2} c^\mu_{\alpha \gamma_1}.\]
The left hand side is the coefficient of $s_\beta$ in the expansion of $s_{\gamma_1} (s_\alpha s_{\gamma_2} )$, and the right hand side is the coefficient of $s_\beta$ in the expansion of $(s_{\gamma_1} s_\alpha) s_{\gamma_2}$. In other words, infusion gives a combinatorial proof of the associativity of the algebra given by Littlewood--Richardson rules. 
\end{rem}
\subsection{Diffusion} \label{sec:diffusion}
\begin{mydef} Let $T_1$ and $T_2$ be two standard skew tableaux of shape $\alpha/\beta$ and $\beta/\gamma$, respectively. Then the \emph{union} $T_1 \cup T_2$ of $T_1$ and $T_2$ is the standard tableau of shape $\alpha/\gamma$ with labels described as follows. For a box $B$ in $\alpha/\gamma$ contained in $\alpha/\beta$, the label is simply the label of $B$ in $T_1$. If $B$ is in $\beta/\gamma$, then the label is $|T_1|+i$, where $i$ is the label of $B$ in $T_2$. 
\end{mydef}

Infusion takes two skew tableaux that are lying next to each other, and moves the inner one past the outer one. Diffusion is a bit more complicated, but it is one way of seeing the more naturally stated corollary below. This corollary can be viewed as the analogue for diffusion of associativity for infusion. 

\begin{cor}\label{cor:KL} Let $\alpha,\beta,\delta_1$ be partitions. Then 
\[\sum_{\mu} c^{\mu}_{\alpha \beta} s_{\mu/\delta_1}=\sum_{\lambda_1,\lambda_2} c^{\delta_1}_{\lambda_1 \lambda_2} s_{\alpha/\lambda_1} s_{\beta/\lambda_2}.\]
\end{cor}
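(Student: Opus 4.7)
The plan is to deduce this identity from the two-alphabet addition formula for Schur polynomials, namely $s_\mu(\underline{x},\underline{y})=\sum_{\delta\subseteq\mu} s_\delta(\underline{x})\,s_{\mu/\delta}(\underline{y})$, by computing the product $s_\alpha(\underline{x},\underline{y})\cdot s_\beta(\underline{x},\underline{y})$ in two different ways and comparing coefficients of $s_{\delta_1}(\underline{x})$.

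First, I would apply the Littlewood--Richardson rule in the combined alphabet to write
\[
s_\alpha(\underline{x},\underline{y})\,s_\beta(\underline{x},\underline{y})=\sum_\mu c^\mu_{\alpha\beta}\,s_\mu(\underline{x},\underline{y})=\sum_\mu c^\mu_{\alpha\beta}\sum_{\delta\subseteq\mu} s_\delta(\underline{x})\,s_{\mu/\delta}(\underline{y}).
\]
Using linear independence of the $s_\delta(\underline{x})$ over $\bigwedge_2$, the coefficient of $s_{\delta_1}(\underline{x})$ on the right is precisely $\sum_\mu c^\mu_{\alpha\beta}\,s_{\mu/\delta_1}(\underline{y})$, i.e.\ the left-hand side of the corollary.

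Second, I would expand each factor separately via the addition formula:
\[
s_\alpha(\underline{x},\underline{y})=\sum_{\lambda_1} s_{\lambda_1}(\underline{x})\,s_{\alpha/\lambda_1}(\underline{y}),\qquad s_\beta(\underline{x},\underline{y})=\sum_{\lambda_2} s_{\lambda_2}(\underline{x})\,s_{\beta/\lambda_2}(\underline{y}),
\]
and then multiply and apply the Littlewood--Richardson rule to the $\underline{x}$-variables via $s_{\lambda_1}(\underline{x})s_{\lambda_2}(\underline{x})=\sum_\delta c^\delta_{\lambda_1\lambda_2}s_\delta(\underline{x})$. Collecting terms gives
\[
s_\alpha(\underline{x},\underline{y})\,s_\beta(\underline{x},\underline{y})=\sum_{\delta,\lambda_1,\lambda_2} c^\delta_{\lambda_1\lambda_2}\,s_\delta(\underline{x})\,s_{\alpha/\lambda_1}(\underline{y})\,s_{\beta/\lambda_2}(\underline{y}),
\]
whose coefficient of $s_{\delta_1}(\underline{x})$ is $\sum_{\lambda_1,\lambda_2} c^{\delta_1}_{\lambda_1\lambda_2}\,s_{\alpha/\lambda_1}(\underline{y})\,s_{\beta/\lambda_2}(\underline{y})$, the right-hand side of the corollary. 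Equating the two expressions finishes the proof.

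There is no real obstacle here; the only subtlety is the justification of equating coefficients of $s_{\delta_1}(\underline{x})$, which is immediate from the linear independence of Schur polynomials in the $\underline{x}$ alphabet over the ring of symmetric functions in $\underline{y}$. I note that once diffusion is defined in Section~\ref{sec:diffusion}, one can in fact read off a bijective proof of this identity from the diffusion operation (which is the point the subsequent remark aims to make), but the symmetric-function computation above suffices to establish the statement.
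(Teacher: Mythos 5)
Your proof is correct, but it takes a genuinely different route from the paper. The paper proves Corollary~\ref{cor:KL} by first rewriting both sides as explicit tableau counts and then invoking the bijection $\Diff$ from the Key Lemma (Lemma~\ref{lem:KL}): writing $\alpha*\beta=\gamma_2/\gamma_1$, the right-hand side is expanded as $\sum_{\delta_1,\delta_2}\sum_{\gamma_1\subset\delta\subset\gamma_2}|\RW_{\delta/\gamma_1}(\delta_1)|\cdot|\RW_{\gamma_2/\delta}(\delta_2)|\,s_{\delta_2}$ and the left-hand side as $\sum_{\mu}|\RW_{\mu}(\alpha*\beta)|\sum_{\delta_2}|\RW_{\mu/\delta_1}(\delta_2)|\,s_{\delta_2}$, and $\Diff$ matches the indexing sets term by term. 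Your argument is purely algebraic: compute $s_\alpha(\underline{x},\underline{y})\,s_\beta(\underline{x},\underline{y})$ in two ways via the two-alphabet addition formula and compare coefficients of $s_{\delta_1}(\underline{x})$ over $\bigwedge_2$. Both are valid; your computation is shorter and more self-contained, while the paper's bijective proof packages the explicit bijection between summands that is then used in the proofs of Theorem~\ref{thm:RWpositivity} and Theorem~\ref{thm:betamult}, so the extra combinatorial structure is not incidental but the actual payoff. You rightly flag this point in your last sentence.
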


To motivate diffusion, consider the left hand side of the equation appearing above. A summand here is indexed by first taking an element of the product of $s_\alpha s_\beta$, which we can take to be a tableau in $\RW(\alpha*\beta)$, of shape $\mu$. We then consider all elements in $\RW_{\mu/\delta_1}(\delta_2)$ for all $\delta_2$. So we end up with an indexing set given by two tableaux. Diffusion is the process of taking the tableau  in $\RW_{\mu/\delta_1}(\delta_2)$ -- i.e. a subtableau of some shape appearing in $\RW(\alpha*\beta)$ -- and creating a subtableau of $\alpha*\beta$ itself. To make this a bijection, we need to be more precise, as we are in the following definition.

\begin{mydef}[Diffusion]\label{def:diffusion}

Define a map $\Diff$ from 
\[\RW_{\mu/\delta_1}(\delta_2) \times \RW_{\mu}(\alpha*\beta)\]
to the set of tableaux pairs $(T_1,T_2)$ that together fill the shape $\alpha*\beta$, such that $\Rect(T_1) \in \RW(\delta_1)$ and $\Rect(T_2) \in \RW(\delta_2)$ as follows.

Let $(U,V) \in \RW_{\mu/\delta_1}(\delta_2) \times \RW_{\mu}(\alpha*\beta)$. Take the union of $U$ with the unique element of $\RW(\delta_1)$ to extend $U$ to a tableau of shape $\mu$, which we call $\tilde{U}$. Let $T=\theta^{-1}(\tilde{U})$ of shape $\alpha*\beta$, and finally define
\[\Diff(T_1,T_2)=(T^{\leq |\delta_1},T^{> |\delta_1|}).\]
\end{mydef} 

\begin{lem}[Key Lemma] \label{lem:KL} Let $\alpha, \beta$ and $\delta_1 \subset \mu$, $\delta_2 \subset \mu$ be partitions. Let $k=|\delta_1|$. Then $
\Diff$ defines a bijection between the sets
\[\RW_{\mu/\delta_1}(\delta_2) \times \RW_{\mu}(\alpha*\beta)\]
to the set of tableaux pairs $(T_1,T_2)$ that together fill the shape $\alpha*\beta$, such that $\Rect(T_1) \in \RW(\delta_1)$ and $\Rect(T_2) \in \RW(\delta_2)$.
\end{lem}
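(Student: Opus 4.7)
My plan is to establish the bijection by constructing an explicit inverse $\Diff^{-1}$ and verifying well-definedness on both sides. The main ingredients are the White--Remmel bijection $\theta$ (which underlies Corollary~\ref{cor:RWbij}), the identity $\Rect(T^{\leq k})=\Rect(T)^{\leq k}$ of Lemma~\ref{lem:split}, and the preservation of RW-membership under jeu-de-taquin from Proposition~\ref{prop:jdt}. A useful preliminary observation is that the condition $\Rect(T_i)\in\RW(\delta_i)$ collapses to the equality $\Rect(T_i)=T_{\delta_i}$, the unique element of $\RW(\delta_i)$, since by Proposition~\ref{prop:LRRW} one has $|\RW_\sigma(\delta_i)|=c^{\delta_i}_{\emptyset,\sigma}=\delta_{\sigma,\delta_i}$.

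To check that $\Diff$ lands in the claimed target, I would trace through the construction of $\tilde U$, $T=\theta^{-1}(\tilde U)$, and the split at $|\delta_1|$. Because $\tilde U$ is assembled so that its labels $1,\dots,|\delta_1|$ occupy the $\delta_1$-subshape exactly as in the unique element $T_{\delta_1}$ of $\RW(\delta_1)$, Lemma~\ref{lem:split} yields $\Rect(T^{\leq|\delta_1|})=\Rect(T)^{\leq|\delta_1|}=T_{\delta_1}$. The analogous computation applied to the complement $\mu/\delta_1$, together with the jeu-de-taquin invariance from Proposition~\ref{prop:jdt}, gives $\Rect(T^{>|\delta_1|})=T_{\delta_2}\in\RW(\delta_2)$.

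For the inverse, given a target pair $(T_1,T_2)$ I would form $T=T_1\cup T_2$ of shape $\alpha*\beta$, apply $\theta$ to obtain $V\in\RW_\mu(\alpha*\beta)$ where $\mu=\sh(\Rect T)$, and read off $U$ by restricting the resulting recording data to the $\mu/\delta_1$-subshape (with labels shifted down by $|\delta_1|$). The Remmel--Whitney membership $U\in\RW_{\mu/\delta_1}(\delta_2)$ is obtained from Proposition~\ref{prop:jdt} applied to the jeu-de-taquin equivalence between $T^{>|\delta_1|}$ and its rectification $T_{\delta_2}$. The two compositions $\Diff\circ\Diff^{-1}$ and $\Diff^{-1}\circ\Diff$ then reduce to the inverse property of $\theta$, once one observes that the insertion and extraction of the pinned $T_{\delta_1}$-block on the $\delta_1$-subshape is a reversible bookkeeping step.

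The main obstacle is the careful tracking of the split operation $T\mapsto(T^{\leq|\delta_1|},T^{>|\delta_1|})$ through $\theta$: one must verify that the restriction of $\Rect(\theta^{-1}(\tilde U))$ to its first $|\delta_1|$ labels indeed forms the shape $\delta_1$ and equals $T_{\delta_1}$. This is a consistency check for the canonical rectification target chosen inside $\theta^{-1}$, and it is where the hypothesis $V\in\RW_\mu(\alpha*\beta)$ ensures that a compatible target exists; the verification amounts to chasing the column-insertion steps underlying $\theta$ separately on the two pieces of the split, using Lemma~\ref{lem:split} to guarantee that rectification and splitting commute.
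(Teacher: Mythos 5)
Your proposal follows essentially the same route as the paper's proof: reformulate the target set via the union operation, apply Lemma~\ref{lem:split} to see that $\theta^{-1}(\tilde U)$ lands in the right set, and construct the inverse by applying $\theta$ to $T=T_1\cup T_2$ and then splitting at $k=|\delta_1|$. The supporting lemmas you cite (Proposition~\ref{prop:jdt}, Lemma~\ref{lem:split}, Corollary~\ref{cor:RWbij}/$\theta$) are exactly the ones the paper uses, and your preliminary observation that $\Rect(T_i)\in\RW(\delta_i)$ pins down $\Rect(T_i)=T_{\delta_i}$ is correct and implicit in the paper.

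One small but substantive imprecision in your inverse construction: applying $\theta$ to $T=T_1\cup T_2$ gives the \emph{pair} $(W,V)$ with $W=\Rect(T)$ and $V\in\RW_\mu(\alpha*\beta)$ the recording tableau. The tableau you must restrict to labels $>|\delta_1|$ to recover $U$ is the rectification $W$, not $V$. Indeed, $W^{\leq k}$ has shape $\delta_1$ precisely because $\Rect(T^{\leq k})=T_{\delta_1}$ by Lemma~\ref{lem:split}, so $W^{>k}$ has shape $\mu/\delta_1$ as required, while $V^{>k}$ would in general occupy a different subshape of $\mu$. Your phrase ``restricting the resulting recording data'' is ambiguous and could be read as restricting $V$; the output pair should be $(W^{>k},V)$. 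With that clarification, your argument matches the paper's.
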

\begin{proof}
We note that the target of $\Diff$ is in bijection with
\[ \{T: \sh(T)=\alpha*\beta,  \sh(\Rect(T))=\mu, \Rect(T^{\leq k}) \in \RW(\delta_1),  \Rect(T^{>k}) \in \RW(\delta_2)\}. \]
The fact that $\theta^{-1}(\tilde{U})$ (using the notation appearing in the definition above) lies in the right set follows from Lemma \ref{lem:split}. 

To see that  this is a bijection, we define an inverse. Let $T$ be an element on the right side. We apply $\theta$ to obtain a pair $(W,V)$ where $\Rect(T)=W$ and the recording tableau $V$ is an element of $\RW_{\mu}(\alpha*\beta)$. By Lemma \ref{lem:split}, $W^{\leq k}=\Rect(T^{\leq k})$ is the unique element in $\RW(\delta_1)$, and $\Rect(W^{>k})$ is the unique element in $\RW(\delta_2)$. But then $W^{>k} \in \RW_{\mu/\delta_1}(\delta_2).$ So mapping $T$ to $(W^{>k},V)$ defines an element on the left hand side.  
\end{proof}
\begin{eg} We illustrate the bijection $\Diff^{-1}$ that appears in Lemma \ref{lem:KL}, for 
 \[\alpha=\beta=\yng(2,1), \delta_1=\yng(2,1,1).\]
The tableau $T$ is 
\[T=\ytableausetup{boxsize=\boxsize} \begin{ytableau}
\none &\none & 1 & 2 \\
\none &\none & 6 \\
3 &  5\\
4 \\
\end{ytableau},\]
so $\theta(T)=(W,V)$ where
\[W=\ytableausetup{boxsize=\boxsize} \begin{ytableau}
1 & 2 & 6\\
3&5 \\
4\\
\end{ytableau},\hspace{5mm}
V=\ytableausetup{boxsize=\boxsize} \begin{ytableau}
1&2&5\\
3& 4\\
6\\
\end{ytableau}.\]
Note that, as claimed $V \in \RW(\yng(2,1)*\yng(2,1))$. We can decompose $W$ into:
\[W^{>4}=\ytableausetup{boxsize=\boxsize} \begin{ytableau}
$ $ & $ $ & 2\\
$ $&1 \\
$ $\\
\end{ytableau},\hspace{5mm}
W^{\leq4}=\ytableausetup{boxsize=\boxsize} \begin{ytableau}
1 & 2 & $ $\\
3& $ $ \\
4\\
\end{ytableau}.
\] 
Again, as expected $W^{>4} \in \RW_{\yng(3,2,1)/\yng(2,1,1)}(\yng(2))$, and $W^{\leq 4}$ is the unique element in $\RW(\yng(2,1,1))$. \end{eg}

\begin{proof}[Proof of Corollary \ref{cor:KL}] Write the skew tableau $\alpha*\beta$ as $\gamma_2/\gamma_1$. 
We first claim that the right hand side can be expanded as the sum
\[\sum_{\delta_1,\delta_2} \sum_{\gamma_1 \subset \delta \subset \gamma_2} \sum_{\RW_{\delta/\gamma_1}(\delta_1)}\sum_{\RW_{\gamma_2/\delta}(\delta_2)} s_{\delta_2}.\]
Fix some $\delta$ satisfying $\gamma_1 \subset \delta \subset \gamma_2$, as illustrated below:
\[\begin{tikzpicture}[scale=0.5]
\draw[fill=gray!20] (0,0)--(3,0)--(3,-1)--(2,-1)--(2,-3)--(1,-3)--(0,-3)--(0,0);
\draw[fill=gray!20] (3,4)--(6,4)--(6,3)--(5,3)--(5,1)--(4,1)--(4,0)--(3,0)--(3,4);
\draw[thick] (0,4)--(0,-2)--(1,-2)--(1,-1)--(1.5,-1)--(1.5,0)--(3.5,0)--(3.5,2)--(4.5,2)--(4.5,4)--(0,4);
\node (A) at (4,-1) {$\alpha$};
\node (B) at (7,2) {$\beta$};
\node (C) at (1,3) {$\delta$};
\end{tikzpicture}\]
Let $\lambda_1 \subset \alpha$ and $\lambda_2 \subset \beta$ be the two tableaux given by intersection $\delta$ with $\alpha$ and $\beta$:
\[\begin{tikzpicture}[scale=0.5]
\draw[fill=gray!20] (0,0)--(3,0)--(3,-1)--(2,-1)--(2,-3)--(1,-3)--(0,-3)--(0,0);
\draw[fill=gray!20] (3,4)--(6,4)--(6,3)--(5,3)--(5,1)--(4,1)--(4,0)--(3,0)--(3,4);
\draw[fill=gray] (0,0)--(0,-2)--(1,-2)--(1,-1)--(1.5,-1)--(1.5,0)--(3.5,0)--(0,0);
\draw[fill=gray] (3,4)--(3,0)--(3.5,0)--(3.5,2)--(4.5,2)--(4.5,4)--(3,4);
%\node (A) at (4,-1) {$\alpha$};
%\node (B) at (7,2) {$\beta$};
\node (C) at (-1,-1) {$\lambda_1$};
\node (D) at (2,2) {$\lambda_2$};
\end{tikzpicture}\]
Then the number of elements in the set $\RW_{\delta/\gamma_1}(\delta_1)$ is the same as the number of elements in the set $\RW_{\lambda_1*\lambda_2}(\delta_1)$, which is $c^{\delta_1}_{\lambda_1 \lambda_2}$. Similarly, the number of elements in $\RW_{\gamma_2/\delta}(\delta_2)$ is the coefficient of $s_{\delta_2}$ in the product $s_{\alpha/\lambda_1} s_{\beta/\lambda_2}$, which shows the claim. 

The bijection $\Diff$ then gives the equality, as the left hand side can clearly be expanded as
\[\sum_{\mu} \sum_{\RW_{\mu}(\alpha*\beta)}\sum_{\RW_{\mu/\delta_1}(\delta_2)} s_{\delta_2}. \]
 \end{proof}

 \section{Quivers and bases} \label{sec:quivers}
\subsection{From quivers to bases}
Let $\bigwedge_1$ denote the symmetric function algebra in the variables $x_1, x_2, \dots$, and $\bigwedge_2$ the symmetric function algebra $y_1,y_2,\dots.$ In this section, we consider homogeneous vector space bases of the infinite dimensional vector space $ \bigwedge_1 \otimes_\ZZ \bigwedge_2.$

The products of Schur functions
\[s_{\alpha_1,\alpha_2}:=s_{\alpha_1}(\underline{x}) \otimes s_{\alpha_2}(\underline{y}),\]
form a natural vector space basis of this algebra.
 
These symmetric functions have two natural gradings: a bi-grading, defined by $\gr(s_{\alpha_1,\alpha_2})=(|\alpha_1|,|\alpha_2|)\in \ZZ^2$, and a degree defined by $\deg(s_{\alpha_1,\alpha_2})=|\alpha_1|+|\alpha_2|$. We say that 
\[\gr(s_{\alpha_1,\alpha_2}) \geq \gr(s_{\beta_2,\beta_2})\]
 if $|\alpha_2| > |\beta_2|$ or $|\alpha_2| = |\beta_2|$ and $|\alpha_1| \geq |\beta_1|$. 
 
 Quivers are directed multi-graphs. We consider quivers $Q$ with vertex set 
 \[\Ver:=\{(\alpha_1,\alpha_2): \alpha_1, \alpha_2 \text{ partitions}\}.\]
 \begin{mydef}\label{def:graded} Let $Q$ be a quiver with vertex set $\Ver$ and arrow set $\Arr^Q$. Denote the set of arrows rooted at $(\alpha_1,\alpha_2)$ as
  \[\Arr^Q_{\alpha_1,\alpha_2},\]
  and set
  \[n_{(\alpha_1,\alpha_2),(\beta_1,\beta_2)}=\#\{a \in \Arr: s(a)=(\alpha_1,\alpha_2), t(a)=(\beta_1,\beta_2)\}.\]
  We call $Q$  \emph{homogeneous} if, for all arrows $a \in \Arr^Q$, 
  \[\deg(s(a))=\deg(t(a)).\] We call $Q$ \emph{graded} if, for all arrows $a \in \Arr^Q$, the 
 \[\gr(s(a))>\gr(t(a)).\]
 \end{mydef}
 An immediate consequence of the definition is the following lemma. 
 \begin{lem} For a graded, homogeneous quiver, 
 \[\Arr^Q_{\alpha,\emptyset}=\emptyset,\]
 for all partitions $\alpha.$
 \end{lem}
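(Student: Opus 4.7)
The plan is to argue by contradiction: suppose there is an arrow $a \in \Arr^Q_{\alpha,\emptyset}$, with target $(\beta_1,\beta_2)$, and derive a contradiction from the conjunction of the homogeneous and graded conditions.

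First I would unpack the two hypotheses applied to this arrow. Homogeneity gives $\deg(s(a)) = \deg(t(a))$, i.e.
\[ |\alpha| + 0 = |\beta_1| + |\beta_2|. \]
Gradedness gives the strict inequality $\gr(s(a)) > \gr(t(a))$, which by the ordering introduced just before Definition~\ref{def:graded} means either (i) $|\alpha_2| > |\beta_2|$, or (ii) $|\alpha_2| = |\beta_2|$ and $|\alpha_1| > |\beta_1|$. Here the source is $(\alpha,\emptyset)$, so $|\alpha_2| = 0$.

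Next I would rule out each case. Case (i) requires $0 = |\alpha_2| > |\beta_2|$, which is impossible since $|\beta_2| \geq 0$. Case (ii) gives $|\beta_2| = 0$ and, combined with the homogeneity equation $|\alpha| = |\beta_1| + |\beta_2|$, forces $|\beta_1| = |\alpha|$; but then the strict inequality $|\alpha| = |\alpha_1| > |\beta_1| = |\alpha|$ is absurd. Either way we obtain a contradiction, so $\Arr^Q_{\alpha,\emptyset} = \emptyset$.

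There is no real obstacle here; the lemma is a direct unpacking of definitions, and the only thing to be careful about is interpreting the strict version of the grading order correctly (remembering that $|\alpha_2|$ is the dominant coordinate). In particular, no feature of the underlying quiver structure or of Schur functions is used beyond the two bullet points defining \emph{graded} and \emph{homogeneous}.
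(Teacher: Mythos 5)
Your proof is correct and is the only sensible unpacking of the definitions; the paper does not give a proof at all (it states the lemma as ``an immediate consequence of the definition''), so you have simply spelled out the details that the authors leave implicit. One small observation: for a homogeneous quiver, $|\alpha_2|=|\beta_2|$ already forces $|\alpha_1|=|\beta_1|$, so the second branch of the strict grading inequality can never occur, and gradedness plus homogeneity is equivalent to the single condition $|\alpha_2|>|\beta_2|$ (equivalently $|\alpha_1|<|\beta_1|$, the form used in the introduction). Noting this would have let you dispatch the lemma in one line ($|\alpha_2|=0$ cannot be strictly greater than $|\beta_2|\geq 0$) instead of the two-case argument, but your version is no less correct.
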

 For a homogeneous, graded quiver $Q$, we define a basis 
 \[B(Q):=\{b^Q_v: v \in \Ver\}.\]
 The basis elements are defined inductively as
 \[b^Q_{\alpha_1,\alpha_2}:=s_{\alpha_1,\alpha_2}-\sum_{a \in \Arr^Q_{\alpha_1,\alpha_2}} b_{t(a)}.\]
 This inductive definition makes sense because $Q$ is graded and homogeneous, so that
  \[b^Q_{\alpha_1,\emptyset}=s_{\alpha_1,\emptyset}.\]
  \begin{lem} For any graded, homogeneous quiver $Q$, the set $B(Q)$ is a vector space basis for $\bigwedge_1 \otimes \bigwedge_2.$
  \end{lem}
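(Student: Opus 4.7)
The plan is to reduce the statement to a finite-dimensional, triangular change-of-basis argument in each graded piece.

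First, I would use the fact that $\bigwedge_1 \otimes_\ZZ \bigwedge_2$ decomposes as a direct sum $\bigoplus_{d\geq 0} V_d$, where $V_d$ is the free $\ZZ$-module with basis $\{s_{\alpha_1,\alpha_2} : |\alpha_1|+|\alpha_2|=d\}$. I would then observe, by induction on the inductive definition of $b^Q_{\alpha_1,\alpha_2}$, that homogeneity of $Q$ forces $b^Q_{\alpha_1,\alpha_2}\in V_{|\alpha_1|+|\alpha_2|}$: the defining equation
\[
b^Q_{\alpha_1,\alpha_2}=s_{\alpha_1,\alpha_2}-\sum_{a\in\Arr^Q_{\alpha_1,\alpha_2}}b_{t(a)}
\]
only involves $b_{t(a)}$ with $\deg(t(a))=\deg(s(a))$. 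Since $V_d$ is finite-dimensional and has the same cardinality as $\{b^Q_{\alpha_1,\alpha_2}:|\alpha_1|+|\alpha_2|=d\}$, it suffices to prove the latter set spans $V_d$ (equivalently, is linearly independent) for each $d$ separately.

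Next I would exploit the grading to put a linear order on the pairs of a fixed degree. Within degree $d$, there are only finitely many pairs $(\alpha_1,\alpha_2)$, and I would order them by $|\alpha_2|$ in decreasing order, breaking ties arbitrarily. Because $Q$ is graded, every arrow $a:(\alpha_1,\alpha_2)\to(\beta_1,\beta_2)$ satisfies $|\alpha_1|<|\beta_1|$, and together with homogeneity this gives $|\alpha_2|>|\beta_2|$. So every arrow points strictly later in this order. This is the key structural input.

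I would then read off the change-of-basis matrix. Writing the elements of degree $d$ as a column vector in the chosen order, the defining relation above says that the matrix expressing $(b^Q_{\alpha_1,\alpha_2})$ in terms of $(s_{\alpha_1,\alpha_2})$ is upper unitriangular: the diagonal entry is $1$, and off-diagonal contributions come from the $-\sum b_{t(a)}$ terms, which after expansion live strictly later in the order. Equivalently, the relation
\[
s_{\alpha_1,\alpha_2}=b^Q_{\alpha_1,\alpha_2}+\sum_{a\in\Arr^Q_{\alpha_1,\alpha_2}}b_{t(a)}
\]
lets one express every $s_{\alpha_1,\alpha_2}$ inductively (on $|\alpha_2|$, descending) as a $\ZZ$-linear combination of the $b^Q$'s in the same degree. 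Hence $B(Q)\cap V_d$ spans $V_d$, and since it has the correct cardinality it is a basis. Summing over $d$ yields the claim.

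The only mild subtlety, which I would handle at the induction base, is the case $|\alpha_2|=0$: by the earlier lemma $\Arr^Q_{\alpha,\emptyset}=\emptyset$, so $b^Q_{\alpha,\emptyset}=s_{\alpha,\emptyset}$ and the induction begins cleanly. I do not foresee any serious obstacle here; the entire proof is really just the observation that a graded, homogeneous quiver produces a unitriangular change-of-basis matrix within each finite-dimensional graded component.
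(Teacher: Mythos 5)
Your proof is correct, and it is the natural argument. The paper itself states this lemma without proof, evidently regarding it as routine, so there is nothing in the text to compare against; your write-up supplies exactly the argument the authors must have had in mind. The structural observations are all right: homogeneity gives $\deg(b^Q_{\alpha_1,\alpha_2})=|\alpha_1|+|\alpha_2|$ so the problem restricts to the finite-rank degree pieces; homogeneity plus gradedness forces $|\alpha_2|>|\beta_2|$ along arrows, so arrows strictly decrease $|\alpha_2|$ and the inductive definition terminates; and the resulting change-of-basis matrix (in your chosen order) is unitriangular, hence invertible over $\ZZ$.

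Two very small stylistic remarks, neither of which is a gap. First, the unitriangularity claim as phrased is slightly compressed: to know that the off-diagonal terms "after expansion live strictly later," one needs a one-line induction on $|\alpha_2|$ showing $b^Q_{\alpha_1,\alpha_2}=s_{\alpha_1,\alpha_2}+(\text{terms }s_{\gamma_1,\gamma_2}\text{ with }|\gamma_2|<|\alpha_2|)$; you clearly intend this, but it is worth stating. Second, the spanning argument does not actually need the triangularity or any induction at all: the identity $s_{\alpha_1,\alpha_2}=b^Q_{\alpha_1,\alpha_2}+\sum_{a\in\Arr^Q_{\alpha_1,\alpha_2}}b^Q_{t(a)}$ already expresses each $s_{\alpha_1,\alpha_2}$ directly as a $\ZZ$-linear combination of elements of $B(Q)$ of the same degree, so spanning of each $V_d$ is immediate, and the cardinality count finishes it. The unitriangularity is the cleanest route to get a $\ZZ$-basis (determinant $\pm1$) rather than merely a spanning set, so keeping both remarks is reasonable.
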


 \begin{eg} Suppose such a quiver has $\Arr_{\yng(1),\yng(1)}=\{a_1,a_2\}$, where $t(a_1)=(\yng(2),\emptyset)$ and $t(a_2)=(\yng(1,1),\emptyset)$. Then 
 \[b^Q_{\yng(1),\yng(1)}=s_{\yng(1),\yng(1)}-s_{\yng(1,1),\emptyset}-s_{\yng(2),\emptyset}.\]
 \end{eg}
 
 Given a vector space basis of an algebra, the \emph{structure constants} are coefficients in the expansion in the basis of products of the basis elements. If the structure constants are all non-negative, we call the basis a \emph{positive basis}. 
 
 We are interested in situations where a quiver $Q$ gives rise to a positive basis, or, more weakly, a basis where many structure constants are non-negative. There's one trivial example:
\begin{eg} Let $Q$ be the quiver with vertex set $\Ver$ and set $\Arr=\emptyset$. Then $b^Q_{\alpha_1,\alpha_2}=s_{\alpha_1,\alpha_2}.$ Structure constants are given by Littlewood--Richardson coefficients: the coefficient of $s_{\gamma_1,\gamma_2}$ in the product 
 \[s_{\alpha_1,\alpha_2} s_{\beta_2,\beta_2}\]
 is \[ c^{\gamma_1}_{\alpha_1,\beta_1} c^{\gamma_2}_{\alpha_2,\beta_2} .\]
 These structure constants are non-negative, as the Littlewood--Richardson coefficients are non-negative. \end{eg}
  \subsection{The Remmel--Whitney quiver}
 In this section, we give a non-trivial example of a positive basis coming from a quiver $Q$. We denote this quiver $\qRW$, and call it the \emph{Remmel--Whitney quiver}.

 We use the Remmel--Whitney set to define the arrow set of the Remmel--Whitney quiver $\qRW$. 
  \begin{mydef}
 The Remmel--Whitney quiver $\qRW$ is the quiver with vertex set $\Ver$. There is an arrow $a_T:(\alpha_1,\alpha_2) \to (\beta_1,\beta_2)$ for each tableau $T$ in
 \[\RW_{\beta_1/\alpha_1}(\alpha_2/\beta_2),\]
 for distinct vertices $(\alpha_1,\alpha_2)$ and $(\beta_1,\beta_2)$ 
 \end{mydef}
 
 From now on, we will assume that $\Arr=\Arr^{\qRW}$, and repress the upper index.
 
 \begin{rem} \label{rem:swap} We can use the bijection $\Psi$ given in Corollary \ref{cor:RWbij} to product the same quiver, where for pairs of vertices $(\alpha_1,\alpha_2)$ and  $(\beta_1,\beta_2)$, we define an arrow in $\overline{\Arr}$
\[a_T: (\alpha_1,\alpha_2) \to (\beta_1,\beta_2),\]
for each tableau $T$ in the set $\RW_{\alpha_2/\beta_2}(\beta_1/\alpha_1).$ We will use this indexing later in the paper. 
\end{rem} 
   
We draw the Remmel--Whitney quiver is degree 1, 2 and 3. We do not label the arrows with their tableaux, as in these examples there is at most one arrow between two vertices:
\[\begin{tikzpicture}[scale=0.7]
\node (A) at (0,0) {$(\emptyset,\yng(1))$};
\node (B) at (0,-2) {$(\yng(1),\emptyset)$};
\path [->] (A) edge (B);
\end{tikzpicture}, \hspace{10mm}
\begin{tikzpicture}[scale=0.7]
\node (A) at (0,0) {$(\emptyset,\yng(2))$};
\node (B) at (2,0) {$(\emptyset,\yng(1,1))$};
\node (C) at (1,-2) {$(\yng(1),\yng(1))$};
\node (D) at (0,-4) {$(\yng(2),\emptyset)$};
\node (E) at (2,-4) {$(\yng(1,1),\emptyset)$};
\path [->] (A) edge (C);
\path [->] (A) edge[bend right=10] (D);
\path [->] (B) edge (C);
\path [->] (B) edge[bend left=10] (E);
\path [->] (C) edge (D);
\path [->] (C) edge (E);
\end{tikzpicture}, \hspace{10mm}
\begin{tikzpicture}[scale=0.8]
\node (A) at (0,0) {$(\emptyset,\yng(3))$};
\node (B) at (2,0) {$(\emptyset,\yng(2,1))$};
\node (C) at (4,0) {$(\emptyset,\yng(1,1,1))$};
\node (D) at (1,-2) {$(\yng(1),\yng(2))$};
\node (E) at (3,-2) {$(\yng(1),\yng(1,1))$};
\node (F) at (1,-4) {$(\yng(2),\yng(1))$};
\node (G) at (3,-4) {$(\yng(1,1),\yng(1))$};
\node (H) at (0,-6) {$(\yng(3),\emptyset)$};
\node (I) at (2,-6) {$(\yng(2,1),\emptyset)$};
\node (J) at (4,-6) {$(\yng(1,1,1),\emptyset,)$};
\path [->] (A) edge (D);
\path [->] (A) edge[bend right=20] (F);
\path [->] (A) edge[bend right=40] (H);
\path [->] (B) edge (D);
\path [->] (B) edge (E);
\path [->] (B) edge[bend right=60] (F);
\path [->] (B) edge[bend left=60] (G);
\path [->] (B) edge (I);
\path [->] (C) edge (E);
\path [->] (C) edge[bend left=20] (G);
\path [->] (C) edge[bend left=40] (J);
\path [->] (D) edge (F);
\path [->] (D) edge (G);
\path [->] (D) edge[bend right=60] (I);
\path [->] (D) edge[bend right=30] (H);
\path [->] (E) edge (F);
\path [->] (E) edge (G);
\path [->] (E) edge[bend left=60] (I);
\path [->] (E) edge[bend left=30] (J);
\path [->] (F) edge (H);
\path [->] (F) edge (I);
\path [->] (G) edge (J);
\path [->] (G) edge (I);
\end{tikzpicture}.
\]
We denote the elements of the basis associated to $\qRW$ as
\[w_{\lambda_1,\lambda_2}:=b^{\qRW}_{\lambda_1,\lambda_2},\]
and call them the Remmel--Whitney (RW) basis. For example,
\[w_{\yng(1),\yng(2)}:=s_{\yng(1),\yng(2)}-w_{\yng(2),\yng(1)}-w_{\yng(1,1),\yng(1)}-w_{\yng(3),\emptyset}-w_{\yng(2,1),\emptyset},\]
so
\[w_{\yng(1),\yng(2)}=s_{\yng(1),\yng(2)}-s_{\yng(2),\yng(1)}-s_{\yng(1,1),\yng(1)}+s_{\yng(2,1),\emptyset}+s_{\yng(1,1,1),\emptyset}.\]

It will be convenient to extend the notation $w_{\lambda_1,\lambda_2}$ to skew partitions, so we define:
\[w_{\lambda_1/\mu_1,\lambda_2/\mu_2}:=\sum_{\gamma_1,\gamma_2} c^{\lambda_1}_{\mu_1 \gamma_1} c^{\lambda_2}_{\mu_2 \gamma_2} w_{\gamma_1,\gamma_2}.\]
\begin{rem} By definition,
\[w_{\emptyset,\lambda}=s^2_{\lambda}-\sum_{\gamma \subsetneq \lambda} w_{\lambda/\gamma,\gamma}.\]
\end{rem}

Despite the negative signs appearing in the definition, the Remmel--Whitney basis is a positive basis, with structure constants given by Littlewood--Richardson coefficients. 
\begin{thm}\label{thm:RWpositivity} The structure constants of the Remmel--Whitney basis are the same as the structure constants given by the trivial quiver. That is, for any partitions
\[(\lambda_1,\lambda_2), (\mu_1,\mu_2)\]
the coefficient of $w_{\gamma_1,\gamma_2}$ in the product $w_{\lambda_1,\lambda_2} w_{\mu_1,\mu_2}$, when expanded in the RW basis, is
 \[ c^{\gamma_1}_{\alpha_1,\beta_1} c^{\gamma_2}_{\alpha_2,\beta_2} .\]
\end{thm}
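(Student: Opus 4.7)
The plan is to identify the linear change-of-basis map sending the $w$-basis to the $s$-basis with a natural algebra endomorphism of $\bigwedge_1 \otimes_\ZZ \bigwedge_2$; once this identification is in hand, equality of the structure constants will follow immediately.

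First, I would rearrange the inductive definition of $w$ to write
\[ s_{\alpha_1,\alpha_2} = \sum_{(\beta_1,\beta_2)} \left|\RW_{\beta_1/\alpha_1}(\alpha_2/\beta_2)\right|\, w_{\beta_1,\beta_2}, \]
where the term $(\beta_1,\beta_2)=(\alpha_1,\alpha_2)$ is included with coefficient $1$ (the empty skew tableau). By Proposition \ref{prop:LRRW}, each coefficient equals $\sum_\gamma c^{\beta_1}_{\gamma,\alpha_1}\, c^{\alpha_2}_{\gamma,\beta_2}$.

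Next I would introduce the linear map $\Psi : \bigwedge_1 \otimes_\ZZ \bigwedge_2 \to \bigwedge_1 \otimes_\ZZ \bigwedge_2$ defined by $\Psi(w_{\alpha_1,\alpha_2}) = s_{\alpha_1,\alpha_2}$ and evaluate it on the $s$-basis. Substituting the formula above yields
\[ \Psi(s_{\alpha_1,\alpha_2}) = \sum_\gamma s_\gamma(\underline{x})\, s_{\alpha_1}(\underline{x})\cdot s_{\alpha_2/\gamma}(\underline{y}) = s_{\alpha_1}(\underline{x})\cdot s_{\alpha_2}(\underline{x},\underline{y}), \]
where the last equality is the classical identity $s_\lambda(\underline{x},\underline{y}) = \sum_\gamma s_\gamma(\underline{x})\, s_{\lambda/\gamma}(\underline{y})$. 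Hence $\Psi$ agrees on the $s$-basis with the ring homomorphism $\sigma : \bigwedge_1 \otimes_\ZZ \bigwedge_2 \to \bigwedge_1 \otimes_\ZZ \bigwedge_2$ determined by $p_k(\underline{x}) \mapsto p_k(\underline{x})$ and $p_k(\underline{y}) \mapsto p_k(\underline{x}) + p_k(\underline{y})$, i.e., the substitution $f(\underline{x},\underline{y}) \mapsto f(\underline{x};\,\underline{x},\underline{y})$. Thus $\Psi = \sigma$ is a ring endomorphism, and since it carries one basis to another it is in fact a ring automorphism.

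Finally, applying $\Psi^{-1}$ to the standard $s$-basis expansion $s_{\alpha_1,\alpha_2}\, s_{\beta_1,\beta_2} = \sum c^{\gamma_1}_{\alpha_1,\beta_1}\, c^{\gamma_2}_{\alpha_2,\beta_2}\, s_{\gamma_1,\gamma_2}$ yields exactly the same expansion for $w_{\alpha_1,\alpha_2}\, w_{\beta_1,\beta_2}$ in the $w$-basis, proving the theorem. I expect the main obstacle to be the middle step: the unexpected collapse of the change-of-basis matrix---given in terms of sums over Remmel--Whitney tableaux---into the clean substitution $\underline{y} \mapsto (\underline{x},\underline{y})$. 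Without this recognition, matching the two sets of structure constants would seem to require a much more delicate combinatorial bijection, either cancelling contributions directly or exhibiting an explicit matching between products of tableaux.
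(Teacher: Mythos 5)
Your argument is correct, and it is genuinely different from --- and cleaner than --- the paper's proof. The paper proceeds combinatorially by induction: it first proves Lemma~\ref{lem:basecase} (multiplication by $w_{\mu,\emptyset}$) by an explicit induction on $|\lambda_2|$ using associativity of Littlewood--Richardson coefficients and Proposition~\ref{prop:LRRW}, and then deduces the full theorem by a second induction on $|\lambda_2|+|\mu_2|$ that requires Corollary~\ref{cor:KL}, whose proof in turn rests on the diffusion bijection (Key Lemma~\ref{lem:KL}). You bypass all of that: Proposition~\ref{prop:LRRW} plus the classical branching identity $s_\lambda(\underline{x},\underline{y})=\sum_\gamma s_\gamma(\underline{x})\,s_{\lambda/\gamma}(\underline{y})$ immediately identifies the change-of-basis map $w_{\alpha_1,\alpha_2}\mapsto s_{\alpha_1,\alpha_2}$ with the ring endomorphism $\sigma$ of $\bigwedge_1\otimes_\ZZ\bigwedge_2$ fixing $\bigwedge_1$ and sending $s_\mu(\underline{y})\mapsto s_\mu(\underline{x},\underline{y})$ (equivalently $e_k(\underline{y})\mapsto\sum_{a+b=k}e_a(\underline{x})e_b(\underline{y})$, so $\sigma$ is defined over $\ZZ$, as required). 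Since $\sigma$ sends one vector-space basis to another it is a ring automorphism, and the structure constants are transported verbatim. What you gain is a conceptual explanation of \emph{why} the $w$-basis has LR structure constants --- because it is the pullback of the $s$-basis along a variable substitution. What the paper's route buys is the intermediate machinery (infusion/diffusion bijections, the base-case lemma) that is reused in proving Theorems~\ref{thm:s1thm} and~\ref{thm:betamult}, where the subquiver $\qRW^r$ breaks the automorphism picture; your $\sigma$ does not descend to a ring map there. For this theorem on its own, though, your argument is shorter and more illuminating.

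One small caution: you should justify that the asserted equality $\Psi(s_{\alpha_1,\alpha_2})=s_{\alpha_1}(\underline{x})\,s_{\alpha_2}(\underline{x},\underline{y})$ together with linearity really pins $\Psi$ down as a ring map --- this works because the $s_{\alpha_1,\alpha_2}$ form a basis and $\sigma$ is a priori a ring endomorphism agreeing with $\Psi$ on that basis, so $\Psi=\sigma$ as linear maps and hence $\Psi$ inherits multiplicativity from $\sigma$ rather than being multiplicative ``by construction.'' As you wrote it the logic is slightly inverted; the fix is to define $\sigma$ first, compute $\sigma(s_{\alpha_1,\alpha_2})$ in the $w$-basis via the branching rule and Proposition~\ref{prop:LRRW} to see $\sigma(w_{\alpha_1,\alpha_2})=s_{\alpha_1,\alpha_2}$, and then conclude.
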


We will require the following lemma to prove proving Theorem \ref{thm:RWpositivity}. 
\begin{lem} \label{lem:basecase}Let $\mu,\lambda_1,\lambda_2$ be partitions. Then 
\[w_{\mu,\emptyset} w_{\lambda_1,\lambda_2}=\sum_{\gamma} c^\gamma_{\mu,\lambda_1} w_{\gamma,\lambda_2.}.\]
\end{lem}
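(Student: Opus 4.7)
The plan is to prove Lemma \ref{lem:basecase} by induction on $|\lambda_2|$. First, note that since $w_{\mu,\emptyset}=s_{\mu,\emptyset}=s_\mu(\underline{x})$ (by the earlier observation that $\Arr_{\mu,\emptyset}=\emptyset$), the statement reduces to computing $s_\mu(\underline{x}) w_{\lambda_1,\lambda_2}$ in the RW basis. The base case $|\lambda_2|=0$ is immediate: $w_{\lambda_1,\emptyset}=s_{\lambda_1,\emptyset}$, so $s_\mu s_{\lambda_1}=\sum_\gamma c^\gamma_{\mu,\lambda_1}s_\gamma=\sum_\gamma c^\gamma_{\mu,\lambda_1}w_{\gamma,\emptyset}$.

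For the inductive step, I would unfold the definition of $w_{\lambda_1,\lambda_2}$ using Proposition \ref{prop:LRRW} to count arrows:
\[w_{\lambda_1,\lambda_2}=s_{\lambda_1,\lambda_2}-\sum_{(\beta_1,\beta_2)}\sum_{\eta} c^{\beta_1}_{\eta,\lambda_1}\,c^{\lambda_2}_{\eta,\beta_2}\,w_{\beta_1,\beta_2},\]
where the outer sum runs over $|\beta_2|<|\lambda_2|$. The same expansion applied to each $w_{\gamma,\lambda_2}$ on the right-hand side of the target identity gives an analogous sum indexed by $c^{\beta_1}_{\eta,\gamma}c^{\lambda_2}_{\eta,\beta_2}$. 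I would then multiply the display above by $s_\mu$, rewrite $s_\mu s_{\lambda_1}s_{\lambda_2}=\sum_\gamma c^\gamma_{\mu,\lambda_1}s_{\gamma,\lambda_2}$ and re-expand each $s_{\gamma,\lambda_2}$ back into the RW basis, and apply the inductive hypothesis to each term $s_\mu w_{\beta_1,\beta_2}$ (valid since $|\beta_2|<|\lambda_2|$).

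Collecting the coefficient of a given $w_{\gamma',\beta_2}$ with $|\beta_2|<|\lambda_2|$ reduces the required cancellation to the identity
\[\sum_{\beta_1} c^{\beta_1}_{\eta,\lambda_1}\,c^{\gamma'}_{\mu,\beta_1}=\sum_{\gamma} c^{\gamma}_{\mu,\lambda_1}\,c^{\gamma'}_{\eta,\gamma}\]
for each fixed $\eta$, which is precisely the associativity of the multiplication $s_\mu\cdot s_{\lambda_1}\cdot s_\eta$ in $\bigwedge_1$. This forces everything except $\sum_\gamma c^\gamma_{\mu,\lambda_1}w_{\gamma,\lambda_2}$ to cancel, completing the induction.

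The main conceptual point (rather than obstacle) is recognizing that the only combinatorial fact needed beyond the inductive definition is the associativity of classical Littlewood--Richardson multiplication; the bookkeeping of index relabeling between $\beta_1$ and $\gamma'$ and between $\gamma$ and $\beta_1$ is the only place where care is required, but since the sums are finite (bounded by $|\lambda_2|$ in the relevant degree) there are no convergence issues.
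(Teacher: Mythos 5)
Your proof is correct and takes essentially the same approach as the paper's: induction on $|\lambda_2|$, using Proposition~\ref{prop:LRRW} to translate arrow counts into Littlewood--Richardson coefficients, and then invoking associativity of the product $s_\mu\cdot s_{\lambda_1}\cdot s_\eta$ to produce the needed cancellation. The paper organizes the computation as a chain of forward algebraic simplifications of $w_{\mu,\emptyset}w_{\lambda_1,\lambda_2}$ while you expand both sides into the $w$ basis and compare coefficients of $w_{\gamma',\beta_2}$, but these are the same calculation.
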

\begin{proof} We will use induction on $|\lambda_2|$. If $|\lambda_2|=0$, then the statement follows as
\[w_{\lambda,\emptyset}=s_{\lambda,\emptyset}.\]
Now consider the general case:
\begin{equation}\label{eq:step1} w_{\mu,\emptyset} w_{\lambda_1,\lambda_2}=w_{\mu,\emptyset} (s_{\lambda_1,\lambda_2} - \sum_{a \in \Arr_{\lambda_1,\lambda_2}} w_{t(a)}).\end{equation}
For each $a\in \Arr_{\lambda_1,\lambda_2},$ if $t(a)=(\nu_1,\nu_2)$, then 
\[|\nu_2|<|\lambda_2|.\]
So by induction, \eqref{eq:step1} is
\[\sum_{\gamma} c^\gamma_{\mu, \lambda_1} s_{\gamma,\lambda_2}-\sum_{a \in \Arr_{\lambda_1,\lambda_2}} \sum_{\gamma'} c^{\gamma'}_{\mu, \nu_1} w_{\gamma',\nu_2}.\]
Now using Proposition \ref{prop:LRRW}, this becomes
\[\sum_{\gamma} c^\gamma_{\mu, \lambda_1} s_{\gamma,\lambda_2}-\sum_{(\nu_1,\nu_2)}\sum_{\beta,\gamma'} c^{\nu_1}_{\beta,\lambda_1} c^{\lambda_2}_{\beta, \nu_2}  c^{\gamma'}_{\mu, \nu_1} w_{\gamma',\nu_2}.\]
By associativity of the product of symmetric polynomials, this is the same as
\[\sum_{\gamma} c^\gamma_{\mu, \lambda_1} s_{\gamma,\lambda_2}-\sum_{(\nu_1,\nu_2)}\sum_{\beta,\gamma'} c^{\nu_1}_{\mu,\lambda_1} c^{\lambda_2}_{\beta, \nu_2}  c^{\gamma'}_{\beta, \nu_1} w_{\gamma',\nu_2}.\]
We can re-write $\nu_1$ as $\gamma$ and pull out the coefficient of $c^\gamma_{\mu,\lambda_1}$ in both sums to obtain:
\[\sum_{\gamma} c^\gamma_{\mu, \lambda_1} (s_{\gamma,\lambda_2}-\sum_{\beta,\nu_2, \gamma'} c^{\lambda_2}_{\beta, \nu_2}  c^{\gamma'}_{\beta, \gamma} w_{\gamma',\nu_2}).\]
Again applying Proposition \ref{prop:LRRW}, this simplifies to
\[\sum_{\gamma} c^\gamma_{\mu, \lambda_1} (s_{\gamma,\lambda_2}-\sum_{a \in \Arr_{\gamma,\lambda_2}}  w_{t(a)}).\]
Using the definition of the $\qRW$ basis, we obtain the statement of the lemma.
\end{proof}

 We can now prove Theorem \ref{thm:RWpositivity}. 
 \begin{proof}[Proof of Theorem \ref{thm:RWpositivity}]
Fix two pairs of partitions,
\[(\lambda_1,\lambda_2), (\mu_1,\mu_2).\]
We will use induction on $|\lambda_2|+|\mu_2|$. If $|\lambda_2|+|\mu_2|=0$, then the statement of the Theorem holds by Lemma \ref{lem:basecase}. 

By the same lemma,
\[w_{\lambda_1,\lambda_2} w_{\mu_1,\mu_2}=w_{\lambda_1,\emptyset}w_{\mu_1,\emptyset} w_{\emptyset,\lambda_2} w_{\emptyset,\mu_2} =\sum_{\alpha_1} c^{\alpha_1}_{\lambda_1 \mu_1}w_{\alpha_1,\emptyset} w_{\emptyset,\lambda_2}w_{\emptyset, \mu_2},\]
so it suffices to prove the theorem in the case where $\lambda_1=\mu_1= \emptyset.$ 

 Suppose $|\mu_2|>0$, then by definition
\[w_{\emptyset,\lambda_2} w_{\emptyset,\mu_2}=(s^2_{\lambda_2}-\sum_{\alpha \subsetneq \lambda_2} w_{\lambda_2/\alpha,\alpha}) (s^2_{\mu_2}-\sum_{\beta \subsetneq \mu_2} w_{\mu_2/\beta,\beta}).\]
Using the identity $(a-b)(c-d)=a c - (a - b) d - (c - d) b - b d$, this is 
\[\sum_{\gamma} c^\gamma_{\lambda_2 \mu_2} s^2_{\gamma}-w_{\emptyset,\lambda_2} (\sum_{\beta \subsetneq \mu_2} w_{\mu_2/\beta,\beta})-w_{\emptyset,\mu_2}(\sum_{\alpha \subsetneq \lambda_2} w_{\lambda_2/\alpha,\alpha})-(\sum_{\alpha \subsetneq \lambda_2} w_{\lambda_2/\alpha,\alpha})(\sum_{\beta \subsetneq \mu_2} w_{\mu_2/\beta,\beta}),\]
which can be re-written as
\[\sum_{\gamma} c^\gamma_{\lambda_2 \mu_2} s^2_{\gamma}-\sum_{\substack{\alpha \subset \lambda_2\\ \beta \subset \mu_2 \\ |\lambda_2/\alpha|+|\mu_2/\beta|>0}} w_{\lambda_2/\alpha,\alpha}  w_{\mu_2/\beta,\beta}.\]
Applying the induction assumption to the product in the second sum, we obtain
\[\sum_{\gamma} c^\gamma_{\lambda_2 \mu_2} s^2_{\gamma}-\sum_{\substack{\delta,\\\alpha \subset \lambda_2\\ \beta \subset \mu_2 \\ |\lambda_2/\alpha|+|\mu_2/\beta|>0}} c^\delta_{\alpha \beta} s^1_{\lambda_2/\alpha} s^1_{\mu_2/\beta} w_{\emptyset,\delta}.\]
Applying Corollary \ref{cor:KL} and Lemma \ref{lem:basecase}, this can be re-written as 
\[\sum_{\gamma} c^\gamma_{\lambda_2 \mu_2} s^2_{\gamma}-\sum_{\gamma/\delta \neq \emptyset} c^\gamma_{\lambda_2 \mu_2} w_{\gamma/\delta,\delta}=\sum_{\gamma} c^\gamma_{\lambda_2 \mu_2}w_{\emptyset,\gamma}\]
as required. \end{proof}

\subsection{Products and vertices}\label{subsec:prodvert}
In this section, we identify two bijections, one from infusion and one from diffusion, that play an important role in computing structure constants of nice sub-quivers of the Remmel--Whitney quiver. 

Fix a vertex $(\alpha_1,\alpha_2)$ of the Remmel--Whitney quiver, and another partition $\beta$. Consider the product $w_{\beta,\emptyset} w_{\alpha_1,\alpha_2}$, and the targets of all the arrows with vertex appearing in this product, i.e.:
\begin{equation} \label{eq:sum1}
\sum_{\delta} \sum_{T_1 \in \RW_{\delta/\alpha_1}(\beta)} \sum_{a_{T_2} \in \Arr_{\delta,\alpha_2}} w_{t(a_{T_2})}.\end{equation}
Consider some summand here, indexed by a pair $(T_1,T_2)$, with $T_1 \in \RW_{\delta/\alpha_1}(\beta)$ and $a_{T_2} \in  \Arr_{\delta,\alpha_2}.$ That is, let $t(a_{T_2})=(\mu_1,\mu_2)$, so that $T_2 \in \RW_{\mu_1/\delta}(\alpha_2/\mu_2).$ 
We can visualize $T_1$ and $T_2$ as in the diagram on the left; we can then do the infusion of the pair: 
\[\begin{tikzpicture}[scale=0.5]
\draw (0,-3)--(0,0)--(3,0);

\draw (3,0)--(3,-1)--(2,-1)--(2,-2)--(1,-2)--(1,-3)--(0,-3);
\draw (5,0)--(5,-1)--(4,-1)--(4,-2)--(3,-2)--(3,-3)--(2,-3)--(2,-4)--(1,-4)--(1,-5)--(0,-5);
\draw (3,0)--(7,0)--(7,-1)--(6,-1)--(6,-2)--(5,-2)--(5,-3)--(4,-3)--(4,-4)--(3,-4)--(3,-5)--(2,-5)--(2,-6)--(1,-6)--(1,-7)--(0,-7)--(0,-3);

\node (A) at (0.5,-1.5) {$\alpha_1$};
\node (B) at (5.5,-1.5) {$T_2$};
\node (C) at (3.5,-1.5) {$T_1$};
\node (D) at (8.5,-3) {$\xrightarrow{\Slide}$};
\end{tikzpicture}  \hspace{3mm} 
\begin{tikzpicture}[scale=0.5]
\draw (0,-3)--(0,0)--(3,0);

\draw (3,0)--(3,-1)--(2,-1)--(2,-2)--(1,-2)--(1,-3)--(0,-3);
\draw (5,0)--(5,-1)--(4,-1)--(4,-2)--(3,-2)--(3,-3)--(2,-3)--(2,-4)--(1,-4)--(1,-5)--(0,-5);
\draw (3,0)--(7,0)--(7,-1)--(6,-1)--(6,-2)--(5,-2)--(5,-3)--(4,-3)--(4,-4)--(3,-4)--(3,-5)--(2,-5)--(2,-6)--(1,-6)--(1,-7)--(0,-7)--(0,-3);

\node (A) at (0.5,-1.5) {$\alpha_1$};
\node (B) at (3.5,1.5) {$\Slide_{T_1}(T_2)$};
\node (D) at (3.5,-1.5) {};
\node (E) at (3.5,-2.5) {};
\draw[->] (B)--(D);
\node (C) at (8.5,-2.5) {$\Slide^{T_2}(T_1)$};
\draw[->] (C)--(E);

\end{tikzpicture}
\]
 Let $\lambda$ be the outer shape of $\Slide_{T_1}(T_2)$. By Proposition \ref{prop:jdt} and Corollary \ref{cor:slideprop},
\[U_1:=\Slide_{T_1}(T_2) \in \ \RW_{\lambda/\alpha_1}(\alpha_2/\mu_2)\]
and 
\[U_2:=\Slide^{T_2}(T_1) \in \RW_{\mu_1/\lambda}(\beta).\]
Therefore $U_1$ defines an arrow $a_{U_1}:(\alpha_1,\alpha_2) \to (\lambda,\mu_2)$ and $U_2 \in \RW_{\mu_1/t(a_{U_1})_1}(\beta)$. Essentially, we have swapped the operations of multiplying by $w_{\beta,\emptyset}$ and taking all outgoing arrows. By the properties of infusion, this in fact gives a bijection, as we already knew from the proof of Lemma \ref{lem:basecase}, where \eqref{eq:sum1} is shown to be equal to
 \[\sum_{a_{U_1} \in \Arr_{\alpha_1,\alpha_2}} w_{\beta,\emptyset} w_{t(a_{U_1})}=\sum_{a_{U_1} \in \Arr_{\alpha_1,\alpha_2}} \sum_{\gamma} \sum_{U_2 \in \RW_{\gamma/t(a_{U_1})_1}(\beta)} w_{\gamma,t(a)_2}.\]

We now consider an analogous bijection when multiplying by $w_{\emptyset,\beta}$. We need to use the alternative arrow indexing set introduced in Remark \ref{rem:swap}. As above, we first take the product, and then look at the targets of all arrows based at some element appearing in the product; this is:
\begin{equation}\label{eq:bijdiff} \sum_{T_1 \in \RW(\beta*\alpha_2)} \sum_{a_{T_2} \in \overline{\Arr}_{\alpha_1,\sh(T_1)}} w_{t(a_{T_2})}.\end{equation}
 If we take a summand $(\mu_1,\mu_2)$ indexed by $(T_1,T_2)$, then $T_2 \in \RW_{\sh(T_1)/\mu_2}(\mu_1/\alpha_1)$. Applying the bijection $\Diff$ from Lemma \ref{lem:KL} to the pair $(T_2,T_1)$, we get two new tableaux $(U_1,U_2)$ that together cover $\beta*\alpha_2$, satisfying that $U_2 \in  \RW_{\sh(T_1)/\mu_2}(\mu_1/\alpha_1)$ and $\Rect(U_1) \in \RW(\mu_2)$. Diagrammatically:

\[\begin{tikzpicture}[scale=0.5]
\draw (0,-3)--(0,0)--(3,0);

\draw[fill=gray] (3,0)--(3,-1)--(2,-1)--(2,-2)--(1,-2)--(1,-3)--(0,-3)--(0,0)--(3,0);
%\draw (5,0)--(5,-1)--(4,-1)--(4,-2)--(3,-2)--(3,-3)--(2,-3)--(2,-4)--(1,-4)--(1,-5)--(0,-5);
\draw (3,0)--(7,0)--(7,-1)--(6,-1)--(6,-2)--(5,-2)--(5,-3)--(4,-3)--(4,-4)--(3,-4)--(3,-5)--(2,-5)--(2,-6)--(1,-6)--(1,-7)--(0,-7)--(0,-3);
\node (B) at (3.5,-1.5) {$T_2$};
\node (D) at (8.5,-3) {$\xrightarrow{\Diff}$};
\end{tikzpicture}  \hspace{3mm} 
 \begin{tikzpicture}[scale=0.5]
\draw (0,0)--(3,0)--(3,-1)--(2,-1)--(2,-3)--(1,-3)--(0,-3)--(0,0);
\draw[] (3,4)--(6,4)--(6,3)--(5,3)--(5,1)--(4,1)--(4,0)--(3,0)--(3,4);
\draw[fill=gray] (0,0)--(0,-2)--(1,-2)--(1,-1)--(1.5,-1)--(1.5,0)--(3.5,0)--(0,0);
\draw[fill=gray] (3,4)--(3,0)--(3.5,0)--(3.5,2)--(4.5,2)--(4.5,4)--(3,4);
\node (A) at (-1,-1) {$\beta$};
\node (B) at (2,2) {$\alpha_2$};
\node (C) at (7,0) {$U_2$};
\node (D) at (4,2) {};
\node (E) at (2,-0.8) {};
\draw[->] (C)--(E);
\draw[->] (C)--(D);

\end{tikzpicture}\]
 
If we look at the reverse lexicographic labeling of $\mu_1/\alpha_1$, by the Remmel--Whitney rules, we obtain two sub-skew shapes: one given by the labels of $U_2$ appearing in $\alpha_2$, and one given by the labels of $U_2$ appearing in $\beta$. It is now easy to see that we have produced two new tableaux, one which defines an arrow rooted at $(\alpha_1,\alpha_2)$, coming from the $\alpha_2$ labels. The other, coming from the $\beta$ labels, is an element of $\RW_{\beta/\lambda}(\mu_1/\delta)$ for some $\lambda$ and $\delta$. We can use the bijection $\Psi$ to get a tableaux in $\RW_{\mu_1/\delta}(\beta/\lambda)$, then do jeu-de-taquin to finally end up with an arrow rooted at $(\emptyset,\beta)$.

 In summary, we can interpret $(U_1,U_2)$ as the data of an element in the product of $w_{t(a_{1})} w_{t(a_{2})}$, where $a_1$ is rooted at $(\emptyset,\beta)$ and $a_2$ is rooted at $(\alpha_1,\alpha_2)$. We allow one but not both of the arrows to be loops.

 In the proof of Theorem \ref{thm:RWpositivity}, \eqref{eq:bijdiff} is shown to be equal to
 \[\sum_{a_{1},a_{2}} w_{t(a_{1})} w_{t(a_{2})},\]
 where the sum is over $a_1, a_2$ exactly as above. The above discussion has just given an explicit bijection between summands.

\subsection{Subquivers of the Remmel--Whitney quiver}\label{subsec:subquivers}

\begin{mydef}Let $r>0$ be a positive integer, and $\alpha$ a partition. The \emph{$r^{th}$ row rule filling} of $\alpha$ is the filling of $\alpha$ given by placing $r$ in the top left box, and then going up by $1$ along columns, and down by $1$ along rows. This may result in negative entries. 
\end{mydef}
\begin{eg} The $5^{th}$ row rule filling of $(3,3,2)$ is
\[\ytableausetup{boxsize=\boxsize} \begin{ytableau}
5 & 4 & 3 \\
6 & 5 & 4\\
7 & 6\\
\end{ytableau}.\]
\end{eg}

\begin{mydef} \label{def:rowrule} Let $r>0$ be a positive integer, and $a_T:(\alpha_1,\alpha_2) \to (\beta_1,\beta_2)$ be an arrow in $\qRW$. Recall that $T \in \RW_{\beta_1/\alpha_1}(\alpha_2/\beta_2)$. For $i \in \{1,\dots,|\beta_1|-|\alpha_1|\}$, let $R_r(i)$ be the label in the $r^{th}$ row filling of $\alpha_2/\beta_2$ of the box labeled by $i$ in the reverse lexicographic labeling. Let $\row_T(i)$ be the row of the $i^{th}$ label in $T$. We say that
\begin{itemize}
\item $T$ \emph{satisfies the $r^{th}$ row rule} if, for all $i$, $R_r(i) \leq \row_T(i)$. 
\item $T$ \emph{satisfies the reverse $r^{th}$ row rule} if, for all $i$, $R_r(i) >\row_T(i)$.
\end{itemize}
We denote the subset of arrows in $\Arr$ satisfying the $r^{th}$ row rule $\Arr^{\leq r}$, and the subset of arrows in $\Arr$ satisfying the reverse $r^{th}$ row rule $\Arr^{>r}$. 
\end{mydef}
\begin{eg} Let $r=2$. In this example, we consider arrows $a_T$ rooted at $(\yng(1),\yng(1,1))$ and with target $(\beta_1,\emptyset)$. The $r=2$ row filling of $\yng(1,1)$ is 
 \ytableausetup{mathmode, boxsize=\boxsize}
\[\begin{ytableau}
2 \\
3 \\
\end{ytableau},\]
and the reverse lexicographic filling of $\yng(1,1)/\emptyset$ is
\[\begin{ytableau}
1 \\
2 \\
\end{ytableau}.\]
The arrow $a_T: (\yng(1),\yng(1,1)) \to (\yng(1,1,1), \emptyset)$
with \[T=\begin{ytableau}
  \\
1 \\
2 \\
\end{ytableau}\]
satisfies the $r=2$ row rule.

 The arrow $a_T: (\yng(1),\yng(1,1)) \to (\yng(2,1), \emptyset)$
with \[T=\begin{ytableau}
$ $ & 1  \\
2 \\
\end{ytableau}\]
satisfies the reverse $r=2$ row rule.
\end{eg}
The row rule has an important splitting property. To state it, we need to explain how to compose two arrows into a new tableau. 
Suppose that there is a path in  $\qRW$ made up of two arrows,
\[(\alpha_1,\alpha_2) \xrightarrow{a_{T_1}} (\lambda_1,\lambda_2) \xrightarrow{a_{T_2}}  (\beta_1,\beta_2).\] 
So $\alpha_1 \subset \lambda_1 \subset \beta_1$, and $\beta_2 \subset \lambda_2 \subset \alpha_2$, $T_1 \in \RW_{\lambda_1/\alpha_1}(\alpha_2/\lambda_2)$, and $T_2 \in \RW_{\beta_1/\lambda_1}(\lambda_2/\beta_2)$. We will build a tableau $T$ of shape $\alpha_1/\beta_1$. If $T \in \RW_{\alpha_1/\beta_1}(\beta_2/\alpha_2)$, then we say that $a_{T_1}$ and $a_{T_2}$ are \emph{composable} with \emph{composition} $a_T$.

We now describe how to construct $T$, a tableau of shape $\beta_1/\alpha_1$ (which may not be semi-standard or standard). A box $B_1$ in the skew shape $\beta_1/\alpha_1$ is included either in the skew shape $\lambda_1/\alpha_1$ or $\beta_1/\lambda_1$. If it is in the first, take the label from $T_1$, which determines a box $B_2$ in the skew shape $\alpha_2/\lambda_2$ using the reverse lexicographic filling. This box $B_2$ is also contained in $\alpha_2/\beta_2$, and let $i$ be the label of this box in the reverse lexicographic filling of $\alpha_2/\beta_2$. Use $i$ to label $B_1$ in $T$. If $B_1$ is in $\beta_1/\lambda_1$, do the same process, except using $T_2$.  We call the resulting tableau $T$ the \emph{composition tableau} of $T_1$ and $T_2$.

\begin{prop} \label{prop:splitting1}Let $a_T: (\alpha_1,\alpha_2) \to (\beta_1,\beta_2)$ be an arrow in $\qRW$ that does not satisfy either the $r^{th}$ row rule or the $r^{th}$ reverse row rule. Then there are two arrows in $\qRW$,
\[(\alpha_1,\alpha_2) \xrightarrow{a_{T_1}} (\lambda_1,\lambda_2) \xrightarrow{a_{T_2}}  (\beta_1,\beta_2)\]
such that $a_{T_1}$ satisfies the row rule, and $a_{T_2}$ satisfies the reverse row rule, and $T$ is the composition tableau of $T_1$ and $T_2$. 
\end{prop}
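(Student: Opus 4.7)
The plan is to construct $\lambda_1, \lambda_2, T_1, T_2$ explicitly from $T$ and verify the required properties. Partition the labels of $T$ into
\[S_1 = \{i : R_r(i) \leq \row_T(i)\} \quad \text{and} \quad S_2 = \{i : R_r(i) > \row_T(i)\};\]
both are nonempty by hypothesis. Let $\lambda_2/\beta_2$ consist of the boxes of $\alpha_2/\beta_2$ whose $\rl$-label lies in $S_2$, and let $\lambda_1/\alpha_1$ consist of the boxes of $T$ whose label lies in $S_1$. Define $T_1$ and $T_2$ to be the restrictions of $T$ to $\lambda_1/\alpha_1$ and $\beta_1/\lambda_1$, with labels re-indexed by $1,\dots,|S_1|$ and $1,\dots,|S_2|$ in an order-preserving way.

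The first step is to verify that $\lambda_2$ is a valid partition with $\beta_2 \subset \lambda_2 \subset \alpha_2$; equivalently, that the $S_2$-boxes of $\alpha_2/\beta_2$ are closed under moving up or left within $\alpha_2/\beta_2$. If $(i,j)$ has label $\ell \in S_2$ and $(i-1,j)\in\alpha_2/\beta_2$ has label $\ell'$, then the column RW rule gives $\row_T(\ell') \leq \row_T(\ell) - 1$, and since $R_r(\ell') = R_r(\ell)-1$, the strict integer inequality defining $S_2$ propagates to $\ell'$. The left-neighbor case is handled analogously using the row RW rule for the adjacent pair $\ell, \ell+1$ in $\rl$.

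The more delicate step is verifying that $\lambda_1$ is a valid partition, which amounts to two analogous propagation claims in $T$: if $(i,j)\in T$ has label $\ell \in S_2$ and its right-neighbor $(i, j+1)$, respectively bottom-neighbor $(i+1, j)$, lies in $T$ with label $\ell''$, then $\ell'' \in S_2$. We split on the position of $\ell''$ relative to $\ell$ in $\rl$. The same-row case in $\rl$ is either immediate from an $R_r$ computation or ruled out by the row RW rule (depending on the direction), and the later-row case with $\kappa_{\ell''}\leq\kappa_\ell$ yields $R_r(\ell'') \geq R_r(\ell) + 1$ directly. The main remaining case is $\rho_{\ell''} > \rho_\ell$ together with $\kappa_{\ell''} > \kappa_\ell$. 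Here partition convexity places every box in the $\rl$-column at column $\kappa'' := \kappa_{\ell''}$ from row $\rho_\ell$ to row $\rho_{\ell''}$ inside $\alpha_2/\beta_2$. Letting $m_0 < m_1 < \dots < m_k = \ell''$ be the labels at these boxes, the iterated column RW rule forces strictly increasing rows in $T$ along the chain, while the row RW rule along the $\rl$-row $\rho_\ell$ from $m_0$ to $\ell$ forces weakly decreasing rows and strictly increasing columns. Combining these inequalities with the known positions of $\ell$ and $\ell''$ in $T$ produces a contradiction, ruling out this case.

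With $\lambda_1$ and $\lambda_2$ established, the remaining verifications are straightforward. Because $\alpha_2/\lambda_2$ is the rightward portion of each row of $\alpha_2/\beta_2$ and the column structure is unchanged, consecutive same-row or same-column label pairs in $\rl(\alpha_2/\lambda_2)$ correspond to such pairs in $\rl(\alpha_2/\beta_2)$, so the RW rules for $T_1$ are inherited from those for $T$; a symmetric argument handles $T_2$ and $\rl(\lambda_2/\beta_2)$. The row rule for $T_1$ and the reverse row rule for $T_2$ follow immediately from the definitions of $S_1$ and $S_2$, and unraveling the re-indexing identifies $T$ with the composition tableau of $T_1$ and $T_2$. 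The main obstacle is the double chain contradiction in the third paragraph.
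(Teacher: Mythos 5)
Your proof is correct and follows the same approach as the paper's: partition the labels of $T$ according to whether the row rule holds, then show the rule-breaking set is closed under passing to labels above or to the left in $\rl(\alpha_2/\beta_2)$ and to labels to the right or below in $T$, so that the two classes cut out the desired skew shapes $\alpha_2/\lambda_2$, $\lambda_2/\beta_2$ and $\lambda_1/\alpha_1$, $\beta_1/\lambda_1$. The two-chain argument in your third paragraph fills in precisely the step the paper compresses into ``the $\RW$ rules imply that $j$ is strictly to the left of $i$ in $T$,'' and the verification that the RW rules descend to $T_1$ and $T_2$ is left implicit in the paper.
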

\begin{proof} By definition,  $\sh(T)=\beta_1/\alpha_1$ and $T$ satisfies the RW rules coming from the skew shape $\alpha_2/\beta_2$. Let $i$ be a label in the reverse lexicographic labeling of $\alpha_2/\beta_2$, and suppose that  $\row_T(i) < R_r(i)$. Consider the labels the left or above of $i$ in $\alpha_2/\beta_2$, as outlined in red in the picture below: 
\[\begin{tikzpicture}[scale=0.5]
\draw (2,0)--(5,0)--(5,-1)--(4,-1)--(4,-2)--(3,-2)--(3,-3)--(2,-3)--(2,-4)--(1,-4)--(1,-5)--(0,-5)--(0,-2)--(1,-2)--(1,-1)--(2,-1)--(2,0);
\node (A) at (6,-2.5) {$\alpha_2/\beta_2$};
\draw (1.5,-1.5)--(1.5,-2.5)--(2.5,-2.5)--(2.5,-1.5)--(1.5,-1.5);
\node (B) at (2,-2) {$i$};
\draw[thick, red] (2,0)--(2,-1)--(1,-1)--(1,-2)--(0,-2)--(0,-2.5)--(2.5,-2.5)--(2.5,0)--(2,0);
\end{tikzpicture}.\]
We claim that any label $j$ in this region also satisfies $\row_T(j) < R_r(j)$. To see this, note that the region is always connected, and so it suffices to consider a label $j$ either directly above or to the left of $i$ in the reverse lexicographic filling of $\alpha_2/\beta_2$. If $j$ is directly above $i$, then 
\begin{itemize}
\item $R_r(j)=R_r(i)-1$, and
\item $\row_T(j) < \row_T(i).$
\end{itemize}
Therefore
\[R_r(j)=R_r(i)-1 > \row_T(i)-1 \geq \row_T(j),\]
so $R_r(j) > \row_T(j)$ as claimed. 

Suppose $j$ is a label directly to the left of $i$ in the reverse lexicographic filling  $\rl(\alpha_2/\beta_2)$, that is, $j=i+1$. Then we have
\begin{itemize}
\item $R_r(i+1)=R_r(i)+1$, and
\item $\row_T(i+1) \leq \row_T(i).$
\end{itemize}
Combining these two, we obtain that
\[R_r(i+1)=R_r(i)+1>\row_T(i)+1>\row_T(i+1)\]
as required.

Next, we prove a similar claim about the location of labels satisfying $\row_T(i) < R_r(i)$ in $T$. 
Suppose $i$ is such a label. Consider the labels to the right of or below  $i$ in $T$, as outlined in red in the picture below: 
\[\begin{tikzpicture}[scale=0.5]
\draw (2,0)--(5,0)--(5,-1)--(4,-1)--(4,-2)--(3,-2)--(3,-3)--(2,-3)--(2,-4)--(1,-4)--(1,-5)--(0,-5)--(0,-2)--(1,-2)--(1,-1)--(2,-1)--(2,0);
\node (A) at (6,-2.5) {$T$};
\draw (1.5,-1.5)--(1.5,-2.5)--(2.5,-2.5)--(2.5,-1.5)--(1.5,-1.5);
\node (B) at (2,-2) {$i$};
\draw[thick, red] (1.5,-1.5)--(1.5,-4)--(2,-4)--(2,-3)--(3,-3)--(3,-2)--(4,-2)--(4,-1.5)--(1.5,-1.5);
\end{tikzpicture}.\]
We claim that any label $j$ in this region also satisfies $\row_T(j) < R_r(j)$. 
Again, it suffices to check this for $j$ directly to the right or below of $i$ in $T$, so $\row_T(j) \leq \row_T(i)+1$. 

In either case, we have  $j>i$. Consider the labels $i$ and $j$ in the reverse lexicographic filling  $\rl(\alpha_2/\beta_2)$:\begin{itemize}
\item If $j$ is strictly above $i$ in $T$, then $j<i$, so this case is impossible.
\item If $j$ is weakly below and to weakly left of $i$ in $T$, then 
\[R_r(j)\geq R_r(i)+1 >\row_T(i)+1 \geq \row_T(j).\]
\item If $j$ is weakly below and strictly to the right of $i$ in $T$, then the $\RW$ rules imply that $j$ is strictly to the left of $i$ in $T$, so this is impossible. 
\end{itemize}

These two claims imply that the set of labels $\{i: \row_T(i)<R_r(i)\}$ define a sub-skew tableau both in $T$, of shape $\lambda_1/\alpha_1$, where $\lambda_1 \subset \beta_1$ and of $\alpha_2/\beta_2$, of shape $\alpha_2/\lambda_2$, where $\beta_2 \subset \lambda_2.$ By renumbering the labels, this gives the decomposition:
\[(\alpha_1,\alpha_2) \xrightarrow{a_{T_1}} (\lambda_1,\lambda_2) \xrightarrow{a_{T_1}}  (\beta_1,\beta_2).\]
\end{proof}

The converse of the proposition also holds. 

\begin{prop}\label{prop:splitting2} Suppose that there is a path in  $\qRW$ made up of two arrows,
\[(\alpha_1,\alpha_2) \xrightarrow{a_{T_1}} (\lambda_1,\lambda_2) \xrightarrow{a_{T_2}}  (\beta_1,\beta_2)\]
such that $a_{T_1}$ satisfies the row rule, and $a_{T_2}$ satisfies the reverse row rule. Then $a_{T_1}$ and $a_{T_2}$ are composable. 

\end{prop}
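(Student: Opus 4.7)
The plan is to verify directly that the composition tableau $T$ of shape $\beta_1/\alpha_1$, obtained from $T_1$ and $T_2$ by the prescribed relabeling, lies in $\RW_{\beta_1/\alpha_1}(\alpha_2/\beta_2)$. This requires checking that $T$ is a standard skew tableau and satisfies both Remmel--Whitney rules associated with $\rl(\alpha_2/\beta_2)$.

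The first observation is that the natural relabelings $\rl(\alpha_2/\lambda_2) \hookrightarrow \rl(\alpha_2/\beta_2)$ and $\rl(\lambda_2/\beta_2) \hookrightarrow \rl(\alpha_2/\beta_2)$ are order-preserving. Consequently, any RW or standardness condition involving only labels that come from a single region -- either the inner region $\lambda_1/\alpha_1$ (contributed by $T_1$) or the outer region $\beta_1/\lambda_1$ (contributed by $T_2$) -- reduces to the corresponding property of $T_1 \in \RW_{\lambda_1/\alpha_1}(\alpha_2/\lambda_2)$ or $T_2 \in \RW_{\beta_1/\lambda_1}(\lambda_2/\beta_2)$.

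The substantive work is the mixed case. For the horizontal RW rule, consider consecutive labels $i < i+1$ in the same row of $\rl(\alpha_2/\beta_2)$ at boxes $(p, c+1)$ and $(p, c)$ of $\alpha_2$. The case $i+1 \in \alpha_2/\lambda_2$, $i \in \lambda_2/\beta_2$ is impossible by the Young-diagram property of $\lambda_2$. In the remaining mixed case, $i$ lies in $\alpha_2/\lambda_2$ (contributed by $T_1$, placed in $\lambda_1/\alpha_1 \subset \lambda_1$ in $T$) while $i+1$ lies in $\lambda_2/\beta_2$ (contributed by $T_2$, placed in $\beta_1/\lambda_1 \subset \beta_1 \setminus \lambda_1$). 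The row rule on $T_1$ gives $\row_T(i) \geq r + p - c - 1$ and the reverse row rule on $T_2$ gives $\row_T(i+1) \leq r + p - c - 1$, yielding $\row_T(i+1) \leq \row_T(i)$ (weakly above). The \emph{strictly right} condition is then automatic: since $\lambda_1$ is a Young diagram and $\row_T(i+1) \leq \row_T(i)$, we have $\lambda_{1, \row_T(i+1)} \geq \lambda_{1, \row_T(i)}$, and combining with $i \in \lambda_1$, $i+1 \in \beta_1 \setminus \lambda_1$ forces the column of $i+1$ in $T$ to strictly exceed that of $i$. An entirely analogous argument handles the vertical RW rule.

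The remaining issue is standardness of $T$ across the $\lambda_1$-boundary. The row/reverse-row-rule inequalities yield $r_j - c_j > r_i - c_i$ for the boxes $(r_i, c_i) \in \alpha_2/\lambda_2$ and $(r_j, c_j) \in \lambda_2/\beta_2$ corresponding to horizontally adjacent labels in $T$, which forces the correct reverse-lex order in all cases except when $r_i > r_j$ and $c_i > c_j$. The main obstacle will be ruling out this ``crossed'' configuration, which I would handle by tracing the chain of consecutive labels guaranteed by the RW rule on $T_2$ through row $r_j$ of $\lambda_2/\beta_2$ and applying the reverse row rule to the rightmost such label, deriving a contradiction with the hypothesised position of $\tilde{j}$ in $T$. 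An analogous argument handles vertical adjacencies, completing the proof.
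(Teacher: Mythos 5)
The completed parts of your argument track the paper's proof closely: you split into single-region cases (which reduce to the hypotheses on $T_1$, $T_2$), and for the mixed RW cases you combine the row rule on the $T_1$-side with the reverse row rule on the $T_2$-side to bound rows, then use the Young-diagram shape of $\lambda_1$ to force the horizontal direction. Your observation that the other mixed assignment for the horizontal RW rule is impossible by the Young-diagram property of $\lambda_2$ is also right, and using the $\lambda_1$-boundary to get ``strictly right'' is in fact a touch cleaner than the paper, which obtains it from $i>j$ together with standardness (so the paper's RW check implicitly relies on having established standardness first).

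The gap is in the standardness step, which is where the paper spends most of its effort. You correctly isolate the crossed configuration ($r_i > r_j$, $c_i > c_j$ for the corresponding boxes in $\rl(\alpha_2/\beta_2)$) as the case to exclude, but you only gesture at how to rule it out: ``tracing the chain of consecutive labels\ldots through row $r_j$ of $\lambda_2/\beta_2$ and applying the reverse row rule to the rightmost such label, deriving a contradiction with the hypothesised position of $\tilde{j}$,'' where $\tilde{j}$ is undefined. This does not yet amount to a proof, and it does not obviously match what is actually needed. The paper's argument traces the chain of RW-consecutive labels \emph{in the row of $i$}, partitioned into the $t$ labels landing in $\alpha_2/\lambda_2$ (handled by the row rule for $T_1$) followed by $s$ labels in $\lambda_2/\beta_2$ (handled, when $s>0$, by the reverse row rule for $T_2$), and uses the relation of $j$ to this chain across a split into the cases $s>0$ and $s=0$; the inequality $R_r(i)+t \le \row_T(i)$ from one side and $\row_T(j) < R_r(i)+t$ from the other side produce the contradiction with $\row_T(i) \le \row_T(j)$. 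Your sketch works only with a chain on the $T_2$ side in $j$'s row, so it is not clear how to get both halves of the contradiction, and you would still need the two subcases. This is the substantive step of the proposition, so the proof as written is incomplete.
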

\begin{proof}
There are two claims to check: first, that $T$ is a standard tableau, and second, that it satisfies the $\RW$ rules coming from $\alpha_2/\beta_2$. 

We first show that $T$ is standard. Let $i$ be a label in $T$, and let $j$ be a label either directly to the right or below of $i$. We want to show that $i<j$. Note that if $i$ and $j$ are both contained in either $\lambda_1/\alpha_1$ or $\beta_1/\lambda_1$, then the claim holds by the assumptions on $T_1$ and $T_2$. So we can assume that $i$ is in  $\lambda_1/\alpha_1$ and $j$ is in $\beta_1/\lambda_1$. We illustrate this below:
\[\begin{tikzpicture}[scale=0.4]
\draw[dashed] (0,-3)--(0,0)--(3,0);

\draw (3,0)--(3,-1)--(2,-1)--(2,-2)--(1,-2)--(1,-3)--(0,-3);
\draw (5,0)--(5,-1)--(4,-1)--(4,-2)--(3,-2)--(3,-3)--(2,-3)--(2,-4)--(1,-4)--(1,-5)--(0,-5);
\draw (3,0)--(7,0)--(7,-1)--(6,-1)--(6,-2)--(5,-2)--(5,-3)--(4,-3)--(4,-4)--(3,-4)--(3,-5)--(2,-5)--(2,-6)--(1,-6)--(1,-7)--(0,-7)--(0,-3);

\node (A) at (0.5,0.5) {$\alpha_1$};
\node (B) at (6.5,0.5) {$\beta_1$};
\node (C) at (4.5,0.5) {$\lambda_1$};

\node (E) at (3.5,-2.5) {$j$};
\node (D) at (2.5,-3.5) {$j$};

\node (F) at (2.5,-2.5) {$i$};
\node (G) at (-1.5,-3.5) {$T$};

\end{tikzpicture}.\]

 Since $a_{T_1}$ satisfies the row rule, and $a_{T_2}$ satisfies the reverse row rule,
\[R_r(i) \leq \row_T(i) \leq \row_T(j)< R_r(j).\]
This implies $R_r(i) <R_r(j)$, so when we consider the labels $i$ and $j$ in the reverse lexicographic filling, then $j$ is strictly below the shift of the line $x=y$ running through $i$ in $\rl(\alpha_2/\beta_2)$, as illustrated in red in the diagram:
\[\begin{tikzpicture}[scale=0.4]
\draw[dashed] (0,-3)--(0,0)--(3,0);

\draw (3,0)--(3,-1)--(2,-1)--(2,-2)--(1,-2)--(1,-3)--(0,-3);
\draw (5,0)--(5,-1)--(4,-1)--(4,-2)--(3,-2)--(3,-3)--(2,-3)--(2,-4)--(1,-4)--(1,-5)--(0,-5);
\draw (3,0)--(7,0)--(7,-1)--(6,-1)--(6,-2)--(5,-2)--(5,-3)--(4,-3)--(4,-4)--(3,-4)--(3,-5)--(2,-5)--(2,-6)--(1,-6)--(1,-7)--(0,-7)--(0,-3);
\draw[red, thick] (1,0)--(8,-7);

\node (A) at (0.5,0.5) {$\beta_2$};
\node (B) at (6.5,0.5) {$\alpha_2$};
\node (C) at (4.5,0.5) {$\lambda_2$};

\node (E) at (3.5,-2.5) {$i$};

\node (F) at (1.5,-3.5) {$j$};
\node (G) at (-2.5,-3.5) {$\rl(\alpha_2/\beta_2)$};

\end{tikzpicture}.\]

We want to show that $j$ is not in a row above the row containing $i$. To see this, suppose for a contradiction that $j$ is strictly above in $i$ in the reverse lexicographic ordering, so that $j$ and $i$ form an L shape as demonstrated below. The green line is boundary of $\lambda_2$, which separates $i$ and $j$.
\[\begin{tikzpicture}[scale=0.4]
\node (A) at (0.5,-0.5) {$j$};
\draw (0,0)--(1,0)--(1,-2)--(4,-2)--(4,-3)--(0,-3)--(0,0);
\draw[green,thick] (3,0)--(3,-1)--(2,-1)--(2,-3)--(1,-3)--(1,-4)--(0,-4);
\node (B) at (3.5,-2.5) {$i$};
\end{tikzpicture}\]
Suppose that there are $t$ boxes to the left of $i$, not contained in $\lambda_2$, and $s+t$ boxes to the left of $i$, not contained in $\beta_2$. Since $j$ is not in the same column of $i$, $s+t>0.$ Since $T_1$ satisfies the row rule and the $\RW$ rules, we know that
\[R_r(i)+t=R_r(i+t) \leq \row_T(i+t) \leq \row_T(i).\]
If $s>0$, then as $T_2$ satisfies the reverse row rule and the $\RW$ rules, we know that
\[\row_T(j)\leq \row_T(i+t+1)-1<R_r(i+t+1)-1=R_r(i)+t.\]
If $s=0$, then 
\[\row_T(j) < R_r(j)<R_r(i)+t.\]
Since by assumption, $\row_T(i) \leq \row_T(j),$ this gives a contradiction.

%%%
We now show that $T$ satisfies the $\RW$ rules. We consider a label $i$ in the reverse lexicographic  filling of $\alpha_2/\beta_2$. Let $j$ and $k$ be the labels directly to the right and directly below, if they exist. Note that $j<i<k.$ Again, if $i$ and $j$ are both contained in either $\alpha_2/\lambda_2$ or $\lambda_2/\beta_2$, or $i$ and $k$ are, we know that the $\RW$ rules are satisfied for this pair because of the assumptions on $T_1$ and $T_2$. So if suffices to consider the case where $i$ is in $\lambda_2/\beta_2$ and $j$ and $k$ are in $\alpha_2/\lambda_2$:
\[\begin{tikzpicture}[scale=0.4]
\draw[dashed] (0,-3)--(0,0)--(3,0);

\draw (3,0)--(3,-1)--(2,-1)--(2,-2)--(1,-2)--(1,-3)--(0,-3);
\draw (5,0)--(5,-1)--(4,-1)--(4,-2)--(3,-2)--(3,-3)--(2,-3)--(2,-4)--(1,-4)--(1,-5)--(0,-5);
\draw (3,0)--(7,0)--(7,-1)--(6,-1)--(6,-2)--(5,-2)--(5,-3)--(4,-3)--(4,-4)--(3,-4)--(3,-5)--(2,-5)--(2,-6)--(1,-6)--(1,-7)--(0,-7)--(0,-3);

\node (A) at (0.5,0.5) {$\beta_2$};
\node (B) at (6.5,0.5) {$\alpha_2$};
\node (C) at (4.5,0.5) {$\lambda_2$};

\node (E) at (3.5,-2.5) {$j$};
\node (D) at (2.5,-3.5) {$k$};

\node (F) at (2.5,-2.5) {$i$};
\node (G) at (-4.5,-3.5) {Rev. lex. of $\alpha_2/\beta_2$};

\end{tikzpicture}.\]

This implies that
\[R_r(i)>\row_T(i), R_r(j) \leq \row_T(j), R_r(k) \leq \row_T(k).\]
Since $R_r(i)=R_r(j)+1$ and $R_r(i)=R_r(k)-1,$ we conclude that
\[\row_T(i)\leq \row_T(j), \text{ and } \row_T(i)< \row_T(k).\]
As $i>j$, this implies that $i$ is weakly above and to the right of $j$ in $T$, as required. We need to show that $k$ is strictly below and to the left of $i$ in $T$. Anything strictly below and to the right of $i$ is in $\beta_1/\lambda_1$, but $k$ is in $\lambda_1/\alpha_1$, so $k$ must be to the left of $i$ as required.

\end{proof}

We define a sub-quiver of $\qRW$ for each $r$.
\begin{mydef} The $r^{th}$ $\qRW$ subquiver, which we denote $\qRW^r$, has vertex set $\Ver$ and arrow set $\Arr^{\leq r}$. We let $\tau^r_{\alpha,\beta}$ denote the basis. 
\end{mydef}
We will often suppress the upper $r$ index, as $r$ will not vary.

The following theorem shows that the row-restricted basis has a positive change of basis matrix with respect to the $\RW$ basis. 
\begin{thm}\label{thm:wexp} Let $(\alpha_1,\alpha_2)$ be a pair of partitions, and $r$ a positive integer. Then
\[\tau^r_{\alpha_1,\alpha_2}=w_{\alpha_1,\alpha_2}+\sum_{\substack{a_T \in \Arr^{>r}, \\s(a_T)=(\alpha_1,\alpha_2)}} w_{t(a_T)}.\]
\end{thm}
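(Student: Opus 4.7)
The plan is to argue by induction on $|\alpha_2|$, using the recursive definitions of $\tau^r_{\alpha_1,\alpha_2}$ and $w_{\alpha_1,\alpha_2}$ together with the composition/splitting bijection established in Propositions \ref{prop:splitting1} and \ref{prop:splitting2}.

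For the base case $\alpha_2 = \emptyset$, the quiver is graded and homogeneous, so $\Arr_{\alpha_1,\emptyset} = \emptyset$. Thus $\tau^r_{\alpha_1,\emptyset} = s_{\alpha_1,\emptyset} = w_{\alpha_1,\emptyset}$, and the sum on the right-hand side of the claim is empty, so the identity holds trivially.

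For the inductive step, fix $(\alpha_1,\alpha_2)$ with $|\alpha_2|>0$ and assume the statement holds for every vertex $(\lambda_1,\lambda_2)$ with $|\lambda_2|<|\alpha_2|$. Since every arrow in $\qRW$ strictly decreases $|\cdot_2|$, each target $t(a)$ for $a\in \Arr_{\alpha_1,\alpha_2}$ lies in this inductive range. I would start from
\[\tau^r_{\alpha_1,\alpha_2}= s_{\alpha_1,\alpha_2} - \sum_{a \in \Arr^{\leq r}_{\alpha_1,\alpha_2}} \tau^r_{t(a)},\]
then rewrite $s_{\alpha_1,\alpha_2} = w_{\alpha_1,\alpha_2} + \sum_{a \in \Arr_{\alpha_1,\alpha_2}} w_{t(a)}$ from the definition of $w$, and apply the inductive hypothesis to each $\tau^r_{t(a)}$. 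After routine cancellation of $\sum_{a \in \Arr^{\leq r}_{\alpha_1,\alpha_2}} w_{t(a)}$ with the corresponding terms in the $w$-expansion of $s_{\alpha_1,\alpha_2}$, and using the disjoint decomposition $\Arr_{\alpha_1,\alpha_2} = \Arr^{\leq r}_{\alpha_1,\alpha_2} \sqcup \Arr^{>r}_{\alpha_1,\alpha_2} \sqcup \Arr^{\text{neither}}_{\alpha_1,\alpha_2}$, one obtains
\[\tau^r_{\alpha_1,\alpha_2} = w_{\alpha_1,\alpha_2} + \sum_{a \in \Arr^{>r}_{\alpha_1,\alpha_2}} w_{t(a)} + \sum_{a \in \Arr^{\text{neither}}_{\alpha_1,\alpha_2}} w_{t(a)} - \sum_{a \in \Arr^{\leq r}_{\alpha_1,\alpha_2}} \; \sum_{a' \in \Arr^{>r}_{t(a)}} w_{t(a')}.\]

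The key step is then to show that the last two sums cancel. This is exactly where Propositions \ref{prop:splitting1} and \ref{prop:splitting2} enter: they together give a bijection
\[\Arr^{\text{neither}}_{\alpha_1,\alpha_2} \;\longleftrightarrow\; \bigl\{(a,a') : a \in \Arr^{\leq r}_{\alpha_1,\alpha_2},\ a' \in \Arr^{>r}_{t(a)}\bigr\},\]
sending a tableau $T$ satisfying neither rule to its unique splitting $(T_1,T_2)$ at the locus $\{i : \row_T(i) < R_r(i)\}$, and sending a composable pair to their composition tableau. By construction $t(a'') = t(a')$ when $a''$ corresponds to $(a,a')$, so the two sums match term by term in the $w$-basis.

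I expect the main obstacle to be purely bookkeeping: verifying cleanly that all three pieces — the splitting bijection, the target-preservation, and the index shuffling between $\Arr$, $\Arr^{\leq r}$, $\Arr^{>r}$, $\Arr^{\text{neither}}$ — line up without double counting. The bulk of the technical content has already been packaged into the splitting propositions, so once the algebraic identity is arranged in the form above, the bijection gives the cancellation immediately and the inductive step closes.
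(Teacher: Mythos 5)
Your proof is correct and follows essentially the same route as the paper: induction on $|\alpha_2|$, rewriting $s_{\alpha_1,\alpha_2}$ via the $w$-basis, applying the inductive hypothesis to each $\tau^r_{t(a)}$, and then invoking Propositions \ref{prop:splitting1} and \ref{prop:splitting2} to cancel the mixed-type arrows against the composite paths. The only cosmetic difference is that you explicitly name the bijection and note the target-preservation $t(a'') = t(a')$, which the paper leaves implicit.
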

\begin{proof} We proceed by induction on $|\alpha_2|$. If $|\alpha_2|=0$, then $\tau_{\alpha_1,\emptyset}=w_{\alpha_1,\emptyset}.$
Suppose the statement holds for all $\tau_{\alpha_1,\alpha_2}$ with $|\alpha_2|<k$ for some fixed $k$. If $|\alpha_2|=k$, then 
\[\tau_{\alpha_1,\alpha_2}=s^1_{\alpha_1} s^2_{\alpha_2}-\sum_{\substack{a_T \in \Arr^{\leq r}, \\s(T)=(\alpha_1,\alpha_2)}} \tau_{t(a_T)}.\]
Using the definition of $w_{\alpha_1,\alpha_2}$ and applying the induction assumption to every term in the sum, we obtain:
\[w_{\alpha_1,\alpha_2}+\sum_{\substack{a_T \in \Arr_{\alpha_1,\alpha_2}}} w_{t(a_T)}-\sum_{\substack{a_T \in \Arr_{\alpha_1,\alpha_2}^{\leq r}}} (w_{t(a_T)}+\sum_{\substack{a_{T'} \in \Arr_{t(a_T)}^{>r}}} w_{t(a_{T'})}).\]
We can re-write this as
\[w_{\alpha_1,\alpha_2}+\sum_{\substack{a_T \in \Arr^{> r}_{\alpha_1,\alpha_2}}} w_{t(a_T)}+\sum_{\substack{a_T \not \in \Arr^{\leq r}_{\alpha_1,\alpha_2} \sqcup  \Arr^{> r}_{\alpha_1,\alpha_2}}} w_{t(a_T)}-\sum_{\substack{a_T \in \Arr_{\alpha_1,\alpha_2}^{\leq r}}} \sum_{\substack{a_{T'} \in \Arr_{t(a_T)}^{>r}}} w_{t(a_{T'})}.\]
Propositions \ref{prop:splitting1} and \ref{prop:splitting2} imply that 
\[\sum_{\substack{a_T \not \in \Arr^{\leq r}_{\alpha_1,\alpha_2} \sqcup  \Arr^{> r}_{\alpha_1,\alpha_2}}} w_{t(a_T)}-\sum_{\substack{a_T \in \Arr_{\alpha_1,\alpha_2}^{\leq r}}} \sum_{\substack{a_{T'} \in \Arr_{t(a_T)}^{>r}}} w_{t(a_{T'})}=0,\]
so the claim holds. 
\end{proof}

\section{Multiplication rules of the $\qRW^r$ basis}\label{sec:multrules}
Our next consideration will be to identify some structure constants of the $\qRW^r$ basis. This basis has many positive structure constants.

We give a combinatorial description of the product of any $\tau$ class with $\tau_{\alpha,\emptyset}$ and $\tau_{\emptyset,\beta}$, for $\beta$ satisfying $\beta^1 \leq r-1$, with an element of the $\qRW^r$ basis.
\subsection{Multiplication by $\tau_{\alpha,\emptyset}$}

\begin{thm}\label{thm:s1thm} Fix $r>0$. Then in the $\qRW^r$ basis,
\[\tau_{\alpha,\emptyset} \tau_{\beta_1,\beta_2}=\sum_{\lambda} \sum_{T_1 \in \RW_{\lambda/\beta_1}(\alpha)} (\tau_{\lambda,\beta_2}+\sum_{\substack{a_{T_2} \in \Arr^{\leq r}_{\lambda,\beta_2}, \\ a_{\Slide_{T_1}({T_2}) \in \Arr^{>r}_{\beta_1,\beta_2}} }} \tau_{t(a_{T_2})})\]
where the sum is over partitions $\lambda$ such that $c_{\alpha,\beta_1}^\lambda\neq 0$.
\end{thm}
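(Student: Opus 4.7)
The plan is to expand both sides of the identity in the Remmel--Whitney basis $\{w_{\mu,\nu}\}$ and verify equality by an arrow-set bijection.

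First I would expand the left-hand side. Since neither the Remmel--Whitney quiver nor $\qRW^{r}$ has arrows rooted at $(\alpha,\emptyset)$, one has $\tau_{\alpha,\emptyset} = w_{\alpha,\emptyset}$, and Theorem \ref{thm:wexp} gives $\tau_{\beta_1,\beta_2} = w_{\beta_1,\beta_2} + \sum_{c\in \Arr^{>r}_{\beta_1,\beta_2}} w_{t(c)}$. Multiplying out and applying Lemma \ref{lem:basecase} to each factor yields sums indexed by Remmel--Whitney tableaux. The infusion bijection from \S \ref{subsec:prodvert}, which trades the data of an arrow rooted at $(\beta_1,\beta_2)$ together with a Remmel--Whitney tableau for the data of a Remmel--Whitney tableau $T_1 \in \RW_{\lambda/\beta_1}(\alpha)$ together with an arrow rooted at $(\lambda,\beta_2)$, then rewrites the left-hand side as
\[\tau_{\alpha,\emptyset}\tau_{\beta_1,\beta_2} = \sum_\lambda \sum_{T_1 \in \RW_{\lambda/\beta_1}(\alpha)}\Bigl( w_{\lambda,\beta_2} + \sum_{\substack{a_{T_2} \in \Arr_{\lambda,\beta_2}\\ a_{\Slide_{T_1}(T_2)} \in \Arr^{>r}_{\beta_1,\beta_2}}} w_{t(a_{T_2})}\Bigr).\]

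Next I would expand the claimed right-hand side in the $w$-basis using Theorem \ref{thm:wexp} on each $\tau$-summand. After canceling the $w_{\lambda,\beta_2}$ terms, the identity reduces, for each fixed $(\lambda, T_1)$, to a target-preserving bijection between the set $L := \{a \in \Arr_{\lambda,\beta_2} : a_{\Slide_{T_1}(T)} \in \Arr^{>r}_{\beta_1,\beta_2}\}$ and the disjoint union $\Arr^{>r}_{\lambda,\beta_2} \sqcup R_2 \sqcup \{(a',c): a' \in R_2,\, c \in \Arr^{>r}_{t(a')}\}$, where $R_2 := L \cap \Arr^{\leq r}_{\lambda,\beta_2}$. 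Proposition \ref{prop:splitting1} classifies each $a \in L$ into exactly one of three types --- in $\Arr^{\leq r}$, in $\Arr^{>r}$, or factoring uniquely as $c \circ a'$ with $a' \in \Arr^{\leq r}$ and $c \in \Arr^{>r}$ --- giving the forward map, and Proposition \ref{prop:splitting2} supplies its inverse by composing.

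Two compatibility claims between infusion and the row rule must be checked to confirm this really is a bijection. The easier one is that $a_{T_2} \in \Arr^{>r}_{\lambda,\beta_2}$ always implies $a_{\Slide_{T_1}(T_2)} \in \Arr^{>r}_{\beta_1,\beta_2}$; this holds because jeu-de-taquin moves labels only weakly up and to the left, strengthening the strict inequality $\row_{T_2}(i) < R_r(i)$. The main obstacle is the composition compatibility: when $a_{T_2} = c \circ a_{T_2'}$ with $a_{T_2'} \in \Arr^{\leq r}$ and $c \in \Arr^{>r}$, the membership $a_{\Slide_{T_1}(T_2)} \in \Arr^{>r}_{\beta_1,\beta_2}$ must be equivalent to $a_{\Slide_{T_1}(T_2')} \in \Arr^{>r}_{\beta_1,\beta_2}$. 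I would prove this by showing that infusion only interacts with the row-rule block $T_2'$: the noncrossing Lemmas \ref{lem:buch} and \ref{lem:buch2} together with the row-rule inequality $R_r(i) \leq \row_{T_2}(i)$ enforced on the $T_2'$ portion prevent any sliding path initiated at a corner of $T_1$ from penetrating into the reverse-row-rule portion $c$. Hence $c$ survives the infusion unchanged as an outer block, which by the first compatibility already contributes labels in the reverse-row-rule region, so the reverse row rule for $\Slide_{T_1}(T_2)$ is controlled precisely by the image $\Slide_{T_1}(T_2')$ of the inner portion.
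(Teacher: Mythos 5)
Your overall strategy matches the paper's: expand both $\tau$-factors in the $w$-basis via Theorem~\ref{thm:wexp}, use Lemma~\ref{lem:basecase}/Theorem~\ref{thm:RWpositivity} for the $w$-basis products, apply the infusion bijection from \S\ref{subsec:prodvert} to re-express the left-hand side as indexed by arrows rooted at $(\lambda,\beta_2)$, and then reconcile with the $w$-expansion of the right-hand side via the splitting propositions. Your reformulation as a target-preserving bijection
$L \leftrightarrow \Arr^{>r}_{\lambda,\beta_2}\sqcup R_2 \sqcup\{(a',c)\}$ is a more explicit packaging of the paper's partition of the image into $\mathcal{S}_1,\mathcal{S}_2,\mathcal{S}_3$, and your first compatibility claim (that $a\in\Arr^{>r}_{\lambda,\beta_2}$ automatically lies in $L$, because the slide $\Slide_{T_1}(T_2)$ moves boxes weakly up, only strengthening $\row < R_r$) is correct and is exactly the paper's justification for $\mathcal{S}_1$.

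The genuine problem is the justification you give for the composition compatibility. Your argument is that the sliding paths initiated at the boxes of $T_1$ never enter the reverse-row-rule block $c$, so that $c$ survives the infusion unchanged. This is false: $c$ is the \emph{outer} block of the arrow tableau, and every jeu-de-taquin gap starting at an inner corner travels outward until it reaches an outer corner of the tableau, so the paths in general cross from $T_2'$ into $c$ and move the boxes of $c$. For a minimal example, take $r=2$, $T_1$ the single box $\{1\}$ of shape $(2)/(1)$, and the arrow $T_2\in\RW_{(3,1)/(2)}\bigl((2)\bigr)$ with $1$ at $(2,1)$ (row-rule part $T_2'$) and $2$ at $(1,3)$ (reverse part $c$); vacating $T_1$'s box at $(1,2)$ slides the $c$-box from $(1,3)$ to $(1,2)$, so the path penetrates $c$ and $c$ does \emph{not} survive unchanged. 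So the claim ``infusion only interacts with the row-rule block'' fails, and your justification of the crucial step collapses. Note also that the naive stronger statement, that the $T_2'$-sub-tableau's slides in $T_2$ agree with the slides of $T_2'$ alone, is not true for general inner sub-skew-shapes either, so an honest proof of the needed implications (if $\Slide_{T_1}(T_2)\in\Arr^{>r}$ then $\Slide_{T_1}(T_2')\in\Arr^{>r}$, and conversely under the composability hypothesis) requires a more careful comparison of where the $T_2'$-boxes land with and without $c$ present. The paper asserts this step without proof, so matching the paper's level of detail you could also simply assert it; but the particular argument you propose is incorrect and would need to be replaced.
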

In other words, to expand the product $\tau^r_{\alpha,\emptyset} \tau^r_{\beta_1,\beta_2}$ in the basis, we first consider the expansion of $s_{\alpha}(\underline{x}) s_{\beta_1}(\underline{x})$ in terms of Schur polynomials. One way of expressing the Littlewood--Richardson rule is by considering all standard skew tableaux $T$ in $\RW_{\gamma/\beta_1}(\alpha)$ for all $\lambda$ with this set non-empty; the number of elements in this set is $c^{\lambda}_{\alpha,\beta_1}$. Each such $T$ contributes to the expansion of $\tau^r_{\alpha,\emptyset} \tau^r_{\beta_1,\beta_2}$ in two ways:
\begin{enumerate}
\item Add $\tau^r_{\lambda,\beta_2}$. 
\item For each arrow out of the vertex $(\lambda,\beta_2)$ -- that is, for each $a_{U}:(\lambda,\beta_2) \to (\lambda_1,\lambda_2)$ satisfying the row rules -- complete the partial rectification of $U$ to a skew tableau of inner shape $\beta_1$. Do this by playing jeu-de-taquin on the inner corners of $U$ in the order indicated by the tableau $T$; this is the \emph{infusion} of $U$ past $T$ \cite{thomas}, denoted $\Slide_T(U)$ (see Definition \ref{defn:slide}), and corresponds to an arrow:
\[a_{\Slide_T(U)}: (\beta_1,\beta_2) \to (\delta,\lambda_2)\]
for some $\delta$. If this new arrow fully \emph{breaks} the row rules, add $\tau^r_{\lambda_1,\lambda_2}$ to the sum. 
\end{enumerate}
\begin{proof}[Proof of Theorem \ref{thm:s1thm}]  
We can use Theorem \ref{thm:wexp}  to expand the product
\[ \tau_{\alpha,\emptyset} \tau_{\beta_1,\beta_2}=w_{\alpha,\emptyset} (w_{\beta_1,\beta_2}+\sum_{a_T \in \Arr^{>r}_{\beta_1,\beta_2}} w_{t(a_T)}),\]
and then apply Theorem \ref{thm:RWpositivity} to obtain
\[\sum_\lambda \sum_{T_1 \in \RW_{\lambda/\beta_1}(\alpha)} w_{\lambda,\beta_2}+\sum_{a_T \in \Arr^{>r}_{\beta_1,\beta_2}}\sum_{\delta} \sum_{T_1 \in \RW_{\delta/t(a_{T})_1}(\alpha)} w_{\delta,t(a_T)_2}.\]
 
The set
 \[ \{T: a_T \in \Arr^{>r}, a_T: (\beta_1,\beta_2) \to (\gamma_1,\gamma_2)\}\]
 is by definition a subset of  $\RW_{\gamma_1/\beta_1}(\beta_2/\gamma_2).$

 By restricting the bijection from Lemma \ref{lem:bij}, and taking a union over $\gamma_2$ and $\delta$, we obtain an injective map from 
 \[\bigcup_{\gamma_2,\delta} \bigcup_{\gamma_1} \{T: a_T \in \Arr^{>r}, a_T: (\beta_1,\beta_2) \to (\gamma_1,\gamma_2)\} \times \RW_{\delta/\gamma_1}(\alpha)\]
 to the set 
 \[\bigcup_{\gamma_2,\delta} \bigcup_{\lambda} \RW_{\delta/\lambda}(\beta_2/\gamma_2) \times \RW_{\lambda/\beta_1}(\alpha).\]
 Since infusion moves labels to the right and down, the image of the map  contains the set
\[\mathcal{S}_1:= \bigcup_{\gamma_2,\delta}\bigcup_{\lambda} \{T: a_T \in \Arr^{>r}, a_T: (\lambda,\beta_2) \to (\delta,\gamma_2)\} \times \RW_{\lambda/\beta_1}(\alpha).\]
It also contains all pairs $(T_1,T_2)$ where $a_{T_1} \in \Arr^{\leq r},$ and $\Slide^{T_2}(T_1) \in \Arr^{>r}$: call this set $\mathcal{S}_2$. 

The rest of the image is made up of pairs $(T_1,T_2)$ where $a_{T_1}$ satisfies neither the row rule or the reverse row rule, and $\Slide^{T_2}(T_1) \in \Arr^{>r}$; we call this set $\mathcal{S}_3$. Using Proposition \ref{prop:splitting1}, we can split $a_{T_1}$ into a composition 
\[(\lambda,\beta_2) \xrightarrow{a_{T_{11}}} (\mu_1,\mu_2) \xrightarrow{a_{T_{12}}} (\delta,\gamma_2),\]
where $a_{T_{11}}$ satisfies the row rule, and $a_{T_{12}}$ satisfies the reverse row rule. It is clear that 
\[\Slide^{T_{2}}(T_{11}) \in \Arr^{>r},\]
 so $(T_{11},T_2) \in \mathcal{S}_2$. Suppose we have an arrow that satisfies the reverse row rule, that has source $t(a_{T_{11}})$.  Using Proposition \ref{prop:splitting2}, we can create a new arrow $a_{T_1}$ which is composition of the arrow with $a_{T_{11}}$. This composition also satisfies $\Slide^{T_2}(T_1) \in \Arr^{>r}$. 

We can therefore write the product as 
\[\sum_\lambda \sum_{T_1 \in \RW_{\lambda/\beta_1}(\alpha)}(w_{\lambda,\beta_2}+\sum_{a_T \in \Arr^{>r}_{\lambda,\beta_2}} w_{t(a)})+
\sum_{\substack{a_{T_2} \in \Arr^{\leq r}_{\lambda,\beta_2}, \\ \Slide_{T_1}(T_2) \in \Arr^{>r}_{\beta_1,\beta_2} }} 
(w_{t(a_{T_2})}+\sum_{a_{T_3}\in \Arr^{>r}_{t(a_{T_2})}} w_{t(a_{T_3})} ).\]
Rewriting in the $\qRW^r$ basis, this is
\[\tau_{\alpha,\emptyset} \tau_{\beta_1,\beta_2}=\sum_{\lambda} \sum_{T_1 \in \RW_{\lambda/\beta_1}(\alpha)} \tau_{\lambda,\beta_2}+\sum_{\substack{a_{T_2} \in \Arr^{\leq r}_{\lambda,\beta_2}, \\ \Slide_{T_1}(T_2) \in \Arr^{>r}_{\beta_1,\beta_2} }} \tau_{t(a_{T_2})}.\]
\end{proof}
\begin{eg} \label{eg:s1mult} Let $r=3$, and consider the product $\tau_{\yng(2,1),\emptyset} \tau_{\yng(2,2),\yng(2)}.$ Then the rule above says to first compute the product of $s_{\yng(2,1)}$ and $s_{\yng(2)}$ using the Remmel--Whitney rule. We look at all skew shapes with interior shape $\yng(2,2)$, which rectify to 
\[\begin{ytableau}
1 & 2\\
3\\
\end{ytableau}.\]
These are
\[\begin{ytableau}
$ $ & $ $ & 1 & 2\\
$ $ & $ $ & 3\\
\end{ytableau},\hspace{5mm}
\begin{ytableau}
$ $ & $ $ & 1 & 2\\
$ $ & $ $ \\
3\\
\end{ytableau},\hspace{5mm}
\begin{ytableau}
$ $ & $ $ & 2\\
$ $ & $ $ & 3\\
1\\
\end{ytableau},\]
\[\begin{ytableau}
$ $ & $ $ & 2\\
$ $ & $ $ \\
1& 3 \\
\end{ytableau},\hspace{5mm}
\begin{ytableau}
$ $ & $ $ &  2\\
$ $ & $ $ \\
1\\
3\\
\end{ytableau},\hspace{5mm}
\begin{ytableau}
$ $ & $ $ \\
$ $ & $ $ \\
 1 & 2\\
 3\\
\end{ytableau}.\]
For each skew tableau above, with outer shape $\alpha$, we get a contribution of the form $\tau_{\alpha,\yng(2)}$. 

There are now extra terms arising from the row rule. For each skew tableau with outer shape $\alpha$ above, we consider elements of $\RW_{\delta/\alpha}(\yng(2)/\gamma)$, that satisfy the row rule before filtering and the reverse row rule after filtering. For example, take the skew tableau 
\[\begin{ytableau}
$ $ & $ $ & 1 & 2\\
$ $ & $ $ & 3\\
\end{ytableau}\]
and set $\gamma=\yng(1)$. The skew tableau in green below
\[\begin{ytableau}
$ $ & $ $ &  1 &  2\\
$ $ & $ $ & 3 &  *(green) 1\\
\end{ytableau}\]
is an element of $\RW_{\yng(4,4)/\yng(4,3)}(\yng(2)/\gamma)$. The row rule for $1$ is $2$, so this satisfies the row rule. After filtering, we obtain  
\[\begin{ytableau}
$ $ & $ $ & *(green) 1 & *(yellow) 2\\
$ $ & $ $ & *(yellow) 1 & *(yellow) 3\\
\end{ytableau},\]
which satisfies the reverse row rule. This is the only $\delta,\gamma$ pair for this $\alpha$ that has this property, and it contributes a term $\tau_{\yng(4,4),\yng(1)}$ to the sum. The contributions from other $\alpha$ are
\[\begin{ytableau}
$ $ & $ $ & 1 & 2\\
$ $ & $ $ &*(green)1 \\
3\\
\end{ytableau},\hspace{5mm}
\begin{ytableau}
$ $ & $ $ & 2\\
$ $ & $ $ &*(green) 1\\
1& 3 \\
\end{ytableau},\hspace{5mm}
\begin{ytableau}
$ $ & $ $ &  2\\
$ $ & $ $ &*(green) 1\\
1\\
3\\
\end{ytableau}.\]
We conclude that $\tau_{\yng(2,1),\emptyset} \tau_{\yng(2,2),\yng(2)}$ expands as
\[\tau_{\yng(4,3),\yng(2)}+\tau_{\yng(4,4),\yng(1)}+\tau_{\yng(4,2,1),\yng(2)}+\tau_{\yng(4,3,1),\yng(1)}+\tau_{\yng(3,3,1),\yng(2)}+\tau_{\yng(3,2,2),\yng(2)}+\tau_{\yng(3,3,2),\yng(1)}+\tau_{\yng(3,2,1,1),\yng(2)}+\tau_{\yng(3,3,1,1),\yng(1)}+\tau_{\yng(2,2,2,1),\yng(2)}.\]

\end{eg}
We give a special case of Theorem \ref{thm:s1thm} when we multiply by $\alpha={(1,1,\dots,1)}$, i.e. when $\alpha$ is a column. We need some notation.
\begin{mydef} We say a skew shape is a \emph{vertical strip of length $p$} if contains $p$ boxes, no two of which are in the same row. 
For a partition $\beta$, we let 
\[C_k(\beta)=\{\beta' \subset \beta : \beta/\beta' \text{ is a vertical strip of length $k$}\}\]
and 
\[C^k(\beta)=\{\beta \subset \beta' : \beta'/\beta \text{ is a vertical strip of length $k$}\}.\]
For a vertical strip $\beta/\beta'$, we let $T_{\beta/\beta'}$ be the tableau of this shape where boxes are labeled from top to bottom, and call the box labeled by $i$ the $i^{th}$ box. 
\end{mydef} 
\begin{thm}\label{thm:s1col}
Let $1^p$ be the partition that is a single column of length $p$. Then in the $\qRW^r$ basis,
%\[\tau_{\alpha,\emptyset} \tau_{\beta_1,\beta_2}=\sum_{\lambda \in C^p(\beta_1)}  \tau_{\lambda,\beta_2}+\sum_{\substack{a_T \in \Arr^{\leq r}_{\lambda,\beta_2}, \\ t(a) \in C^q(\lambda) \times C_q(\beta_2), \\ T=T_{t(a)_1/\beta_1}, \\ \Slide_{T _{\lambda/\beta_1}}(T) \in \Arr^{>r} }} \tau_{t(a)}.\]
\[\tau_{1^p,\emptyset} \cdot \tau_{\beta_1,\beta_2}=\sum_{\lambda/\beta_1 \text{ a vertical strip of length $p$}}  \left( \tau_{\lambda,\beta_2}+\sum_{\substack{}} \tau_{\gamma_1,\gamma_2} \right)\]
where the interior sum is over  arrows $a \in \Arr^{\leq r}_{\lambda,\beta_2}$ whose targets are $t(a)=(\gamma_1,\gamma_2)$ such that
$\gamma_1/\lambda$ is a vertical strip of length $q$ and $\beta_2/\gamma_2$ is a vertical strip of length of $q$, and 
%$T=T_{\gamma_1/\beta_1}
$\Slide_{ T_{\lambda/\beta_1}}(T_{\gamma_1/\beta_1}) \in \Arr^{>r}_{\beta_1,\beta_2} $.
\end{thm}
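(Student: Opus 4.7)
The plan is to specialize Theorem \ref{thm:s1thm} to the case $\alpha = 1^p$ and simplify both the outer and inner sums of the resulting formula.

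First I would simplify the outer sum by characterizing $\RW_{\lambda/\beta_1}(1^p)$. The reverse lexicographic filling $\rl(1^p)$ is a single column carrying labels $1,\dots,p$ from top to bottom, so the Remmel--Whitney rules demand that in any $T_1 \in \RW_{\lambda/\beta_1}(1^p)$, each label $i+1$ sits strictly below and weakly to the left of $i$. Combined with the standard tableau property, this forces $\lambda/\beta_1$ to be a vertical strip of length $p$ and $T_1 = T_{\lambda/\beta_1}$ is uniquely determined, so the outer sum of Theorem \ref{thm:s1thm} reduces precisely to a sum over vertical strips $\lambda/\beta_1$ of length $p$ with the specific $T_1 = T_{\lambda/\beta_1}$.

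Next I would tackle the inner sum. The key observation is that combining the row rule $R_r(i) \le \row_{T_2}(i)$ with the reverse row rule $R_r(i) > \row_{\Slide_{T_1}(T_2)}(i)$ yields $\row_{\Slide_{T_1}(T_2)}(i) < \row_{T_2}(i)$ for every label $i$, so every label of $T_2$ strictly ascends during the infusion past $T_1$. By Corollary \ref{cor:slideprop}, $\Slide^{T_2}(T_1)$ is jeu-de-taquin equivalent to $T_1 = T_{\lambda/\beta_1}$ and hence rectifies, under column insertion, to the column tableau of shape $(1^p)$. An auxiliary observation --- that a standard skew tableau which rectifies under column insertion to $(1^p)$ is necessarily $T_{\mu/\nu}$ for a vertical strip $\mu/\nu$, since this happens iff its reading word is strictly increasing --- then identifies $\Slide^{T_2}(T_1) = T_{\gamma_1/\delta}$ with $\gamma_1/\delta$ a vertical strip of length $p$.

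The main obstacle is showing that these combined constraints force $\gamma_1/\lambda$ itself to be a vertical strip of some length $q$. I expect this to follow by tracking the $q$ labels of $T_2$ through the $p$ successive slides past the vertical strip $T_1$, using the non-crossing property of jdt paths (Lemma \ref{lem:buch2}) together with the strict-ascension property to rule out any two labels of $T_2$ lying in the same row. Once this is established, Proposition \ref{prop:LRRW} combined with the Pieri rule yields $|\RW_{\gamma_1/\lambda}(\beta_2/\gamma_2)| = c^{\beta_2}_{\gamma_2,(1^q)}$, which equals $1$ when $\beta_2/\gamma_2$ is a vertical strip of length $q$ and $0$ otherwise. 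Hence $\beta_2/\gamma_2$ must also be a vertical strip of length $q$, and the unique $T_2$ is $T_{\gamma_1/\lambda}$. Substituting $T_2 = T_{\gamma_1/\lambda}$ into the slide condition of Theorem \ref{thm:s1thm} then produces exactly the indexing and slide condition appearing in Theorem \ref{thm:s1col}.
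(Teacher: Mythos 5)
Your starting point — specializing Theorem \ref{thm:s1thm} to $\alpha=1^p$ and observing that the outer sum collapses to vertical strips $\lambda/\beta_1$ with $T_1=T_{\lambda/\beta_1}$ — agrees with the paper. The key observation that the row rule plus the reverse row rule force every label of $T_2$ to strictly ascend under the infusion is correct and is exactly what drives the paper's lemmas. But your reordering of the inner-sum argument has two genuine problems.

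First, the step from ``$\gamma_1/\lambda$ is a vertical strip'' to ``$\beta_2/\gamma_2$ is a vertical strip'' via Proposition \ref{prop:LRRW} does not go through. Proposition \ref{prop:LRRW} gives $|\RW_{\gamma_1/\lambda}(\beta_2/\gamma_2)|=\sum_\delta c^{\gamma_1}_{\delta\lambda}c^{\beta_2}_{\delta\gamma_2}$, and you assert this equals $c^{\beta_2}_{\gamma_2,1^q}$. That would require $c^{\gamma_1}_{\delta\lambda}$ to be supported only at $\delta=1^q$, which is false: a vertical strip in the sense of this paper (Definition \ref{def:rowrule}/\S3.4) need not be connected, and for a disconnected vertical strip $s_{\gamma_1/\lambda}$ is a product of elementary symmetric functions $e_{a_1}\cdots e_{a_m}$ rather than $e_q$, so there are several $\delta$ with $c^{\gamma_1}_{\delta\lambda}\neq 0$. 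Consequently the claimed equality of cardinalities fails, and the argument for ``$\beta_2/\gamma_2$ vertical strip, $T_2$ unique'' is not valid. The paper avoids this by proving $\beta_2/\gamma_2$ is a vertical strip \emph{first} (Lemma \ref{lem:s1lem1}), directly from Corollary \ref{cor:abelowb} and the inequalities $\row(a)\geq R(a)>l_a$ that come from the row rules, with no appeal to cardinality counts at all.

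Second, the ``main obstacle'' — that $\gamma_1/\lambda$ is a vertical strip — is only sketched: you say you expect it to follow from Lemma \ref{lem:buch2} and strict ascension, but give no argument. Your earlier observation identifies $\Slide^{T_2}(T_1)$ as a vertical-strip tableau, but that is a tableau of shape $\gamma_1/\delta$ (where $\delta$ is the outer shape of $\Slide_{T_1}(T_2)$), not of shape $\gamma_1/\lambda=\sh(T_2)$; passing from the one to the other is precisely the content of Lemma \ref{lem:s1lem2}, whose proof is a careful case analysis that moreover leans on the positioning fact established inside the proof of Lemma \ref{lem:s1lem1}. Since your proposed order puts the $\beta_2/\gamma_2$ claim \emph{after} the $\gamma_1/\lambda$ claim, you cannot borrow that argument and would need a genuinely self-contained replacement, which is not supplied. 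In short: the outer sum and the strict-ascension observation are fine, but the inner-sum analysis as written is both incomplete (Lemma \ref{lem:s1lem2} unproved) and incorrect in the Pieri/cardinality step.
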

\begin{rem} \label{rem:appendix} The content of the therorem is that the only arrow contributions to the sum in Theorem $\ref{thm:s1thm}$ are arrows $a_T:(\lambda,\beta_2) \to (\gamma_1,\gamma_2)$ where $\gamma_1 \in C^q(\lambda)$ and $\gamma_2 \in C_q(\beta_2)$ for some $q$. In fact, the slides and row rule are very simple as well. Locally, the picture looks like the below, where $\beta_1$ is to the left, $T_{\lambda/\beta_1}$ is in blue, and $T$ is in green:
\[\begin{tikzpicture}[scale=0.7]
\tiny
\draw[thick,dashed] (6,0)--(4,0)--(4,-7)--(2,-7)--(2,-8);
\node (A) at (2,-2) {$\beta_1$};
\draw[fill=blue] (4,0)--(5,0)--(5,-2)--(4,-2)--(4,0);
\draw (6.3,0)--(6.3,-2);
\draw (6.2,0)--(6.4,0);
\draw (6.2,-2)--(6.4,-2);
\node (F) at (7.6,-1) {length $s$};

\draw[fill=green] (4,-2)--(4,-6)--(5,-6)--(5,-2);
\node (B) at (4.5,-2.5) {$k$};
\node (C) at (4.5,-3.5) {$k+1$};
\node (D) at (4.5,-4.5) {$\vdots$};
\node (E) at (4.5,-5.5) {$k+l$};
\draw[->] (10,-4)--(13,-4);
\node (G) at (11.5,-5) {$\Slide_{T _{\lambda/\beta_1}}(T) $};
\node (H) at (15,-5) {};
\end{tikzpicture}
\begin{tikzpicture}[scale=0.7]
\tiny
\draw[thick,dashed] (6,0)--(4,0)--(4,-7)--(2,-7)--(2,-8);
\node (A) at (2,-2) {$\beta_1$};
\draw[fill=green] (4,0)--(5,0)--(5,-4)--(4,-4)--(4,0);
\draw[fill=blue] (4,-4)--(4,-6)--(5,-6)--(5,-4);
\draw (6.3,-4)--(6.3,-6);
\draw (6.2,-4)--(6.4,-4);
\draw (6.2,-6)--(6.4,-6);
\node (F) at (7.6,-5) {length $s$};
\node (B) at (4.5,-0.5) {$k$};
\node (C) at (4.5,-1.5) {$k+1$};
\node (D) at (4.5,-2.5) {$\vdots$};
\node (E) at (4.5,-3.5) {$k+l$};
\end{tikzpicture}
.\]
We don't include the labels of $T_{\lambda/\beta_1}$, as they aren't important here. Suppose the blue column consists of $s$ boxes. To express the row rule requirements, for a box labeled in $j$ in $T$, let $R(j)$ be the row rule of the $j^{th}$ box in $t(a_T)_2/\beta_2$, and let $\row(j)$ be the row where the label $j$ appears in $T$. Then $T \in \Arr^{\leq r}$ and $\Slide_{T_{\lambda/\beta_1}}(T) \in \Arr^{>r}$ if and only if, for all $j$,
\[\row(j) \geq R(j) > \row(j)-s.\]
\end{rem}
\begin{lem}\label{lem:s1lem1} Let $\lambda \in C^p(\beta_1)$, and $a_T:(\lambda,\beta_2) \to (\gamma_1,\gamma_2)$ be an arrow such that $a_T \in \Arr^{\leq r}_{\lambda,\beta_2}$ and $\Slide_{T_{\lambda/\beta_1}}(T)  \in \Arr^{>r}_{\beta_1,\beta_2}.$ Then $\beta_2 \in C^q(\gamma_2)$ for some $q$. 
\end{lem}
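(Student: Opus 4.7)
The plan is to combine the row and reverse row rules on $T$ and $\Slide_{T_{\lambda/\beta_1}}(T)$ with the preservation of the $\RW$-rules under jeu-de-taquin (Proposition~\ref{prop:jdt}) and the non-crossing structure of slides along a vertical strip (Lemma~\ref{lem:buch2}). First I would establish that every label of $T$ strictly moves up during the infusion: the hypothesis $a_T \in \Arr^{\leq r}_{\lambda,\beta_2}$ gives $\row_T(i) \geq R_r(i)$, while $a_{\Slide_{T_{\lambda/\beta_1}}(T)} \in \Arr^{>r}_{\beta_1,\beta_2}$ gives $\row_{\Slide_{T_{\lambda/\beta_1}}(T)}(i) < R_r(i)$, so $\row_T(i) > \row_{\Slide_{T_{\lambda/\beta_1}}(T)}(i)$ for every label $i$. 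In particular each label of $T$ must lie on at least one of the $p$ sliding paths that comprise the infusion.

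Next I would argue by contradiction. Suppose $\beta_2/\gamma_2$ is not a vertical strip, so its reverse lexicographic filling contains two consecutive labels $i, i+1$ in the same row, with $R_r(i+1) = R_r(i)+1$. Applying the $\RW$-rules to $T$ and (by Proposition~\ref{prop:jdt}) to $\Slide_{T_{\lambda/\beta_1}}(T)$ gives $\row_T(i+1) \leq \row_T(i)$ and the analogous inequality in $\Slide_{T_{\lambda/\beta_1}}(T)$. Combining these with the row-rule inequality $\row_T(i+1) \geq R_r(i)+1$ and the reverse-row-rule inequality $\row_{\Slide_{T_{\lambda/\beta_1}}(T)}(i) \leq R_r(i)-1$ forces both $i$ and $i+1$ to move up by at least two rows during the infusion.

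Since each jeu-de-taquin slide moves a given label by at most one row, both $i$ and $i+1$ must appear on at least two of the $p$ sliding paths. These paths are processed bottom-to-top along the vertical strip $\lambda/\beta_1$, so by Lemma~\ref{lem:buch2} they are mutually non-crossing, with later paths lying strictly below or weakly left of earlier ones; meanwhile the $\RW$-rules preserved at every intermediate stage keep the column of $i+1$ strictly greater than that of $i$. I would then track the columns of $i$ and $i+1$ slide by slide and argue that producing two up-moves for $i+1$ together with two up-moves for $i$ while maintaining the strict left-right ordering forces a later sliding path to sit strictly to the right of an earlier one, contradicting non-crossing, and yielding $\beta_2 \in C^q(\gamma_2)$ for some $q$. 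The main obstacle is executing this final geometric step rigorously; I would attempt it by also invoking Corollary~\ref{cor:slideprop}, which implies that the shape $\gamma_1/\gamma_1'$ swept out by the infusion is itself a vertical strip of length $p$, so the non-crossing matching of starting rows in $\lambda/\beta_1$ to ending rows in $\gamma_1/\gamma_1'$ is order-preserving and should pin down the column trajectories of $i$ and $i+1$ enough to produce the required contradiction.
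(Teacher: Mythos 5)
Your setup is correct through the point where you derive that both $i$ and $i+1$ must move up by at least two rows under the infusion, but the final step---the heart of the argument---is left as a plan rather than a proof, and the plan points at the wrong tool. You invoke Lemma~\ref{lem:buch2} directly and propose to track sliding paths by hand, but the non-crossing statement alone does not obviously produce the contradiction: turning ``both labels rise twice'' into ``two sliding paths must cross'' requires a careful analysis of how a later path can or cannot carry a label past another, and you concede you have not executed it.

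The paper avoids this entirely via Corollary~\ref{cor:abelowb} (derived from Lemma~\ref{lem:abelowb}, which is where the work with Lemma~\ref{lem:buch2} actually happens). Since $i$ lies strictly to the left of and below $i+1$ in $T$, that corollary gives directly that after sliding, $i$ lands in row $l_i \geq \row_T(i+1)$; combining with $\row_T(i+1) \geq R(i+1) > R(i) > l_i$ gives an immediate contradiction $l_i > l_i$. That one inequality replaces your entire ``track trajectories and find a crossing'' step. Without Corollary~\ref{cor:abelowb} (or an equivalent lemma you would have to prove), your argument has a genuine gap; with it, your inequalities collapse to the paper's four-line contradiction, so you should use it rather than re-derive its content ad hoc.
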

\begin{proof}
Because of the row rule requirements, every box in $a_T$ must move up a non-zero number of rows under the slide. If any box in $\gamma$ is not below a box in the skew shape $\lambda/\beta_1$, then there is a box in $\gamma$ directly below the shape $\beta_1$, and the slide cannot move the row of the box up. Therefore, we can conclude that every box in $\gamma_1$ is in the same column as a box in the skew shape $\lambda/\beta_1$.

Suppose that $\beta_2/\gamma_2$ is not a vertical strip. Then there are two boxes next to each other in the same row, labeled by the reverse lexicographic labeling as $a$ and $a+1$. By the  Remmel--Whitney rules, $a+1$ is weakly above and to the right of $a$ in the tableau $T$. 

Suppose $a$ is in row $\row(a)$ before the slide, and $l_a$ after the slide. Then by Corollary \ref{cor:abelowb}, 
\[ l_a \geq \row(a+1),\]
where $\row(a+1)$ is the row of $a+1$ before the slide. 

We now have the following inequalities, where $R(j)$ is the row label of $j$ in $\beta_2/\gamma_2$ in the reverse lexicographic filling:
\[\row(a) \geq R(a) > l_a,\]
\[\row(a+1) \geq R(a+1)> l_{a+1},\]
\[R(a)<R(a+1), \hspace{2mm} l_a \geq l_{a+1}, \hspace{2mm} \row(a) \geq \row(a+1).\]
We therefore obtain a contradiction:
\[l_a \geq \row(a+1) \geq R(a+1)>R(a)>l_a.\]
So we conclude that $\beta_2/\gamma_2$ is a vertical strip.
\end{proof}

\begin{lem} \label{lem:s1lem2} Let $\lambda \in C^p(\beta_1)$, and $a_T:(\lambda,\beta_2) \to (\gamma_1,\gamma_2)$ be an arrow such that $a_T \in \Arr^{\leq r}_{\lambda,\beta_2}$ and $\Slide_{T_{\lambda/\beta_1}}(T)  \in \Arr^{>r}_{\beta_1,\beta_2}.$ Then $\gamma_1\in C^q(\lambda)$ for some $q$. 
\end{lem}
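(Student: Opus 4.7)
The plan is to argue by contradiction, following the template of the proof of Lemma~\ref{lem:s1lem1}. Suppose $\gamma_1/\lambda$ is not a vertical strip; then some row of $\gamma_1/\lambda$ contains two adjacent boxes, which I write as $(i,j),(i,j+1)$. Let $a=T(i,j)$ and $b=T(i,j+1)$. Because $T$ is standard, $a<b$.

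First I would translate the combinatorial setup into numerical bounds. By Lemma~\ref{lem:s1lem1}, $\beta_2/\gamma_2$ is a vertical strip, so its reverse lexicographic filling labels rows top to bottom. Writing $a,b$ at positions $(p_a,q_a),(p_b,q_b)$ in $\beta_2/\gamma_2$ with $p_a<p_b$, the vertical-strip-inside-partitions condition on $\gamma_2\subset\beta_2$ gives $q_a\ge q_b$, so
\[R(b)-R(a)=(p_b-p_a)+(q_a-q_b)\ge 1.\]
Combined with the row rule on $T$ and the reverse row rule on $\Slide_{T_{\lambda/\beta_1}}(T)$, the rows $l_a,l_b$ of $a,b$ in the slid tableau satisfy $l_a<R(a)\le i$ and $l_b<R(b)\le i$, so both boxes must move strictly above row $i$ under the infusion.

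Next I would analyze the geometry of the columns $j$ and $j+1$. Reusing the column argument from the proof of Lemma~\ref{lem:s1lem1}, each of $(i,j),(i,j+1)$ lies in a column containing a box of $\lambda/\beta_1$; call these $(i_1,j),(i_2,j+1)\in\lambda/\beta_1$. The vertical-strip condition on $\lambda/\beta_1$ combined with $\beta_1\subset\lambda$ being partitions forces $i_1>i_2$. A short computation, using that vertical-strip plus partition data gives $\lambda^{i_1}=j$ while $\gamma_1^k\ge j+1$ for $k\le i$, shows that $(i-1,j+1)\in\gamma_1/\lambda$; so $T$ has a box directly above $b$, with some label $c<b$ by standardness.

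The main obstacle is the final step: deriving a contradiction by tracking the slide. Because the vertical strip $\lambda/\beta_1$ is processed bottom to top, the slide starting at $(i_1,j)$ is applied first. I claim that along this slide the hole never descends from $(i-1,j)$ to $(i,j)$: at every row where the hole sits in column $j$, the box immediately to its right in column $j+1$ of $T$ (which exists by the column analysis of the previous paragraph) carries a smaller label than the box directly below, so the hole exits rightward from column $j$ into column $j+1$. By Lemma~\ref{lem:buch2}, no subsequent slide path can cross this first one, so no later slide can reach $(i,j)$ either. Hence $l_a=i$, contradicting $l_a<R(a)\le i$. The hard part is this non-descent claim; its proof requires a case analysis on the presence of further boxes of $\lambda/\beta_1$ between rows $i_2$ and $i_1$ and of further boxes of $\gamma_1/\lambda$ between rows $i_1$ and $i$, but in every sub-case the blocker $(i-1,j+1)\in T$ (or an analogous box sitting higher in column $j+1$) furnishes the required obstruction.
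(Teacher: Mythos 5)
Your proposal sets up the problem correctly (the vertical-strip structure of $\beta_2/\gamma_2$ from Lemma~\ref{lem:s1lem1}, the bound $R(b)>R(a)$, the fact that both $a$ and $b$ must strictly gain rows, and the column analysis placing boxes of $\lambda/\beta_1$ above $a$ and $b$). The fatal flaw is the final ``non-descent claim,'' which is not merely hard to prove --- it is false. Concretely, when the hole reaches $(i-1,j)$, the box below it is $a$ and the box to its right is the green box of $\gamma_1/\lambda$ at $(i-1,j+1)$ (call it $b_{k-1}$ in the paper's notation, the label just above $b=b_k$). Jeu-de-taquin moves the smaller of these into the hole. In the case $a<b_1<\cdots<b_{k-1}$, which is entirely possible and is exactly the first case in the paper's proof, we have $a<b_{k-1}$, so the hole does descend into $(i,j)$ and $a$ does move up. Your conclusion $l_a=i$ simply does not hold in this regime.

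The paper's proof handles this by splitting on the sign of $a-b_1$ rather than trying to pin $a$ in place. When $a<b_1$, it shows $a$ slides up past \emph{all} the $\lambda/\beta_1$ boxes in its column, lands in the row of $b_1$, and then derives the contradiction $R(a)>\mathrm{row}(b_1)\ge R(b_1)$ against $R(a)<R(b_1)$ (the latter from $a<b_1$ and the vertical-strip structure of $\beta_2/\gamma_2$, which you did establish). When $a>b_1$, the contradiction is not that $a$ fails to move but that some $b_l$ with $a>b_l$ fails to gain a row, violating the reverse row rule for $b_l$. Your single-minded attempt to prove ``$a$ never moves'' cannot be patched into a proof because the genuine obstruction in Case~1 comes from $a$ moving too little (relative to $R(a)$), not from $a$ being stationary. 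To repair your argument you would need to reintroduce exactly the case split the paper uses, at which point you recover the paper's proof.
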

\begin{proof} Suppose $\gamma_1/\lambda$ is not a vertical strip; so $T$ has two boxes, labeled say $a<b=b_k$ in the same row. Since $T$ is a skew tableau, we can assume that $a$ is directly below a box in $\lambda/\beta_1$. Using Lemma \ref{lem:s1lem1} and its proof, this means that locally $T$ looks like:
\[\begin{tikzpicture}[scale=0.7]
\tiny
\draw[thick,dashed] (4.25,-5)--(3,-5)--(3,-7)--(2,-7)--(2,-9)--(1.5,-9);
\draw[fill,green] (3,-7)--(4,-7)--(4,-10)--(3,-10)--(3,-7);
\draw[fill,blue] (3,-7)--(3,-9)--(2,-9)--(2,-7)--(3,-7);
\draw[fill,blue] (4,-5)--(4,-7)--(3,-7)--(3,-5)--(4,-5);
\draw[fill,green] (2,-9)--(3,-9)--(3,-10)--(2,-10)--(2,-9);
\node (A) at (2.5,-9.5) {$a$};
\node (B) at (3.5,-9.5) {$b_k$};
\node (C) at (3.5,-8.3) {$\vdots$};
\node (D) at (3.5,-7.5) {$b_1$};
\end{tikzpicture}.\]
As we do each step in $\Slide_{T_{\lambda/\beta_1}}(T) $, starting at the bottom box in $\lambda/\beta_1$, $a$ doesn't move until we reach a box in the same row as $a$, if it exists. In this case, $a$ either moves left or not at all. If $a$ moves left, then it will never move up a row, so this case is impossible. So we can assume that the next box to slide is the one directly above $a$ in the picture. 

Suppose first that $a<b_1$, and so $a<b_1<\cdots<b_{k-1}.$ Then it is clear that $a$ slides past all the blue boxes in the column of $\lambda/\beta_1$, and then doesn't move again. So $R(a) > \row(b_1) \geq R(b_1)$. However, as $a<b_1$, and $b_1$ is weakly above and to the right of $a$ in $T$,  $R(a)<R(b_1)$, which gives a contradiction. 

Otherwise, $a > b_1$. Let $i$ be the largest index $i \leq k-1$ such that $a > b_i$. Then as we do the slides, we see that none of $b_1,\dots, b_i$ move rows during the slide. This also gives a contradiction to the row rule, so we have proved the lemma. 
\end{proof}
\begin{lem} \label{lem:s1lem3} Let $\lambda \in C^p(\beta_1)$, and $a_T:(\lambda,\beta_2) \to (\gamma_1,\gamma_2)$ be an arrow such that $a_T \in \Arr^{\leq r}_{\lambda,\beta_2}$ and $\Slide{T_{\lambda/\beta_1}}(T) \in \Arr^{>r}_{\beta_1,\beta_2}.$ Then $T=T_{\gamma_1/\lambda}$.  
\end{lem}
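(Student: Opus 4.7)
The plan is to argue by contradiction. Since $\gamma_1/\lambda$ is a vertical strip of length $q$ by Lemma~\ref{lem:s1lem2}, the labels $1,\dots,q$ of $T$ occupy pairwise distinct rows. If $T \neq T_{\gamma_1/\lambda}$, some consecutive pair $i,i+1$ must satisfy $\row_T(i) > \row_T(i+1)$. Denote by $(y_j,x_j)$ the position in $\rl(\beta_2/\gamma_2)$ of the box labeled $j$, so $y_i < y_{i+1}$ by Lemma~\ref{lem:s1lem1}; since $\beta_2/\gamma_2$ is a vertical strip, each row of it contains a unique box, giving $\beta_2^{y_j}=x_j$ and $\gamma_2^{y_j}=x_j-1$. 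If $x_i=x_{i+1}$, the partition conditions on $\beta_2$ and $\gamma_2$ applied to the intermediate rows (which lie entirely in $\gamma_2$) force $y_{i+1}=y_i+1$, making $i$ directly above $i+1$ in $\rl(\beta_2/\gamma_2)$, so the RW rule forces $i+1$ strictly below $i$ in $T$, a contradiction. Hence $x_i \neq x_{i+1}$, and combined with $\beta_2^{y_i}\geq \beta_2^{y_{i+1}}$ this gives $x_i > x_{i+1}$. Let $R(j)=r+y_j-x_j$ denote the row rule value of label $j$.

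In Case~1, suppose $i$ is strictly to the left of $i+1$ in $T$. Corollary~\ref{cor:abelowb} applied to the infusion $\Slide_{T_{\lambda/\beta_1}}(T)$ yields $\row_{\Slide_{T_{\lambda/\beta_1}}(T)}(i) \geq \row_T(i+1)$. Combining with the row rule hypotheses,
\[
R(i) \,>\, \row_{\Slide_{T_{\lambda/\beta_1}}(T)}(i) \,\geq\, \row_T(i+1) \,\geq\, R(i+1),
\]
so $R(i) > R(i+1)$, which rearranges to $x_{i+1}-x_i > y_{i+1}-y_i \geq 1$, i.e.\ $x_{i+1} \geq x_i+2$, directly contradicting $x_i > x_{i+1}$.

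In Case~2, suppose $i$ is strictly to the right of $i+1$ in $T$. Then Lemma~\ref{lem:abelowb} applied with $a=i+1$ and $b=i$ gives $\row_{\Slide_{T_{\lambda/\beta_1}}(T)}(i+1) > \row_T(i)$. But infusion moves labels weakly upward, so $\row_{\Slide_{T_{\lambda/\beta_1}}(T)}(i+1) \leq \row_T(i+1) < \row_T(i)$, a contradiction. With both cases impossible, no offending pair exists, so the labels of $T$ increase with the row, giving $T=T_{\gamma_1/\lambda}$. The main subtlety is the reduction in the first paragraph: propagating $x_i=x_{i+1}$ through intermediate rows of $\gamma_2$ to deduce $y_{i+1}=y_i+1$, which is needed to invoke the RW rule; the two case contradictions themselves are essentially a direct application of the previously established slide-monotonicity lemmas together with the numerical row rule inequalities.
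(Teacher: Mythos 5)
Your proof is correct and follows essentially the same route as the paper's: assume $T\neq T_{\gamma_1/\lambda}$, find a descent $\row_T(i)>\row_T(i+1)$, invoke Corollary~\ref{cor:abelowb} to get $\row_{\Slide_{T_{\lambda/\beta_1}}(T)}(i)\geq\row_T(i+1)$, and chain this with the two row-rule hypotheses to force $R(i)>R(i+1)$, which contradicts the fact that $R$ increases along the vertical strip $\beta_2/\gamma_2$. Two portions of your argument are redundant, though not incorrect: the sub-case $x_i=x_{i+1}$ is already covered by the general fact that $R(i)<R(i+1)$ for a vertical strip (you don't need the separate RW-rule contradiction there), and your Case~2 ($i$ strictly right of $i+1$) is vacuous -- since $T$ is standard and $i<i+1$ with $\row_T(i)>\row_T(i+1)$, the label $i$ is automatically strictly left of $i+1$, so there is nothing to rule out via Lemma~\ref{lem:abelowb}. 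The paper states this directly ("$a+1$ is strictly above and to the right of $a$") and closes with $l_a\geq\row(a+1)\geq R(a+1)>R(a)>l_a$; your translation to coordinates $x_j,y_j$ is a longer way of verifying $R(i)<R(i+1)$ but lands in the same place.
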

\begin{proof}
Note that the statement has implicitly used Lemma \ref{lem:s1lem2}, which shows that $\gamma_1/\lambda$ is a vertical strip. Suppose that $T$ is not $T_{\gamma_1/\lambda}$. Then there exist labels $a,a+1$ in $T$ such that $\row(a)>\row(a+1)$. This means in particular that $a+1$ is strictly above and to the right of $a$.  If we let $l_j$ indicate the row of label $j$ in $\Slide_{T_{\lambda/\beta_1}}(T) $, then by Corollary \ref{cor:abelowb} $l_a \geq \row(a+1)$. Since by Lemma \ref{lem:s1lem1}, $\beta_2/\gamma_2$ is a vertical strip, hence $R(a)<R(a+1)$. 
Then 
\[l_a \geq \row(a+1) \geq R(a+1)>R(a)>l_a,\]
a contradiction.
\end{proof}

\begin{proof}[Proof of Theorem \ref{thm:s1col}]
By Theorem \ref{thm:s1thm}, 
\[\tau_{\colp,\emptyset} \tau_{\beta_1,\beta_2}=\sum_{\lambda} \sum_{T_1 \in \RW_{\lambda/\beta_1}(\colp)} (\tau_{\lambda,\beta_2}+\sum_{\substack{a_{T_2} \in \Arr^{\leq r}_{\lambda,\beta_2}, \\ \Slide_{T_1}(T_2) \in \Arr^{>r}_{\beta_1,\beta_2} }} \tau_{t(a_{T_2})}).\]
By the Pieri rule for Schur polynomials,
\[\bigcup_{\lambda}\RW_{\lambda/\beta_1}(\colp)=\{T_{\lambda/\beta_1}: \lambda \in C^p(\beta_1)\}.\]
For a fixed such $\lambda$,  consider $a_T \in \Arr^{\leq r}_{\lambda,\beta_2}$ such that $\Slide_{T_{\lambda/\beta_1}}(T) \in \Arr^{>r}_{\beta_1,\beta_2}.$  
By Lemmas \ref{lem:s1lem1} and \ref{lem:s1lem2}, we have $t(a) \in C^q(\lambda) \times C_q(\beta_2).$  By Lemma \ref{lem:s1lem3}, $T=T_{t(a)/\beta_1}$. Writing $t(a)=(\gamma_1,\gamma_2)$ gives the result.
\end{proof}

\subsection{Multiplication by $\tau_{\emptyset,\beta}$}

We continue to fix $r$, a positive integer. In this section, we state and prove a combinatorial description of the structure constants of a product $\tau^r_{\emptyset,\beta}$ with any element of the basis, when $\beta$ is not too wide. Specifically, we require that the row rule filling of $\beta$ contains only elements strictly larger than $1$, or equivalently, when $\beta^1 <r.$ Otherwise, there may be negative structure constants.

\begin{eg}\label{eg:neg} Consider the $\qRW^2$ quiver, and the product of $\tau_{\yng(1),\yng(1)}$ and $\tau_{\emptyset,\yng(2)}$. To compute this product, we use Theorem \ref{thm:wexp}. We see that
\[\tau_{\yng(1),\yng(1)} \tau_{\emptyset,\yng(2)}=w_{\emptyset,\yng(2)}(w_{\yng(1),\yng(1)}+w_{\yng(2),\emptyset}).\]
Expanding the product using Theorem \ref{thm:RWpositivity}, this is
\[ w_{\yng(1),\yng(3)}+w_{\yng(1),\yng(2,1)}+w_{\yng(2),\yng(2)}.\]
We need to re-write this in the $\qRW^2$ basis. Again applying Theorem \ref{thm:wexp}, we have 
\[w_{\yng(1),\yng(3)}=\tau_{\yng(1),\yng(3)},\]
\[w_{\yng(1),\yng(2,1)}=\tau_{\yng(1),\yng(2,1)}-w_{\yng(2),\yng(2)}-w_{\yng(1,1),\yng(2)},\]
\[w_{\yng(1,1),\yng(2)}=\tau_{\yng(1,1),\yng(2)},\]
so the product is
\[\tau_{\yng(1),\yng(3)}+\tau_{\yng(1),\yng(2,1)}-\tau_{\yng(1,1),\yng(2)}.\]
\end{eg}

The first step is to re-work the row rule. We define $\overline{\Arr}^{\leq r}$, and $\overline{\Arr}^{> r}$, and then show that they are naturally in bijection with  ${\Arr}^{\leq}$, and ${\Arr}^{>}$ respectively. 

\begin{mydef}\label{def:barr}
Let $T$ be an arrow in $\overline{\Arr}$,
\[a_T: (\alpha_1,\alpha_2) \to (\beta_1,\beta_2),\]
 so that $T \in \RW_{\alpha_2/\beta_2}(\beta_1/\alpha_1).$  Given a label $l$ of $T$, let $R_l$ be the row rule filling of the box labeled by $l$ in $T$, and let $k_l$  be the row in which $l$ appears in the reverse lexicographic filling of $\beta_1/\alpha_1$. Then  
\begin{itemize}
\item $a_T \in \overline{\Arr}^{\leq r }$ if $R_l \leq k_l$ for all $l$.
\item  $a_T \in \overline{\Arr}^{> r }$ if $R_l > k_l$ for all $l$.
\end{itemize}
\end{mydef}
\begin{eg}
 The $r=2$ row filling of $\yng(1,1)$ is 
\[\begin{ytableau}
2 \\
3 \\
\end{ytableau},\]
and the reverse lexicographic filling of $\yng(1,1,1)/\yng(1)$ is
\[\begin{ytableau}
 \\
1 \\
2 \\
\end{ytableau}.\]
The arrow $a_T: (\yng(1),\yng(1,1)) \to (\yng(1,1,1), \emptyset)$, where $T \in \RW_{\yng(1,1)}(\yng(1,1,1)/\yng(1))$ is the tableau
\[\begin{ytableau}
1 \\
2 \\
\end{ytableau}.\]
is an element of $\overline{\Arr}^{\leq r}.$

The arrow $a_T: (\yng(1),\yng(1,1)) \to (\yng(2,1), \emptyset)$, where $T \in \RW_{\yng(1,1)}(\yng(2,1)/\yng(1))$ is the tableau
\[\begin{ytableau}
1 \\
2 \\
\end{ytableau}.\]
is an element of $\overline{\Arr}^{> r}.$

\end{eg}

\begin{prop}\label{prop:psibij}
The bijection $\Psi$ gives a natural bijection between the following sets:
\[\Arr \to \overline{\Arr},\]
\[\Arr^{\leq r} \to \overline{\Arr}^{\leq r},\]
\[\Arr^{> r} \to \overline{\Arr}^{> r}.\]
\end{prop}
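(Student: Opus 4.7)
The plan is to reduce all three bijections to pure bookkeeping, once the action of $\Psi$ on individual boxes is made explicit. The first bijection $\Arr \to \overline{\Arr}$ is immediate from Corollary \ref{cor:RWbij}: for each ordered pair of vertices $(\alpha_1,\alpha_2)$ and $(\beta_1,\beta_2)$, $\Psi$ gives a bijection $\RW_{\beta_1/\alpha_1}(\alpha_2/\beta_2) \to \RW_{\alpha_2/\beta_2}(\beta_1/\alpha_1)$, and these sets precisely index the arrows from $(\alpha_1,\alpha_2)$ to $(\beta_1,\beta_2)$ in $\Arr$ and $\overline{\Arr}$ respectively.

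The key step is to unwind $\Psi$ explicitly using the remark after Corollary \ref{cor:RWbij}, namely that $\Psi(T)$ is obtained by filling the appropriate skew shape (in reading order) with $w_T^{-1}$. Writing $q_i$ for the box of $\beta_1/\alpha_1$ carrying the label $i$ in $\rl(\beta_1/\alpha_1)$ and $p_v$ for the box of $\alpha_2/\beta_2$ carrying the label $v$ in $\rl(\alpha_2/\beta_2)$, this inversion translates into the following symmetric statement: $T$ has entry $v$ at box $q_i$ if and only if $\Psi(T)$ has entry $i$ at box $p_v$. In words, $\Psi$ interchanges the role of a box's \emph{position index} (from $\rl$) with its \emph{label} in the tableau.

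Given this symmetric description, the matching of row-rule conditions is a tautology. Fix $T \in \RW_{\beta_1/\alpha_1}(\alpha_2/\beta_2)$ and set $T' := \Psi(T)$. For a label $l$ of $T'$, let $v := T(q_l)$; the symmetric description places $l$ in $T'$ at box $p_v$. Then $R_l$ of Definition \ref{def:barr} is the $r$-th row-rule entry of $\alpha_2/\beta_2$ at $p_v$, which is exactly $R_r(v)$ from Definition \ref{def:rowrule}. Likewise $k_l$, the row of the box labeled $l$ in $\rl(\beta_1/\alpha_1)$, is the row of $q_l$, and since $T$ places $v$ at $q_l$ this row is $\row_T(v)$. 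As $l$ ranges over $\{1,\dots,|\beta_1/\alpha_1|\}$, the assignment $l \mapsto v = T(q_l)$ is a bijection onto the label set of $T$, so the family of inequalities $R_l \leq k_l$ cutting out $\overline{\Arr}^{\leq r}$ coincides with the family $R_r(v) \leq \row_T(v)$ cutting out $\Arr^{\leq r}$. The identical argument with the reversed inequality handles $\Arr^{>r} \to \overline{\Arr}^{>r}$.

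The only step that requires care is extracting the symmetric description of $\Psi$ from Proposition \ref{prop:transposition}; once that is in place, there is no real obstacle beyond keeping indexing conventions straight between the two skew shapes.
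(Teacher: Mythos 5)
Your proof is correct and follows essentially the same route as the paper: both invoke Corollary \ref{cor:RWbij} for the first bijection and then translate the row-rule conditions through the identity $w_{\Psi(T)} = w_T^{-1}$ from Proposition \ref{prop:transposition}. Your ``symmetric description'' ($T$ has entry $v$ at $q_i$ iff $\Psi(T)$ has entry $i$ at $p_v$) is a clean repackaging of the paper's index-chasing, but the underlying argument is identical.
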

\begin{proof}
That the first map is a bijection follows Corollary \ref{cor:RWbij}  (where $\Psi$ is defined).

Now suppose  that $a_T: (\alpha_1,\alpha_2) \to (\beta_1,\beta_2)$ is an arrow in $\Arr^{\leq r}$, so $T \in \RW_{\beta_1/\alpha_1}(\alpha_2/\beta_2)$ and $\Psi(T) \in \RW_{\alpha_2/\beta_2}(\beta_1/\alpha_1)$. We need to check that $\Psi(T)$ satisfies the conditions defining $\overline{\Arr}^{\leq r}.$
 
Let $i$ be some label in $T$. By definition, 
\[\row_T(i) \geq R_r(i).\]
Suppose that $w_T(l)=i$, so $\row_T(i)$ is the row of the $l^{th}$ label in the reverse lexicographic labeling of $\beta_1/\alpha_1$, or $k_l$ in the definition of $\overline{\Arr}^{\leq r}$. Since the reading word of $\Psi(T)$, $w_{\Psi(T)}$, is $(w_T)^{-1}$, the row rule of the box labeled by $l$ in $\Psi(T)$, called $R_l$ is the row rule of the box labeled by $i$ in the reverse lexicographic filling of $\alpha_2/\beta_2$. That is, $R_r(i)$. So the conditions $\Arr^{\leq r}$ and $\overline{\Arr}^{\leq r}$, are the same. The third bijection follows by the same argument. \end{proof}

The arrows coming from $\overline{\Arr}$ satisfy splitting properties, just as we found in Propositions \ref{prop:splitting1} and \ref{prop:splitting2}. To state them, we need a notion of composition. Suppose we have arrows
\[(\alpha_1,\alpha_2) \xrightarrow{a_{T_1}} (\lambda_1,\lambda_2) \xrightarrow{a_{T_2}}  (\beta_1,\beta_2).\]
 So $\alpha_1 \subset \lambda_1 \subset \beta_1$, and $\beta_2 \subset \lambda_2 \subset \alpha_2$. Then $T_1 \in \RW_{\alpha_2/\lambda_2}(\lambda_1/\alpha_1)$ and $T_2 \in  \RW_{\lambda_2/\beta_2}(\beta_1/\lambda_1)$. We build a tableau $T$ of shape $\alpha_2/\beta_2$, and if $T$ is an element of $\RW_{\alpha_2/\beta_2}(\beta_1/\lambda_2)$, then we will say $a_{T_1}$ and $a_{T_2}$ are \emph{composable} and that $a_T$ is their \emph{composition}.

We now describe how to construct $T$, a tableau of shape $\alpha_2/\beta_2$ (which may not be semi-standard or standard). A box $B_1$ in the skew shape $\alpha_2/\beta_2$ is included either in the skew shape $\alpha_2/\lambda_2$ or $\lambda_2/\beta_2$. If it is in the first, take the label from $T_1$, which determines a box $B_2$ in the skew shape $\lambda_1/\alpha_1$ using the reverse lexicographic filling. This box $B_2$ is also contained in $\beta_1/\alpha_1$, and let $i$ be the label of this box in the reverse lexicographic filling of $\beta_1/\alpha_1$. Use $i$ to label $B_1$ in $T$. If $B_1$ is in $\lambda_2/\beta_2$, do the same process, except using $T_2$.  We call the resulting tableau $T$ the \emph{composition tableau} of $T_1$ and $T_2$.

\begin{lem}\label{lem:compPsi}Let $a_{T_1}$ and $a_{T_2}$ be arrows in $\Arr$,
\[(\alpha_1,\alpha_2) \xrightarrow{a_{T_1}} (\lambda_1,\lambda_2) \xrightarrow{a_{T_2}}  (\beta_1,\beta_2),\]
that are composable with composition $T$. Then $a_{\Psi(T_1)}$ and $a_{\Psi(T_2)}$ are composable with composition $a_{\Psi(T)}$. 
\end{lem}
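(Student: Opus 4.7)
The plan is to re-encode each arrow as a bijection between skew shapes, and to verify that $\Psi$ corresponds to bijection inversion; this makes compatibility with composition essentially automatic.

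First, I would attach to every arrow $a_T : (\alpha_1,\alpha_2) \to (\beta_1,\beta_2)$ in $\Arr$ the bijection $\phi_T \colon \beta_1/\alpha_1 \to \alpha_2/\beta_2$ sending a box $B$ to the $T(B)$-th box of $\alpha_2/\beta_2$ in reverse lexicographic order, and analogously define $\phi_{\Psi(T)} \colon \alpha_2/\beta_2 \to \beta_1/\alpha_1$ for the corresponding arrow in $\overline{\Arr}$. Unpacking the definitions of the composition tableau in both quivers yields that if $a_{T_1},a_{T_2}$ compose to $a_T$ in $\Arr$, then $\phi_T = \phi_{T_1} \sqcup \phi_{T_2}$ under the decompositions $\beta_1/\alpha_1 = (\lambda_1/\alpha_1)\sqcup(\beta_1/\lambda_1)$ and $\alpha_2/\beta_2 = (\alpha_2/\lambda_2)\sqcup(\lambda_2/\beta_2)$; analogously, the candidate composition tableau $T'$ of $\Psi(T_1)$ and $\Psi(T_2)$ --- whether or not it is a priori known to satisfy the RW rules --- has $\phi_{T'} = \phi_{\Psi(T_1)} \sqcup \phi_{\Psi(T_2)}$.

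The key identity is then $\phi_{\Psi(T)} = \phi_T^{-1}$. Reading $w_T$ as the permutation $l \mapsto (\text{label of the $l$-th reverse-lex box of } \beta_1/\alpha_1)$, the remark after Corollary \ref{cor:RWbij} (via Proposition \ref{prop:transposition}) tells us that the reading word of $\Psi(T)$ is the inverse permutation $w_T^{-1}$; translating back through the definition of $\phi$ delivers the identity immediately. Granted this, the lemma reduces to the computation
\[
\phi_{T'} = \phi_{\Psi(T_1)} \sqcup \phi_{\Psi(T_2)} = \phi_{T_1}^{-1} \sqcup \phi_{T_2}^{-1} = (\phi_{T_1} \sqcup \phi_{T_2})^{-1} = \phi_T^{-1} = \phi_{\Psi(T)}.
\]
Since the bijection $\phi$ determines the tableau uniquely (via reverse lex on the target), $T' = \Psi(T)$, and Corollary \ref{cor:RWbij} guarantees $\Psi(T) \in \RW_{\alpha_2/\beta_2}(\beta_1/\alpha_1)$. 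Hence $T'$ satisfies the RW rules, so $a_{\Psi(T_1)}$ and $a_{\Psi(T_2)}$ are genuinely composable and their composition is $a_{\Psi(T)}$.

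The main obstacle I anticipate is the identity $\phi_{\Psi(T)} = \phi_T^{-1}$: the definition of $\Psi$ runs through column insertion, transposing the pair of insertion/recording tableaux, and then reverse insertion, so some care is needed to check that the inverse-word characterization really does translate into bijection inversion. Everything else is routine bookkeeping on how reverse-lex indices behave under the two decompositions of $\beta_1/\alpha_1$ and $\alpha_2/\beta_2$ used in the composition rules.
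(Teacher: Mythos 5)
Your proof is correct and takes essentially the same route as the paper: both hinge on the observation (via Proposition~\ref{prop:transposition} and Corollary~\ref{cor:RWbij}) that $\Psi$ inverts the reading word, together with the block-by-block construction of the composition tableau. Your re-encoding of reading words as bijections $\phi_T$ between skew shapes, with composition as disjoint union and $\Psi$ as inversion, reduces the final step to a one-line computation, whereas the paper carries out the equivalent index chase $w_{T}(w_{\overline{T}}(i)) = i$ explicitly; the underlying bookkeeping is identical.
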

\begin{proof}
To see the claim, we need to show that $\Psi(T)$ is the composition tableau $\overline{T}$ of $a_{\Psi(T_1)}$ and $a_{\Psi(T_2)}$. Let $w_T$ and $w_{\overline{T}}$ be the reading words of these tableaux; we want to show that $w_T^{-1}=w_{\overline{T}}.$

The label $w_{\overline{T}}(i)$ in the $i^{th}$ box of $\overline{T}$ is determined by either $\Psi(T_1)$ or $\Psi(T_2)$ by definition. Suppose it is given by $\Psi(T_1)$. In $\Psi(T_1)$, this box is labeled by $w_{\Psi(T_1)}(k)$ for some $k$. Then the $w_{\overline{T}}(i)$ box in $\beta_1/\alpha_1$ (using the reverse lexicographic filling) is the $w_{\Psi(T_1)}(k)^{th}$ box in $\lambda_1/\alpha_1$. 

The label of this box in $T_1$ is $w_{T_1}(w_{\Psi(T_1)}(k))=k.$  To find the label of this box in $T$, we look at the $k^{th}$ box in $\Psi(T_1)$, and find its corresponding index in $\overline{T}$; this is $i$ by construction. To conclude, we have  $w(w_{\overline{T}}(i))=i$ as desired. The check if the $i^{th}$ box is determined by $\Psi(T_2)$ is analogous.  
\end{proof}
The results about splitting in $\overline{\Arr}$ now follow easily from their equivalents in $\Arr$.

\begin{prop} \label{prop:tsplitting1}Let $a_T: (\alpha_1,\alpha_2) \to (\beta_1,\beta_2)$ be an arrow in $\overline{\Arr}$, that is not an element of either $\overline{\Arr}^{\leq r}$ or $\overline{\Arr}^{> r}$. Then there are arrows $a_{T_1} \in \overline{\Arr}^{\leq r}$ and $a_{T_2} \in \overline{\Arr}^{> r}$, such that $a_T$ is the composition:
\[(\alpha_1,\alpha_2) \xrightarrow{a_{T_1}} (\lambda_1,\lambda_2) \xrightarrow{a_{T_1}}  (\beta_1,\beta_2).\]
\end{prop}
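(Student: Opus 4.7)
The plan is to reduce Proposition \ref{prop:tsplitting1} to its already-proved counterpart for $\Arr$, namely Proposition \ref{prop:splitting1}, by transporting everything across the bijection $\Psi$. The infrastructure is in place: Proposition \ref{prop:psibij} gives compatible bijections
\[\Arr \xrightarrow{\Psi} \overline{\Arr}, \qquad \Arr^{\leq r} \xrightarrow{\Psi} \overline{\Arr}^{\leq r}, \qquad \Arr^{> r} \xrightarrow{\Psi} \overline{\Arr}^{> r},\]
and Lemma \ref{lem:compPsi} says that $\Psi$ respects composition of composable pairs.

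Given an arrow $a_T: (\alpha_1,\alpha_2) \to (\beta_1,\beta_2)$ in $\overline{\Arr}$ that lies in neither $\overline{\Arr}^{\leq r}$ nor $\overline{\Arr}^{> r}$, the first step is to pass to the arrow $a_{\Psi^{-1}(T)} \in \Arr$ with the same source and target. By Proposition \ref{prop:psibij}, the three bijections displayed above identify the three pairs of sets simultaneously, so $a_{\Psi^{-1}(T)}$ lies in neither $\Arr^{\leq r}$ nor $\Arr^{> r}$. Proposition \ref{prop:splitting1} then produces a composable factorization
\[(\alpha_1,\alpha_2) \xrightarrow{a_{T_1'}} (\lambda_1,\lambda_2) \xrightarrow{a_{T_2'}} (\beta_1,\beta_2)\]
with $a_{T_1'} \in \Arr^{\leq r}$, $a_{T_2'} \in \Arr^{> r}$, and composition tableau equal to $\Psi^{-1}(T)$.

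The second step is to push this factorization back through $\Psi$. Set $T_1 := \Psi(T_1')$ and $T_2 := \Psi(T_2')$. Proposition \ref{prop:psibij} immediately gives $a_{T_1} \in \overline{\Arr}^{\leq r}$ and $a_{T_2} \in \overline{\Arr}^{> r}$. By Lemma \ref{lem:compPsi}, $a_{T_1}$ and $a_{T_2}$ are composable in $\overline{\Arr}$, and their composition tableau is $\Psi$ of the composition of $T_1'$ and $T_2'$; since the latter equals $\Psi^{-1}(T)$, the composition equals $\Psi(\Psi^{-1}(T)) = T$. This yields precisely the factorization claimed in Proposition \ref{prop:tsplitting1}.

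I do not anticipate a genuine obstacle here: the geometric splitting argument (tracking the region where $R_r(i) > \row_T(i)$) has already been executed once in the proof of Proposition \ref{prop:splitting1}, and the machinery of $\Psi$ is precisely designed to translate statements about $\Arr$ into statements about $\overline{\Arr}$. The only point that demands care is making sure to invoke Proposition \ref{prop:psibij} for each of the three sets simultaneously, so that the hypothesis (lying in neither subset) and the conclusion (one factor in each subset) both transfer correctly; this is immediate once noted. An alternative, more laborious route would be to re-run the geometric argument of Proposition \ref{prop:splitting1} directly for $\overline{\Arr}$, tracking labels in the reverse lexicographic filling of $\beta_1/\alpha_1$ rather than of $\alpha_2/\beta_2$, but the $\Psi$-transport argument avoids all such duplication.
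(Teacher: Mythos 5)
Your proof is correct and follows the paper's argument essentially verbatim: pass through $\Psi^{-1}$, use Proposition \ref{prop:psibij} to transfer the hypothesis, apply Proposition \ref{prop:splitting1}, and push the factorization back through $\Psi$ using Lemma \ref{lem:compPsi} and Proposition \ref{prop:psibij}. The one point you spell out slightly more explicitly than the paper — that all three bijections of Proposition \ref{prop:psibij} must be invoked simultaneously so that both hypothesis and conclusion transfer — is a welcome clarification but not a different argument.
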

\begin{proof}
Apply $\Psi^{-1}$ to get an arrow $a_{\Psi^{-1}(T)} \in \Arr$, which is not an element of  either ${\Arr}^{\leq}$ or ${\Arr}^{>}$. By Proposition \ref{prop:splitting1}, there are composable arrows 
\[(\alpha_1,\alpha_2) \xrightarrow{a_{U_1}} (\lambda_1,\lambda_2) \xrightarrow{a_{U_2}}  (\beta_1,\beta_2)\]
such that $a_{U_1} \in \Arr^{\leq r}$, $a_{U_2}\in \Arr^{> r},$ and $\Psi^{-1}(T)\in \Arr$ is the composition tableau of these two arrows. Then by Lemma \ref{lem:compPsi} and Proposition \ref{prop:psibij}, $a_{\Psi(U_1)} \in \overline{\Arr}^{\leq r}$ and $a_{\Psi(U_2)} \in \overline{\Arr}^{> r}$ are composable, with composition tableau $\Psi(\Psi^{-1}(T))=T$.
\end{proof}

\begin{prop}\label{prop:tsplitting2} Suppose that there are arrows $a_{T_1} \in \overline{\Arr}^{\leq r}$ and $a_{T_2} \in \overline{\Arr}^{> r}$ forming a path,
\[(\alpha_1,\alpha_2) \xrightarrow{a_{T_1}} (\lambda_1,\lambda_2) \xrightarrow{a_{T_2}}  (\beta_1,\beta_2).\]
Then $a_{T_1}$ and $a_{T_2}$ are composable. 
\end{prop}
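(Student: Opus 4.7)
The plan is to reduce the statement to Proposition \ref{prop:splitting2}, which gives the corresponding composability result in the unbarred arrow set $\Arr$, by transporting everything through the bijection $\Psi$ from Corollary \ref{cor:RWbij}. This is entirely parallel to the proof of Proposition \ref{prop:tsplitting1}, only with the roles of the hypothesis and conclusion of the splitting swapped.

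First, I would apply $\Psi^{-1}$ to the tableaux $T_1$ and $T_2$. By Proposition \ref{prop:psibij}, this produces arrows $a_{\Psi^{-1}(T_1)} \in \Arr^{\leq r}$ and $a_{\Psi^{-1}(T_2)} \in \Arr^{>r}$; these still form a path $(\alpha_1,\alpha_2) \to (\lambda_1,\lambda_2) \to (\beta_1,\beta_2)$ because $\Psi$ preserves sources and targets. Proposition \ref{prop:splitting2} then guarantees that the pair $(a_{\Psi^{-1}(T_1)}, a_{\Psi^{-1}(T_2)})$ is composable in $\Arr$, with some composition tableau $U \in \RW_{\beta_1/\alpha_1}(\alpha_2/\beta_2)$ giving an arrow $a_U:(\alpha_1,\alpha_2) \to (\beta_1,\beta_2)$.

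Finally, I would invoke Lemma \ref{lem:compPsi} with $\Psi^{-1}(T_i)$ in place of $T_i$ to conclude that $a_{T_1} = a_{\Psi(\Psi^{-1}(T_1))}$ and $a_{T_2} = a_{\Psi(\Psi^{-1}(T_2))}$ are composable in $\overline{\Arr}$, with composition tableau $\Psi(U) \in \RW_{\alpha_2/\beta_2}(\beta_1/\alpha_1)$. This establishes the claim.

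There is no real obstacle: the bijection $\Psi$ has already been shown to intertwine each of the three relevant structures (arrow sets, their row-rule refinements, and composition) with their barred analogues, so the proposition follows by a mechanical transport of Proposition \ref{prop:splitting2}. The only thing to double-check is that the path hypothesis $s(a_{T_2}) = t(a_{T_1}) = (\lambda_1,\lambda_2)$ is preserved by $\Psi^{-1}$, which is immediate from the definition of the bijection in Corollary \ref{cor:RWbij}.
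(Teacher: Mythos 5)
Your proof is correct and follows the same route as the paper: apply $\Psi^{-1}$ to land in $\Arr^{\leq r}$ and $\Arr^{>r}$ via Proposition \ref{prop:psibij}, invoke Proposition \ref{prop:splitting2} to get a composition tableau, and push back through $\Psi$ using Lemma \ref{lem:compPsi}. In fact your write-up is cleaner than the paper's, which contains typos (it cites Proposition \ref{prop:tsplitting2} for itself and writes $\overline{\Arr}^{\leq r}$ where it means the unbarred $\Arr^{\leq r}$); your version states the intermediate memberships and references correctly.
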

\begin{proof}
The arrows $a_{\Psi^{-1}(T_1)}$ and $a_{\Psi^{-1}(T_2)}$ are elements of $\overline{\Arr}^{\leq r}$ and $\overline{\Arr}^{<}$ respectively, by Proposition \ref{prop:psibij}. By Proposition \ref{prop:tsplitting2} they are composable, say with composition tableau $T$. Then by Lemma \ref{lem:compPsi}, $a_{T_1}$ and $a_{T_2}$ are composable with composition tableau $\Psi(T)$ as required. 
\end{proof}

We need a bit more notation to understand the theorem, which will give a Remmel--Whitney style rule for computing products of the form 
\[\tau_{\alpha_1,\alpha_2} \tau_{\emptyset,\beta}\]
when $\beta^1<r.$ 

Begin by fixing a tableau $T \in \RW_\gamma(\beta*\alpha_2)$, together with an arrow $a_U \in \overline{\Arr}_{\alpha_1,\gamma}$. That is, $U \in \RW_{\gamma/\lambda_2}(\lambda_1/\alpha_1)$ for some $(\lambda_1,\lambda_2)$. Let $k=|\lambda_2|$. Using the bijection $\Diff$, following the discussion in \S \ref{subsec:prodvert} we obtain from the data $T$ and $U$ two arrows rooted at $(\alpha_1,\alpha_2)$ and $(\emptyset,\beta)$. Paralleling the notation for infusion, recall that the arrow rooted at $(\alpha_1,\alpha_2)$ is denoted $\Diff_{T}(a_U)$. Note that this could be the trivial arrow, in which case by convention we regard it as an element of $\overline{\Arr}^{>r}$.

\begin{thm} \label{thm:betamult} Fix $r>0$, and let $\beta$ be a partition that is strictly less than $r$ wide. Then in the $\qRW^r$ basis, 
\[\tau_{\alpha_1,\alpha_2} \tau_{\emptyset,\beta}=\sum_{\gamma}\sum_{T \in \RW_\gamma(\beta*\alpha_2)}( \tau_{\alpha_1,\gamma}
+\sum_{\substack{a_U \in \overline{\Arr}^{\leq r}_{\alpha_1,\gamma}\\ \Diff_{T}(a_U) \in \overline{Arr}^{> r}}} \tau_{t(a_U)})\]
where the sum is over partitions $\gamma$ such that $c_{\alpha_2,\beta}^\gamma\neq 0$.
\end{thm}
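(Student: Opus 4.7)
The proof will closely follow the strategy for Theorem \ref{thm:s1thm}, but with the indexing scheme $\overline{\Arr}$ from Remark \ref{rem:swap} and with infusion replaced by diffusion. The first step is to exploit the width hypothesis $\beta^1<r$ to control arrows out of $(\emptyset,\beta)$. Any arrow $a_T:(\emptyset,\beta)\to(\beta_1,\beta_2)$ in $\Arr$ has $T\in\RW_{\beta_1}(\beta/\beta_2)$ of straight shape $\beta_1$, so its label $1$ must sit at position $(1,1)$; however, the top-right box of $\beta/\beta_2$, which carries label $1$ in the reverse lexicographic filling, sits at some column $j_0\le\beta^1\le r-1$, giving row-rule value $R_r(1)=r+i_0-j_0\ge 2$. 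Hence $\Arr^{\leq r}_{\emptyset,\beta}=\emptyset$, and by Proposition \ref{prop:splitting1} every arrow from $(\emptyset,\beta)$ lies in $\Arr^{>r}_{\emptyset,\beta}$. This immediately gives both $\tau_{\emptyset,\beta}=s_{\emptyset,\beta}$ and the clean $w$-expansion $\tau_{\emptyset,\beta}=w_{\emptyset,\beta}+\sum_{a_W\in\overline{\Arr}_{\emptyset,\beta}}w_{t(a_W)}$.

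Next I will expand both factors of $\tau_{\alpha_1,\alpha_2}\tau_{\emptyset,\beta}$ using Theorem \ref{thm:wexp} and distribute, then apply Theorem \ref{thm:RWpositivity} to each resulting $w$--$w$ product. The terms naturally organize into a sum indexed by pairs $(T,a_U)$ with $T\in\RW_\gamma(\beta*\alpha_2)$ and $a_U$ either the identity at $(\alpha_1,\gamma)$ or an element of $\overline{\Arr}_{\alpha_1,\gamma}$; this is exactly the domain of the diffusion bijection packaged in \S\ref{subsec:prodvert}, which to such a pair associates an arrow $\Diff_T(a_U)$ rooted at $(\alpha_1,\alpha_2)$ together with an arrow rooted at $(\emptyset,\beta)$. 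On the right-hand side of Theorem \ref{thm:betamult}, expanding each $\tau$ in the $w$ basis via Theorem \ref{thm:wexp} produces exactly those $w$-classes indexed by pairs $(T,a_U)$ with $a_U$ either trivial or in $\overline{\Arr}^{\leq r}_{\alpha_1,\gamma}$ with $\Diff_T(a_U)\in\overline{\Arr}^{>r}$, together with their $\overline{\Arr}^{>r}$-tails.

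To match the two sides I will partition the summands on the left according to where $\Diff_T(a_U)$ lands: (i) if $\Diff_T(a_U)\in\overline{\Arr}^{>r}$ and $a_U$ is trivial, the term matches a principal $\tau_{\alpha_1,\gamma}$ summand; (ii) if $\Diff_T(a_U)\in\overline{\Arr}^{>r}$ and $a_U\in\overline{\Arr}^{\leq r}$, the term matches a principal $\tau_{t(a_U)}$ summand; (iii) if $\Diff_T(a_U)\in\overline{\Arr}^{\leq r}$, the term matches a $\overline{\Arr}^{>r}$-tail attached to $\tau_{\alpha_1,\gamma}$ via its $w$-expansion; (iv) if $\Diff_T(a_U)$ satisfies neither rule, Propositions \ref{prop:tsplitting1} and \ref{prop:tsplitting2} will split it into a composition of an $\overline{\Arr}^{\leq r}$ and an $\overline{\Arr}^{>r}$ piece, which reassembles as an $\overline{\Arr}^{>r}$-tail attached to some $\tau_{t(a_U)}$. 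The main obstacle will be step (iv): making precise how the diffusion bijection interacts with this tripartite decomposition into $\overline{\Arr}^{\leq r}$, $\overline{\Arr}^{>r}$, and ``neither'' arrows, paralleling the role Corollary \ref{cor:slideprop} plays for infusion in Theorem \ref{thm:s1thm} but requiring more care, since diffusion rectifies its input rather than merely sliding it past a boundary.
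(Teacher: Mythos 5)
Your proposal departs from the paper's proof in structure: the paper argues by induction on $|\alpha_2|$, computing $s_{\alpha_1,\alpha_2}s_{\emptyset,\beta}$ in two ways (once via the Littlewood--Richardson expansion and Theorem \ref{thm:wexp}, once via $\tau_{\alpha_1,\alpha_2}=s_{\alpha_1,\alpha_2}-\sum\tau_{t(a)}$ together with the inductive hypothesis applied to the smaller products $\tau_{t(a_1)}\tau_{\emptyset,\beta}$), isolating $\tau_{\alpha_1,\alpha_2}\tau_{\emptyset,\beta}$, and then matching the residual sums. You instead propose a direct, non-inductive comparison of $w$-expansions. Your observation that $\beta^1<r$ forces $\overline{\Arr}^{\leq r}_{\emptyset,\beta}=\emptyset$, hence $\tau_{\emptyset,\beta}=s_{\emptyset,\beta}$, is correct and is precisely what the paper uses in its base case.

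There is, however, a genuine gap in your case analysis. Once you expand $\tau_{\alpha_1,\alpha_2}$ via Theorem \ref{thm:wexp}, every $w$-summand of the left-hand side corresponds, under the diffusion indexing of \S\ref{subsec:prodvert}, to data in which the arrow rooted at $(\alpha_1,\alpha_2)$ --- that is, $\Diff_T(a_U)$ --- is \emph{constrained} to be trivial or in $\overline{\Arr}^{>r}_{\alpha_1,\alpha_2}$, because the $w$-expansion of $\tau_{\alpha_1,\alpha_2}$ only involves $w_{\alpha_1,\alpha_2}$ and $w_{t(a)}$ for $a\in\Arr^{>r}_{\alpha_1,\alpha_2}$. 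Your cases (iii) and (iv), in which $\Diff_T(a_U)$ lies in $\overline{\Arr}^{\leq r}$ or is of mixed type, therefore do not index left-hand summands at all, and conversely the $\overline{\Arr}^{>r}$-tails that arise on the right when you expand $\tau_{\alpha_1,\gamma}$ and $\tau_{t(a_U)}$ are left unmatched by your cases (i) and (ii). The inductive organization in the paper is exactly what absorbs this extra bookkeeping; your direct route would need a finer bijection between LR-witnessed $w$-summands that you have not constructed.

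Finally, you correctly identify step (iv) as the crux but do not resolve it, and you also omit the key verification that makes the diffusion side of the argument work at all: that under $\Diff$ the row-rule data moves in the required direction. In the paper this is the final paragraph of the proof, relying on the fact that rectifying $\theta^{-1}(T,U)$ moves any box from the $\alpha_2$-part of $\beta*\alpha_2$ at most $\beta^1$ columns to the left and never down (Lemma \ref{lem:buch}), so the row-rule value of a label in $a_U$ is bounded by its row-rule value in $\Diff_T(a_U)$. Without establishing this, the split via Propositions \ref{prop:tsplitting1}--\ref{prop:tsplitting2} and the subsequent application of $\Diff_T^{-1}$ are not known to land back in $\overline{\Arr}^{\leq r}$, and the claimed bijection is unverified.
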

\begin{proof}
We prove this by induction on $|\alpha_2|$. If $|\alpha_2|=0$, then by definition (and using that $|\beta^1|<r$),
\[\tau_{\alpha_1,\emptyset} \tau_{\emptyset,\beta}=s_{\alpha_1,\emptyset} s_{\emptyset,\beta}=\tau_{\alpha_1,\beta}+\sum_{a \in \overline{\Arr}^{\leq r}_{\alpha_1,\beta}} \tau_{t(a)}.\]
This agrees with the statement of the theorem, as $\Diff_{T}(a_U)$ is always trivial when $\alpha_2=\emptyset$.

Now consider the general case. Observe first that we can compute $s_{\alpha_1,\alpha_2} s_{\emptyset,\beta}$ two ways:
\begin{equation*}
\begin{split} s_{\alpha_1,\alpha_2} s_{\emptyset,\beta}&=\sum_{\gamma} \sum_{T \in \RW_\gamma(\alpha_2*\beta)} s_{\alpha_1,\gamma}=\sum_{\gamma} \sum_{T \in \RW_\gamma(\alpha_2*\beta)} (\tau_{\alpha_1,\gamma}+\sum_{\substack{a_U \in \overline{\Arr}^{\leq r}_{\alpha_1,\gamma}}} \tau_{t(a_U)}),
\end{split}
\end{equation*}
and
\begin{equation*}
\begin{split} s_{\alpha_1,\alpha_2} s_{\emptyset,\beta}&=(\tau_{\alpha_1,\alpha_2}+\sum_{a_1 \in \overline{\Arr}^{\leq r}_{\alpha_1,\alpha_2}} \tau_{t(a_1)}) \tau_{\emptyset,\beta}\\
&=\tau_{\alpha_1,\alpha_2} \tau_{\emptyset,\beta}+\sum_{\substack{a_1 \in \overline{\Arr}^{\leq r}_{\alpha_1,\alpha_2}, \\ \gamma'}} \sum_{T' \in \RW_{\gamma'}(\beta*t(a_1)_2)} (\tau_{t(a_1)_1,\gamma'}+\sum_{\substack{a_2 \in \overline{\Arr}_{t(a_1)_1,\gamma'}^{\leq} \\ \Diff_{T}(a_2) \in \Arr^{> r} }} \tau_{t(a)}).
\end{split}
\end{equation*}
where the product is computed using the induction hypothesis. 

We can isolate the term $\tau_{\alpha_1,\alpha_2} \tau_{\emptyset,\beta}$ to see that
\begin{equation*}
\begin{split} \tau_{\alpha_1,\alpha_2} \tau_{\emptyset,\beta}
=&\sum_{\gamma} \sum_{T \in \RW_\gamma(\alpha_2*\beta)} \tau_{\alpha_1,\gamma}+\sum_{\gamma} \sum_{T \in \RW_\gamma(\alpha_2*\beta)}\sum_{\substack{a_U \in \overline{\Arr}^{\leq r}_{\alpha_1,\gamma}}} \tau_{t(a_U)}\\
&-\sum_{\substack{a_1 \in \overline{\Arr}^{\leq r}_{\alpha_1,\alpha_2}, \\ \gamma'}} \sum_{T' \in \RW_{\gamma'}(\beta*t(a_1)_2)} (\tau_{t(a_1)_1,\gamma'}+\sum_{\substack{a_2 \in \overline{\Arr}_{t(a_1)_1,\gamma'}^{\leq}\\ \Diff_{T'}(a_2) \in \overline{\Arr}^{> r}}} \tau_{t(a)}).
\end{split}
\end{equation*}
So to prove the theorem, it suffices to show that:
\begin{equation}\label{eq:expression}
\begin{split} \sum_{\gamma} \sum_{T \in \RW_\gamma(\alpha_2*\beta)}\sum_{\substack{a_U \in \overline{\Arr}^{\leq r}_{\alpha_1,\gamma} \\ \Diff_{T}(a_U) \not \in \overline{\Arr}^{> r}}} \tau_{t(a_U)}
= \sum_{\substack{a_1 \in \overline{\Arr}^{\leq r}_{\alpha_1,\alpha_2}, \\ \gamma'}} \sum_{T' \in \RW_{\gamma'}(\beta*t(a_1)_2)} (\tau_{t(a_1)_1,\gamma'}+\sum_{\substack{a_2 \in \overline{\Arr}_{t(a_1)_1,\gamma'}^{\leq}\\ \Diff_{T'}(a_2) \in \overline{\Arr}^{> r}}} \tau_{t(a)}).
\end{split}
\end{equation}

Summands on the left hand side are indexed by $\sh(T)=\gamma$, and $a_U$. Let $t(a_U)=(\lambda_1,\lambda_2)$ so $U \in \RW(\lambda_1/\alpha_1)_{\gamma/\lambda_2}$. Apply $\Diff$ to the pair  $(T,U)$, and consider the arrow $\Diff_{T}(a_U)$. Using Proposition \ref{prop:tsplitting1}, we can split this arrow into a path consisting of an arrow in $\overline{\Arr}^{\leq r}$ and then one in $\overline{\Arr}^{> r}$. We can then apply $\Diff_{T}^{-1}$ to the second arrow. In this way, we get a summand on the right hand side. This is a bijection -- the only thing left to check is that doing it reverse does indeed produce an arrow in $\overline{\Arr}^{\leq r}$. 

To check this, we need to see what $\Diff$ does to the row rule labels. We need to show that for a pair $T, U$ and label $l$ appearing in the $\alpha_2$ part of $\Diff(T,U)$, $R_l$ is less than the row rule label for $l$ as a label in $\Diff_{T}(a_U)$. This follows from the observation that if we rectify $\theta^{-1}(T,U)$, no box from the $\alpha_2$ part moves more than $\beta^1$ left (by Lemma \ref{lem:buch}), and no box moves down.  
\end{proof}

\begin{eg}
We give an example of the main construction in the theorem, using splitting and $\Diff$. We start with a pair $T \in \RW_\gamma(\beta*\alpha_2)$, together with an arrow $a_U \in \overline{\Arr}_{\alpha_1,\gamma}$, i.e. $U \in \RW_{\gamma/\lambda_2}(\lambda_1/\alpha_1)$.  Let $r=3$, and set $(\alpha_1,\alpha_2) = (\yng(3,2,2,1),\yng(1,1))$, $\beta=\yng(2,1)$. Take 
\[T=\begin{ytableau}
1 & 4\\
2 & 5\\
3\\
\end{ytableau}\]
and $a_U:  (\yng(3,2,2,1),\yng(2,2,1)) \to (\yng(3,3,2,2,1),\yng(1,1))$ given by 
\[U=\begin{ytableau}
$ $ &1\\
$ $ &2\\
3
\end{ytableau}.
\]
To compute $\Diff(T,U)$, we apply $\theta^{-1}$ to the pair 
\[\begin{ytableau}
1 & 4\\
2 & 5\\
3\\
\end{ytableau}, \hspace{3mm}
\begin{ytableau} 
1 &3 \\
2 & 4\\
5
\end{ytableau}\]
and obtain the word $34512$, and insert it into the shape $\yng(2,1)*\yng(1,1)$ to obtain a tableaux $V$ with this reading word: i.e.
\[V=\begin{ytableau} \none & \none & 3\\
\none & \none &4 \\
1 & 5\\
2\\
\end{ytableau}.\]
We then look at $V^{>2}$:
\[V^{>2}=\begin{ytableau} \none & \none & 1\\
\none & \none &2 \\
  & 3\\
$ $ \\
\end{ytableau}.
\]
By definition, $\Diff_{T}(a_U)$ is the arrow $$\yng(3,2,2,1),\yng(1,1) \to  \yng(3,3,2,2),\emptyset $$ given by the upper part of the above tableau. This splits into two arrows, 
\[\yng(3,2,2,1),\yng(1,1)\to \yng(3,2,2,2),\yng(1)\to \yng(3,3,2,2),\emptyset\]
the first of which satisfies the row rule, and the second which breaks it. To apply $\Diff^{-1}$ to the second arrow, we first combine it with the lower part of $V^{>2}$:
\ytableausetup{boxsize=\boxsize}
\[\begin{ytableau} \none & \none & 1\\
\none & \none \\
$ $  & 2\\
$ $ \\
\end{ytableau}.
\]
Using $V$ to fill in the rest of the labels, we get 
\[\begin{ytableau} \none & \none & 3\\
\none & \none \\
1 & 4\\
2\\
\end{ytableau}.\]
Column inserting the reading word $3412$ we obtain the insertion and recording tableau: 
\[\begin{ytableau}
1 & 3\\
2 & 4\\
\end{ytableau},\hspace{5mm}
\begin{ytableau}
1 & 3\\
2 & 4\\
\end{ytableau}.\]
This gives
\[T'=\begin{ytableau}
1 & 3\\
2 & 4\\
\end{ytableau},\hspace{5mm}
U'=\begin{ytableau}
$ $ & 1\\
$ $ & 2\\
\end{ytableau}.\]
Then $$a_{U'}:\yng(3,2,2,2),\yng(2,2) \to \yng(3,3,2,2,1),\yng(1,1) \in \overline{\Arr}^{\leq r},$$ and $\Diff_{T}(a_{U'}) \in \overline{\Arr}^{> r}$ as required. 
\end{eg}

\begin{eg}\label{eg:thm2eg} We illustrate the theorem in an example. Let $r=3$. We compute the product of $\tau_{\emptyset,\yng(2,1)}$ with $\tau_{\yng(2),\yng(2,1)}.$ We first look at all elements in $\RW\left(\yng(2,1)*\yng(2,1)\right)$; this is the set:
\[\begin{ytableau}
1 & 2 & 4 & 5 \\
3 & 6\\
\end{ytableau}, \hspace{5mm}
\begin{ytableau}
1 & 2 & 4 & 5 \\
3 \\
6\\
\end{ytableau}, \hspace{5mm} 
\begin{ytableau}
1 & 2 & 5 \\
3 & 4  & 6\\
\end{ytableau}, \hspace{5mm}
\begin{ytableau}
1 & 2 & 5 \\
3 & 4 \\
6 \\
\end{ytableau}, \hspace{5mm} 
\begin{ytableau}
1 & 2 &5\\
3 & 6 \\
4 \\
\end{ytableau}, \]
\[\begin{ytableau}
1 & 2 &5\\
3  \\
4 \\
6\\
\end{ytableau}, \hspace{5mm}  
\begin{ytableau}
1 & 2 \\
3 & 5 \\
4 & 6 \\
\end{ytableau}, \hspace{5mm}
\begin{ytableau}
1 & 2 \\
3 & 5 \\
4  \\
6\\
\end{ytableau}.\]
For each tableau $T$ appearing in the list above, we compute all $a_U \in \overline{\Arr}^{\leq r}$ with $s(a_U)=(\yng(2),\sh(T))$. Then, to each $(T,U)$ pair, we apply $\Diff$ to obtain a sub-tableau of $\yng(2,1)*\yng(2,1)$ jeu-de-taquin equivalent to $U$. 

For example, we can take
\[ T=\begin{ytableau}
1 & 2 & 4 & 5 \\
3 & 6\\
\end{ytableau}\]
and $a_U:(\yng(2),\yng(4,2)) \to (\yng(3),\yng(3,2))$, where $\tiny U=\young(~~~1,~~)$. We can see that $R_1=0$ (this is the row rule of the box occupied by $1$). Also, as $t(a_U)_1=\yng(3)$, the new box is added in the first row, and so $k_1=1$. 

To compute $\Diff(T,U)$, we apply $\theta^{-1}$ to the pair
\[ \begin{ytableau}
1 & 2 & 3 & 6 \\
4 & 5\\
\end{ytableau},
\begin{ytableau}
1 & 2 & 4 & 5 \\
3 & 6\\
\end{ytableau} \]
and insert the word into the skew shape $\yng(2,1)*\yng(2,1)$ to obtain $V$. We then consider $V^{>5}$ (i.e $\Diff(T,U)_1$), which in this case is just:
\[\begin{ytableau}
\none & \none & $ $ & 1 \\
\none & \none & $ $\\
$ $ & $ $\\
$ $\\
\end{ytableau}. \]
The row rule of the box occupied by $1$ viewed as a box in $\yng(2,1)$ is $2$, therefore, $\Diff_{\yng(2,1)}(a_U) \in \overline{\Arr}^{> r}.$

For each tableau $T$ above, we list all arrows $a_U \in \overline{\Arr}^{\leq r}_{(\yng(2),\sh(T))}$; we record them with a triple $(t(a)_1, U,\Diff(T,U)_1)$. This is enough to determine $k_l,R_l$, and the row rule for each box after $\Diff$. We draw a box around those arrows satisfying$\Diff_{\yng(2,1)}(a_U) \in \overline{\Arr}^{> r}.$
\begin{figure}
\begin{center}
\begin{tabular}{ |c |c|}
\hline
\ytableausetup{smalltableaux}
$T$ & $(t(a)_1, U,\Diff(U)_1)$ \\ 
\hline
$\begin{ytableau}
1 & 2 & 4 & 5 \\
3 & 6\\
\end{ytableau}$ & $\tiny \boxed{( \young(~~1),\young(~~~1,~~),\young(::~1,::~,~~,~))},
 (\young(~~,1),\young(~~~1,~~),\young(::~1,::~,~~,~)), \boxed{(\young(~~21),\young(~~12,~~),\young(::~2,::1,~~,~))},(\tiny \young(~~1,2),\young(~~12,~~),\young(::~2,::1,~~,~)),$ \\ 
 & $(\tiny \young(~~,21),\young(~~12,~~),\young(::~2,::1,~~,~)), (\young(~~,1,2),\young(~~~1,~2),\young(::~1,::2,~~,~)),  (\young(~~,1,2),\young(~~12,~3),\young(::12,::3,~~,~)),  (\young(~~1,2,3),\young(~~12,~3),\young(::12,::3,~~,~)),$\\
 & $(\tiny \young(~~,21,3),\young(~~12,~3),\young(::12,::3,~~,~))$ \\ 
\hline
$\begin{ytableau}
1 & 2 & 4 & 5 \\
3 \\
6\\
\end{ytableau}$ & $\tiny \boxed{(\young(~~1),\young(~~~1,~,~),\young(::~1,::~,~~,~))}, (\young(~~,1),\young(~~~1,~,~),\young(::~1,::~,~~,~)), \boxed{(\young(~~21),\young(~~12,~,~),\young(::~2,::1,~~,~))},$\\
& $(\tiny \young(~~1,2),\young(~~12,~,~),\young(::~2,::1,~~,~)),  (\young(~~,21),\young(~~12,~,~),\young(::~2,::1,~~,~))$ \\
\hline
$\begin{ytableau}
1 & 2 &  5 \\
3 & 4 & 6 \\
\end{ytableau}$ & $\tiny \boxed{(\young(~~,1),\young(~~~,~~1),\young(::~~,::1,~~,~))}, \boxed{(\young(~~1,2),\young(~~1,~~2),\young(::~1,::2,~~,~))}, (\young(~~,1,2),\young(~~1,~~2),\young(::~1,::2,~~,~)),(\young(~~,21,43),\young(~12,~34),\young(::12,::4,~3,~))$ \\
\hline
$\begin{ytableau}
1 & 2 &  5 \\
3 & 4 \\
6\\
\end{ytableau}$ & $\tiny \boxed{(\young(~~1),\young(~~1,~~,~),\young(::~~,::1,~~,~))},\boxed{(\young(~~,1),\young(~~1,~~,~),\young(::~~,::1,~~,~))}, (\young(~~,1,2),\young(~~1,~2,~),\young(::~1,::2,~~,~)), (\young(~~,21,3),\young(~12,~3,~),\young(::~2,::3,~1,~))$ \\
\hline
$\begin{ytableau}
1 & 2 &  5 \\
3 & 6 \\
4\\
\end{ytableau}$ & $\tiny \boxed{(\young(~~1),\young(~~1,~~,~),\young(::~1,::~,~~,~))}, (\young(~~,1),\young(~~1,~~,~),\young(::~1,::~,~~,~)), (\young(~~,1,2),\young(~~1,~2,~),\young(::~1,::~,~2,~)), (\young(~~,21,3),\young(~12,~3,~),\young(::~2,::1,~3,~))$ \\
\hline
$\begin{ytableau}
1 & 2 &  5 \\
3 \\
4\\
6\\
\end{ytableau}$ & $\tiny \boxed{(\young(~~1),\young(~~1,~,~,~),\young(::~1,::~,~~,~))}, (\young(~~,1),\young(~~1,~,~,~),\young(::~1,::~,~~,~)), (\young(~~21),\young(~12,~,~,~),\young(::~2,::~,~1,~))$ \\
\hline
$\begin{ytableau}
1 & 2 \\
3  &5\\
4 & 6\\
\end{ytableau}$ & $(\tiny \young(~~,1,2,3),\young(~1,~2,~3),\young(::~1,::2,~3,~))$ \\
\hline
$\begin{ytableau}
1 & 2 \\
3  &5\\
4 \\
6\\
\end{ytableau}$ & $(\tiny \young(~~,1,2),\young(~1,~2,~,~),\young(::~1,::~,~2,~))$ \\
\hline
\end{tabular}.
\end{center}
\end{figure}
In the end, we obtain:
\begin{equation*}
\begin{split}\tau_{\emptyset,\yng(2,1)} \tau_{\yng(2),\yng(2,1)}&=\tau_{\yng(2),\yng(4,2)}+\tau_{\yng(3),\yng(3,2)}+\tau{\yng(4),\yng(2,2)}\\
&+\tau_{\yng(2),\yng(4,1,1)}+\tau_{\yng(3),\yng(3,1,1)}+\tau_{\yng(4),\yng(2,1,1)}\\
&+\tau_{\yng(2),\yng(3,3)}+\tau_{\yng(2,1),\yng(3,2)}+\tau_{\yng(3,1),\yng(2,2)}\\
&+\tau_{\yng(2),\yng(3,2,1)}+\tau_{\yng(3),\yng(2,2,1)}+\tau_{\yng(2,1),\yng(2,2,1)}\\
&+\tau_{\yng(2),\yng(3,2,1)}+\tau_{\yng(3),\yng(2,2,1)}\\
&+\tau_{\yng(2),\yng(3,1,1,1)}+\tau_{\yng(3),\yng(2,1,1,1)}\\
&+\tau_{\yng(2),\yng(2,2,2)}+\tau_{\yng(2),\yng(2,2,1,1)}\\
%=&\tau_{\yng(2),\yng(4,2)}+\tau_{\yng(2),\yng(4,1,1)}+\tau_{\yng(2),\yng(3,3)}+2\tau_{\yng(2),\yng(3,2,1)}+\tau_{\yng(2),\yng(3,1,1,1)}+\tau_{\yng(2),\yng(2,2,2)}+\tau_{\yng(2),\yng(2,2,1,1)}\\
%&+\tau_{\yng(3),\yng(3,2)}+\tau_{\yng(3),\yng(3,1,1)}+2\tau_{\yng(3),\yng(2,2,1)}+\tau_{\yng(3),\yng(2,1,1,1)}\\
%&+\tau_{\yng(2,1),\yng(3,2)}+\tau_{\yng(2,1),\yng(2,2,1)}\\
%&+\tau{\yng(4),\yng(2,2)} +\tau_{\yng(4),\yng(2,1,1)}\\
%&+\tau_{\yng(3,1),\yng(2,2)}\\
\end{split}
\end{equation*}
\end{eg}
We now specialize Theorem \ref{thm:betamult} to the case where we multiply by $\tau_{\emptyset,(1,\dots,1)}.$ 
\begin{thm}\label{thm:s2} Fix $r>1$,  and let $\beta=\colp$, a column of length $p$. Then in the $\qRW^r$ basis, 
\[\tau_{\alpha_1,\alpha_2} \tau_{\emptyset,\beta}=\sum_{\gamma \in C^p(\alpha_2)} (\tau_{\alpha_1,\gamma} +\sum_{\substack{a_U \in \overline{\Arr}^{\leq r}_{\alpha_1,\gamma}\\ \Diff_{T_{\gamma/\alpha_2}}(a_U) \in \overline{\Arr}^{> r} \\
\\ t(a_U) \in C^q(\alpha_1) \times C_q(\gamma)
\\U=T_{\alpha_2/t(a)_2}}} \tau_{t(a_U)}).\]
\end{thm}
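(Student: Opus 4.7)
The plan is to deduce Theorem \ref{thm:s2} from Theorem \ref{thm:betamult} specialized to $\beta = \colp$, then to cut down the resulting sum via Pieri-type rigidity arguments parallel to those used for Theorem \ref{thm:s1col}. Since $\beta^1 = 1 < r$, Theorem \ref{thm:betamult} applies directly and yields
\[\tau_{\alpha_1,\alpha_2}\, \tau_{\emptyset,\colp} = \sum_{\gamma}\sum_{T \in \RW_\gamma(\colp * \alpha_2)}\Bigl(\tau_{\alpha_1,\gamma} + \sum_{\substack{a_U \in \overline{\Arr}^{\leq r}_{\alpha_1,\gamma}\\ \Diff_T(a_U) \in \overline{\Arr}^{>r}}} \tau_{t(a_U)}\Bigr).\]

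First, I would use the classical Pieri rule together with Proposition \ref{prop:LRRW} to reduce the outer sum: the coefficient $c^\gamma_{\alpha_2,\colp}$ equals $|\RW_\gamma(\colp * \alpha_2)|$, which is $1$ when $\gamma \in C^p(\alpha_2)$ and $0$ otherwise. Inspection of the Remmel--Whitney rules then shows that the unique tableau in $\RW_\gamma(\colp * \alpha_2)$ fills the sub-diagram $\alpha_2 \subset \gamma$ in reverse-lexicographic order and labels the vertical strip $\gamma/\alpha_2$ top-to-bottom; this is the tableau played by $T_{\gamma/\alpha_2}$ in the statement.

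Second, for fixed $\gamma \in C^p(\alpha_2)$ and an arrow $a_U \in \overline{\Arr}^{\leq r}_{\alpha_1,\gamma}$ with $t(a_U) = (\gamma_1', \gamma_2')$, I would prove diffusion analogues of Lemmas \ref{lem:s1lem1}--\ref{lem:s1lem3}: the requirement $\Diff_{T_{\gamma/\alpha_2}}(a_U) \in \overline{\Arr}^{>r}$ forces $\gamma_1' \in C^q(\alpha_1)$, $\gamma_2' \in C_q(\gamma)$ for a common $q$, and $U$ to be the canonical top-to-bottom vertical-strip labeling. The key simplification is that because $T_{\gamma/\alpha_2}$ has its non-$\alpha_2$ labels arranged as a vertical strip, the $\Diff$ procedure reduces, via Lemma \ref{lem:KL}, to tracking a controlled sequence of jeu-de-taquin slides on $U$; this places us in the setting of Corollary \ref{cor:abelowb} and Lemmas \ref{lem:buch}--\ref{lem:buch2}. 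Running the same row-vs-row-rule contradictions as in the infusion-side proof -- here comparing $R_l$, the row of each label of $U$, and the row of its image in $\Diff_{T_{\gamma/\alpha_2}}(a_U)$ -- rules out every $a_U$ not of the described form, while conversely the canonical $U$ is easily checked to satisfy both rules.

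The main obstacle will be the bookkeeping in the second step: unlike infusion, diffusion proceeds through the rectification $\theta$ and a split at $|\gamma_2'|$, so the arrow $\Diff_{T_{\gamma/\alpha_2}}(a_U)$ rooted at $(\alpha_1,\alpha_2)$ is not read off $U$ directly but only via the intermediate tableau on $\alpha_2 * \colp$. The mitigating feature is precisely the vertical-strip structure at the bottom of $T_{\gamma/\alpha_2}$, which makes the relevant piece of the bijection behave essentially like inward jeu-de-taquin along a vertical strip, enabling the same local inequalities used in Remark \ref{rem:appendix} to constrain the labels of $U$ and the shapes $\gamma_1'/\alpha_1$ and $\gamma/\gamma_2'$.
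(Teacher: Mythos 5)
Your proposal matches the paper's proof in structure and in the key technical tools: specialize Theorem \ref{thm:betamult} to $\beta=\colp$, note that the Pieri rule collapses the outer sum to $\gamma\in C^p(\alpha_2)$ with a unique admissible tableau, and then establish diffusion-side analogues of Lemmas \ref{lem:s1lem1}--\ref{lem:s1lem3} (the paper's Lemmas \ref{lem:s2lem1}, \ref{lem:s2lem2}, \ref{lem:s2lem3}) by tracking row-rule inequalities through the jeu-de-taquin slides underlying $\Diff$, using Corollary \ref{cor:abelowb} and Lemmas \ref{lem:buch}--\ref{lem:buch2}. You have correctly identified both the route and the main technical burden (the indirection of $\Diff$ through $\theta$ and the split), so this is essentially the paper's argument; the only minor imprecision is the claim that the $\alpha_2$-part of the unique tableau in $\RW_\gamma(\colp*\alpha_2)$ is filled in reverse-lexicographic order, when it is in fact the unique element of $\RW(\alpha_2)$ (the row-superstandard filling), a detail that does not affect the argument.
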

We prove the theorem in a series of lemmas. We fix $\alpha_1,\alpha_2,\beta$ and $p$ as in the theorem above. 
\begin{lem}\label{lem:s2lem1} Let $\gamma \in C^p(\alpha_2)$, and $a_U:(\alpha_1,\gamma) \to (\lambda_1,\lambda_2)$ be an arrow appearing in the sum of Theorem \ref{thm:betamult}.  Then $\lambda_1 \in C^q(\alpha_1)$ for some $q$. 
\end{lem}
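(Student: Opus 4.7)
The plan is to argue by contradiction, closely paralleling the proof of Lemma \ref{lem:s1lem2}. Suppose $\lambda_1/\alpha_1$ is not a vertical strip; then there exist two boxes of $\lambda_1/\alpha_1$ in the same row, so two consecutive labels $i$ and $i+1$ lie in the same row of the reverse lexicographic filling of $\lambda_1/\alpha_1$, with $k_i = k_{i+1}$. Applying the Remmel--Whitney rules to $U \in \RW_{\gamma/\lambda_2}(\lambda_1/\alpha_1)$, label $i+1$ must sit strictly to the right and weakly above label $i$ in $U$. Since the row-rule filling decreases by one when moving right and increases by one when moving down, this geometry forces $R_{i+1} \leq R_i - 1$, while the hypothesis $a_U \in \overline{\Arr}^{\leq r}$ supplies $R_i \leq k_i$ and $R_{i+1} \leq k_{i+1} = k_i$. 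These are the baseline inequalities from which a contradiction will be produced.

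Next I would analyze the bijection $\Diff$ applied to the pair $(U, T_\gamma)$, where $T_\gamma$ denotes the unique element of $\RW_\gamma((1^p) * \alpha_2)$. Writing $(V_1, V_2) = \Diff(U, T_\gamma)$, the tableau $V_2$ carries the labels corresponding to the boxes of $\lambda_1/\alpha_1$; its restriction to the $\alpha_2$ portion of $(1^p)*\alpha_2$, re-indexed by the reverse lexicographic filling of $\eta_1/\alpha_1$ (where $(\eta_1, \eta_2)$ denotes the target of $\Diff_T(a_U)$), yields the tableau $W$ that underlies $\Diff_T(a_U)$, while the remaining labels of $V_2$ sit inside the single column $\beta$. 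Because $T_\gamma$ places labels $1, \dots, p$ in top-to-bottom order on the vertical strip $\gamma/\alpha_2$, and because $\beta$ has width $1$, the geometry of the $\theta^{-1}$ step is sufficiently rigid to locate where labels $i$ and $i+1$ of $U$ end up: either in the column $\beta$ or in $\alpha_2$ (in which case they contribute labels to $W$).

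Finally I would invoke the hypothesis $\Diff_T(a_U) \in \overline{\Arr}^{>r}$, which supplies reverse row-rule inequalities $R'_l > k'_l$ for every label $l$ of $W$. A case analysis on how $i$ and $i+1$ are distributed between $\alpha_2$ and $\beta$, combined with the width-$1$ constraint on $\beta$ and the baseline inequalities from the first paragraph, produces an impossible chain of inequalities relating $R_i$, $R_{i+1}$, $k_i$, and the reverse row-rule values of the corresponding labels of $W$---directly analogous to the contradiction at the end of the proof of Lemma \ref{lem:s1lem2}. The main obstacle is the label-tracking in the middle step: unlike infusion, the bijection $\Diff$ is defined via $\theta^{-1}$ together with a splitting rather than as a transparent sequence of jeu-de-taquin slides, so pinpointing where individual labels of $U$ end up inside $V_2$ requires careful exploitation of the special form of $T_\gamma$ for the column case $\beta = (1^p)$.
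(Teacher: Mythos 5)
Your outline follows the paper's strategy at a high level — contradiction via row-rule and reverse-row-rule inequalities against a supposed horizontal pair of boxes in $\lambda_1/\alpha_1$, using the column shape of $\colp$ to locate $i+1$ in the $\alpha_2$ part of the diffused tableau — but there is a genuine gap at exactly the step you yourself flag as the "main obstacle," and closing it is not a matter of being more careful with $\theta^{-1}$. After the preliminaries, the contradiction you need requires a \emph{cross-tableau} comparison: a relation between the row-rule value of $i$ as a box of $U$ and the row-rule value of $i+1$ as a box of $\tilde T := \Diff(T_\gamma, U)_2$. All the inequalities you assemble are either internal to $U$ (e.g.\ $R_{i+1}\leq R_i - 1$, which the paper in fact does not use) or internal to the diffused arrow, and they cannot be chained without this bridge. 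In particular, observing only that $\Diff_T(a_U)\in\overline{\Arr}^{>r}$ and that $a_U\in\overline{\Arr}^{\leq r}$ gives $\Diff(R_{i+1})>m\geq R_i$, which is consistent, not contradictory, on its own.

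The paper supplies the bridge with Corollary \ref{cor:abelowb}, the jeu-de-taquin positional tracking result for slides along a vertical strip. Since $\tilde T$ is jeu-de-taquin equivalent to $U$ (so the RW relative positions persist by Proposition \ref{prop:jdt}), Corollary \ref{cor:abelowb} pins $a$ in $U$ weakly below and to the left of the box occupied by $a+1$ in $\tilde T$, which forces $\Diff(R_{a+1})\leq R_a$. Combined with $\Diff(R_{a+1}) > m \geq R_a$ from the two row-rule hypotheses against the common row $m$ of $\rl(\lambda_1/\alpha_1)$, this yields the contradiction. You are right that the analogy with the infusion case (Lemma \ref{lem:s1lem2}) is imperfect: there the slides are explicit and labels can be followed move by move, whereas for diffusion you need a packaged statement like Corollary \ref{cor:abelowb}. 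Without invoking it (or proving an equivalent), the argument does not close.
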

\begin{proof}
Suppose not; then $\lambda_1/\alpha_1$ contains two boxes next to each other, labeled say by $a$ and $a+1$ in the reverse lexicographic labeling $\rl(\lambda_1/\alpha_1)$ in row $m$ of $\lambda_1/\alpha_1$.  Since $T \in \RW_{\gamma/\lambda_2}(\lambda_1/\alpha_1)$,  $a+1$ appears weakly above and strictly to the right of $a$ in both $T$ and $\tilde{T}=\Diff(T_{\gamma/\lambda_2},U)_2$. Recall that $\tilde{T}$ is a sub-skew tableau of shape $\colp*\alpha_2$; so $a+1$ appears in the $\alpha_2$ part of this skew shape. Set $R_l$ and $\Diff(R_l)$ to the be row rules for a label $l$ as it appears in $T$ and the $\alpha_2$ part of $\tilde{T}$ respectively. By assumption, $\Diff(R_{a+1}) >m \geq R_{a}$. Also, by Corollary \ref{cor:abelowb}, $a$ appears in $T$ weakly below and to the left of the box occupied by $a+1$ in $\tilde{T}$; so in particular
\[\Diff(R_{a+1})-1<R_a,\]
so $\Diff(R_{a+1}) \leq R_a$. This gives a contradiction.
\end{proof}
\begin{lem}\label{lem:s2lem2} Let $\gamma \in C^p(\alpha_2)$, and $a_T:(\alpha_1,\gamma) \to (\lambda_1,\lambda_2)$ be an arrow appearing in the sum of Theorem \ref{thm:betamult}.  Then $\Diff(T_{\gamma},T)^{>|\lambda_2|}$ is a vertical strip. 
\end{lem}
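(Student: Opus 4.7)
My plan is to adapt the argument of Lemma \ref{lem:s2lem1} to the present situation. I would argue by contradiction: suppose that $\Diff(T_{\gamma/\alpha_2},T)^{>|\lambda_2|}$ is not a vertical strip, so that it contains two boxes sharing a common row. Among all such bad pairs, I would choose one whose larger label is minimal, which (after standardness considerations and the fact that the shape in question is skew) lets me reduce to the case of two consecutive labels $a$ and $a+1$ appearing in the same row, with $a$ weakly to the left of $a+1$ in the reverse lexicographic filling of the underlying skew shape inside the $\alpha_2$ portion of $\colp \ast \alpha_2$.

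The next step is to locate $a$ and $a+1$ back in $T$. Since $T \in \RW_{\gamma/\lambda_2}(\lambda_1/\alpha_1)$ and $a+1$ is directly to the left of $a$ in $\rl(\lambda_1/\alpha_1)$, the Remmel--Whitney rules force $a$ to lie weakly below and weakly to the left of $a+1$ in $T$. Because diffusion is implemented, via the definition of $\theta^{-1}$, by a sequence of reverse column insertions on $\tilde U$, and these are jeu-de-taquin equivalent to rectification, the monotonicity content of Corollary \ref{cor:abelowb} carries over to give $l_a \geq \row_T(a+1)$, where $l_a$ denotes the row in which $a$ appears in $\Diff(T_{\gamma/\alpha_2},T)^{>|\lambda_2|}$.

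I would then invoke the two row-rule hypotheses to derive a contradiction. Since $a_T \in \overline{\Arr}^{\leq r}_{\alpha_1,\gamma}$, every label $i$ of $T$ satisfies $\row_T(i) \geq R_r(i)$; since $\Diff_{T_{\gamma/\alpha_2}}(a_U) \in \overline{\Arr}^{>r}$, the images of the labels satisfy the strict reverse inequality $R_r(i) > l_i$. Horizontally adjacent labels in the reverse lexicographic filling satisfy $R_r(a+1) = R_r(a)+1$. Chaining these together produces
\[ l_a \geq \row_T(a+1) \geq R_r(a+1) > R_r(a) > l_a, \]
exactly the contradiction used in Lemma \ref{lem:s2lem1}. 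The main obstacle is verifying that Corollary \ref{cor:abelowb} truly applies, since diffusion is not literally a sequence of jeu-de-taquin inner slides; however, the reverse column insertion underlying $\theta^{-1}$ preserves the vertical-relative-position behavior of consecutive labels that drives the corollary, via Lemma \ref{lem:abelowb} applied to the sequence of intermediate tableaux.
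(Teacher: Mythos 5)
Your reduction to consecutive labels $a$ and $a+1$ sharing a row of $T' := \Diff(T_\gamma, T)^{>|\lambda_2|}$ is the critical gap, and it stems from a structural difference between this lemma and Lemma~\ref{lem:s2lem1} that your proposal does not account for. In Lemma~\ref{lem:s2lem1} the shape in question ($\lambda_1/\alpha_1$) is filled by its \emph{reverse lexicographic} filling, for which horizontally adjacent boxes automatically carry consecutive labels; there the reduction comes for free. Here the shape of $T'$ is filled by an arbitrary \emph{standard} filling, and horizontally adjacent boxes of a standard tableau need not carry consecutive labels. A $2\times 2$ block with $1,2$ down the first column and $3,4$ down the second is a standard tableau, is not a vertical strip, and yet the only same-row pairs are $(1,3)$ and $(2,4)$, neither consecutive. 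This filling is perfectly consistent with the RW rules induced by $\rl(\lambda_1/\alpha_1)$ when $\lambda_1/\alpha_1$ is a vertical strip occupying two distinct columns (the rule only constrains vertically stacked rl labels, and here $1$ sits above $2$ and $3$ above $4$, with no constraint between $2$ and $3$). Choosing the bad pair of minimal larger label does not produce consecutivity, and ``standardness considerations'' do not rescue it.

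There is a second, independent problem: you assert that ``$a+1$ is directly to the left of $a$ in $\rl(\lambda_1/\alpha_1)$,'' but Lemma~\ref{lem:s2lem1} has already established that $\lambda_1/\alpha_1$ is a vertical strip, whose reverse lexicographic filling has no two labels in the same row. The horizontal-adjacency RW rule you invoke therefore never fires, the equality $R_r(a+1) = R_r(a)+1$ has no justification for a non-vertically-adjacent pair (for a vertical strip with boxes in different columns one in fact has $R_r(a+1) \geq R_r(a) + 2$), and the displayed chain of inequalities does not get off the ground.

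The paper's proof instead works with an arbitrary adjacent pair $a<b$ (not assumed consecutive) and splits into two cases according to the labels appearing directly above $b$ in the column of the full tableau $\Diff(T_\gamma,T)$. Case~1 locates a vertically adjacent pair $c<b'$ with $c<a<b'\leq b$, then combines Lemma~\ref{lem:abelowb} with the row rule and reverse row rule to force $k_a \geq R_a \geq \Diff(R_{b'}) > k_{b'}$ while also $k_a < k_{b'}$, a contradiction. Case~2, where the entire column above $b$ carries labels $>a$, takes the topmost label $c$ of that column and exploits the fact that $c$ can move at most one column left under rectification. This two-case structure is exactly what a correct argument needs and what your proposal is missing.
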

\begin{proof} Set $T'=\Diff(T_{\gamma},T)^{>|\lambda_2|}$. Suppose that $T'$ is not a vertical strip; then it contains two boxes next to each other labeled say $a$ and $b$, where $a<b$. Notice that by necessity, these are both in the $\alpha_2$ part of $T;$. We view $T'$ as part of the larger tableau $\Diff(T_{\gamma},T)$ (so $a$ and $b$ are shifted up, but we keep calling them as $a$ and $b$). We consider two cases:
\begin{enumerate}
\item There is a label $c<b$ in the same column as $b$, such that $a>c$. 
\item All labels in the same column as $b$ and above $b$ are greater than $a$.  
\end{enumerate}
\paragraph{\emph{Case 1:}} Without loss of generality, we take $c$ to be the largest such value. The label $b'$ directly below $c$ satisfies $a<b'\leq b$, and so $b'$ appears in $T'$. Let $k_a$ and $k_{b'}$ be the rows occupied by $a$ and $b'$ in $\rl(\lambda_1/\alpha_1)$, which we have already shown to a vertical strip; hence $k_a<k_{b'}$.  By Lemma \ref{lem:abelowb}, in $T$, $a$ is below and to the left of the box occupied by $b'$ in $T'$. So $k_a \geq R_a \geq \Diff(R_{b'})>k_{b'}$, a contradiction. 
\paragraph{\emph{Case 2:}} The entire column above $b$ is filled with labels strictly greater than $a$. Let $c$ be the label in the top row of this column. Since $a<c$, $c$ appears in $T'$, and we must have $\Diff(R_c)>k_c \geq R_c.$ Note that $c$ can move at most one box left under the rectification. If it moves one box left, then $R_c=\Diff(R_c)$, which is a contradiction, so we can assume it does not move, so 
\[k_a \geq R_a>R_c=\Diff(R_c)-1=k_c.\]
This implies that $a$ appears below $c$ in the reverse lexicographic filling $\rl(\lambda_1/\alpha_1)$. This is impossible since $a<c$.
\end{proof}
\begin{lem}\label{lem:s2lem3} Let $\gamma \in C^p(\alpha_2)$, and $a_T:(\alpha_1,\gamma) \to (\lambda_1,\lambda_2)$ be an arrow appearing in the sum of Theorem \ref{thm:betamult}.   Let $\mu$ be the shape of $\Diff(T_{\gamma},T)^{>|\lambda_2|}$. Then 
\[T_\mu=\Diff(T_{\gamma},T)^{>|\lambda_2|}.\]
\end{lem}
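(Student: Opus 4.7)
The plan is to recognize that the skew tableau $V := \Diff(T_\gamma,T)^{>|\lambda_2|}$ is jeu-de-taquin equivalent to $T$; the Remmel--Whitney structure, combined with the fact that both $\mu$ and $\lambda_1/\alpha_1$ are vertical strips, then forces $V$ to coincide with $T_\mu$.

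First I would observe that by the definition of $\Diff$, the rectification of $\Diff(T_\gamma,T)$ is the extension $\tilde T$ of $T$ by the unique element of $\RW(\lambda_2)$, and $\tilde T^{>|\lambda_2|}$ coincides with $T$ after the appropriate relabeling. Lemma \ref{lem:split} then gives
\[
\Rect(V)=\Rect(\tilde T^{>|\lambda_2|})=\Rect(T),
\]
so $V$ and $T$ have identical rectifications and are jeu-de-taquin equivalent. Since $T\in\RW_{\gamma/\lambda_2}(\lambda_1/\alpha_1)$, Proposition \ref{prop:jdt} then yields $V\in\RW_\mu(\lambda_1/\alpha_1)$.

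Next I would argue by contradiction: suppose $V\neq T_\mu$. By Lemma \ref{lem:s2lem2}, $\mu$ is a vertical strip, so the boxes of $V$ lie in distinct rows, and the failure of the labeling to proceed top-to-bottom in order produces consecutive labels $a,a+1$ with $a+1$ strictly above $a$ in $V$. By Lemma \ref{lem:s2lem1}, $\lambda_1/\alpha_1$ is also a vertical strip, so in $\rl(\lambda_1/\alpha_1)$ the label $a$ sits directly above $a+1$. The Remmel--Whitney rule applied to $V\in\RW_\mu(\lambda_1/\alpha_1)$ then forces $a+1$ to appear strictly below (and weakly to the left of) $a$ in $V$, a contradiction. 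Hence $V=T_\mu$.

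The main subtlety is confirming the jeu-de-taquin equivalence in the first step, which amounts to unpacking the $\theta$-based definition of $\Diff$ through Lemma \ref{lem:split}; once this is in hand, the vertical-strip hypotheses on both $\mu$ and $\lambda_1/\alpha_1$ make the Remmel--Whitney constraints so rigid that the tableau is uniquely determined.
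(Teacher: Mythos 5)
Your first step—showing that $V := \Diff(T_\gamma,T)^{>|\lambda_2|}$ is jeu-de-taquin equivalent to $T$ via Lemma~\ref{lem:split}, and hence that $V\in\RW_\mu(\lambda_1/\alpha_1)$ by Proposition~\ref{prop:jdt}—is correct and a clean observation. The gap is in the contradiction step. You assert that since $\lambda_1/\alpha_1$ is a vertical strip, the consecutive labels $a$ and $a+1$ satisfy ``$a$ sits directly above $a+1$'' in $\rl(\lambda_1/\alpha_1)$, and then invoke the Remmel--Whitney column rule. But a vertical strip need not be a single column: it can be disconnected, with consecutive boxes in consecutive rows but \emph{different} columns (e.g.\ $\lambda_1/\alpha_1=(2,2,1)/(2,1)$, with boxes at $(2,2)$ and $(3,1)$). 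In that case $a$ is \emph{not} directly above $a+1$ in the same column, no RW rule relates their positions in $V$, and the contradiction evaporates. Indeed, the mere facts that $\mu$ and $\lambda_1/\alpha_1$ are vertical strips and that $V\in\RW_\mu(\lambda_1/\alpha_1)$ cannot force $V=T_\mu$: for a disconnected vertical strip $\lambda_1/\alpha_1$ of length $2$, both standard fillings of a disconnected vertical strip $\mu$ of length $2$ lie in $\RW_\mu(\lambda_1/\alpha_1)$, since the RW constraints are vacuous.

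What actually closes the argument is the row-rule data you have not used: the hypotheses $a_T\in\overline{\Arr}^{\le r}$ and $\Diff_{T_\gamma}(a_T)\in\overline{\Arr}^{>r}$. The paper takes the same bad pair $a,a+1$ (with $a$ below and to the left of $a+1$ in $V$), uses Corollary~\ref{cor:abelowb} to track $a$ under the slides rectifying $V$ to $T$ and conclude that $a$ stays weakly below and to the left of $a+1$'s original box, and then chains the row-rule inequalities: $k_a\geq R_a\geq\Diff(R_{a+1})>k_{a+1}$, contradicting $k_a<k_{a+1}$ (which holds since $\lambda_1/\alpha_1$ is a vertical strip labeled top-to-bottom). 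This argument makes no same-column assumption. To repair your proof you would need to bring in Corollary~\ref{cor:abelowb} and the row-rule inequalities in exactly this way for the case where $a$ and $a+1$ lie in different columns.
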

\begin{proof} The skew shape $\mu$ is a vertical strip by Lemma \ref{lem:s2lem2}. Recall that $T_\mu$ is the tableau where the boxes are labeled from top to bottom. Suppose that the claim in the lemma does not hold. Then there are labels $a, a+1$ such that $a$ appears below and to the left of $a+1$. Then, apply Corollary \ref{cor:abelowb}, we obtain that $a$ stays to the left and weakly below the box containing $a+1$ under the rectification of $\Diff(T_{\gamma},T)^{>|\lambda_2|}$ taking it to $T$. So $k_a \geq R_a \geq \Diff(R_{a+1})>k_{a+1},$ which is impossible. 
\end{proof}
\begin{proof}[Proof of Theorem \ref{thm:s2}]
By Theorem \ref{thm:betamult}, 
\[\tau_{\alpha_1,\alpha_2} \tau_{\emptyset,\beta}=\sum_{\gamma}\sum_{T \in \RW_\gamma(\beta*\alpha_2)}( \tau_{\alpha_1,\gamma}
+\sum_{\substack{a_U \in \overline{\Arr}^{\leq r}_{\alpha_1,\gamma}\\ \Diff_{T}(a_U) \in \overline{\Arr}^{> r}}} \tau_{t(a_U)}).\]
We note that since $\beta=\colp$, the sum over $\gamma$ and $T \in \RW_\gamma(\beta*\alpha_2)$ is just the sum over $\gamma \in C^p(\alpha_2)$, where 
the associated partition is $T_\gamma$ for any such $\gamma$. Let $a_T:(\alpha_1,\gamma) \to (\lambda_1,\lambda_2)$ be an arrow such that $
\overline{\Arr}^{\leq r}_{\alpha_1,\gamma}$ using this $T_\gamma$, such that $ \Diff_{T}(a_U) \in \overline{\Arr}^{> r}$. By Lemma \ref{lem:s2lem1}, $t(a_T)  \in 
C^q(\alpha_1).$ By Lemmas \ref{lem:s2lem2} and \ref{lem:s2lem3}, $\Diff(T_{\gamma},T)^{>|\lambda_2|}=T_\mu$ for some vertical strip $\mu$.  Then as $T$ 
and $T_\mu$ are jdt equivalent, and $T_\mu$ rectifies to $T_{(q)^t}$, so does $T$. Therefore $T \in C_q(\gamma)$ and $T=T_{\alpha_2/t(a)_2}$ as required. 
\end{proof}

\subsection{Remarks on other subquivers} \label{rem}
Our interest in the subquivers of the Remmel--Whitney given by the row rule is motivated by the applications to Schubert calculus, which we discuss in the next section. However, we want to observe that the product rule theorems above did not depend directly on the definition of the row rule sub-quivers. In fact, is fairly easy to see that any sub-quiver with sharing nice properties with these quivers will satisfy analogous row rules, with the identical proof.

Consider any partitioning of the arrows of the Remmel--Whitney quiver into three sets $\Arr^+$, $\Arr^-$, and $\Arr^m$ -- e.g. as we have divided arrows into those satisfying the row rule, the reverse row rule, and those of mixed type. Suppose that this partitioning satisfies
\begin{itemize}
\item \emph{Splitting properties}: a path given by an arrow in $\Arr^+$ followed by an arrow in $\Arr^-$ can be composed to an arrow in $\Arr^m$, and an arrow in $\Arr^m$ can be split into such a path.
\item \emph{Preservation properties}: the bijections $\Slide_{T}$ and $\Diff_{T}$ take arrows in $\Arr^-$ to arrows in $\Arr^-$, and their inverses take $\Arr^+$ to $\Arr^+$. 
\end{itemize}
Then we can define a sub-quiver with arrow set $\Arr^+$, and product rules analogous to Theorems \ref{thm:s1thm} and \ref{thm:betamult} will hold. We do not know of non-trivial families of examples of such quivers, or their geometric significance. However, this gives an interesting perspective on positivity for bases coming from quivers.

\section{Applications to flag varieties}\label{sec:flag}
In this section, we show that the $\qRW^r$ basis can be mapped to the Schubert basis of a two-step flag variety. For the rest of the section, we fix a two-step flag variety of quotients, $Fl(n,r_1,r_2)$, where $n>r_1>r_2$, and set $r=r_1-r_2+1$.  

\subsection{Schubert basis} \label{s:bijection}
The cohomology ring of $\flag$ has a Schubert  basis  $\sigma_w$ 
indexed by the set 
\[ S(n;r_1,r_2):= \{w\in S_n: w(i)<w(i+1) \text{ if } i\neq r_1,r_2\},\]
of permutations in $S_n$ whose descent set is contained in $\{r_1,r_2\}$. 
 The Schubert basis can alternatively be indexed by the set
$P(n;r_1,r_2)$  of
pairs of partitions $(\beta_1,\beta_2)$ with  $\beta_1\subseteq r_1\times (n-r_1)$ and $\beta_2\subseteq r_2\times (r_1-r_2)$.

The two sets $P(n;r_1,r_2)$ and $S(n;r_1,r_2)$ are in bijection. Given a pair $(\beta_1,\beta_2)\in P(n;r_1,r_2)$, denote by $w^1$  the Grassmannian permutation in $S_n$ with possible descent at $r_1$ defined by the partition $\beta^1\subseteq r_1\times (n-r_1)$ and denote by $w^2$ the Grassmannian permutation in $S_n$ with possible descent at $r_2$ defined by the partition  $\beta^2\subseteq r_2\times (r_1-r_2)\subseteq r_2\times (n-r_2)$. Here, $\beta_1 = (w^1(r_1)-r_1,\ldots w^1(1)-1)$, so that $w^1= w_{\beta_1,\emptyset}$, and $\beta_2 =(w^2(r_2)-r_2,\ldots,w^2(1)-1)$ so that $w^2=w_{\emptyset,\beta_2}$. Then the  pair $(\beta_1,\beta_2) \in P(n;r_1-r_2)$ corresponds to the permutation
\[ w_{(\beta_1,\beta_2)} := w_{(\beta_1,\emptyset)} w_{(\emptyset,\beta_2)} = w^1w^2.\]
Given $w\in S(n;r_1,r_2)$, we can also produce a pair $(\beta_1,\beta_2)$. 
(See  also \cite{WiAG} and \cite{chenkalashnikov}).
 If  a pair of partitions$(\beta_1,\beta_2)\in P(n;r_1,r_2)$ corresponds to the permutation $w$ under the above bijection , we also write the Schubert class $\sigma_w$ as $\sigma_{(\beta_1,\beta_2)}$. By convention, we set $\sigma_{(\beta_1,\beta_2)}=0$ if $(\beta_1,\beta_2)\not\in P(n;r_1,r_2)$.

In the Appendix, we  utilize a third indexing set for the Schubert basis for $\flag$ in terms of  integer vectors called \emph{012-strings}, following \cite{puzzle}.

\begin{eg}\label{eg:permutation} Consider the flag variety $\Fl(6;4,2)$ with $n=8$ and $\mathbf{r}=(4,2)$. For the  pair  $\left(\yng(1,1),\yng(2,1)\right)$ in $P(6,4,2)$,  $w^1=[1245|36]$ and $w^2=[24|1356]$ with descents marked at $r_1=4$ and $r_2=2$. The corresponding permutation in $S(6,4,2)$ is $w=w^1w^2=[25|14|36]$. 
Similarly, the pair  $\left(\yng(2,2,1),\yng(1,1)\right)$ corresponds to the permutation
$[1356|24]\cdot [23|1456]=[26|34|15]$ and $\left( \yng(2,2,1,1),\yng(1)\right)$ corresponds to
   $ [2356|14]\cdot [21|3456]=[25|36|14]$.

\end{eg}

\subsection{Alternative basis}
The flag variety $\Fl:=\flag$ comes equipped with a tautological sequence of quotient bundles 
$V_{\Fl} \twoheadrightarrow Q_1 \twoheadrightarrow Q_2$ 
of ranks $r_1$ and $r_2$.  
The  cohomology ring $\HH^*\Fl$  is generated by symmetric polynomials in two sets of variables: $x'_1,\dots,x'_{r_1}$, the Chern roots of the tautological quotient bundle of rank $r_1$, and $y'_1,\dots,y'_{r_2}$, the Cherns of the tautological quotient bundle of rank $r_2$; elementary symmetric polynomials of these Chern roots correspond to the Chern classes of the tautological bundles $Q_1$ and $Q_2$. 

We will consider an alternative set of generators of the cohomology ring of $\flag$ given by $n$  variables
\[
  \sigma_1^2,\ldots,\sigma_{r_2}^2,\,\sigma_1^{1},\ldots,\sigma_{r_{1}-r_2}^{1},\ldots,\sigma_1^{0},\ldots,\sigma_{n-r_1}^{0}
\] 
with $\sigma_i^j$ of degree $i$. These correspond to the Chern classes of $Q_2$, $\ker(Q_1 \twoheadrightarrow Q_2)$, and $\ker(V\twoheadrightarrow Q_1)$.  With $e_a(r_2)$ corresponding to the $a$th Chern class of $Q_2$, we define  \begin{equation}\label{eq:eq-recursion-l}
e_a(r_1) = e_a(r_2) + \sum_{m=1}^{r_{1}-r_2} \sigma_m^1 e_{a-m}(r_2).
\end{equation}
where we set $e_0(r_l)=1$ and $e_m(r_l)=0$ if either $m<0$ or $m>r_l$. In this setup, $e_a(r_i)$ corresponds to the $a$th Chern class of $Q_i$, $e_a(r_1) = c_a(Q_1)=e_1(x'_1,\dots,x'_{r_1})$ and $e_a(r_2)= c_a(Q_2)= e_a(y'_1,\dots,y'_{r_2})$.

 For a partition $\lambda\in P(r_{i-1},r_i)$, we define the class $s^i_\lambda$ to be the Schur polynomial associated to the partition $\lambda$ in the Chern roots of $Q_i$, the rank $r_i$ tautological quotient bundle on $\flag$:
\begin{equation}
\label{eq:s-def}
s^i_\lambda=\left| s^i_{1^{\nu'_k+l-k}} \right|_{1\leq k,l\leq \nu_1}.
\end{equation}
Note that $s^i_{1^a}=c_a(Q_i)$ is the $a$th Chern class of the bundle $Q_i$ so that $s^i_{1^a}=e_a(r_i)$.  Via the bijection in \S \ref{s:bijection}, these recover certain Schubert classes
\[ s^1_\lambda= \sigma_{(\lambda,\emptyset)} \text{ and } s^2_\lambda= \sigma_{(\emptyset,\lambda)}. \]

Note that  we can use \eqref{eq:s-def} to define $s^i_\lambda$ even when $\lambda \not \in P(r_{i-1},r_i)$, although it is no longer  a Schubert class in general.
 For a partition  $\nu$ and $\phi=(\phi_1,\ldots,\phi_t)$ with $1\leq \phi_i \leq 2$, define
  \begin{equation} \label{eq:q-determinant}
\Delta_{\nu}(e(\phi)):=\left| e_{\nu'_i+j-i}(r_{\phi_j}) \right|_{1\leq i,j\leq t}.
\end{equation}
With this notation, we have
\begin{equation}\label{eq:si}
s^i_\nu=\Delta_\nu(e(\phi)),
\end{equation}
where $\phi=(i,\ldots,i)$.

\iffalse
The classical cohomology ring of the flag variety $\flag$ is generated by symmetric polynomials in two sets of variables: $x'_1,\dots,x'_{r_1}$, the Chern roots of the tautological quotient bundle of rank $r_1$, and $y'_1,\dots,y'_{r_2}$, the Cherns of the tautological quotient bundle of rank $r_2$. 
\begin{mydef}\label{defn:perms} Let $\mathcal{S}$ be the set of pairs of partitions $(\alpha,\beta)$ satisfying the condition that $\alpha$ fits into an $r_1 \times (n-r_1)$ box and $\beta$ fits into an $r_2 \times (r_1-r_2)$ box. 
\end{mydef}

\begin{mydef} TODO Schubert classes as indexed by pairs of partitions. Notation: $\sigma_{\alpha,\beta}$. Convention that a Schubert class is zero if not in $\mathcal{S}$. 
\end{mydef}
\fi
\begin{lem}\label{lem:explem} Let $\nu$ be a partition of width $\nu_1\leq n-r_2$. Then 
\[s^2_\nu =\sigma_{\alpha_1,\alpha_2}\]
 where $\alpha_2$ is the partition given by the first $r_1-r_2$ columns of $\nu$, and $\alpha_1$ is given by the remaining columns. 
 \end{lem}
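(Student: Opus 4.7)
The plan is to realize both sides as determinants in the $e_a(r_i)$'s and connect them by a column operation derived from the recursion \eqref{eq:eq-recursion-l}.

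First I would handle the degenerate case $\nu'_1 > r_2$. Since $e_a(r_2) = 0$ for $a > r_2$, every entry in the first row of the $\nu_1 \times \nu_1$ Jacobi--Trudi matrix of \eqref{eq:s-def} vanishes, so $s^2_\nu = 0$. Simultaneously $(\alpha_2)'_1 = \nu'_1 > r_2$ forces $\alpha_2 \not\subseteq r_2\times (r_1-r_2)$, hence $(\alpha_1,\alpha_2)\notin P(n;r_1,r_2)$ and $\sigma_{\alpha_1,\alpha_2} = 0$ by convention. If instead $\nu'_1 \le r_2$, then one checks that $(\alpha_1,\alpha_2)\in P(n;r_1,r_2)$: all column lengths are bounded by $\nu'_1 \le r_2 < r_1$; $\alpha_2$ has at most $r_1-r_2$ columns by construction; and $\alpha_1$ has at most $\max(0,\nu_1 - (r_1 - r_2)) \le n - r_1$ columns since $\nu_1 \le n-r_2$.

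For the non-degenerate case, I would invoke the classical flagged Jacobi--Trudi formula for Schubert classes on a two-step quotient flag variety, which is a specialization of the Kempf--Laksov formula applied to the short exact sequence $0 \to S \to Q_1 \to Q_2 \to 0$ of tautological bundles. It gives
\[\sigma_{\alpha_1,\alpha_2} = \Delta_{\nu}(e(\phi)), \quad \phi_j = 2 \text{ for } j \le r_1-r_2, \quad \phi_j = 1 \text{ for } j > r_1-r_2,\]
i.e.\ the mixed determinant in which columns corresponding to $\alpha_2$ use Chern classes of $Q_2$ and columns corresponding to $\alpha_1$ use Chern classes of $Q_1$.

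The main step is then a column operation. By \eqref{eq:si}, $s^2_\nu = \det A$ where $A$ has $(i,j)$ entry $e_{\nu'_i+j-i}(r_2)$. Let $B$ be the matrix defining $\Delta_\nu(e(\phi))$, which agrees with $A$ in its first $r_1-r_2$ columns and whose $j$-th column for $j > r_1-r_2$ has entries $e_{\nu'_i+j-i}(r_1)$. Applying the recursion \eqref{eq:eq-recursion-l} entrywise, for each such $j$,
\[(e_{\nu'_i+j-i}(r_1))_i \;=\; (e_{\nu'_i+j-i}(r_2))_i + \sum_{m=1}^{r_1-r_2}\sigma^1_m \,(e_{\nu'_i+j-i-m}(r_2))_i,\]
so the $j$-th column of $B$ equals the $j$-th column of $A$ plus a $\sigma^1_m$-combination of earlier columns $A_{j-m}$ of $A$ (each with valid index since $j \ge r_1-r_2+1$ and $m \le r_1-r_2$). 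Hence $B = AM$ for an upper unitriangular $M$, so $\det B = \det A$, which gives $s^2_\nu = \sigma_{\alpha_1,\alpha_2}$ as claimed.

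The main obstacle is confirming the precise form of the flagged Jacobi--Trudi formula for two-step Schubert classes used above; if a direct reference in that exact formulation is unavailable, one would need to derive it intrinsically, for example from the degeneracy-locus/Kempf--Laksov formalism on the two-step flag bundle, or via the Schubert polynomial formula in the cases (such as vexillary permutations) in which it reduces to a single determinant.
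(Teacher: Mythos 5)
Your proof follows the same strategy as the paper: both realize $s^2_\nu$ and $\sigma_{\alpha_1,\alpha_2}$ as determinants in the $e_a(r_i)$ and use the recursion \eqref{eq:eq-recursion-l} to perform column operations. A few comparative remarks are worth making.

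First, you have the flag $\phi$ in the mixed determinant the other way around from the paper's stated $\phi=(1^{r_1-r_2},2^{\nu_1-(r_1-r_2)})$, and in fact your version $\phi_j=2$ for $j\le r_1-r_2$, $\phi_j=1$ for $j>r_1-r_2$ is the one that works. The recursion \eqref{eq:eq-recursion-l} expresses $e_a(r_1)$ in terms of the $e_b(r_2)$; therefore to reduce $\Delta_\nu(e(\phi))$ to the all-$r_2$ determinant $s^2_\nu=\Delta_\nu(e(2,\ldots,2))$, the $e(r_1)$ entries must sit in the \emph{later} columns $j>r_1-r_2$ so that the shifted column indices $j-m$ ($1\le m\le r_1-r_2$) remain in range. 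If instead the $e(r_1)$ entries occupy the first $r_1-r_2$ columns, the shifts produce indices $j-m\le 0$ whose entries $e_{\nu'_k+(j-m)-k}(r_2)$ are generally nonzero, so the unitriangular change of basis fails; a direct check in $\Fl(3;2,1)$ with $\nu=(2)$ or $\Fl(4;3,1)$ with $\nu=(3)$ confirms that only your choice of $\phi$ reproduces $s^2_\nu$. So the paper's $\phi$ appears to be a typo, and your reading is the intended one (consistent with the paper's own instruction to ``substitute \eqref{eq:eq-recursion-l} into the entries in column $r_1-r_2+1$ through column $\nu_1$,'' which only parses if those columns carry $e(r_1)$).

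Second, you explicitly treat the degenerate case $\nu'_1>r_2$, showing both sides vanish and verifying membership in $P(n;r_1,r_2)$ in the nondegenerate case; the paper does not address this, so your treatment is a genuine (if minor) improvement in completeness.

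Finally, both you and the paper invoke a flagged determinantal formula for the Schubert class $\sigma_{\alpha_1,\alpha_2}$ as the key input, without giving a precise reference. You are right to flag this as the main item requiring justification; it is a special case of the Kempf--Laksov formula adapted to the two-step quotient flag, and your suggestion to derive it via the degeneracy-locus formalism (or to bypass the determinant entirely by checking directly that the permutation $w_{(\alpha_1,\alpha_2)}=w^1w^2$ is the Grassmannian permutation with descent at $r_2$ determined by $\nu$, after which $\sigma_w=s^2_\nu$ is immediate by pullback from $\Gr$) is the right way to make this airtight.
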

 \begin{proof}    Note that when $\nu_1\leq r_1-r_2$, this is simply the statement that $s^2_\nu = \sigma_{\emptyset, \nu}$. From \eqref{eq:s-def} and \eqref{eq:si}, 
 \[s^2_\nu:=\left|e_{\nu'_k + l-k }(r_2) \right|_{1\leq k,l\leq \nu_1}.\]
 On the other hand, the pair of partitions $(\alpha_1,\alpha_2)$ corresponds to  the Grassmannian permutation with possible descent at $r_2$ defined by viewing $\nu$ in $r_2\times (n-r_2)$. By the determinantal formula for Schubert classes for  Grassmannian permutations, we have
 \begin{equation} \label{eq:det}
 \sigma_{\alpha_1,\alpha_2} = \left| e_{\lambda'_k+l-k}(r_{\phi_l}) \right|_{1\leq k,l\leq \nu_1}
 \end{equation}
 where $\phi= (1^{r_1-r_2},2^{\nu_1-(r_1-r_2)})$.
 
 The lemma follows from substituting  \eqref{eq:eq-recursion-l} into the entries in column $r_1-r_2+1$ through column $ \nu_1$ of the determinant \eqref{eq:det}, and applying column operations.

 \end{proof}

\subsection{Mapping into Schubert calculus} \label{sec:mapping}
Recall that we have fixed a two-step flag variety of quotients, $\Fl:=\flag$, where $n>r_1>r_2$, and have set $r=r_1-r_2+1$. We define a ring homomorphism:
\[\Sch_{\Fl}:  \bigwedge_1 \otimes_\ZZ \bigwedge_2 \to \HH^*\Fl\]
which takes a Schur function in either set of variables to the corresponding Schur polynomial. The goal of this section is to show that 
\begin{thm}\label{thm:thmcomp}
The homomorphism $\Sch_{\Fl}: \bigwedge_1 \otimes_\ZZ \bigwedge_2  \to \HH^*\Fl$ is surjective and induces an isomorphism 
\[\bigwedge_1 \otimes_\ZZ \bigwedge_2/\ker(\Sch_{\Fl}) \cong  \HH^*\Fl\]
that takes $\tau^r_{\alpha,\beta}$ to the Schubert class $\sigma_{\alpha,\beta},$ when $(\alpha,\beta) \in P(n;r_1,r_2)$, and $0$ otherwise. 
\end{thm}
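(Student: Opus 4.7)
The plan is to first establish surjectivity of $\Sch_{\Fl}$, then prove the assignment $\tau^r_{\alpha,\beta}\mapsto\sigma_{\alpha,\beta}$ (under the convention $\sigma_{(\cdot,\cdot)}=0$ outside $P(n;r_1,r_2)$) by induction on $|\beta|$. Surjectivity is immediate because $\HH^*\Fl$ is generated by the Chern classes of $Q_1$ and $Q_2$, which are the images of elementary symmetric polynomials in $\underline{x},\underline{y}$ under $\Sch_{\Fl}$. For the base case $|\beta|=0$, the graded-homogeneous condition forbids arrows out of $(\alpha,\emptyset)$, so $\tau^r_{\alpha,\emptyset}=s_{\alpha,\emptyset}$ and its image is $s^1_\alpha$, which equals $\sigma_{\alpha,\emptyset}$ if $\alpha\subseteq r_1\times(n-r_1)$ and vanishes otherwise by standard Grassmannian Schubert calculus (pullback from $\Gr(r_1,n)$).

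For the inductive step, applying $\Sch_{\Fl}$ to the defining identity $\tau^r_{\alpha,\beta}=s_{\alpha,\beta}-\sum_{a\in\Arr^{\leq r}_{\alpha,\beta}}\tau^r_{t(a)}$ and invoking the induction hypothesis (every arrow target satisfies $|t(a)_2|<|\beta|$) yields
\[\Sch_{\Fl}(\tau^r_{\alpha,\beta}) = s^1_\alpha s^2_\beta - \sum_{a \in \Arr^{\leq r}_{\alpha,\beta}} \sigma_{t(a)}.\]
The theorem therefore reduces to verifying the single cohomological identity
\[s^1_\alpha s^2_\beta = \sigma_{\alpha,\beta} + \sum_{a \in \Arr^{\leq r}_{\alpha,\beta}} \sigma_{t(a)}\]
in $\HH^*\Fl$ for all pairs $(\alpha,\beta)$, under the same convention.

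The main obstacle is establishing this identity, which I would do by case analysis. If $\alpha\not\subseteq r_1\times(n-r_1)$, both sides vanish since $s^1_\alpha=0$ and every $\gamma_1\supseteq\alpha$ appearing as an arrow target also fails to fit. If $\beta$ has more than $r_2$ parts then $s^2_\beta=0$; a row-rule argument applied to the bottom-left box of $\beta/\gamma_2$ (at some position $(p,1)$ with $p\geq r_2+1$, so $R_r\geq r+r_2=r_1+1$) forces $\gamma_1$ to have more than $r_1$ rows for every arrow, so the right-hand side vanishes too. A parallel width-based argument handles the case when $\beta$ is too wide. For $(\alpha,\beta)\in P(n;r_1,r_2)$, Lemma \ref{lem:explem} identifies $s^2_\beta$ with $\sigma_{\emptyset,\beta}$ (and the base case identifies $s^1_\alpha$ with $\sigma_{\alpha,\emptyset}$), so the identity becomes the Pieri-type product rule $\sigma_{\alpha,\emptyset}\cdot\sigma_{\emptyset,\beta}=\sigma_{\alpha,\beta}+\sum\sigma_{t(a)}$; I would verify this by a direct polynomial computation in $\HH^*\Fl$ exploiting the presentation by Chern classes (using the relation $c(Q_1)=c(K_{12})c(Q_2)$ from the tautological sequence), combined with a careful matching of arrows in $\Arr^{\leq r}_{\alpha,\beta}$ whose targets lie in $P(n;r_1,r_2)$ against terms in the Schubert expansion. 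Once the identity is verified, $\Sch_{\Fl}$ maps $\{\tau^r_{\alpha,\beta}:(\alpha,\beta)\in P(n;r_1,r_2)\}$ bijectively onto the Schubert basis and annihilates all remaining $\tau^r$-basis elements, giving the claimed isomorphism.
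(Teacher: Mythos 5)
Your overall strategy (surjectivity, then induction on $|\beta|$ using the defining recursion for $\tau^r_{\alpha,\beta}$) mirrors the paper's, but the proposal leaves a genuine gap exactly at the hardest step. You reduce the theorem to the identity $s^1_\alpha s^2_\beta = \sigma_{\alpha,\beta}+\sum_{a\in\Arr^{\leq r}_{\alpha,\beta}}\sigma_{t(a)}$ in $\HH^*\flag$ and then say you would establish the key case $(\alpha,\beta)\in P(n;r_1,r_2)$ by ``a direct polynomial computation \dots combined with a careful matching of arrows.'' That is precisely the content that requires proof, and a direct computation via $c(Q_1)=c(K_{12})c(Q_2)$ is not tractable in the form you describe: one is being asked to match, for arbitrary $\alpha$, the Schubert expansion of $\sigma_{\alpha,\emptyset}\cdot\sigma_{\emptyset,\beta}$ against the row-rule arrow count, which is the whole point of the theorem. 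The paper sidesteps this by a genuinely different mechanism: it first shows that $\{\Sch_{\Fl}(\tau_{\alpha,\beta})\}_{(\alpha,\beta)\in P}$ is a vector space basis (by triangularity of the $\tau$-to-$s$ change of basis), then compares the \emph{matrices of multiplication} by $\sigma_{\alpha,\emptyset}$ and by $\Sch_{\Fl}(\tau_{\alpha,\emptyset})$ on the two bases. Since Schur polynomials are polynomials in the elementary symmetric polynomials $s_{1^p}$, this comparison reduces to the single-column case $\alpha=1^p$, which is handled by matching Theorem \ref{thm:s1col} against the known $012$-string Pieri rule of \cite{puzzle} (this matching is the content of Proposition \ref{thm:appendix} in the Appendix and is nontrivial). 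Without something playing the role of this reduction-plus-Appendix argument, your proposal is incomplete.

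There is also a secondary issue in your case analysis for $(\alpha,\beta)\not\in P(n;r_1,r_2)$. The ``too tall'' case ($\beta$ with more than $r_2$ parts) is indeed easy because $s^2_\beta=0$. But the ``too wide'' case is not ``parallel'': when $\beta^1>r_1-r_2$ the class $s^2_\beta$ does \emph{not} vanish; by Lemma \ref{lem:explem} it equals a Schubert class $\sigma_{\alpha_1,\alpha_2}$ with $\alpha_1\neq\emptyset$, which has to be cancelled against a specific arrow target. The paper handles this explicitly (it is why Lemma \ref{lem:explem} exists), and the subsequent general ``not in $P$'' case requires a separate row-rule argument on the widths of arrow targets. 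Your ``parallel width-based argument'' glosses over this asymmetry.
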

 We will prove this theorem via two propositions. The importance of this theorem is that it allows us to translate the two main theorems of the paper, Theorem \ref{thm:s1thm} and Theorem \ref{thm:betamult} into statements about Schubert calculus in two step flag varieties. That is, we obtain a Remmel--Whitney style Littlewood--Richardson rule for the product of Schubert classes, where at least one class is indexed by a pair of partitions including the empty set. We give these corollaries now.

 \begin{cor}\label{cor:s1thm} Let $(\alpha,\emptyset)$ and $(\beta_1,\beta_2)$ be two elements in $P(n;r_1,r_2)$. Then 
\[\sigma_{\alpha,\emptyset} \sigma_{\beta_1,\beta_2}=\sum_{\lambda} \sum_{T_1 \in \RW_{\lambda/\beta_1}(\alpha)}( \sigma_{\lambda,\beta_2}+\sum_{\substack{a_{T_2} \in \Arr^{\leq r}_{\lambda,\beta_2}, \\ \Slide_{T_1}(T_2) \in \Arr^{>r}_{\beta_1,\beta_2} }} \sigma_{t(a_{T_2})}).\]
\end{cor}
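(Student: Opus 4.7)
The plan is to derive this corollary as a direct consequence of Theorem \ref{thm:s1thm} combined with Theorem \ref{thm:thmcomp}. Since Theorem \ref{thm:s1thm} gives the precise expansion of $\tau^r_{\alpha,\emptyset}\, \tau^r_{\beta_1,\beta_2}$ in the $\qRW^r$ basis, the strategy is to apply the ring homomorphism $\Sch_{\Fl}$ to both sides of that identity and then invoke Theorem \ref{thm:thmcomp} to convert each $\tau^r$ class into the corresponding Schubert class.

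First I would note that, since $(\alpha,\emptyset)$ and $(\beta_1,\beta_2)$ are in $P(n;r_1,r_2)$, Theorem \ref{thm:thmcomp} gives $\Sch_{\Fl}(\tau^r_{\alpha,\emptyset})=\sigma_{\alpha,\emptyset}$ and $\Sch_{\Fl}(\tau^r_{\beta_1,\beta_2})=\sigma_{\beta_1,\beta_2}$. Because $\Sch_{\Fl}$ is a ring homomorphism, the image of the left-hand side of the identity in Theorem \ref{thm:s1thm} is precisely $\sigma_{\alpha,\emptyset}\,\sigma_{\beta_1,\beta_2}$, the product we wish to compute in $\HH^*\Fl$.

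Next I would push $\Sch_{\Fl}$ through the right-hand side term by term. Each term is of the form $\tau^r_{\lambda,\beta_2}$ or $\tau^r_{t(a_{T_2})}$, and Theorem \ref{thm:thmcomp} sends each such $\tau^r_{\gamma_1,\gamma_2}$ to $\sigma_{\gamma_1,\gamma_2}$ when $(\gamma_1,\gamma_2) \in P(n;r_1,r_2)$ and to $0$ otherwise. Using the convention stated in \S\ref{s:bijection} that $\sigma_{\gamma_1,\gamma_2}=0$ for $(\gamma_1,\gamma_2) \notin P(n;r_1,r_2)$, these vanishing contributions are automatically absorbed into the stated sum, and the surviving terms match exactly the corollary's formula. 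The sums over $\lambda$, over $T_1 \in \RW_{\lambda/\beta_1}(\alpha)$, and over arrows $a_{T_2} \in \Arr^{\leq r}_{\lambda,\beta_2}$ with $\Slide_{T_1}(T_2)\in\Arr^{>r}_{\beta_1,\beta_2}$ are purely combinatorial and do not depend on the flag variety, so they are preserved verbatim under $\Sch_{\Fl}$.

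The only point requiring care — and the main (mild) obstacle — is checking that the vanishing convention is consistently applied: some of the targets $(\lambda,\beta_2)$ or $t(a_{T_2})$ appearing on the right may fail to lie in $P(n;r_1,r_2)$, and these are precisely the terms annihilated by $\Sch_{\Fl}$ via Theorem \ref{thm:thmcomp}. Once this identification is made, the corollary follows with no further computation. Thus the proof is essentially a one-line application: apply $\Sch_{\Fl}$ to the identity of Theorem \ref{thm:s1thm} and invoke Theorem \ref{thm:thmcomp}.
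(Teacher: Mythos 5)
Your proof is correct and follows exactly the route the paper takes: apply the ring homomorphism $\Sch_{\Fl}$ to the identity of Theorem \ref{thm:s1thm} and invoke Theorem \ref{thm:thmcomp}, using the convention that $\sigma_{\gamma_1,\gamma_2}=0$ when $(\gamma_1,\gamma_2)\notin P(n;r_1,r_2)$ to absorb the terms annihilated by $\Sch_{\Fl}$.
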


 \begin{cor} \label{cor:betamult} Let $(\alpha_1,\alpha_2)$ and $(\emptyset,\beta)$ be two elements in $P(n;r_1,r_2)$. Then 
\[\sigma_{\alpha_1,\alpha_2} \sigma_{\emptyset,\beta}=\sum_{\gamma}\sum_{T \in \RW_\gamma(\beta*\alpha_2)}( \sigma_{\alpha_1,\gamma}
+\sum_{\substack{a_U \in \overline{\Arr}^{\leq r}_{\alpha_1,\gamma} \\  \Diff_{T}(a_U) \in \overline{\Arr}^{> r}}} \sigma_{t(a_U)}).\]

\end{cor}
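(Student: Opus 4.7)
The plan is to deduce Corollary \ref{cor:betamult} immediately from Theorem \ref{thm:betamult} by applying the ring homomorphism $\Sch_{\Fl}$ and invoking the identification of $\tau^r$-classes with Schubert classes provided by Theorem \ref{thm:thmcomp}. Since this is the two-step flag companion of Theorem \ref{thm:betamult}, essentially no new combinatorics is required; the work is simply to transport the identity across the surjection $\Sch_{\Fl}$.

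First I would verify that the hypothesis of Theorem \ref{thm:betamult} is satisfied. The assumption $(\emptyset,\beta)\in P(n;r_1,r_2)$ means that $\beta$ fits inside an $r_2\times(r_1-r_2)$ rectangle, so $\beta^1\leq r_1-r_2 < r_1-r_2+1 = r$. Thus $\beta$ is strictly less than $r$ wide, which is exactly the width hypothesis needed. Next I would apply Theorem \ref{thm:betamult} to write
\[\tau^r_{\alpha_1,\alpha_2}\,\tau^r_{\emptyset,\beta}=\sum_{\gamma}\sum_{T \in \RW_\gamma(\beta*\alpha_2)}\Bigl( \tau^r_{\alpha_1,\gamma}
+\sum_{\substack{a_U \in \overline{\Arr}^{\leq r}_{\alpha_1,\gamma}\\ \Diff_{T}(a_U) \in \overline{\Arr}^{> r}}} \tau^r_{t(a_U)}\Bigr)\]
as an identity in $\bigwedge_1\otimes_\ZZ\bigwedge_2$.

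Then I would apply $\Sch_{\Fl}$ to both sides. Since $\Sch_{\Fl}$ is a ring homomorphism, the left-hand side is sent to $\Sch_{\Fl}(\tau^r_{\alpha_1,\alpha_2})\,\Sch_{\Fl}(\tau^r_{\emptyset,\beta})$. By Theorem \ref{thm:thmcomp}, for the input pairs $(\alpha_1,\alpha_2),(\emptyset,\beta)\in P(n;r_1,r_2)$ these are the Schubert classes $\sigma_{\alpha_1,\alpha_2}$ and $\sigma_{\emptyset,\beta}$, so the left-hand side becomes the desired product $\sigma_{\alpha_1,\alpha_2}\sigma_{\emptyset,\beta}$ in $\HH^*\Fl$. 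Likewise each summand on the right becomes a Schubert class $\sigma_{\alpha_1,\gamma}$ or $\sigma_{t(a_U)}$, and this gives the stated formula once we rewrite $\tau^r$ as $\sigma$ throughout.

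The only subtlety, and the one step worth highlighting as a potential obstacle, is that the sums on the right range over \emph{all} partitions $\gamma$ and \emph{all} admissible arrows $a_U$, so some of the pairs that appear may fail to lie in $P(n;r_1,r_2)$. However, Theorem \ref{thm:thmcomp} asserts that $\Sch_{\Fl}$ sends $\tau^r_{\mu_1,\mu_2}$ to $0$ whenever $(\mu_1,\mu_2)\notin P(n;r_1,r_2)$, and the convention from \S\ref{s:bijection} is that $\sigma_{\mu_1,\mu_2}=0$ in that same case. These two conventions agree, so such terms drop out identically on both sides, and the identity asserted in the corollary holds verbatim. No separate combinatorial argument about which pairs lie in $P(n;r_1,r_2)$ is needed.
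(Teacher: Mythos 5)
Your proposal is correct and matches the paper's own approach: the paper states that the corollaries follow from Theorem \ref{thm:thmcomp} by replacing $\tau^r_{\mu_1,\mu_2}$ with $\sigma_{\mu_1,\mu_2}$ and using the convention that $\sigma_{\mu_1,\mu_2}=0$ when $(\mu_1,\mu_2)\notin P(n;r_1,r_2)$, which is exactly what you do. Your explicit verification of the width hypothesis $\beta^1\leq r_1-r_2<r$ and your remark about terms outside $P(n;r_1,r_2)$ vanishing on both sides are the two small points that make the deduction airtight.
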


 \begin{prop}The following statements about the homomorphism  $\Sch_{\Fl}: \bigwedge_1 \otimes_\ZZ \bigwedge_2  \to \HH^*\Fl$ hold:
 \begin{enumerate}
 \item The map is surjective.
\item The set $\{\Sch_{\Fl}(\tau_{\alpha,\beta}): (\alpha,\beta) \in P(n;r_1,r_2)\}$ is a vector space basis of $\HH^*\Fl$. 
 \end{enumerate}
 \end{prop}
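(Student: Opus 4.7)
My plan is to establish surjectivity directly and then deduce the basis claim from a vanishing statement that I would prove by induction on $|\beta|$.

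For part (1), $\Sch_{\Fl}$ sends the elementary symmetric polynomials $e_k(\underline{x})$ and $e_k(\underline{y})$ to the Chern classes $c_k(Q_1)$ and $c_k(Q_2)$, respectively. Since these generate $\HH^*\Fl$ as a ring, the map is surjective. For part (2), observe that $|P(n;r_1,r_2)|=\binom{n}{r_2,\,r_1-r_2,\,n-r_1}$ equals $\dim \HH^*\Fl$, so by surjectivity it suffices to prove the vanishing
\[\Sch_{\Fl}\bigl(\tau^r_{\alpha,\beta}\bigr)=0\qquad\text{for every }(\alpha,\beta)\notin P(n;r_1,r_2),\]
after which the remaining images will span $\HH^*\Fl$, and a dimension count forces a basis.

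To prove the vanishing I would induct on $|\beta|$, using the defining recursion $\tau^r_{\alpha,\beta}=s_{\alpha,\beta}-\sum_{a\in\Arr^{\leq r}_{\alpha,\beta}}\tau^r_{t(a)}$. The base case $\beta=\emptyset$ reduces to $s^1_\alpha=0$ in $\HH^*\Fl$ whenever $\alpha\not\subseteq r_1\times(n-r_1)$, which follows from the Jacobi--Trudi formula (or its dual) since then either the first row or the first column of the defining determinant in Chern classes of $Q_1$ is identically zero. For the inductive step I would split into cases according to which inequality cutting out $P(n;r_1,r_2)$ is violated. When $\alpha$ has more than $r_1$ rows or first row exceeding $n-r_1$, we have $\Sch_{\Fl}(s_{\alpha,\beta})=s^1_\alpha\cdot s^2_\beta=0$, and any arrow target $(\alpha',\beta')$ satisfies $\alpha'\supseteq\alpha$ and so inherits the same defect; induction on $|\beta'|<|\beta|$ closes these cases. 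When $\beta$ has more than $r_2$ rows, $s^2_\beta=0$ kills $s_{\alpha,\beta}$, and the row rule takes care of the arrows: for any $a_T\in\Arr^{\leq r}_{\alpha,\beta}$, the box of $\beta$ at position $(r_2+1,1)$ either lies in $\beta'$, so $\beta'$ still has more than $r_2$ rows, or it lies in $\beta/\beta'$, in which case its row-rule value is $r+(r_2+1)-1=r_1+1$; the row rule $R_r(l)\leq \row_T(l)$ then forces $\alpha'$ to have more than $r_1$ rows. Either way $(\alpha',\beta')\notin P(n;r_1,r_2)$ and induction applies.

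The main obstacle is the remaining subcase $\beta_1>r_1-r_2$ with $(\alpha,\beta)$ otherwise acceptable. Here $s^2_\beta$ does \emph{not} vanish in $\HH^*\Fl$: Lemma~\ref{lem:explem} identifies it with a genuine Schubert class $\sigma_{(\mu,\nu)}$ whose first component $\mu$ records the excess columns of $\beta$, so the needed cancellation cannot come simply from $\Sch_{\Fl}(s_{\alpha,\beta})=0$. I would attack this subcase using Theorem~\ref{thm:betamult} (and its column specialization Theorem~\ref{thm:s2}) to strip off columns of $\beta$ iteratively, matching each contribution to $s^1_\alpha\cdot\sigma_{(\mu,\nu)}$ with an arrow in $\Arr^{\leq r}_{\alpha,\beta}$ via the diffusion bijection and absorbing residuals through the inductive hypothesis. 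Verifying that this combinatorial pairing is bijective, so that the arrow sum exactly cancels $\Sch_{\Fl}(s_{\alpha,\beta})$, is where I expect the delicate work to lie.
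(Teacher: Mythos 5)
Your part (1) is correct and slightly different from the paper (you argue via Chern classes generating $\HH^*\Fl$; the paper instead observes that the images $s_\alpha(x')s_\beta(y')$ for $(\alpha,\beta)\in P(n;r_1,r_2)$ already form a vector space basis). For part (2), however, there is a genuine gap. Your reduction --- prove the vanishing $\Sch_{\Fl}(\tau^r_{\alpha,\beta})=0$ for $(\alpha,\beta)\notin P(n;r_1,r_2)$, then a dimension count --- is logically valid, and the base case and the ``easy'' defects ($\alpha$ out of its rectangle, $\beta$ too tall) are handled correctly. But the case you flag as delicate, namely $\beta_1>r_1-r_2$ with everything else admissible, is not a small loose end: there $\Sch_{\Fl}(s_{\alpha,\beta})=s_\alpha(x')s_\beta(y')\neq 0$, and the cancellation you need comes from the images $\Sch_{\Fl}(\tau_{t(a)})$ over arrow targets that lie inside $P(n;r_1,r_2)$. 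Identifying those images with Schubert classes is exactly Proposition~\ref{prop:tausigma}, which the paper proves \emph{after} the present proposition, using it together with the Pieri rule of the appendix; invoking it here would be circular. Moreover Theorems~\ref{thm:betamult} and~\ref{thm:s2}, which you suggest using to strip columns, are stated under the hypothesis $\beta_1<r$ and so do not apply directly in this regime.

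The paper avoids the vanishing entirely at this point. Its proof of part (2) is a short triangularity observation: since $\tau_{\alpha,\beta}=s_{\alpha,\beta}+(\text{strictly lower-grading terms})$, the images $\{\Sch_{\Fl}(\tau_{\alpha,\beta}):(\alpha,\beta)\in P(n;r_1,r_2)\}$ are a unitriangular transform of the known basis $\{s_\alpha(x')s_\beta(y')\}$, hence themselves a basis. The vanishing you attempt to establish first is proved only later, inside the proof of Theorem~\ref{thm:thmcomp}, precisely because it relies on the machinery downstream of the present proposition.
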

 \begin{proof}
 The set 
 \[\{s_\alpha(x'_1,\dots,x'_{r_1}) s_\beta (y'_1,\dots,y'_{r_1}): (\alpha, \beta) \in P(n;r_1,r_2)\},\]
  is a vector space basis for $\HH^*\Fl$ (one way to see this is through the rim-hook removal in \cite{gukalashnikov}). Since each of these elements are in the image of $\Sch_{\Fl}$, this map is surjective. 

For the second part, observe that $\tau_{\alpha,\beta}$ can be expanded in the $s_{\alpha,\beta}$ basis as 
\[\tau_{\alpha,\beta}=s_{\alpha,\beta}+\sum s_{\gamma,\mu},\]
where the sum contains only elements of strictly lower grading than $(\alpha,\beta)$. Combined with the remark above, this shows that $\{\Sch_{\Fl}(\tau_{\alpha,\beta}): (\alpha,\beta) \in P(n;r_1,r_2)\}$ is a vector space basis of $\HH^*\Fl$ as required. 
 \end{proof}

 \begin{prop} \label{prop:tausigma} Let $(\alpha,\beta) \in P(n;r_1,r_2)$. Then $\Sch_{\Fl}(\tau_{\alpha,\beta})=\sigma_{\alpha,\beta}.$
 \end{prop}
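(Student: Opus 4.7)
The plan is to prove the stronger statement of Theorem~\ref{thm:thmcomp}, namely that $\Sch_{\Fl}(\tau_{\alpha,\beta})=\sigma_{\alpha,\beta}$ for \emph{every} pair of partitions $(\alpha,\beta)$, adopting the convention that $\sigma_{\alpha,\beta}=0$ when $(\alpha,\beta)\not\in P(n;r_1,r_2)$. The proposition is the restriction of this statement to $(\alpha,\beta)\in P$; proving the stronger version is what makes the induction close, since the targets of arrows in $\Arr^{\leq r}_{\alpha,\beta}$ need not themselves lie in $P$. I induct on $|\beta|$. The base case $|\beta|=0$ is immediate: the recursive definition gives $\tau_{\alpha,\emptyset}=s_{\alpha,\emptyset}$, so $\Sch_{\Fl}(\tau_{\alpha,\emptyset})=s^1_\alpha$; this equals $\sigma_{\alpha,\emptyset}$ via pullback from the Grassmannian projection $\Fl\to\mathrm{Gr}(r_1,n)$ when $\alpha\subseteq r_1\times(n-r_1)$, and vanishes in $\HH^*\Fl$ otherwise (trivially if $\alpha$ has more than $r_1$ rows, and from the Chern-class relations for the trivial bundle $V$ if $\alpha$ is too wide), matching the convention.

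For the inductive step, apply $\Sch_{\Fl}$ to
\[\tau_{\alpha,\beta}=s_{\alpha,\beta}-\sum_{a\in\Arr^{\leq r}_{\alpha,\beta}}\tau_{t(a)}.\]
Since the quiver is graded, every target $t(a)=(\alpha',\beta')$ satisfies $|\beta'|<|\beta|$, so the induction hypothesis gives $\Sch_{\Fl}(\tau_{t(a)})=\sigma_{t(a)}$. The claim therefore reduces to verifying the identity
\[s^1_\alpha\cdot s^2_\beta=\sigma_{\alpha,\beta}+\sum_{a\in\Arr^{\leq r}_{\alpha,\beta}}\sigma_{t(a)}\qquad\text{in }\HH^*\Fl,\]
with the zero convention absorbing out-of-$P$ terms on both sides.

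The strategy for verifying this identity is to expand the left-hand side in the Schubert basis directly. The key tool is Lemma~\ref{lem:explem}: any $s^2_\nu$ of width at most $n-r_2$ equals the Schubert class obtained by splitting $\nu$ at its $(r-1)$st column, so every summand arising from a Littlewood--Richardson / Remmel--Whitney expansion (Theorem~\ref{thm:RWpositivity}) of $s^1_\alpha\cdot s^2_\beta$ can be evaluated explicitly as a Schubert class or is zero in $\HH^*\Fl$. The main obstacle, and where the bulk of the technical work will lie, is the combinatorial matching between this Schubert expansion and the arrow sum on the right-hand side. The point is that the choice $r=r_1-r_2+1$ is calibrated precisely so that failure of the row rule translates, under $\Sch_{\Fl}$ and Lemma~\ref{lem:explem}, into an index outside $P$, and hence a vanishing Schubert class. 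The splitting results Propositions~\ref{prop:splitting1} and~\ref{prop:splitting2}, together with the jeu-de-taquin and rectification lemmas of Section~\ref{sec:background}, provide the bookkeeping needed to turn this heuristic into a rigorous term-by-term identification of the two sides.
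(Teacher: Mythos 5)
Your reduction is correct and, up to a point, mirrors the paper's: applying $\Sch_{\Fl}$ to the recursion $\tau_{\alpha,\beta}=s_{\alpha,\beta}-\sum_{a\in\Arr^{\leq r}_{\alpha,\beta}}\tau_{t(a)}$ and invoking the grading together with the induction hypothesis does reduce everything to the identity $s^1_\alpha\cdot s^2_\beta=\sigma_{\alpha,\beta}+\sum_{a\in\Arr^{\leq r}_{\alpha,\beta}}\sigma_{t(a)}$ in $\HH^*\flag$. The base case and the bookkeeping you do around it are fine. But the last paragraph of your proposal is not a proof of that identity: it is a programmatic claim that the splitting propositions and jeu-de-taquin lemmas of Section \ref{sec:background} suffice to carry out a ``term-by-term identification.'' You do not exhibit that identification, and there is a real reason to doubt it can be produced purely from those combinatorial tools. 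Expanding $s^1_\alpha\cdot s^2_\beta$ in the Schubert basis of $\HH^*\flag$ is exactly the geometric product rule you are trying to prove; Lemma~\ref{lem:explem} only rewrites a single $s^2_\nu$, it does not expand a mixed product $s^1_\alpha s^2_\beta$, and the argument as stated is circular without some independent geometric input.

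The paper gets around this precisely where your proposal is silent. Rather than attacking the identity directly for general $\alpha$, the paper's proof compares the \emph{multiplication operators} of $\Sch_{\Fl}(\tau_{\alpha,\emptyset})$ (in the $\Sch_{\Fl}(\tau)$-basis) and of $\sigma_{\alpha,\emptyset}$ (in the Schubert basis). Because $\sigma_{\alpha,\emptyset}=s^1_\alpha$ can be written determinantally in terms of the elementary classes $\sigma_{1^p,\emptyset}$, the comparison is reduced to the single-column case $\alpha=1^p$. That Pieri case is then proved in the Appendix (Proposition~\ref{thm:appendix}) by a genuinely new combinatorial translation, matching the row-rule/infusion description from Theorem~\ref{thm:s1col} with the known Pieri rule for $\flag$ in terms of 012-strings from \cite{puzzle}. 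This external Pieri rule is the indispensable geometric input that anchors the induction. Your proposal omits it entirely, and the gap is not a matter of omitted bookkeeping: it is the step where the combinatorics of the quiver meets the actual cohomology of $\flag$, and it needs either the Appendix argument or a substitute of comparable content.
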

 \begin{proof}
 Using the proposition above, we have two bases of the cohomology ring: $\{\Sch_{\Fl}(\tau_{\alpha,\beta})\}$ and the Schubert basis $\{\sigma_{\alpha,\beta}\}$, both indexed by the same set $P(n;r_1,r_2).$ We use this to compare the structure constants of the two bases.  
 
We note that for $(\alpha,\emptyset)$ and $(\emptyset,\beta)$, both in $P(n;r_1,r_2)$, we have 
 \[\Sch_{\Fl}(\tau_{\alpha,\emptyset})=\sigma_{\alpha,\emptyset},\]
  \[\Sch_{\Fl}(\tau_{\emptyset,\beta})=\sigma_{\emptyset,\beta}.\]
 We prove the statement using induction on the number of boxes in $\beta$. The base case is above. Now consider a pair $(\alpha,\beta) \in P(n;r_1,r_2)$. 
 We can write:
 \begin{equation}\label{eq:tau}
 \Sch_{\Fl}(\tau_{\alpha,\emptyset}) \Sch_{\Fl}(\tau_{\emptyset,\beta})=\Sch_{\Fl}(\tau_{\alpha,\beta})+\sum_{a \in \Arr^{\leq r}_{\alpha,\beta}} \Sch_{\Fl}(\tau_{t(a)}).
 \end{equation}
 Suppose we know that the matrices of multiplication of $\Sch_{\Fl}(\tau_{\alpha,\emptyset})$ and $\sigma_{\alpha,\emptyset}$ are equal. Then we would have that:
 \begin{equation}\label{eq:sigma} \sigma_{\alpha,\emptyset} \sigma_{\emptyset,\beta}=\sigma_{\alpha,\beta}+\sum_{a \in \Arr^{\leq r}_{\alpha,\beta}} \sigma_{t(a)}.
 \end{equation}
 Then we know that the left hand side of equations \eqref{eq:tau} and \eqref{eq:sigma} are equal, and by the induction assumption, so is the sum on the right hand side of the equation. Therefore, up to this assumption on matrices of multiplication, we see that $\sigma_{\alpha,\beta}=\Sch_{\Fl}(\tau_{\alpha,\beta}).$
 
Therefore, we can complete the theorem if we show that the matrices of multiplication of $\sigma_{\alpha,\emptyset}$ and $\Sch_{\Fl}(\tau_{\alpha,\emptyset})$ are equal (this is actually slightly stronger than one needs for the above, but is required for the reduction). 
 
Note that since $\sigma_{\alpha,\emptyset}=\Sch_{\Fl}(\tau_{\alpha,\emptyset})$ is the Schur polynomial associated to $\alpha$, it can be written as a polynomial in the elementary symmetric polynomials, which are the Schur polynomials associated to columns. Therefore, we can compute
 \[\sigma_{\alpha,\emptyset} \sigma_{\emptyset,\beta}\]
 in either basis using the respective matrices of multiplication of $\sigma_{\colp,\emptyset}$ in either the $\sigma$ or the $\tau$ basis. So we've reduced to the case of comparing the matrices of multiplication when $\alpha$ is a column. But this is exactly the Pieri rule given in Proposition \ref{thm:appendix} of the Appendix.
 \end{proof}
 
 We have nearly proved Theorem \ref{thm:thmcomp}. We now complete the proof.
 \begin{proof}[Proof of Theorem \ref{thm:thmcomp}]
 The only remaining claim of Theorem to prove is that if $(\alpha,\beta) \not \in P(n;r_1,r_2)$, then $\Sch_{\Fl}(\tau_{\alpha,\beta})=0.$ We note that this follows from (for example) the rim-hook removal of \cite{gukalashnikov} when $\beta=\emptyset.$ 

 Suppose $(\alpha,\beta) \not \in P(n;r_1,r_2)$. We will show that $\tau_{\alpha,\beta} \in \ker(\Sch_{\Fl}).$  We proceed by induction on $|\beta|$. We have already shown the base case.
 
In the induction step, we consider separately the case when $\alpha=\emptyset$. Note that in that case, $\beta$ is too wide (the claim is trivial if $\beta$ is too tall). Note that
\[\tau_{\emptyset,\beta}=s_{\emptyset,\beta}-\sum_{a \in \Arr^{\leq r}_{\emptyset,\beta}} \tau_{t(a)},\]
and that summands in the sum in the right hand side come in two types: either $t(a) \not \in P(n;r_1,r_2)$ as even after removing boxes, $\beta$ is too wide, or $t(a)=(\alpha_1,\alpha_2)$, where $\alpha_i$ are as described in Lemma \ref{lem:explem}. This follows from considering which arrows satisfy the row rule. Applying $\Sch_{\Fl}$, the equation therefore becomes (by induction):
\[\Sch_{\Fl}(\tau_{\emptyset,\beta})=s_{\beta}(y')-\Sch_{\Fl}(\tau_{\alpha_1,\alpha_2})=s_{\beta}(y')-\sigma_{\alpha_1,\alpha_2}=0.\]
The second equality follows from Lemma \ref{lem:explem} and the previous proposition. 

For the general case when $(\alpha,\beta)\not \in P(n;r_1,r_2)$, consider the product $\tau_{\alpha,\emptyset} \tau_{\emptyset,\beta}$. By the arguments above, this is in the kernel. By Theorem \ref{thm:s1thm},
 \[\tau_{\alpha,\emptyset} \tau_{\emptyset,\beta}= \sum_{T_1 \in \RW(\alpha)} \tau_{\alpha,\beta}+\sum_{\substack{a_{T_2} \in \Arr^{\leq r}_{\alpha,\beta}, \\ \Slide_{T_1}(T_2) \in \Arr^{>r}_{\emptyset,\beta} }} \tau_{t(a_{T_2})}.\]
 Note that the first sum is not really a sum, as $\RW(\alpha)$ has a unique element; call this element $T_{\alpha}$. We look at elements in the second sum.  Suppose $a_T \in  \Arr^{>r}_{\emptyset,\beta}$. Let $t(a)=(\gamma_1,\gamma_2)$.

Now consider two cases: when $\beta$ is not too wide, and when $\beta$ is too wide. 

Case 1: \emph{$\beta$ is not too wide}. Then $\alpha$ must be too wide. Then as $\alpha \subset \gamma_1$, $\gamma_1$ must also be too wide, and as the number of elements in $\gamma_2$ is strictly less than that in $\beta$, we can apply the induction assumption to see that $\tau_{\gamma_1,\gamma_2} \in \ker(\Sch_{\Fl}).$

Case 2:\emph{ $\beta$ is too wide}. 
Since $\beta$ is too wide, the row rule in the last column in the top row is $R_r(1) \leq 1.$ Therefore, if this box is not contained in $\gamma_2$, then it cannot possibly satisfy $\row_T(1)<R_r(1) \leq 1.$ So the width of $\gamma_2$ is the same as $\beta$. Hence, by induction, $\tau_{\gamma_1,\gamma_2} \in \ker(\Sch_{\Fl}).$

Now, in both cases, observe:
\[0=\Sch_{\Fl}(\tau_{\alpha,\emptyset} \tau_{\emptyset,\beta})=\Sch_{\Fl}(\tau_{\alpha,\beta})+\sum_{\substack{a_{T_2} \in \Arr^{\leq r}_{\alpha,\beta}, \\ \Slide_{T_\alpha}(T_2) \in \Arr^{>r}_{\emptyset,\beta} }}\Sch_{\Fl}( \tau_{t(a_{T_2})})=\Sch_{\Fl}(\tau_{\alpha,\beta}).\]
 \end{proof}
 
 \appendices

 \section{Appendix: A comparison to a Pieri rule for two-step flag varieties}\label{sec:appendix}
\begin{center} 
	by Ellis Buckminster\footnote{Department of Mathematics,  University of Pennsylvania, {\em E-mail address}: {\tt ellis17@sas.upenn.edu} 
	}, Linda Chen, and Elana Kalashnikov
\end{center} 

We  prove the following tableau formulation of the Pieri rule in the cohomology ring of $\flag$.  
\begin{prop} \label{thm:appendix} Consider the flag variety $\flag$. For  $1\leq p\leq r_1$, let  $1^p$ be the column partition of height $p$, and let  $(\beta_1,\beta_2)$ be  a pair of partitions with $\beta_1\subseteq r_1\times (n-r_1)$ and $\beta_2\subseteq r_2\times (r_1-r_2)$. Then
\begin{equation}\label{eq:pieri2}\sigma_{1^p,\emptyset} \cdot \sigma_{\beta_1,\beta_2}=\sum_{\lambda/\beta_1 \text{ a vertical strip of length $p$}}  \left( \sigma_{\lambda,\beta_2}+\sum_{\substack{}} \sigma_{\gamma_1,\gamma_2} \right) \text{ in } \HH^*\flag,
\end{equation}
where the interior sum is over  arrows $a \in \Arr^{\leq r}_{\lambda,\beta_2}$ whose targets are $t(a)=(\gamma_1,\gamma_2)$ such that
$\gamma_1/\lambda$ is a vertical strip of length $q$ and $\beta_2/\gamma_2$ is a vertical strip of length of $q$, and 
%$T=T_{\gamma_1/\beta_1}
$\Slide_{ T_{\lambda/\beta_1}}(T_{\gamma_1/\beta_1}) \in \Arr^{>r}_{\beta_1,\beta_2} $.
\end{prop}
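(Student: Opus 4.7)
The plan is to prove Proposition \ref{thm:appendix} by direct comparison with the Pieri specialization of the Littlewood--Richardson rule for $\flag$ of \cite{puzzle}, expressed using 012-strings. Multiplying by $\sigma_{1^p,\emptyset}$ (the $p$-th Chern class of the rank-$r_1$ tautological quotient bundle) is a genuine Pieri multiplication on the two-step flag variety, and the puzzle rule in this special case degenerates into an explicit, squarefree enumeration of summands.

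First, I will translate the 012-string Pieri rule into the pair-of-partition indexing $P(n;r_1,r_2)$ via the bijection of \S\ref{s:bijection}. Each summand $\sigma_{(\gamma_1,\gamma_2)}$ appearing in $\sigma_{1^p,\emptyset} \cdot \sigma_{(\beta_1,\beta_2)}$ will be shown to arise from a length-$p$ sequence of elementary $0$-moves, followed by a (possibly empty) length-$q$ sequence of $1 \leftrightarrow 2$ exchanges triggered by those moves. In the partition language, the $0$-moves produce a vertical strip $\lambda/\beta_1$ of length $p$, while the exchanges produce a vertical strip $\gamma_1/\lambda$ of length $q$ together with a vertical strip $\beta_2/\gamma_2$ of the same length $q$. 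This is exactly the combinatorial shape of the summands on the right-hand side of \eqref{eq:pieri2}.

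Second, I will match the validity conditions on the two sides. On the 012-string side, an exchange is admissible exactly when certain local interlacing inequalities hold between the position of each exchanged $1$-entry and the positions of the surrounding $0$- and $2$-entries. On the tableau side, by Remark \ref{rem:appendix}, the joint conditions $a_{T_{\gamma_1/\lambda}} \in \Arr^{\leq r}_{\lambda,\beta_2}$ and $\Slide_{T_{\lambda/\beta_1}}(T_{\gamma_1/\lambda}) \in \Arr^{>r}_{\beta_1,\beta_2}$ are equivalent to the local pinching inequality
\[
\row(j) \geq R(j) > \row(j) - s
\]
for each label $j$ of $T_{\gamma_1/\lambda}$, where $s$ is the length of the column segment of $\lambda/\beta_1$ stacked immediately above the corresponding segment of $\gamma_1/\lambda$. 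The core verification is that, column by column, these two local inequalities coincide under the 012-string/partition dictionary.

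The main obstacle is precisely this columnwise matching. The 012-string condition is naturally global in flavor—it states that a single sequence of elementary moves respects an interlacing rule along the entire string—whereas the tableau condition is phrased per column via the row-rule filling. Some bookkeeping is therefore required to identify each segment-length $s$ on the tableau side with the corresponding inter-entry gap on the 012-string side, and to confirm that the admissible ranges agree. Once this correspondence is established, the summands on both sides are in bijection, each Schubert class appears with multiplicity one on both sides, and \eqref{eq:pieri2} follows.
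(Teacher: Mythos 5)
Your proposal takes essentially the same route as the paper's own Appendix proof: translate the column Pieri rule of \cite{puzzle} from 012-strings to pairs of partitions via the bijection of \S\ref{s:bijection}, and verify columnwise that the Bruhat-chain admissibility conditions match the pinching inequality $\row(j)\geq R(j)>\row(j)-s$ of Remark~\ref{rem:appendix}. The only caution worth flagging is that the $p$-step chain $u\xrightarrow{p}u'$ is not literally $p$ $0$-moves followed by $q$ separate $1\leftrightarrow 2$ exchanges; rather, each of the $p$ basic moves slides a single $2$ leftward over a (possibly empty) block of $0$s to land on a $0$ or a $1$, the extra $q$ boxes in $\gamma_1/\lambda$ (and the $q$ removals from $\beta_2$) come from the lengths of those blocks, and the bookkeeping you anticipate --- matching column-segment lengths with inter-entry gaps --- is carried out in the paper by decomposing the chain at the indices where a fresh $2$ begins to move (Lemma~\ref{lem:kstep}).
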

This statement is exactly that of the first part of Theorem \ref{thm:s1col}  after replacing $\tau^r_{\alpha,\beta}$ with the Schubert class $\sigma_{\alpha,\beta}$ when $(\alpha,\beta)\in P(n;r_1,r_2)$, and setting it to 0 otherwise. In other words, the Pieri-type formula  of Theorem \ref{thm:s1col} gives the same structure constants as the analogous formula for Schubert classes in the cohomology of the two-step flag variety $\flag$.

This formulation of the Pieri rule for $\flag$ is used to prove Proposition \ref{prop:tausigma}, which in turn is used to prove Theorem \ref{thm:thmcomp}.
We show that this formula can be obtained via a column version of the Pieri rule for $\flag$ in \cite{puzzle}  stated in terms  of  012-strings.

\subsection{012-strings} 
There are multiple  indexing sets for the Schubert basis for $\flag$. In \S \ref{s:bijection}, we described the indexing set $S(n;r_1,r_2)$ of permutations and the indexing set  $P(n;r_1,r_2)$ of pairs of partitions. In particular, a permutation $w\in S(n;r_1,r_2)$ corresponds to a pair of partitions $(\beta_1,\beta_2)\in P(n;r_1,r_2)$ via the factorization $w=w^1w^2$ of $w$ into Grassmannian permutations $w^1$ with possible descent at $r_1$ and $w^2$ with possible descent at $r_2$; these are further identified with partitions $\beta_1\subseteq r_1\times (n-r_1)$ and $\beta_2\subseteq r_2\times (r_1-r_2)$.

We  describe a third indexing set in terms of  012-strings, as in \cite{puzzle}; these are integer vectors $u$ of length $n$, with $r_2$ entries equal to 0, $r_1-r_2$ entries equal to 1, and $n-r_1$ entries equal to 2. These strings are in bijection with $S(n;r_1,r_2)$: a permutation $w \in S(n;r_1,r_2)$ written in one-line notation as 
\[ w=[w_1 \cdots w_{r_2} \mid w_{r_2+1}\cdots w_{r_1} \mid w_{r_1+1}\cdots w_n],\]
corresponds to the 012-string $u=u_1\cdots u_n$ where $u_{w_i}=0$ for  $i\in [1,r_2]$, $u_{w_i}=1$ for $i\in [r_2+1, r_1]$ and $u_{w_i}=2$ for $i\in [r_1+1,n]$. 

\begin{rem} \label{rem:stringpartitions}
The string $u$  corresponds to the pair of partitions $(\beta_1,\beta_2)$ via two intermediate strings:  $u^1$ obtained by setting all entries of 1 to 0 in $u$, resulting in a string of length $n$ with  $r_1$ entries equal to 0 and $n-r_1$ entries equal to 2; and a string 
 $u^2$ obtained by omitting all entries of 2 in $u$, resulting in a string of length $r_1$ with $r_2$ entries equal to 0 and $r_1-r_2$ entries equal to 1. The pair $(u^{(1)},u^{(2)})$  gives a pair of partitions $(\beta_1,\beta_2)$ as follows: starting from the lower left corner of the $r_1 \times (n-r_1)$ rectangle, each 0 records a vertical step and each 2 records a horizontal step that traces out $\beta_1$; and starting from the lower left corner of a $r_2\times (r_1-r_2)$ rectangle, each 0 records a vertical step and each 1 records a horizontal step that traces out $\beta_2$.
 \end{rem}

\begin{eg} \label{eg:string} Consider the   flag variety $\Fl(6;4,2)$ as in Example \ref{eg:permutation}. The  pair  $\left(\yng(1,1),\yng(2,1)\right)$  with associated permutation $[25|14|36]$ corresponds to the string $102102$, the pair 
$\left(\yng(2,2,1),\yng(1,1)\right)$ with permutation
$[26|34|15]$ corresponds to the string $201121$, and the pair $\left( \yng(2,2,1,1),\yng(1)\right)$  with permutation
   $[25|36|14]$ has string $201201$.
\end{eg}

\iffalse

The cohomology ring $\HH^*\flag$ has a basis of Schubert classes commonly indexed by
\[ S(n;\mathbf{r}):= \{w\in S_n: w(i)<w(i+1) \text{ if } i\not\in\mathbf{r}\},\]
 the set of permutations in $S_n$ whose descent set is contained in $\{r_1,\ldots,r_\rho\}$.
If 
$S_{n,\mathbf{r}}$ is the  parabolic subgroup of $S_n$ generated by simple transpositions $(i,i+1)$ for $i\not\in\rr$, then $\Sn$ is a set of coset representations for $S_n/S_{n,\rr}$. Given $w\in \Sn$, the corresponding Schubert class $\sigma_w\in \HH^{2\ell(w)}\flag$, where the \emph{length} of $w$ is the number
of inversions $\ell(w) = \#\{ i<j \,|\, w(i)>w(j) \}$.

\fi

Consider the following  covering relation induced by the Bruhat order on $S_n$. For a string $u=u_1\cdots u_n$, write $u\xrightarrow{1} u'$ if there exists indices $i<j$ such that $u_i\in\{0,1\}$, $u_j=2$,  $u_k<u_i$ for all $i<k<j$, and $u'$ is obtained from $u$ by interchanging $u_i$ and $u_j$. Then we write $u\xrightarrow{p} u'$  if there exists a sequence $u=u^0\xrightarrow{1} u^1 \xrightarrow{1} \cdots \xrightarrow{1} u^p =u'$ such that 
$i_p<j_p\leq i_{p-1}<\cdots < j_2\leq i_1<j_1$,
%$i_p<j_p\leq i_{p-1}<\cdots <i_2\leq j_1<i_1$, 
where the step $u^{t-1}\xrightarrow{1} u^t$ interchanges the entries of index $i_t$ and $j_t$ for $t\in [1,p-1]$. Define  $u(p)$ to be  the  012-string corresponding to the pair of partitions $(1^p,\emptyset)$. The Pieri formula states that
\cite{ ls, sottile}
\begin{equation} \label{eq:puzzlepieri}
\sigma_{u(p)}\cdot \sigma_u = \sum_{u\xrightarrow{p}u'} \sigma_{u'}
\end{equation}
Note that this is a column version of the Pieri rule stated in \cite{puzzle}.
The 012-string $u(p)$ is given by $(0^{r_2-p},1,0^p,1^{r_1-r_2-1},2^{n-r_1})$ for $1\leq p\leq r_2$ and  $(0^{r_2},1^{r_1-r_2-p},2,1^p,2^{n-r_1-1})$ for $r_2<p\leq r_1$.

Let $u$ be the 012-string corresponding to the pair of partitions $(\beta_1,\beta_2)$. Consider a summand of  \eqref{eq:puzzlepieri} corresponding to $u\xrightarrow{p} u'$ given by $i_p<j_p\leq i_{p-1}<\cdots <j_2\leq i_1<j_1$.

 \begin{rem} \label{rem:upieri}
Since each $u_{j_t}=2$, we  can view steps of the sequence $u\xrightarrow{p} u'$ as swaps of
  entries of 2 indexed by $j_t$ with entries in position $i_t<j_t$. The  entry of 2 is then indexed by $i_t$, so  $j_{t+1}=i_{t}$ corresponds to a subsequent swap of the entry of 2. Similarly, $j_{t+1}<i_t$ corresponds to beginning a  series of steps with a new entry of 2 in position $j_{t+1}$. 
\end{rem}

The sequence $u\xrightarrow{p}u'$ can be broken into 
\[ u=u^0 \xrightarrow{k_1} u^{k_1} \xrightarrow{k_2-k_1} u^{k_2} \rightarrow \cdots \rightarrow u^{p-1} \xrightarrow{k_p-k_{p-1}} u^p =u'\]
with each  subsequence $u^{k_t} \xrightarrow{k_{t+1}-k_t} u^{k_{t+1}}$  given by
\begin{equation} \label{eq:k}
u^{k_t} \xrightarrow{1} u^{k_t+1} \xrightarrow{1} \cdots  \xrightarrow{1} u^{k_{t+1}}
\end{equation}
where
$i_{k_{t+1}-1} < j_{k_{t+1}-1}=i_{k_{t+1}-2} < \cdots < j_{k_t+1}=i_{k_t} <j_{k_t}$.  Note that there are strict inequalities  $j_{k_{t+1}}<i_{k_t}$ and $j_{k_{t}}<i_{k_t-1} $ and equalities in between: $j_t=i_{t-1}$ for $t\in[k_t+1,k_{t+1}]$.

\begin{eg}  Consider (substrings of) a subsequence
\[  10100110{\bf 02}  \xrightarrow{1}  101001{\bf 102}0  \xrightarrow{1} 10100{\bf 12}010
  \xrightarrow{1} 10{\bf 1002}1010   \xrightarrow{1} 1020011010 ,
\] 
where the substrings $u_{i_l}\ldots u_{j_l}$  for which the entries in positions $i_l$ and $j_l=2$ are swapped are in bold. Here, we have only shown the portion of the 012-string that is affected. Via Remark \ref{rem:stringpartitions}, their intermediate 02-substrings and 01-substrings are:
\begin{align*}(0^92, 101001100)  \xrightarrow{1} &(0^820, 101001100) \xrightarrow{1} (0^620^3, 101001010) \\
 & \xrightarrow{1} (0^520^4,10{100}1010)  \xrightarrow{1} (0^220^7,100011010),
\end{align*}
which correspond to the following:
\begin{align*}\emptyset,\yng(4,4,2,2,1)  \xrightarrow{1} &  \,  \yng(1), \yng(4,4,2,2,1)  \xrightarrow{1}  \,  \yng(1,1,1), \yng(4,3,2,2,1)  \xrightarrow{1} \yng(1,1,1,1), \yng(4,3,2,2,1)  \xrightarrow{1} \yng(1,1,1,1,1,1,1), \yng(4,3,1,1,1) .
\end{align*}

\end{eg}

\begin{lem} \label{lem:kstep}
Consider a subsequence $u^{k_t} \xrightarrow{k_{t+1}-k_t} u^{k_{t+1}}$ given by \eqref{eq:k}. Denote by  $(\mu_1,\mu_2)$  the pair of partitions corresponding to  $u^{k_t}$. Consider the set of indices $L:=\{ l \mid j_l-i_l\geq 2, k_t\leq  l <  k_{t+1}\} $, and let $s_t=\sum_{l\in L} (j_l-i_l-1)$.  Let $2_{j_t} :=\#\{ a\geq j_t\mid u^{k_t}_a=2\}$  be the  number of entries of 2 in the 012-string $u^{k_t}$ to the right of position $j_t$, inclusive; this counts the entries of 2 from right to left, ending at position $j_{k_t}$, and similarly, for $k_t\leq l< k_{t+1}$, let $1_l$ be the count of entries of 1 from right to left, end at position $i_l$. Then the pair of partitions $(\mu^+_1,\mu^-_2)$ corresponding to $u^{k_{t+1}}$ satisfies the following:
\begin{enumerate}
\item $\mu^+_1/\mu_1$ is a column of length $k_{t+1}-k_t+s_t$ in column $2_{j_t}$, counted from the right; 
\item $\mu_2/\mu^-_2$ is a vertical strip of length $s_t$; more specifically, $\mu^-_2$ is obtained  by removing $j_l-i_l-1$ boxes  from  column $1_l$ of $\mu_2$ for $l\in L$, counted from the right; and
\item row rule: $\row_T(j) \geq R_T(j) > \row_T(j)-(k_{t+1}-k_t)$
for all $j$.
\end{enumerate}
\end{lem}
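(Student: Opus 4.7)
The subsequence $u^{k_t} \xrightarrow{k_{t+1}-k_t} u^{k_{t+1}}$ consists of successive elementary swaps that together push a single $2$ leftward through the string: by the requirement $j_{l+1}=i_l$, the $2$ arriving at position $i_l$ after step $l$ becomes the $2$ swapped in step $l+1$. Moreover, since the entries strictly between $i_l$ and $j_l$ are $<u_{i_l}\leq 1$, they must all equal $0$, so this moving $2$ never crosses another $2$ during the subsequence.

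For Part (1), the plan is to pass to the 02-string $u^{(1)}$ obtained by setting all $1$'s to $0$'s. There the tracked $2$ moves from $j_l$ to $i_l$ in step $l$, crossing the $j_l-i_l$ zeros at positions $i_l,i_l+1,\ldots,j_l-1$. Each such crossing adds one box to the column of $\mu_1$ whose index from the right equals the number of $2$'s at or to the right of the moving $2$. Because the moving $2$ never crosses another $2$, this count is preserved across the subsequence and equals its initial value $2_{j_t}$. Summing yields a total of $(k_{t+1}-k_t-|L|)+\sum_{l\in L}(j_l-i_l)=(k_{t+1}-k_t)+s_t$ boxes, all in column $2_{j_t}$.

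For Part (2), I would pass to $u^{(2)}$ by deleting $2$'s. Steps with $u_{i_l}=0$, and steps with $u_{i_l}=1$ but $j_l=i_l+1$, leave $u^{(2)}$ unchanged. For each $l\in L$ the $1$ originally at $i_l$ moves to $j_l$, shifting rightward in $u^{(2)}$ past the $j_l-i_l-1$ intervening zeros and shortening by that amount the column of $\mu_2$ whose index from the right is $1_l$. The identification of this column as $1_l$ (taken in the initial state) uses that earlier subsequence steps affect only positions strictly greater than $i_l$, so the count of $1$'s at or to the right of $i_l$ is preserved. For $l_1<l_2$ in $L$, the inequalities $i_{l_2}<i_{l_1}$ together with $u^{k_t}_{i_{l_2}}=1$ give $1_{l_1}<1_{l_2}$, hence distinct 1b-steps act on distinct columns. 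The vertical-strip condition then reduces to checking that the number of boxes removed from each column does not exceed the gap to the next column, which is verified via the explicit formula for column lengths in terms of the positions of $1$'s and $0$'s in $u^{k_t}_{(2)}$.

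Part (3) is the most delicate and will be the main obstacle. I take $T$ to be the restriction of the arrow's tableau to the $s_t$ boxes added by this subsequence, labeled so that label $j$ corresponds to the $j$-th box removed from $\mu_2$ in the reverse lexicographic order; $R_T(j)$ is then read from the $r$-th row-rule filling of Definition~\ref{def:rowrule} with $r=r_1-r_2+1$, and $\row_T(j)$ is the row of label $j$ in $T$. The lower bound $\row_T(j)\geq R_T(j)$ follows from the correspondence between the row of a removed box in $\mu_2$ and the position of the shifted $1$ in $u^{(2)}$, since the row-rule filling tracks precisely this vertical coordinate. The upper bound $R_T(j)>\row_T(j)-(k_{t+1}-k_t)$ reflects that the subsequence contains only $k_{t+1}-k_t$ elementary swaps, limiting how much the row of a label can fall short of its row-rule value. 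The hard part is setting up this dictionary between positions in the $012$-string and rows in the row-rule filling carefully enough that both inequalities reduce to the strict ordering $i_{k_{t+1}-1}<j_{k_{t+1}-1}=i_{k_{t+1}-2}<\cdots<j_{k_t}$ coming from the definition of $u\xrightarrow{p}u'$.
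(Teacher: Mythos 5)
The proposal's treatment of parts (1) and (2) follows the same route as the paper: pass to the $02$-string and $01$-string respectively, track the single $2$ moving left, observe that the moving $2$ never crosses another $2$ or another $1$ (so the column index from the right is frozen), and read off the added/removed boxes from crossings of $0$'s. Your verification of the vertical-strip property in (2), which the paper leaves implicit, is correct: for $l<l'$ both in $L$ with $i_l>i_{l'}$, the $m_{l'}$ zeros strictly between positions $i_{l'}$ and $j_{l'}=i_{l'-1}\le i_l$ force $\mathrm{len}(1_{l'})-m_{l'}\ge \mathrm{len}(1_l)$, so the removed segments occupy disjoint rows.

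For part (3) there is a genuine gap: you identify the correct dictionary (positions in the $012$-string correspond to rows) but never actually derive the two inequalities, and the "reflects that the subsequence contains only $k_{t+1}-k_t$ swaps" remark is not an argument. What the paper does, and what is needed, is an explicit computation of $\row(j)$ and $R(j)$ that exposes a block structure. Concretely, with $\rho$ the row of the first box added to $\mu_1$ (one more than the number of $0$'s and $1$'s to the right of $j_{k_t}$), one has $\row(j)=\rho+\hat{k}+j-1$ where $\hat{k}=k_{t+1}-k_t$, so $\row$ increments by exactly $1$. The row-rule value $R(j)=r+(\text{row of box }j)-(\text{column from the left of box }j)$ increments by exactly $1$ within a block (same column, consecutive rows of $\mu_2/\mu_2^-$) and by at least $2$ between blocks (the column index $1_l$ strictly increases, and the disjointness from part (2) gives the extra $+1$). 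Thus the difference $\row(j)-R(j)$ is weakly decreasing. One then computes the endpoints: writing $L=\{l_1<\cdots<l_{|L|}\}$, one finds $\row(1)-R(1)=\hat{k}-1-(l_1-k_t)$ and $\row(s_t)-R(s_t)=\hat{k}-1-(l_{|L|}-k_t)$. Since $k_t\le l_1\le l_{|L|}\le k_{t+1}-1$, both endpoints lie in $[0,\hat{k}-1]$, and by monotonicity so do all intermediate differences, which is exactly $\row(j)\ge R(j)>\row(j)-\hat{k}$. Without this quantitative step the lemma is not established, and I would regard the proposal as incomplete on the point you yourself flagged as "the main obstacle."
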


\begin{proof} From Remark \ref{rem:upieri}, this subsequence results in the entry of 2 in position $j_t$ being moved to position $i_{k_{t+1}-1}$. Thus by Remark \ref{rem:stringpartitions}, the
partition $\mu^+_1$ is determined  solely by  the position of the 2s in the terminal 012-string $u^{k_{t+1}}$; it is obtained from $\mu_1$ by adding  $ j_{k_t}-i_{k_{t+1}-1}$ boxes to the $2_{j_t}$th column from the right.
We obtain (1) from the equality   $s_t = \sum_{l\in L} (j_l-i_l -1)  = \sum_{l=k_t}^{k_{t+1}-1} (j_l-i_l -1) = j_{k_t}-i_{k_{t+1}-1}-(k_{t+1}-k_t)$.

On the other hand, from Remark \ref{rem:upieri} and  \ref{rem:stringpartitions}, changes to a 012-string results in modifications to the partition $\mu_2$ exactly when $j_l-i_l\geq 2$, i.e. when $u^{l-1}\xrightarrow{1} u^l$ is given by
\[\cdots 10^{j_l-i_l-1}2 \cdots \xrightarrow{1} \cdots 2 0^{j_l-i_l-1}1 \cdots .\] 
Upon omitting the entries of 2, this changes the 01-string defining $\mu_2$  from $\cdots 10^{j_l-i_l-1} \cdots$ to $\cdots 0^{j_l-i_l-1}1\cdots$. 
This corresponds precisely to removing $j_l-i_l-1$ boxes from  column $1_l$. The  terminal partition $\mu^-_2$ is obtained by repeating this process over  $l\in L$.

We can also use the 012-strings to keep track of  row numbers and fillings: the row filling 
row number of $\mu^+_1$ is given by counting 0s and 1s. Define $\rho:=(n+1-j_t)-2_{j_t}+1$; this is one more than the number of 0s and 1s to the right of position $j_t$. The addition of 1 accounts for the new 0 or 1 that is being created. The row number for $\mu_1^+/\mu_1$ is given by $\row(j)=\rho+j-1+\hat{k}$, i.e. the sequence $(\rho+\hat{k},\ldots,, \rho+ (j_{k_t}-i_{k_{t+1}})$, and the row filling is given by the sequence 
$$R(j)= (\rho + 1,\ldots, \rho+ (j_{k_t}-i_{k_t}-1), \rho+ j_{k_t}-i_{k_t}+1,\ldots \rho+ (j_{k_t}-i_{k_t+1}),\ldots,
\rho+ (j_{k_t}-i_{k_{t+1}})).$$
The row filling $R(j)$  for $\mu^+_1$ consists of  blocks of size $j_l-i_l-1$ that contains increases by one within each block and increases by two between such blocks. The row number $\row(j)$ for $\mu^+_1$ increases by one for each row. This is is illustrated by:
% where $\beta_1$ is to the left, $T_{\lambda/\beta_1}$ is in blue, and $T$ is in green:
\[\begin{tikzpicture}[scale=0.7]
\tiny
\draw[thick,dashed] (8,0)--(3,0)--(3,-7)--(2,-7)--(2,-8);
%\node (A) at (2,-2) {$\beta_1$};
\draw[fill=gray] (3,0)--(8,0)--(8,-2)--(3,-2)--(3,0);
\draw (8.5,0)--(8.5,-2);
\draw (8.4,0)--(8.6,0);
\draw (8.4,-2)--(8.6,-2);
\node (F) at (10.2,-1) { $\hat{k}:=k_{t+1}-k_t$};

\draw(3,-2)--(3,-6)--(8,-6)--(8,-2);
\node (B) at (5.5,-2.3) {$\rho+\hat{k}$};
\node (D) at (5.5,-2.7) {$\vdots$};
\node (C) at (5.5,-3.3) {$\rho+\hat{k}+j_{k_t}-i_{k_t}-1$};
\draw (3,-3.6)--(8,-3.6);
\node (C2) at (5.5,-3.8) {$\rho+\hat{k}+j_{k_t}-i_{k_t}$};
\node (D) at (5.5,-4.2) {$\vdots$};
\draw (3,-4.8)--(8,-4.8);
\node (E) at (5.5,-4.9)  {$\vdots$};
\node (E) at (5.5,-5.7) {$\rho+j_{k_{t}}-i_{k_{t+1}}$};
\draw (8.5,-2)--(8.5,-6);
\draw (8.4,-6)--(8.6,-6);
\node (F) at (9.2,-4) { $s_t$};
%\draw[->] (10,-4)--(13,-4);
%\node (G) at (11.5,-5) {$\Slide(T \to T_{\lambda/\beta_1})$};
%\node (H) at (15,-5) {};
\end{tikzpicture}
\begin{tikzpicture}[scale=0.7]
\tiny
\draw[thick,dashed] (8,0)--(3,0)--(3,-7)--(2,-7)--(2,-8);
%\node (A) at (2,-2) {$\beta_1$};
\draw[fill=gray] (3,0)--(8,0)--(8,-2)--(3,-2)--(3,0);
\draw (8.5,0)--(8.5,-2);
\draw (8.4,0)--(8.6,0);
\draw (8.4,-2)--(8.6,-2);
\node (F) at (10.2,-1) { $\hat{k}:=k_{t+1}-k_t$};

\draw (8.5,-2)--(8.5,-6);
\draw (8.4,-6)--(8.6,-6);
\node (F) at (9.2,-4) { $s_t$};

\draw(3,-2)--(3,-6)--(8,-6)--(8,-2);
\node (B) at (5.5,-2.3) {$\rho+1$};
\node (D) at (5.5,-2.7) {$\vdots$};
\node (C) at (5.5,-3.3) {$\rho+j_{k_t}-i_{k_t}-1$};
\draw (3,-3.6)--(8,-3.6);
\node (C2) at (5.5,-3.8) {$\rho+j_{k_t}-i_{k_t}+1$};
\node (D) at (5.5,-4.2) {$\vdots$};
\draw (3,-4.8)--(8,-4.8);
\node (E) at (5.5,-4.9)  {$\vdots$};
\node (E) at (5.5,-5.7) {$\rho+j_{k_{t}}-i_{k_{t+1}}$};
%\draw[->] (10,-4)--(13,-4);
%\node (G) at (11.5,-5) {$\Slide(T \to T_{\lambda/\beta_1})$};
%\node (H) at (15,-5) {};
\end{tikzpicture}\]
We see that $\row(j)\geq R(j)$ and $R(j)>\row(j)-\hat{k}=\row(j)-(k_{t+1}-k_t)$.
\end{proof}

\begin{proof}[Proof of Proposition \ref{thm:appendix}]  Let $u$ be the 012-string corresponding to the pair of partitions $(\beta_1,\beta_2)$. 
Let $K$ be the ordered sequence $K:=\{ t \mid j_{t}<i_{t-1} \}= (k_1<\cdots < k_{|K|})$  with the convention $j_1<i_0$. 
From Remark \ref{rem:appendix},  the fact that the summands of \eqref{eq:puzzlepieri}  give summands of Proposition   \ref{thm:appendix} follows from applying Lemma \ref{lem:kstep} sequentially for $t=k_1,\ldots,k_{|K|}$, with $q= \sum_{t\in K} s_t$.
Conversely, the data of the summands in the proposition corresponds exactly to the data of $i_p<j_p\leq i_{p-1}<\cdots <j_2\leq i_1<j_1$ and hence to a summand of \eqref{eq:puzzlepieri}.
\end{proof}

\bibliographystyle{amsplain}
\bibliography{bibliography}
\end{document}